\newcolumntype{C}{>{\raggedleft\arraybackslash$}p{1.1em}<{$}}
\newcolumntype{s}{>{\raggedleft\arraybackslash$}p{.2em}<{$}}
\newtheorem{theorem}{Theorem}[section]
\newtheorem{corollary}[theorem]{Corollary}
\newtheorem{proposition}[theorem]{Proposition}
\newtheorem{lemma}[theorem]{Lemma}
\numberwithin{equation}{section}
\theoremstyle{definition}
\newtheorem{definition}[theorem]{Definition}
\newtheorem{remark}[theorem]{Remark}
\newtheorem{example}[theorem]{Example}
\newcommand{\R}{\mathbb{R}}
\newcommand{\C}{\mathbb{C}}
\newcommand{\F}{\mathbb{F}}
\renewcommand{\P}{\mathbb{P}}
\renewcommand{\epsilon}{\varepsilon}
\DeclareMathOperator{\GL}{GL}
\DeclareMathOperator{\Stab}{Stab}
\DeclareMathOperator{\Sym}{Sym}
\DeclareMathOperator{\End}{End}
\DeclareMathOperator{\Irr}{Irr}
\DeclareMathOperator{\tr}{tr}
\DeclareMathOperator{\sgn}{sgn}
\DeclareMathOperator{\id}{id}
\newcommand{\Ind}{\smash{\big\uparrow}}
\newcommand{\Res}{\smash{\big\downarrow}}
\newcommand{\ind}{\hbox{$\hskip-0.1pt\raisebox{1pt}{$\uparrow$}$}}
\newcommand{\res}{\hbox{$\hskip-0.1pt\downarrow$}}
\renewcommand{\theta}{\vartheta}
\newcommand{\Id}{\mathrm{id}}
\newcommand{\isoblock}[1]{\Mat(#1)} 
\newcommand{\mfrac}[2]{{\textstyle\frac{#1}{#2}}}
\newcounter{thmlistcnt}
\newenvironment{thmlist}%
	{\setcounter{thmlistcnt}{0}%
	\begin{list}{\emph{(\roman{thmlistcnt})}}{%
		\usecounter{thmlistcnt}%
		\setlength{\topsep}{0pt}%
		\setlength{\leftmargin}{0pt}%
		\setlength{\itemsep}{0pt}%
		\setlength{\labelwidth}{25pt}
		\setlength{\itemindent}{30pt}}%
	}%
	{\end{list}}%
\newcounter{thmlistcntE}
\newenvironment{thmlistE}%
	{\setcounter{thmlistcntE}{0}%
	\begin{list}{\emph{(\roman{thmlistcntE})}}{%
		\usecounter{thmlistcntE}%
		\setlength{\topsep}{0pt}%
		\setlength{\leftmargin}{36pt}%
		\setlength{\itemsep}{0pt}%
		\setlength{\labelwidth}{36pt}
		\setlength{\itemindent}{0pt}}%
	}%
	{\end{list}}%
\newcounter{thmlistcntEL}
\newenvironment{thmlistEL}%
	{\setcounter{thmlistcntEL}{0}%
	\begin{list}{\emph{(\roman{thmlistcntEL})}}{%
		\usecounter{thmlistcntEL}%
		\setlength{\topsep}{0pt}%
		\setlength{\leftmargin}{30pt}%
		\setlength{\itemsep}{0pt}%
		\setlength{\labelwidth}{30pt}
		\setlength{\itemindent}{0pt}}%
	}%
	{\end{list}}%
	\newcounter{thmlistcntES}
\newenvironment{thmlistES}%
	{\setcounter{thmlistcntES}{0}%
	\begin{list}{\emph{(\roman{thmlistcntES})}}{%
		\usecounter{thmlistcntEL}%
		\setlength{\topsep}{0pt}%
		\setlength{\leftmargin}{20pt}%
		\setlength{\itemsep}{0pt}%
		\setlength{\labelwidth}{30pt}
		\setlength{\itemindent}{0pt}}%
	}%
	{\end{list}}%
\newcounter{defnlistcnt}
\newenvironment{defnlist}%
	{\setcounter{defnlistcnt}{0}%
	\begin{list}{(\alph{defnlistcnt})}{%
		\usecounter{defnlistcnt}%
		\setlength{\topsep}{0pt}%
		\setlength{\leftmargin}{0pt}%
		\setlength{\itemsep}{0pt}%
		\setlength{\labelwidth}{30pt}
		\setlength{\itemindent}{35pt}}%
	}%
	{\end{list}}%
\newcounter{defnlistcntE}
\newenvironment{defnlistE}%
	{\setcounter{defnlistcntE}{0}%
	\begin{list}{(\alph{defnlistcntE})}{%
		\usecounter{defnlistcntE}%
		\setlength{\topsep}{0pt}%
		\setlength{\leftmargin}{40pt}%
		\setlength{\itemsep}{0pt}%
		\setlength{\labelwidth}{40pt}
		\setlength{\itemindent}{0pt}}%
	}%
	{\end{list}}%
\newcounter{remarklistcnt}
\newenvironment{remarklist}%
	{\setcounter{remarklistcnt}{0}%
	\begin{list}{(\alph{remarklistcnt})}{%
		\usecounter{remarklistcnt}%
		\setlength{\topsep}{3pt}%
		\setlength{\leftmargin}{30pt}%
		\setlength{\itemsep}{3pt}%
		\setlength{\labelwidth}{30pt}
		\setlength{\itemindent}{0pt}}%
	}%
	{\end{list}}%
\newenvironment{axioms}[2][] 
 {\enumerate[label=(#2\arabic*#1),leftmargin=40pt, topsep=3pt,
 ref=\textup{(#2\arabic*#1)}]}
 {\endenumerate}
\DeclareMathOperator{\Ann}{Ann}
\DeclareMathOperator{\RId}{RId}
\DeclareMathOperator{\diag}{diag}
\DeclareMathOperator{\Mat}{Mat}
\DeclareMathOperator{\codim}{codim}
\newcommand{\triv}{\mathbbm{1}}
\newcommand{\conj}[2]{{}^{#1}\hskip-0.75pt#2}
\newcommand{\Eb}[1]{E^\bullet\hskip-0.5pt(#1)}
\newcommand{\MC}{\mathrm{MC}}
\newcommand{\subsubsubsection}[1]{\medskip\noindent{\scalebox{1}{\textit{#1.}}}}
\newlength\mylength
\def\vlongmapsto#1{%
\begin{tikzpicture}
\draw (0,0.5mm) -- (0,-0.5mm);
\setlength{\mylength}{\widthof{#1}}
\draw[->] (0,0) -- (1.2\mylength,0) node[above,midway] {#1};
\end{tikzpicture}
}
\newcommand{\LwAsV}{V_{\eta_G,w}} 
\newcommand{\LwAsL}{L} 
\newcommand{\VOrLw}{V_{\eta_G,w}} 
\newcommand{\VOrLwInProof}{L_w} 
\newcommand{\mbaseline}{1.7em}
\newcommand{\mbaselinemed}{2.8em}
\newcommand{\epsface}[1]{\raisebox{-1pt}{\epsdice{#1}}}
\newcommand{\IrrC}{\Irr}
\newcommand{\mbf}[1]{\mathbf{#1}}
\newcommand{\tn}{t}
\newcommand{\MX}{M}
\newcommand{\XY}{W}
\newcommand{\IP}{P}
\renewcommand{\tocsubsection}[3]{%
  \indentlabel{\@ifnotempty{#2}{\ignorespaces#1 #2\quad}}#3}
\newcommand\@dotsep{4.5}
\def\@tocline#1#2#3#4#5#6#7{\relax
  \ifnum #1>\c@tocdepth 
  \else
    \par \addpenalty\@secpenalty\addvspace{#2}%
    \begingroup \hyphenpenalty\@M
    \@ifempty{#4}{%
      \@tempdima\csname r@tocindent\number#1\endcsname\relax
    }{%
      \@tempdima#4\relax
    }%
    \parindent\z@ \leftskip#3\relax \advance\leftskip\@tempdima\relax
    \rightskip\@pnumwidth plus1em \parfillskip-\@pnumwidth
    #5\leavevmode\hskip-\@tempdima{#6}\nobreak
    \leaders\hbox{$\m@th\mkern \@dotsep mu\hbox{.}\mkern \@dotsep mu$}\hfill
    \nobreak
    \hbox to\@pnumwidth{\@tocpagenum{\ifnum#1=1\fi#7}}\par
    \nobreak
    \endgroup
  \fi}
\renewcommand\csname r@tocindent0\endcsname{0pt}
\def\l@subsection{\@tocline{2}{0pt}{2.5pc}{5pc}{}}
\begin{document}

\title[Weak lumping on left cosets]{Weak lumping of left-invariant random walks on left cosets of finite groups}


\author{Edward Crane}
\author{\'Alvaro Guti\'errez}
\author{Erin Russell}
\author{Mark Wildon}

\address{University of Bristol, School of Mathematics, Fry Building, Woodland Road, Bristol, BS8 1UG, United Kingdom}

\email{edward.crane@bristol.ac.uk}  \email{a.gutierrezcaceres@bristol.ac.uk}
\email{erin.russell@bristol.ac.uk}
\email{mark.wildon@bristol.ac.uk}

\date{\today}

\begin{abstract}
Let $G$ be a finite group and let $H$ be a subgroup of $G$. The \emph{left-invariant random walk} driven by a probability measure $w$ on~$G$ is the Markov chain in which from any state $x \in G$, the probability of stepping to $xg \in G$ is $w(g)$. The initial state is chosen randomly according to a given distribution. The walk is said to \emph{lump weakly on left cosets} if the induced process on $G/H$ is a time-homogeneous Markov chain. We characterise all the initial distributions and weights $w$ such that the walk is irreducible and lumps weakly on left cosets, and determine all the possible transition matrices of the induced Markov chain. In the case where $H$ is abelian we refine our main results to give a necessary and sufficient condition for weak lumping by an explicit system of linear equations on $w$, organized by the double cosets $HxH$. As an application we consider shuffles of a deck of $n$ cards such that repeated observations of the top card form a Markov chain. Such shuffles include the random-to-top shuffle, and also, when the deck is started in a uniform random order, the top-to-random shuffle. We give a further family of examples in which our full theory of weak lumping is needed to verify that the top card sequence is Markov.
\end{abstract}

\keywords{Random walks, representation theory of finite groups, Markov chains, card shuffling}

\subjclass{(primary) 60J10; (secondary) 20C05, 05E10, 20C30}

\maketitle
\thispagestyle{empty}	

\vspace*{-12pt}
\setcounter{tocdepth}{1}
\tableofcontents

\section{Introduction}\label{sec:introduction}
Let $G$ be a finite group. We define a \emph{weight} to be a function on~$G$ taking non-negative real values, at least one of which is positive.  Thinking of $w$ as a measure on $G$, given any subset $K$ of $G$, we write $w(K)$ for $\sum_{g \in K} w(g)$.
The \emph{left-invariant random walk} on $G$ driven by the weight~$w$ is the
 time-homogeneous $G$-valued Markov chain $X = (X_0, X_1, X_2, \dots)$ with transition probabilities 
\[ \P[X_{t+1} = xg \mid X_t = x] = \frac{w(g)}{w(G)}.\]
The starting value $X_0$ may be either deterministic or random.
Let $H$ be a subgroup of $G$. 
(Throughout, $H$ and $G$ have these meanings.)
Many natural questions concern the induced random process 
$(X_0 H, X_1 H, \ldots)$
taking values in the set $G/H$ of left cosets of $H$ in $G$. For instance, if $G = \Sym_n$ and $H = \Sym_{n-1}$, then the left-invariant random walk 
on $G$ models a sequence of random shuffles of a deck of $n$ cards, and the induced process on $G/H$
models the sequence of cards appearing as the top card in the deck after each shuffle.
This is the setting of the extended example of our main results in \S\ref{subsec:shuffles},
where we give further justification for our focus on left cosets.

In our stochastic setting, it is natural to start the left-invariant random walk $X$ at a \emph{random} starting point $X_0$, distributed according to a chosen
probability distribution $\alpha$ on~$G$. We denote the resulting Markov chain by $\MC(\alpha,w)$. 
By the main theorem of \cite{BW},
the induced process on $G/H$ 
is a time-homogeneous Markov chain \emph{for every initial distribution} $\alpha$ if and only if, for every double coset $HxH$ of $H$ in $G$, $w(gH)$ is constant over all $gH \subseteq HxH$.
(See \S\ref{sec:doubleCosets} for background on double cosets.)
In this case we say that the random walk on $G$ \emph{lumps strongly} on the left cosets of~$H$.

\begin{definition}\label{defn:lumpsWeakly}
Let $w$ be a weight. 
\begin{defnlist}
\item 
We say that the left-invariant random walk driven by $w$ 
\emph{lumps weakly to $G/H$
when started at the distribution $\alpha$}
if the induced process $(X_t H)_{t \ge 0}$ taking values in $G/H$ is a time-homogeneous
Markov chain, when $X_0$ is distributed according to $\alpha$.
\item We say the left-invariant random walk driven by $w$ \emph{lumps weakly to $G/H$}
if it lumps weakly to $G/H$ for \emph{some} initial distribution $\alpha$.
\end{defnlist}
\end{definition}

More briefly, if (a) holds we say that $\MC(\alpha,w)$ \emph{lumps weakly} to $G/H$
and if (b) holds then $w$ \emph{lumps weakly} or is
\emph{weakly lumping}.
We shall see in this article that weak lumping occurs much more generally than strong lumping, and has a much deeper theory.

We concentrate on the case when the weight $w$ is \emph{irreducible}; that is,
the support of $w$ generates~$G$, or equivalently, 
 the random walk $X$ is an irreducible Markov chain, with the uniform distribution as its unique invariant distribution. We justify this choice in \S\ref{subsec:whyIrreducible}:
we plan to study the reducible case further in a sequel to this paper.
 
  The main contributions of this paper are:
\begin{enumerate}
\item A complete algebraic description of the set of 
irreducible weights $w$ such that the left-invariant random walk on $G$ driven by $w$ lumps weakly to $G/H$ when started at the uniform distribution on $G$
(Theorem~\ref{thm:mainGL} and Corollary~\ref{cor: main testWeight}), 
together with an algorithm that determines whether this holds for any given weight $w$;

\smallskip
\item For any given irreducible weight $w$, 
a complete algebraic description of the set of probability distributions~$\alpha$ on $G$  such that 
$\MC(\alpha,w)$ lumps weakly to $G/H$, and a procedure to compute this set
(Theorem~\ref{thm: main testDist}). 
\end{enumerate}

A key idea
is to interpret a weight $w$ on the group $G$ as the element $\sum_{g \in G} 
w(g) g$ of the  complex group algebra $\C[G]$. 
As we show in Lemma~\ref{lemma:stepIsMultiplication},
steps in the Markov
chain then correspond to multiplication
by $w$ in $\C[G]$, and so we are able to bring representation theory to bear on our problem. 
To avoid difficulties caused by working over a non-algebraically closed field, we must work over $\C$ even
though weights are real valued.

When read with
\S\ref{sec: probabilityPreliminaries} 
on the preliminaries we need from Markov chain theory, and \S\ref{sec:algebraicPreliminaries} on the
algebraic preliminaries,
we hope this paper will be found accessible to a broad readership.

\subsection{Main results}\label{subsec:mainResults}
Given a non-empty subset~$K$ of $G$, the element of $\C[G]$ corresponding to the uniform distribution on~$K$
is 
$$\eta_K = \frac{1}{|K|} \sum_{g \in K} g.$$ Recall that  $e \in \C[G]$ is an 
\emph{idempotent} if \hbox{$e^2 = e$}. For example, $\eta_K$ is an idempotent whenever $K$ is a subgroup of $G$.
Let $E(H)$ be the set of idempotents of $\C[H]$
and let $\Eb{H}$ be the subset of idempotents $e$ such that~$\eta_H e = \eta_H$.
Recall that $(X_t)_{t \ge 0}$ denotes the Markov chain $\MC(\alpha,w)$. 
\newcounter{mainGL}
\setcounter{mainGL}{\value{theorem}}
\begin{theorem}[Characterisation of weak lumping in terms of idempotents]\label{thm:mainGL}
    Let $w$ be an irreducible
    weight on $G$ and let $\alpha$ be a distribution on $G$, both thought as elements of $\C[G]$.
    Then $\MC(\alpha, w)$ lumps weakly to $G/H$ if and only if there exists an idempotent $e\in\Eb{H}$ such that
    \begin{thmlistE}
    \item $\alpha\in\C[G]e$,
    \item $ e w (1-e) = 0$,
    \item $ (e - \eta_H) w \eta_H = 0$. 
    \end{thmlistE}
    In this case, for any $t\ge0$, the conditional distribution of $X_t$ given the sequence of cosets \hbox{$X_0H$\hskip-1pt}, ...,\,$X_t H$ always belongs to $\C[G]e$. 
\end{theorem}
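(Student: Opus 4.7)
The strategy is to reinterpret a step of $\MC(\alpha,w)$ as right multiplication by $w$ in $\C[G]$ (Lemma~\ref{lemma:stepIsMultiplication}) and to track the conditional distribution $p_t \in \C[G]$ of $X_t$ given the observed sequence $X_0H, \dots, X_tH$. Normalising so that $w(G)=1$, the element $p_tw$ is the conditional distribution of $X_{t+1}$ given the same history; conditioning further on $X_{t+1}H = yH$ yields $p_{t+1}$ as the restriction of $p_tw$ to $yH$, renormalised. Thus weak lumping is the statement that the coset mass $M(p_tw, yH)$ depends only on $X_tH$ and $yH$, via a time-independent rule.

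\textbf{Sufficient direction.} Given $e\in\Eb{H}$ satisfying (i)--(iii), I would prove by induction on $t$ that $p_t \in X_t\C[H]e$; this simultaneously establishes the final clause of the theorem. The base case uses that the decomposition $\C[G] = \bigoplus_{xH} x\C[H]$ is stable under right multiplication by $\C[H]$, so (i), equivalent to $\alpha e = \alpha$, forces every coset-component of $\alpha$ to lie in $x\C[H]e$; normalising the restriction to $X_0H$ gives $p_0$. For the induction step, (ii) rewritten as $ew = ewe$ yields $p_tw = p_tew = p_t(ewe) = (p_tw)e \in \C[G]e$, whose $y\C[H]$-component lies in $y\C[H]e$.

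For the Markov property, right multiplication by $\eta_H$ preserves coset masses, so it suffices to compute $p_tw\eta_H$. Since $\eta_H$ is central in $\C[H]$, the hypothesis $\eta_H e = \eta_H$ also gives $e\eta_H = \eta_H$, so $p_t\eta_H \in X_t\C[H]e\eta_H = X_t\C[H]\eta_H = \C\cdot X_t\eta_H$; the mass-one normalisation forces $p_t\eta_H = X_t\eta_H$, the uniform distribution on $X_tH$. Then (iii) yields
\[
p_tw\eta_H \;=\; p_tew\eta_H \;=\; p_t\eta_H w\eta_H \;=\; X_t\eta_H w \eta_H,
\]
an expression depending only on $X_tH$. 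Hence the coset masses of $p_tw$ are a time-homogeneous function of $X_tH$, proving the Markov property and yielding an explicit transition matrix on $G/H$ read off from $x\eta_H w\eta_H$.

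\textbf{Necessary direction.} Assume $\MC(\alpha,w)$ lumps weakly and let $V \subseteq \C[G]$ be the $\C$-linear span of all conditional distributions $p_t^{(h)}$ as $t$ and the history $h$ range. The decomposition of $p_tw$ as a nonnegative combination of the successor conditional distributions gives $Vw \subseteq V$, and the weak-lumping hypothesis additionally forces the coset masses of $pw$ to be a fixed function of the coset on which $p$ is supported, for $p\in V$. The main obstacle, which I expect to be the technical heart of the argument, is to upgrade these constraints into the stronger statement that $V = \C[G]e$ for a single idempotent $e\in\Eb{H}$. I would hope to exploit the irreducibility of $w$ to propagate the $w$-invariance of $V$ throughout $\C[G]$ (via suitable averages of powers of $w$, or by exhibiting $V$ as the image of a projection onto a right $\C[H]$-submodule) so that each $V \cap x\C[H]$ becomes a right $\C[H]$-submodule of $x\C[H]$ and a common idempotent $e$ emerges from the resulting module structure. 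Conditions (ii) and (iii) are then recovered by reversing the coset-mass computation of the forward direction, and (i) is immediate from $\alpha \in V = \C[G]e$.
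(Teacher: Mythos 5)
Your sufficiency argument is correct, and it also delivers the final clause of the theorem; it amounts to a direct, group-algebra re-derivation of the paper's route through the Gurvits--Ledoux certificate (Corollary~\ref{cor: GL certificate theorem} combined with Lemma~\ref{lemma:circ}): the induction $p_t \in X_t\hskip1pt\C[H]e$, the identity $e\eta_H=\eta_H$, and the computation $p_tw\eta_H = X_t\eta_H w\eta_H$ are all sound. The problem is the necessary direction. The statement you plan to prove there --- that the span $V$ of all conditional distributions equals $\C[G]e$ for some $e\in\Eb{H}$ --- is false in general, so no refinement of your "propagation" step can close the argument along that line. Concretely, take $G=\Sym_3$, $H=\langle(1,2,3)\rangle$, the irreducible weight $w=\tfrac{2}{3}\eta_H+\tfrac{1}{3}\eta_H(1,2)\eta_H$ (it lies in the Hecke algebra $\eta_H\C[G]\eta_H$, so it lumps strongly by Corollary~\ref{cor:strongExact}(i)), and $\alpha$ the point mass at the identity. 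Then $\MC(\alpha,w)$ lumps weakly, but since $w$, and hence every $b\eta_H w$, is constant on left cosets, the conditional distributions are the identity element $1$ at time $0$ and $b\eta_H$ for all $t\ge 1$. Thus $V=\langle 1\rangle + \C[G]\eta_H$ has dimension $[G:H]+1=3<6$; a left ideal containing $1$ is all of $\C[G]$, so $V$ is not a left ideal and in particular is not $\C[G]e$ for any idempotent. Moreover $V\cap\C[H]=\langle 1,\eta_H\rangle$ is not a right $\C[H]$-submodule of $\C[H]$, which also blocks the "projection onto a right $\C[H]$-submodule" variant you suggest.

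What is actually needed is weaker but requires a different mechanism: one must show that $V$ (equivalently $\alpha$) is \emph{contained in} some induced left ideal $\C[G]e$ satisfying the stability conditions (ii) and (iii), not that $V$ is such an ideal. The paper achieves this with the maximal Gurvits--Ledoux space $J_w$: properties \ref{axiom V1}--\ref{axiom V3} are closed under sums of subspaces containing $\eta_G$ (Lemmas~\ref{lemma:sumCirc} and~\ref{lem: maximal GL module exists}; this is where irreducibility enters, since $\eta_G$ lies in every such space by the ergodic argument) and under left translation by group elements, so the maximal such subspace is a left ideal, induced from $\C[H]$, hence of the form $\C[G]e$ with $e\in\Eb{H}$ (Lemma~\ref{lemma:JwIsInduced}); Theorem~\ref{thm: main testDist} then gives $\alpha\in J_w$, and Lemma~\ref{lemma:circ} translates stability of $J_w$ into (ii) and (iii). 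Enlarging $V$ is indeed the right move, but after enlarging one must re-establish $V^\circ w\subseteq V^\circ$ for the bigger space, and that closure-under-sums/translation (or maximality) argument is precisely the missing heart of your sketch, which as written is aimed at the wrong object.
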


Note that $\C[G]e$ is a left ideal of $\C[G]$.
It follows from the final part of Theorem~\ref{thm:mainGL}, 
by averaging over all sequences of cosets $X_0H$, \ldots, \!$X_{t}H$,
 that the distribution of $X_t$ is in $\C[G]e$ for every time $t$. These observations motivate the following definition.

\begin{definition}\label{defn:lumpsStably}
Let $L$ be a left ideal of $\C[G]$ of the form $L = \C[G]e$ for $e \in \Eb{H}$. 
We say that the left-invariant random walk 
driven by an irreducible weight $w$ \emph{lumps weakly to  $G/H$ with stable ideal $L$}
if $ew(1-e) = 0$ and $(e-\eta_H)w\eta_H = 0$.
\end{definition}
There may be more than one $e \in \Eb{H}$ such that $L = \C[G]e$. However, we shall show (see Lemma~\ref{lemma:circ}) that the conditions $ew(1-e) = 0$ and $(e-\eta_H)w\eta_H =0$ in Definition~\ref{defn:lumpsStably} either hold for all choices of $e$ or none, since they are equivalent to $Lw \subseteq L$ and $L(1-\eta_H)w\eta_H = 0$ respectively. We may write `$w$ lumps stably 
for~$L$'
as shorthand for Definition~\ref{defn:lumpsStably}. It follows from Theorem~\ref{thm:mainGL} that if $w$ lumps stably for $L$ then
\smallskip
\begin{defnlistE}
\item $X = \MC(\alpha, w)$ lumps weakly to $G/H$ for all 
initial distributions $\alpha \in L$, \emph{and}
\item
for any initial distribution $\alpha \in L$ and all $t$,
$L$ always contains the conditional distribution of $X_t$ given 
$X_0H, \ldots, X_tH$.
\end{defnlistE}

\smallskip

\noindent
The card shuffling example in \S\ref{subsec:shuffles} demonstrates that stable lumping is interesting even in cases
where the left-invariant random walk lumps strongly. 
Thus for each $e \in \Eb{H}$, Theorem~\ref{thm:mainGL} gives a necessary and sufficient condition 
for the left-invariant random walk to lump stably for $\C[G] e$.
In Corollary~\ref{cor:real} we refine Theorem~\ref{thm:mainGL} to show
that, in the irreducible case, all weakly lumping weights
can be obtained by considering real idempotents in $E^\bullet(H) \cap \R[H]$.

\begin{remark}
 A left ideal $L$ of $\C[G]$ may be expressed as $L = \C[G]e$ for some idempotent $e \in E(H)$  if and only if $L$ decomposes as a direct sum of its projections to the subspaces $b\hskip1pt\C[H]$ of $\C[G]$, i.e.~$\C[G]e = \bigoplus_{b \in G/H} b \hskip1pt \C[H] e$. 
(Here and throughout, the notation $b \in G/H$ means that~$b$ varies over a set of representatives for the left cosets $bH$
of $H$ in $G$.) 
This decomposition shows that, as a left $\C[G]$-ideal, $\C[G]e$ is isomorphic to the \emph{induced ideal} $(\C[H]e) \Ind_H^G$. (See Definitions~\ref{defn:restrictedAndInducedModules}~and~\ref{defn:inducedIdeal} for the definition
of induction and induced ideals.)
This connection between weak lumping
and induced ideals is a recurring theme in this work and critical to the proof of Theorem~\ref{thm:mainGL}.
\end{remark}

  In Definition~\ref{defn:GLideal} below
we define a \emph{Gurvits--Ledoux ideal} for an irreducible weight
$w$ to be an induced left ideal $L$ of $\C[G]$ 
containing $\eta_G$ and
such that $Lw \subseteq L$. We show in Proposition~\ref{prop:lumpsStably}
that the left-invariant random walk driven by $w$ lumps  stably for the Gurvits--Ledoux ideal~$L$ if and only if $L(1-\eta_H) w \subseteq L(1-\eta_H)$.
This leads to a practical computational test for weak lumping, starting
at a distribution.
By Definition~\ref{defn:GLideal},
$L_{\alpha, w}$  is
the intersection of all Gurvits--Ledoux ideals
containing $\alpha$.

\newcounter{GLmodules}
\setcounter{GLmodules}{\value{theorem}}
\begin{samepage}
\begin{corollary}\label{cor:GLmodules}{\ }
Let $w$ be an irreducible weight.
The Markov chain $\MC(\alpha,w)$ lumps weakly to $G/H$ if and only
if $L_{\alpha,w}(1-\eta_H) \subseteq L_{\alpha, w}$.
\end{corollary}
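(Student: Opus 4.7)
The plan is to reduce Corollary~\ref{cor:GLmodules} to Proposition~\ref{prop:lumpsStably} applied to the ideal $L_{\alpha,w}$ itself. First I would verify the preliminary assertion that $L_{\alpha,w}$ is a Gurvits--Ledoux ideal. Each of the three defining properties of a Gurvits--Ledoux ideal (containment of $\eta_G$, the closure $Lw\subseteq L$, and being induced from $\C[H]$) is preserved under arbitrary intersections of left ideals. The induction property is the only nonobvious one, and it follows from the characterisation in the remark after Theorem~\ref{thm:mainGL}: $L$ is induced iff $L=\bigoplus_{b\in G/H}(L\cap b\hskip1pt\C[H])$, a decomposition that clearly passes to intersections.

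The converse direction then becomes immediate. If the stated condition holds for $L_{\alpha,w}$, Proposition~\ref{prop:lumpsStably} tells us that $w$ lumps stably for $L_{\alpha,w}$. Since $\alpha\in L_{\alpha,w}$ by construction, the first consequence of stable lumping recorded after Definition~\ref{defn:lumpsStably} gives that $\MC(\alpha,w)$ lumps weakly to $G/H$.

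For the forward direction, I would apply Theorem~\ref{thm:mainGL} to produce an idempotent $e\in\Eb{H}$ with $\alpha\in L:=\C[G]e$, $ew(1-e)=0$ and $(e-\eta_H)w\eta_H=0$. By Lemma~\ref{lemma:circ} these conditions on $e$ are equivalent to $Lw\subseteq L$ and $L(1-\eta_H)w\eta_H=0$, so $L$ is a Gurvits--Ledoux ideal containing $\alpha$; by minimality of $L_{\alpha,w}$, we have $L_{\alpha,w}\subseteq L$.

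The hard part is transferring the stable-lumping condition from the witness ideal $L$ down to the smaller ideal $L_{\alpha,w}$. I expect this to be handled cleanly by working with the formulation $L(1-\eta_H)w\eta_H=0$ from Lemma~\ref{lemma:circ}, because the vanishing of an image under right multiplication is manifestly inherited by any subspace: from $L_{\alpha,w}\subseteq L$ one immediately gets $L_{\alpha,w}(1-\eta_H)w\eta_H=0$. Invoking Lemma~\ref{lemma:circ} once more for $L_{\alpha,w}$, or equivalently applying Proposition~\ref{prop:lumpsStably} to it, then repackages this identity as the containment claimed by the corollary.
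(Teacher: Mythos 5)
Your proposal is correct and follows essentially the same route as the paper's proof: the forward direction uses Theorem~\ref{thm:mainGL} together with Lemma~\ref{lemma:circ} and minimality of $L_{\alpha,w}$ to pass the vanishing identity $L(1-\eta_H)w\eta_H=0$ down to $L_{\alpha,w}$, and the converse repackages the condition via Proposition~\ref{prop:lumpsStably}, which is exactly the machinery behind the `if' direction of Theorem~\ref{thm:mainGL} that the paper cites. Note only that you have, correctly, interpreted the displayed condition in its intended form $L_{\alpha,w}(1-\eta_H)w\subseteq L_{\alpha,w}(1-\eta_H)$, i.e.\ $L_{\alpha,w}^\circ w\subseteq L_{\alpha,w}^\circ$ (the version stated and proved in \S\ref{sec: a characterisation of WL}), rather than the literal containment $L_{\alpha,w}(1-\eta_H)\subseteq L_{\alpha,w}$, which holds trivially for any induced ideal containing $\eta_G$.
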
 
\end{samepage}

In \S\ref{sec: tests}, we provide 
a practical computational procedure to compute $L_{\alpha,w}$, and in particular to compute
$L_{\eta_G,w}$
for any given weight $w$. It is an important feature of this test that the computation of the left ideal $L_{\alpha,w}$ and the test of whether $L_{\alpha,w}(1-\eta_H) \subseteq L_{\alpha,w}$ may be performed almost entirely within $\C[H]$, making it more efficient than a direct application of the Gurvits--Ledoux criterion (Theorem~\ref{thm: GL characterisation of WL}) when $H$ is much smaller than $G$.

As a corollary, we obtain a practical test for 
weak lumping, in the wider sense of Definition~\ref{defn:lumpsWeakly}(b).
Set $L_w = L_{\eta_G, w}$.

\newcounter{LTest}
\setcounter{LTest}{\value{theorem}}
\begin{corollary}[Weak lumping test for a weight]
\label{cor: main testWeight}\label{cor:LTest}
Let $w$ be an irreducible weight. The following are equivalent:
\begin{thmlist}
\item The left-invariant random walk driven by $w$ lumps weakly to $G/H$; 
\item $\MC(\eta_G, w)$ lumps weakly to $G/H$;
\item $L_w(1-\eta_H) w\eta_H = 0$;
\item The left-invariant random walk driven by $w$ lumps weakly to $G/H$ with stable ideal $L_w$.
\end{thmlist}
\end{corollary}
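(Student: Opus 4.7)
The plan is to close the cycle (i) $\Rightarrow$ (iii) $\Rightarrow$ (iv) $\Rightarrow$ (ii) $\Rightarrow$ (i). The easy implications are (ii) $\Rightarrow$ (i), which is immediate from Definition~\ref{defn:lumpsWeakly}, and (iv) $\Rightarrow$ (ii), which uses that $\eta_G \in L_w$ by the very definition of a Gurvits--Ledoux ideal and then invokes the discussion after Definition~\ref{defn:lumpsStably} to conclude that $\MC(\eta_G,w)$ lumps weakly. For (iv) $\Rightarrow$ (iii), writing $L_w = \C[G]e$ with $e \in \Eb{H}$, the stable lumping condition $(e-\eta_H)w\eta_H = 0$ from Definition~\ref{defn:lumpsStably} is equivalent to $L_w(1-\eta_H)w\eta_H = 0$ by the remark following that definition (using $e\eta_H = \eta_H e = \eta_H$, which holds since $\eta_H$ is central in $\C[H]$ and $e \in \Eb{H}$).

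For (iii) $\Rightarrow$ (iv), I will use that $L_w$ is itself a Gurvits--Ledoux ideal, being the intersection of such. Hence $L_w$ is an induced left ideal containing $\eta_G$ with $L_w w \subseteq L_w$. The remark after Theorem~\ref{thm:mainGL} then gives $L_w = \C[G]e$ for some $e \in E(H)$, and $\eta_G \in L_w$ forces $\eta_H \in \C[H]e$, so $\eta_H e = \eta_H$ and $e \in \Eb{H}$. The condition $L_w w \subseteq L_w$ translates to $ew(1-e)=0$ and (iii) translates to $(e-\eta_H)w\eta_H=0$, supplying all the data required by Definition~\ref{defn:lumpsStably}.

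The main content of the argument is (i) $\Rightarrow$ (iii). Suppose $\MC(\alpha, w)$ lumps weakly to $G/H$ for some probability distribution $\alpha$. Theorem~\ref{thm:mainGL} supplies an $e \in \Eb{H}$ with $\alpha \in L := \C[G]e$, $Lw \subseteq L$, and $L(1-\eta_H)w\eta_H = 0$. To deduce (iii) through $L_w \subseteq L$ it suffices to show that $L$ is a Gurvits--Ledoux ideal, and the only property not already in hand is $\eta_G \in L$. Here the irreducibility hypothesis is essential: the distribution of $X_n$ is proportional to $\alpha w^n$, and since $\alpha \in L$ and $Lw \subseteq L$ all of these distributions lie in $L$. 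Irreducibility of the Markov chain on the finite state space $G$ gives Cesaro convergence of the distributions of $X_n$ to the unique invariant distribution $\eta_G$, and since $L$ is a linear subspace of the finite-dimensional algebra $\C[G]$ it is closed, so $\eta_G \in L$. Therefore $L_w \subseteq L$ and $L_w(1-\eta_H)w\eta_H \subseteq L(1-\eta_H)w\eta_H = 0$, giving (iii).

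The Cesaro convergence step is the one ingredient beyond routine bookkeeping, and I expect it to be the main technical point to set out carefully. Working with time-averaged distributions (rather than $\alpha w^n / w(G)^n$ itself) avoids any periodicity obstruction, so irreducibility of $w$ alone suffices and no aperiodicity hypothesis is needed.
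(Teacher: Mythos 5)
Your easy implications and the idempotent translations (via $e\eta_H=\eta_H e=\eta_H$ and the equivalences of Lemma~\ref{lemma:circ}) are fine, but the argument for the substantive implication (i)$\Rightarrow$(iii) is circular with respect to this paper. You appeal to the `only if' direction of Theorem~\ref{thm:mainGL}, yet that theorem is proved \emph{after} Corollary~\ref{cor:LTest} and depends on it: the maximal ideal $J_w$ of \S\ref{subsec:Jw} is well defined precisely because Corollary~\ref{cor:LTest} together with $(\star)$ shows that $L_w$ satisfies \ref{axiom V1}--\ref{axiom V3} whenever $w$ lumps weakly, and the paper's proof of Theorem~\ref{thm:mainGL} then runs through $J_w$ and Theorem~\ref{thm: main testDist}. (Your step (iv)$\Rightarrow$(ii) has the same flavour: the discussion after Definition~\ref{defn:lumpsStably} is justified in the paper by Theorem~\ref{thm:mainGL}; that step needs only the easy direction, which follows non-circularly from Proposition~\ref{prop:lumpsStably} and Corollary~\ref{cor: GL certificate theorem}, so cite those instead.) The paper's own proof never touches Theorem~\ref{thm:mainGL}: by the Gurvits--Ledoux criterion $(\star)$, weak lumping of $\MC(\alpha,w)$ gives $V_{\alpha,w}^\circ w\subseteq V_{\alpha,w}^\circ$; irreducibility gives $L_w=V_{\eta_G,w}\subseteq V_{\alpha,w}$ (Proposition~\ref{prop: irreducible case}), hence $L_w^\circ w\subseteq L_w^\circ$ (Lemma~\ref{lem: weak lumpability of irreducible weights}); and since $L_w$ is an induced ideal (Lemma~\ref{lemma:LwIsInduced}), $L_w^\circ=L_w(1-\eta_H)$ (Lemma~\ref{lemma: ker Lambda is Ann etaH}), which delivers (ii), (iii) and (iv) in turn. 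Your route could be repaired, but only by first proving the forward direction of Theorem~\ref{thm:mainGL} independently of this corollary (for instance by checking directly that the induced ideal $L_{\alpha,w}=\sum_{g\in G}gV_{\alpha,w}$ satisfies $L_{\alpha,w}^\circ w\subseteq L_{\alpha,w}^\circ$), which essentially amounts to redoing the paper's argument.

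Two further points. The Ces\`aro step you single out as the technical heart is redundant: $e\in\Eb{H}$ gives $\eta_H e=\eta_H$, so $\eta_G=\eta_G\eta_H e\in\C[G]e$ immediately, with no ergodic theorem needed. Consequently your proof locates the use of irreducibility in the wrong place: on the paper's route it enters through Lemma~\ref{lem: irreducible case} and Proposition~\ref{prop: irreducible case} (comparing the minimal Gurvits--Ledoux space for $\alpha$ with that for $\eta_G$), while on yours it is simply a hypothesis of Theorem~\ref{thm:mainGL}. It is genuinely needed somewhere: Example~\ref{ex: reducible dihedral} gives a reducible weight for which (i) holds but (ii) fails.
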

  
Dual to the minimal ideal $L_w$ in the previous corollary,
we show in \S\ref{subsec:Jw} that for each irreducible weight $w$ there exists a maximal Gurvits--Ledoux ideal $L$ satisfying 
$L(1-\eta_H) w\eta_H = 0$. We denote this ideal $J_w$.
We provide a practical computational procedure to compute $J_w$ and
use it to describe the set of initial probability distributions $\alpha$ for which $\MC(\alpha,w)$ lumps weakly to $G/H$.

\newcounter{Jthm}
\setcounter{Jthm}{\value{theorem}}
\begin{theorem}[Weak lumping test for an initial distribution]
\label{thm: main testDist}
    Let $w\in\C[G]$ be an irreducible weakly lumping weight.
For each distribution $\alpha$ on $G$, the Markov chain
    $\MC(\alpha, w)$ lumps weakly to $G/H$ if and only if $\alpha \in J_w$.  
\end{theorem}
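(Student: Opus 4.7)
The plan is to bootstrap the theorem from Theorem~\ref{thm:mainGL} via a dictionary between the idempotent description of weak lumping and the ideal-theoretic description of Gurvits--Ledoux ideals. First I would verify that an induced ideal $L = \C[G]e$ contains $\eta_G$ exactly when $e \in \Eb{H}$: the forward direction follows because $\eta_G \in \C[G]e$ forces $\eta_G e = \eta_G$, and projecting onto the summand indexed by $H$ in the decomposition $\C[G]e = \bigoplus_{b \in G/H} b\hskip1pt \C[H]e$ (from the Remark following Definition~\ref{defn:lumpsStably}) yields $\eta_H e = \eta_H$; the converse follows from $\eta_G = \frac{1}{[G:H]}\sum_{b \in G/H} b \eta_H$. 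Combined with Lemma~\ref{lemma:circ}, this identifies the Gurvits--Ledoux ideals with induced ideals $\C[G]e$ for $e \in \Eb{H}$ satisfying $ew(1-e) = 0$. Similarly, since $\eta_H$ is central in $\C[H]$, any $e \in \Eb{H}$ also satisfies $e\eta_H = \eta_H$, so the condition $L(1-\eta_H)w\eta_H = 0$ is equivalent to the condition $(e-\eta_H)w\eta_H = 0$ of Theorem~\ref{thm:mainGL}.

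With this dictionary in place, the forward direction is immediate: if $\MC(\alpha,w)$ lumps weakly, Theorem~\ref{thm:mainGL} supplies $e \in \Eb{H}$ with $\alpha \in \C[G]e$, $ew(1-e) = 0$, and $(e-\eta_H)w\eta_H = 0$, so that $L := \C[G]e$ is a Gurvits--Ledoux ideal containing $\alpha$ and satisfying $L(1-\eta_H)w\eta_H = 0$. By the maximality property defining $J_w$, we get $L \subseteq J_w$, hence $\alpha \in J_w$. For the reverse direction I would read the same translation backwards: since $J_w$ is a Gurvits--Ledoux ideal it has the form $\C[G]e$ with $e \in \Eb{H}$ by the first paragraph; $J_w w \subseteq J_w$ together with Lemma~\ref{lemma:circ} yields $ew(1-e) = 0$; and the defining condition $J_w(1-\eta_H)w\eta_H = 0$ gives $(e-\eta_H)w\eta_H = 0$. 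Applying Theorem~\ref{thm:mainGL} to this $e$ together with $\alpha \in J_w = \C[G]e$ then establishes weak lumping.

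The main obstacle is primarily bookkeeping: the idempotent $e$ generating a left ideal $L = \C[G]e$ is not unique, so the Gurvits--Ledoux conditions and the stable lumping criterion must be phrased in terms of $L$ rather than a chosen $e$. This is precisely the role of Lemma~\ref{lemma:circ}. The substantive content lies in the prior existence of the maximal ideal $J_w$ established in \S\ref{subsec:Jw}, without which the reverse direction would have no single ``universal'' candidate $e$ to feed back into Theorem~\ref{thm:mainGL}.
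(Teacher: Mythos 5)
Your ``dictionary'' between idempotents and Gurvits--Ledoux ideals is correct (the equivalences $\eta_G \in \C[G]e \iff e \in \Eb{H}$ and $\C[G]e(1-\eta_H)w\eta_H = 0 \iff (e-\eta_H)w\eta_H = 0$ are exactly Lemma~\ref{lemma:circ} and the remark after Definition~\ref{defn:lumpsStably}), but your forward direction has a genuine logical gap: it is circular relative to the paper. You invoke the ``only if'' half of Theorem~\ref{thm:mainGL} to produce an idempotent $e \in \Eb{H}$ with $\alpha \in \C[G]e$, yet the paper proves precisely that half of Theorem~\ref{thm:mainGL} \emph{from} Theorem~\ref{thm: main testDist}: the first step of its proof is ``By Theorem~\ref{thm: main testDist} we have $\alpha \in J_w$,'' after which $e$ is taken to be a generator of $J_w$. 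The substantive difficulty both results share is extracting an ideal (equivalently, idempotent) certificate containing a \emph{given} $\alpha$ from the mere fact of weak lumping; the minimal Gurvits--Ledoux space $V_{\alpha,w}$ satisfies \ref{axiom V3} by $(\star)$ but is not in general a left ideal, and it is exactly the maximality of $J_w$ (via the translation argument of Lemma~\ref{lemma:JwIsInduced}) that supplies the ideal structure. So citing Theorem~\ref{thm:mainGL} here assumes the hard part of the statement you are trying to prove.

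Your reverse direction is fine and essentially matches the paper: $J_w = \C[G]e$ with $e \in \Eb{H}$ satisfying \ref{axiom V1}--\ref{axiom V3} is available from \S\ref{subsec:Jw}, and you only need the ``if'' half of Theorem~\ref{thm:mainGL}, whose proof (Lemma~\ref{lemma:circ} plus Proposition~\ref{prop:lumpsStably}, resting on Corollary~\ref{cor: GL certificate theorem}) does not depend on Theorem~\ref{thm: main testDist}. To repair the forward direction non-circularly, argue as the paper does: by $(\star)$ in \S\ref{subsec:applicationGL} the minimal space $V_{\alpha,w}$ satisfies \ref{axiom V0}--\ref{axiom V3}, it contains $\eta_G$ by irreducibility (Lemma~\ref{lem: irreducible case}), and hence by the maximality of $J_w$ (using Lemma~\ref{lem: maximal GL module exists}) we get $V_{\alpha,w} \subseteq J_w$, so $\alpha \in J_w$ --- no idempotents needed. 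Alternatively, you could make your route self-contained by first proving the ``only if'' half of Theorem~\ref{thm:mainGL} independently, e.g.\ by showing the minimal Gurvits--Ledoux \emph{ideal} $L_{\alpha,w} = \sum_{g \in G} gV_{\alpha,w}$ satisfies \ref{axiom L3}, using $(gV)^\circ = g(V^\circ)$ and Lemma~\ref{lemma:sumCirc}; but some such argument must be supplied rather than cited.
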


Returning to Theorem~\ref{thm:mainGL}, given $e \in \Eb{H}$, let
\begin{equation}\label{eq:Theta} 
\Theta(e) =  \bigl\{ w \in \C[G]: ew(1-e) = 0, (e-\eta_H)w\eta_H = 0 \bigr\}
\end{equation}
and let
\[
    \Theta = \bigcup_{e\in \Eb{H}}\Theta(e). 
\] 
Theorem~\ref{thm:mainGL} implies that a weight $w$ lumps weakly
in the sense of Definition~\ref{defn:lumpsWeakly}(b) if and only if $w \in \Theta$;
moreover in this case, as noted above, $w$ lumps weakly to $G/H$ with
stable ideal $\C[G]e$, in the sense of Definition~\ref{defn:lumpsStably}.
Equivalently, the set of weakly lumping irreducible weights is $\Gamma \cap \Delta \cap \Theta$
where $\Delta \subseteq \R[G]$ is the simplex of probability distributions and $\Gamma$ is the set of elements of $\C[G]$ whose support is not contained in any proper subgroup of $G$, i.e.
\[\Gamma = \C[G] \setminus \bigcup_{K \lneq G} \C[K]. \]
Remarkably, as we show in Lemma~\ref{lem:ThetaAlgebra}, 
$\Theta(e)$ is a subalgebra of $\C[G]$; that is, $\Theta(e)$ is a vector subspace of $\C[G]$ closed under multiplication.

We have an interesting characterisation of when the union defining~$\Theta$ is irredundant.
To state it, we require Definition~\ref{def: full induction restriction}: if
the restriction to the subgroup $H$ of the permutation character of $G$ acting on the cosets of~$H$ 
contains every irreducible character of $H$, then we say that $H$ has \emph{full induction restriction}. 
For instance, $H$ has full induction restriction whenever there is a  double coset $HxH$ of the maximum possible size $|H|^2$,
or equivalently, whenever the permutation group of $G$ acting on the left cosets of $H$ has a base
(see \cite[\S 4.13]{CameronPermutationGroups}) of size $2$.

\newcounter{irredundant}
\setcounter{irredundant}{\value{theorem}}
\begin{proposition}
\label{prop: irredundant}
The subgroup $H$ of $G$ has full induction restriction if and only if the union defining $\Theta$ is irredundant, in the sense
that no subalgebra $\Theta(e)$ is contained in another.
\end{proposition}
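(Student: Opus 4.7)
The plan is to work with induced left ideals $L = \C[G]e$ rather than with the idempotents themselves, since by Lemma~\ref{lemma:circ} the algebra $\Theta(e)$ depends only on $L$. Each such $L$ corresponds bijectively to a left $\C[H]$-ideal $M = \C[H]e$ of $\C[H]$ containing $\eta_H$ via $L = M\Ind_H^G$, and by Wedderburn $M$ decomposes as $\C\eta_H \oplus \bigoplus_{\chi \ne \triv} M_\chi$, where $M_\chi$ is a sub-$H$-module of the $\chi$-isotypic component of $\C[H]$. The first reduction is to rewrite the auxiliary condition $(e - \eta_H)w\eta_H = 0$ as $e_1 \cdot (w\eta_H) = 0$, with $e_1 = e - \eta_H \in M$ and $\cdot$ denoting the natural left $\C[H]$-action on $\C[G]\eta_H \cong \triv\Ind_H^G$. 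Mackey's formula gives
\[
(\triv\Ind_H^G)\Res_H \;\cong\; \bigoplus_{x \in H\backslash G/H} \triv\Ind_{H \cap xHx^{-1}}^H,
\]
so this condition is sensitive only to those $M_\chi$ for which $\chi$ appears in the restriction---exactly the irreducibles present under full induction restriction.

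For the reverse direction, I would suppose that some irreducible $\chi$ of $H$ is missing from every Mackey summand above, and produce a redundancy. Choose an induced ideal $L$ with $M_\chi = 0$, and let $L'$ correspond to $M' = M \oplus M'_\chi$ for some nonzero submodule $M'_\chi$ of the $\chi$-isotypic component of $\C[H]$. Then $\Theta(L) = \Theta(L')$, checked condition by condition. The auxiliary condition is invariant under this modification by the paragraph above. For the stability condition $Lw \subseteq L$, I would apply a parallel Mackey analysis: via Frobenius reciprocity, $\C[G]$-endomorphisms of $L = M\Ind_H^G$ correspond to $\C[H]$-maps $M \to (M\Ind_H^G)\Res_H$, and by Mackey
\[
(M\Ind_H^G)\Res_H \;=\; \bigoplus_{x \in H\backslash G/H} (\conj{x}{M}\Res_{H \cap xHx^{-1}})\Ind_{H \cap xHx^{-1}}^H,
\]
whence the contribution from the $\chi$-isotypic of $M$ is again controlled by whether $\chi$ appears in these summands, which by hypothesis it does not.

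For the forward direction, assume full induction restriction. Given distinct $L \ne L'$ with $M_\chi \ne M'_\chi$ for some $\chi$, by assumption $\chi$ appears in $\triv\Ind_{H \cap xHx^{-1}}^H$ for some double-coset representative $x$, so there is a nonzero $\chi$-isotypic vector $v$ of $\triv\Ind_H^G$ arising from that Mackey summand. I would construct $w$ explicitly with $w\eta_H = v$ (or a carefully chosen lift), arranged so that $e_1 \cdot v = 0$ while $e'_1 \cdot v \ne 0$ (or the reverse), producing $w \in \Theta(L) \setminus \Theta(L')$ and hence incomparability. The main obstacle is arranging the stability condition $Lw \subseteq L$ for the same $w$: unlike the auxiliary condition this is not visible only through $\triv\Ind_H^G$, so $w$ must be tuned to preserve $L$ globally. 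I expect to handle this by taking $w$ to be a perturbation, supported on or near $HxH$, of a weight already in $\Theta(L) \cap \Theta(L')$, where the perturbation is engineered via the $\chi$-block of the Mackey decomposition above so that stability for $L$ is preserved while the auxiliary condition for $L'$ is broken.
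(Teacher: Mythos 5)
Your reduction to induced ideals and your observation that the auxiliary condition $(e-\eta_H)w\eta_H=0$ only ``sees'' the constituents of $\C[H]e$ that occur in $(\triv_H\Ind_H^G)\Res_H$ are sound, and they correspond to one of the two hypotheses in the paper's containment criterion (Theorem~\ref{thm: containment Thetas}). But both directions of your argument have genuine gaps. In the backward direction, the claimed equality $\Theta(L)=\Theta(L')$ is false in general, and the justification is where it breaks: the stability condition $L'w\subseteq L'$ is \emph{not} insensitive to the added $\chi$-part. By Proposition~\ref{prop:ThetaAlgebraDoubleCosetDimension}, the number of independent constraints that $e'w(1-e')=0$ imposes on $\C[HxH]$ is $\bigl\langle \chi_{\C[H]e'}\Res_{H\cap xHx^{-1}},\, (\chi^{x^{-1}}_{\C[H](1-e')})\Res_{H\cap xHx^{-1}}\bigr\rangle$, so enlarging $M$ by a $\chi$-part adds constraints governed by inner products $\bigl\langle \chi\Res_{H\cap xHx^{-1}},\, \psi^{x^{-1}}\Res_{H\cap xHx^{-1}}\bigr\rangle$ against the \emph{other} constituents $\psi$ of the complement; these have nothing to do with whether $\chi$ contains the trivial character of $H\cap xHx^{-1}$, i.e.\ with your hypothesis that $\chi$ is absent from $\triv_H\Ind_H^G\Res_H$. (Your Mackey decomposition of $(M\Ind_H^G)\Res_H$ is of the wrong module for this purpose: $\chi$ typically does appear there.) What is true, and all that redundancy requires, is a one-way containment, and even that needs $L$ chosen carefully: by Theorem~\ref{thm: containment Thetas}, for a general $L$ with $M_\chi=0$ whose $L^\circ$ meets some Wedderburn block properly, \emph{neither} containment between $\Theta(L)$ and $\Theta(L')$ need hold. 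The paper's converse takes $L=\C[G]\eta_H$ (so $L^\circ=0$) and $U$ of character $\triv_H+\chi$, for which $\Theta(e)\subseteq\Theta(\eta_H)$ does follow.

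In the forward direction the crucial step is exactly the one you defer: producing a witness $w$ satisfying \emph{both} conditions for $L$ (in particular $ew(1-e)=0$) while violating a condition for $L'$. The perturbation idea is not carried out, and the case analysis is incomplete: your witness is designed to break the auxiliary condition of $L'$, but for distinct ideals the auxiliary-condition subspaces can be nested with $L$'s contained in $L'$'s, in which case every $w\in\Theta(L)$ automatically satisfies $L'$'s auxiliary condition and you must instead violate $\tilde e w(1-\tilde e)=0$ --- which your construction does not control. The paper avoids witness-building altogether: it first proves the block-by-block criterion for $\Theta(e)\subseteq\Theta(\tilde e)$ (Theorem~\ref{thm: containment Thetas}, resting on the parabolic and Borel subalgebra analysis of the right idealizers) together with the Frobenius-type statement for induced ideals (Lemma~\ref{lem: frobenius for isotypic blocks}), and then both directions of Proposition~\ref{prop: irredundant} follow formally. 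To repair your approach you would need a substitute for that criterion, i.e.\ a directional analysis of the stability conditions, not just of the auxiliary ones.
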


We remark that for each choice of $e \in \Eb{H}$, the conditions in~\eqref{eq:Theta} 
give a finite system of \emph{linear} equations that define $\Theta(e)$. These equations can be re-expressed so that each equation refers only to values
$w(g)$ for $g$ in a fixed double coset $HxH$. 
This is made explicit in Corollary~\ref{cor:abelian} and~\eqref{eq:ThetaSplit} and makes Theorem~\ref{thm:mainGL}
a computationally effective result.
We also highlight
Proposition~\ref{prop:ThetaAlgebraDoubleCosetDimension},
which states that the number of equations in an irredundant system of equations for the condition $ew(1-e) = 0$
on $\C[HxH]$ is
\[ \bigl\langle \chi_{\C[H]e} \Res_{H \cap xHx^{-1}}, (\chi^{x^{-1}}_{\C[H](1-e)}) \Res_{H \cap xHx^{-1}}
\bigr\rangle. \]
Here $\chi_{\C[H]e}$ is the character of the left $\C[H]$-ideal $\C[H]e$, the downwards arrow denotes restriction,
and the inner product is as defined in~\eqref{eq:innerProduct} below taking $G = H \cap xHx^{-1}$;
see~\S\ref{sec:algebraicPreliminaries}
for the remaining notation. This gives a good flavour of how representation theory leads to 
results on our probabilistic questions. 

Let $\star$ denote the algebra anti-involution on $\C[G]$ defined, for $x \in \C[G]$, by
 $x^\star = \sum_{g \in G} \overline{x(g)} g^{-1}$.
There is a beautiful duality between the left-invariant random walk
driven by a weight~$w$ and its time-reversal, which is the left-invariant random
walk driven by the weight $w^\star$.
(Note that since $w$ is real-valued, $w^\star = \sum_{g \in G} w(g) g^{-1}$.)

\newcounter{Revthm}
\setcounter{Revthm}{\value{theorem}}
\begin{theorem}[Time reversal]\label{thm:mainTimeReversal}
Let $e \in \Eb{H}$ and let $w \in \C[G]$ be a weight. 
The left-invariant random walk on $G$ driven by $w$ lumps weakly to $G/H$ with
stable ideal $\C[G]e$  
if and only if the left-invariant random walk on $G$ driven by $w^\star$ lumps 
weakly to $G/H$ with stable ideal 
$\C[G](1-e^\star+\eta_H)$.
\end{theorem}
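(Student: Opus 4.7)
The plan is to exploit that $\star$ is an anti-involution on $\C[G]$, so $(xy)^\star = y^\star x^\star$. Applying $\star$ to the two stability conditions for $(w,e)$ produces two equations in $w^\star$; the task is then to show that these coincide with the stability conditions for $w^\star$ at the candidate idempotent $f := 1 - e^\star + \eta_H$. Conceptually, $1-e^\star$ is the natural complement of $e^\star$ in $\C[H]$ but has augmentation $0$; adding $\eta_H$ is precisely the correction needed to land $f$ in $\Eb{H}$.

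My first step is to verify $f \in \Eb{H}$. Writing $e = \sum_{h \in H} c_h h$, the hypothesis $\eta_H e = \eta_H$ is equivalent to $\sum_h c_h = 1$, so by complex conjugation the same holds for $e^\star$, giving $\eta_H e^\star = e^\star \eta_H = \eta_H$; combined with $(e^\star)^2 = (e^2)^\star = e^\star$ and $\eta_H^2 = \eta_H$, a short expansion yields $f^2 = f$ and $\eta_H f = \eta_H$.

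Next I would translate the hypotheses: applying $\star$ to $ew(1-e) = 0$ yields $(1-e^\star)w^\star e^\star = 0$, and applying it to $(e-\eta_H)w\eta_H = 0$ yields $\eta_H w^\star(e^\star-\eta_H) = 0$. Since $1-f = e^\star-\eta_H$ and $f-\eta_H = 1-e^\star$, the stability conditions for $(w^\star, f)$ read $(1-e^\star+\eta_H)w^\star(e^\star-\eta_H) = 0$ and $(1-e^\star)w^\star\eta_H = 0$.

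The remaining step is the algebraic equivalence of these two pairs, proved using only the identities $\eta_H e^\star = e^\star \eta_H = \eta_H$ (equivalently, $\eta_H(1-e^\star) = (1-e^\star)\eta_H = 0$). For the forward direction, right-multiplying $(1-e^\star)w^\star e^\star = 0$ by $\eta_H$ gives $(1-e^\star)w^\star\eta_H = 0$, after which expanding the four-term product $(1-e^\star+\eta_H)w^\star(e^\star-\eta_H)$ and grouping its $\eta_H w^\star(\cdots)$ pieces shows it vanishes. For the reverse direction, left-multiplying the first $f$-condition by $\eta_H$ collapses the $(1-e^\star)$ factors and returns $\eta_H w^\star(e^\star-\eta_H) = 0$; substituting this together with the second $f$-condition back into the four-term expansion recovers $(1-e^\star)w^\star e^\star = 0$. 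I do not expect any genuine obstacle here: once the complement $f$ has been identified, each direction is a brief symbolic manipulation, and the only conceptual content is choosing the correct correction $+\eta_H$ to the naive complementary idempotent.
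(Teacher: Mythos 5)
Your argument is correct, but it takes a genuinely different route from the paper. You work directly with the algebraic definition of stable lumping (Definition~\ref{defn:lumpsStably}): you check that $f=1-e^\star+\eta_H$ is an idempotent of $\C[H]$ with $\eta_H f=\eta_H$ (this verification is needed and your coefficient-sum argument for $\eta_H e^\star=e^\star\eta_H=\eta_H$ is fine), apply the anti-involution $\star$ to the pair $ew(1-e)=0$, $(e-\eta_H)w\eta_H=0$, and then show by short manipulations using only $\eta_H e^\star=e^\star\eta_H=\eta_H$ that the resulting system $(1-e^\star)w^\star e^\star=0$, $\eta_H w^\star(e^\star-\eta_H)=0$ is equivalent to the stable-lumping system for $(w^\star,f)$; I checked both directions and they go through exactly as you describe. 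The paper instead deduces the theorem from a general probabilistic duality for weak lumping of stationary finite Markov chains (Theorem~\ref{thm:DualityForMarkovChainLumping}), applied with $\alpha=\eta_G$ and $V=\C[G]e$, together with the orthogonal-complement computation $(\C[G](e-\eta_H))^\perp=\C[G](1-e^\star+\eta_H)$ of Lemma~\ref{lem:perpandstar}; that route explains \emph{why} the dual ideal arises (as the $\alpha$-weighted annihilator of $V^\circ$ for the time-reversed chain) and the general theorem is reused elsewhere, e.g.\ for Corollary~\ref{cor:ExactStrongDuality}, whereas your proof is shorter, self-contained, purely algebraic, and does not even use that $w$ is real-valued. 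Two small points to make explicit if you write it up: the theorem's notion of ``lumps weakly with stable ideal $\C[G]e$'' is exactly the two equations you use (and by Lemma~\ref{lemma:circ} these do not depend on the choice of generator in $\Eb{H}$, so checking them for the specific generator $1-e^\star+\eta_H$ suffices), and you should record that $\star$ is an involution so that the backward implication for $w$ follows from the forward implication applied to $w^\star$.
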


We have seen that strong lumping is a sufficient condition for weak lumping. There is another commonly used sufficient condition for weak lumping, called \emph{exact lumping} (see Definition~\ref{defn:exactLumping}). In our setting it corresponds to taking $e = \eta_H$ in Theorem~\ref{thm:mainGL}. Applying Theorem~\ref{thm:mainTimeReversal} to 
the characterisation in~\cite{BW} of strong lumping, stated as (i) in the corollary below,
we obtain a simple criterion
for exact lumping. The weight $w$ in the following corollary may be reducible. 
 
\newcounter{Revcor}
\setcounter{Revcor}{\value{theorem}}
\begin{corollary}\label{cor:strongExact}
The left-invariant random walk driven by a weight $w$
\begin{thmlist}
\item lumps strongly to $G/H$ if and only if $w(gH)$ is constant for left cosets $gH$ in~the same double coset;
\item lumps exactly  to $G/H$ if and only if $w(Hg)$ is constant for right cosets $Hg$ in the same double coset.
\end{thmlist}
\end{corollary}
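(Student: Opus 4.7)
Part (i) is a restatement of the main theorem of \cite{BW}, already mentioned in the introduction, so no new work is required there.

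For part (ii), my plan is to apply the time-reversal duality of Theorem \ref{thm:mainTimeReversal} with $e = \eta_H$. As noted in the paragraph preceding the corollary, exact lumping corresponds to taking $e = \eta_H$ in Theorem \ref{thm:mainGL}, i.e.\ to lumping with stable ideal $\C[G]\eta_H$. Since $\eta_H^\star = \eta_H$ (because $\eta_H$ is real and $H$ is closed under inversion), the formula $e \mapsto 1 - e^\star + \eta_H$ sends $\eta_H$ to $1$, so the dual stable ideal is $\C[G]\cdot 1 = \C[G]$; this is exactly strong lumping of $w^\star$. Thus exact lumping of $w$ is equivalent to strong lumping of $w^\star$.

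Now I apply part (i) to $w^\star$: strong lumping of $w^\star$ is equivalent to $w^\star(gH)$ being constant on left cosets $gH$ inside each double coset. Because $w$ is real-valued, $w^\star = \sum_g w(g)\,g^{-1}$, and so $w^\star(gH) = \sum_{h \in H} w((gh)^{-1}) = w(Hg^{-1})$. The inversion map $g \mapsto g^{-1}$ sends the double coset $HxH$ bijectively to $Hx^{-1}H$ and carries left cosets to right cosets. Since $x$ and $x^{-1}$ range over the same set of double-coset representatives, the condition on $w^\star$ rephrases precisely as the stated condition: $w(Hz)$ is constant on right cosets $Hz$ inside each double coset of $G$.

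The main obstacle is the irreducibility hypothesis embedded in Theorems \ref{thm:mainGL} and \ref{thm:mainTimeReversal} via Definition \ref{defn:lumpsStably}, whereas the present corollary explicitly permits reducible $w$. I would address this with a short supplementary calculation: exact lumping (as weak lumping from the uniform distribution) is equivalent to the matrix-level condition that $\sum_{x \in x_0 H} P(x,y)$ be constant in $y \in y_0H$ for every pair of cosets. Unwinding $P(x,y) = w(x^{-1}y)/w(G)$ and summing over $x \in x_0 H$ yields $w(Hx_0^{-1}y)/w(G)$; requiring this to be constant on $y \in y_0 H$ is exactly the condition that $w(Hz)$ be constant on right cosets $Hz$ inside the double coset $Hx_0^{-1}y_0H$. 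This calculation bypasses the irreducibility assumption and serves as a direct verification that complements the time-reversal argument.
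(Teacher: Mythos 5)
Your proof of (ii) follows essentially the same route as the paper: the paper also deduces (ii) from the \cite{BW} criterion for (i) together with the time-reversal duality of Theorem~\ref{thm:mainTimeReversal}, just run in the opposite direction (it takes $e = \id_H$, so that strong lumping of $w$ corresponds to the dual stable ideal $\C[G]\eta_H$, i.e.\ exact lumping, for $w^\star$, and then replaces $w$ by $w^\star$); since $e \mapsto 1 - e^\star + \eta_H$ interchanges $\id_H$ and $\eta_H$, your version with $e = \eta_H$ is the same computation, and your identity $w^\star(gH) = w(Hg^{-1})$ plus the fact that inversion swaps left and right cosets and permutes double cosets is exactly how the paper translates between the two coset conditions via the anti-involution property of $\star$. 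Where you genuinely add something is the last paragraph: you are right that Definition~\ref{defn:lumpsStably} and Propositions~\ref{prop:strongJwIsCG} and~\ref{prop:exactLwIsCGeta} are stated for irreducible weights while the corollary allows reducible $w$, and the paper's own proof passes over this (it cites the equivalence (i)$\Leftrightarrow$(iv) of Proposition~\ref{prop:strongJwIsCG}, relying implicitly on the fact that the relevant equivalences, Dynkin's condition $\Leftrightarrow (1-\eta_H)w\eta_H = 0$ and exact lumping from $\eta_G$ $\Leftrightarrow \eta_H w(1-\eta_H)=0$, never use irreducibility). Your direct computation $\sum_{x \in x_0H} P(x,y) = w(Hx_0^{-1}y)/w(G)$, constant on $y \in y_0H$ for all coset pairs if and only if $w(Hz)$ is constant on right cosets inside each double coset, is correct (the right cosets $Hx_0^{-1}y$, $y \in y_0H$, are precisely those contained in $Hx_0^{-1}y_0H$) and in fact yields a self-contained proof of (ii) valid for reducible $w$. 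Two small points to tighten: the parenthetical ``exact lumping (as weak lumping from the uniform distribution)'' is misphrased, since exact lumping of $\MC(\eta_G,w)$ is strictly stronger than weak lumping from $\eta_G$; and the equivalence of exact lumping of $\MC(\eta_G,w)$ with your matrix condition deserves one line of justification, via Lemma~\ref{lem:EquivalentConditionsForExactLumping} rather than Corollary~\ref{cor:exactLumping} (which assumes irreducibility): $V(f,P,\eta_G)^\circ = 0$ forces $V(f,P,\eta_G) = \langle b\eta_H : b \in G/H\rangle$, and this holds exactly when that span is preserved by right multiplication by $P$, which is your condition.
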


We remark that Propositions~\ref{prop:condindepstrong} and~\ref{prop:condindepexact} give attractive
reinterpretations of strong and exact lumping 
using conditional independence that may be applied to this corollary. 
In Proposition~\ref{prop:interpolating} we show that this corollary describes the two extreme
cases of a family of results on weak lumping to $G/H$
indexed by the subgroups of~$H$: strong lumping
is the case of the trivial subgroup, and exact lumping 
is the case where the subgroup is $H$ itself.

It is natural to ask for the possible 
transition matrices of the lumped process when 
the left-invariant random walk on $G$ lumps weakly to $G/H$. This is addressed by our next main theorem.
See~\S\ref{subsec:orbitalMatrices} for the definition of the orbital matrices $M_{HxH}$.
To orient the more expert reader we remark that
the algebra $\eta_H \C[G] \eta_H$ in (iii) is isomorphic to the Hecke algebra of $H$-bi-invariant
functions on the double coset space $H \backslash G / H$. The weights appearing in conditions (i), (ii) and (iv) below may be reducible.
 
\begin{theorem}\label{thm:mainTransitionMatrices} 
Let $Q$ be a stochastic matrix with rows and columns indexed by $G/H$. The following are equivalent:
\begin{thmlist}
\item there is a weight $w$ on $G$ such that $\MC(\eta_G,w)$ lumps weakly to $G/H$ and the lumped chain 
has transition matrix~$Q$;
\item $Q$ is the transition matrix of the induced random walk on $G/H$ driven by a weight
$w$ in $\eta_H \C[G]\eta_H$.
\item $Q$ satisfies $Q_{(gH, g'H)} = Q_{(kgH, kg'H)}$ for all $g, g', k \in G$;
\item $Q$ is the transition matrix of the induced random walk on $G/H$
driven by a weight satisfying the conditions in Corollary~\ref{cor:strongExact} to lump 
both strongly and exactly on $G/H$.
\end{thmlist}
\end{theorem}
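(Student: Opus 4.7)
The plan is to close the cycle (i) $\Rightarrow$ (iii) $\Rightarrow$ (ii) $\Rightarrow$ (iv) $\Rightarrow$ (i).

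For (i) $\Rightarrow$ (iii), I would exploit the fact that the walk driven by $w$ is left-invariant, so for every $k \in G$ the bijection $x \mapsto kx$ of $G$ commutes with the one-step kernel. Since $\eta_G$ is $G$-invariant under left multiplication, the joint law of $(X_0, X_1, \ldots)$ under $\MC(\eta_G, w)$ is invariant under the diagonal transformation $(X_t)_{t} \mapsto (kX_t)_{t}$. Projecting to $G/H$, the joint law of the coset sequence is invariant under the induced diagonal $G$-action on $G/H$. Because the marginal of $X_t H$ is uniform on $G/H$ (hence $G$-invariant) and the lumped process is time-homogeneous Markov with matrix $Q$, dividing out gives $Q_{(gH, g'H)} = Q_{(kgH, kg'H)}$, which is (iii).

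For (iii) $\Rightarrow$ (ii), the condition (iii), combined with the built-in right-$H$-invariance in the indexing (any representatives of $gH$ and $g'H$ yield the same entry), forces $Q_{(gH, g'H)}$ to depend only on the double coset $Hg^{-1}g'H$. Write $Q_{(gH, g'H)} = \lambda(Hg^{-1}g'H)$. I would then take $w$ constant on double cosets, say $w(g) = \lambda(HgH)/|HgH|$, which is bi-$H$-invariant and so lies in $\eta_H\C[G]\eta_H$. A direct computation, using that $X_0$ conditioned on $X_0H = gH$ is uniform on $gH$ when $X_0 \sim \eta_G$, shows that the one-step transition probability from $gH$ to $g'H$ of the induced walk is $w(Hg^{-1}g'H)/(|H|w(G))$. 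The stochasticity relation $\sum_D (|D|/|H|)\lambda(D) = 1$ pins down the normalization so that this equals $\lambda(Hg^{-1}g'H) = Q_{(gH, g'H)}$ exactly.

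For (ii) $\Rightarrow$ (iv), an element $w \in \eta_H\C[G]\eta_H$ is pointwise constant on each double coset, so both $w(gH) = |H|w(g)$ and $w(Hg) = |H|w(g)$ are constant across left, respectively right, cosets in the same double coset. By Corollary~\ref{cor:strongExact} this yields both strong and exact lumping. Finally (iv) $\Rightarrow$ (i) is immediate from the result of~\cite{BW} recalled in the introduction: strong lumping makes $\MC(\alpha, w)$ weakly lumping for \emph{every} initial distribution, in particular $\alpha = \eta_G$, with transition matrix determined by $w$. The main obstacle is the verification step in (iii) $\Rightarrow$ (ii), where the formula for the induced transition must be reconciled with the prescribed $Q$ via the correct normalization; a key supporting observation is that $\eta_G$ is stationary for the walk on $G$, so the conditional-uniform property of $X_t$ given $X_tH = gH$ persists for every $t \ge 0$, allowing the time-$0$ computation of $Q_{(gH, g'H)}$ to transfer to every later step as required by time-homogeneity.
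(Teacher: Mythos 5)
Three of your four implications are sound, and your route genuinely differs from the paper's: the paper proves (i)$\Rightarrow$(ii) directly by conditioning on $X_0\in gH$ (uniform on the coset) and observing that the lumped one-step probabilities only see $\eta_H w\eta_H$, and then obtains (ii)$\Leftrightarrow$(iii) from the isomorphism between the Hecke algebra $\eta_H\C[G]\eta_H$ and the span of the orbital matrices (Proposition~\ref{prop:Hecke}). Your symmetry argument for (i)$\Rightarrow$(iii) — the law of $\MC(\eta_G,w)$ is invariant under simultaneous left translation by $k$, and since the uniform marginal gives every coset positive mass the lumped transition matrix is determined by that law, hence invariant under the relabelling $gH\mapsto kgH$ — is correct and a legitimate alternative; your (ii)$\Rightarrow$(iv)$\Rightarrow$(i) agrees with the paper. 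The genuine problem is the verification inside (iii)$\Rightarrow$(ii), exactly the step you flagged. For a bi-$H$-invariant weight $w$, starting uniformly on $gH$, the probability of landing in $g'H$ is
\[
\frac{1}{|H|\,w(G)}\sum_{h,h'\in H} w(h^{-1}g^{-1}g'h')
\;=\;\frac{|H|\,w(g^{-1}g')}{w(G)}
\;=\;\frac{|H|}{|D|}\cdot\frac{w(D)}{w(G)},
\qquad D = Hg^{-1}g'H,
\]
not $w(D)/\bigl(|H|w(G)\bigr)$ as you assert; the two differ by the factor $|H|^2/|D| = |H\cap xHx^{-1}|$, so your formula is valid only on double cosets of maximal size. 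Consequently your weight $w(g)=\lambda(HgH)/|HgH|$ induces lumped transition probabilities $|H|\lambda(D)/\bigl(|D|\sum_{D'}\lambda(D')\bigr)$, which differ from the prescribed $\lambda(D)$ whenever double cosets of different sizes carry positive mass: for $G=\Sym_3$, $H=\Sym_{\{2,3\}}$, $\lambda(H)=\tfrac12$, $\lambda(H(1,2)H)=\tfrac14$, the induced diagonal entry is $\tfrac23\ne\tfrac12$. Since the induced matrix is unchanged by globally rescaling $w$, no overall normalization can repair this, and the stochasticity relation $\sum_D(|D|/|H|)\lambda(D)=1$ that you invoke is not the identity $|H|\sum_D\lambda(D)=1$ that your formula would actually require.

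The fix is a per-double-coset rescaling: take $w(g)=\lambda(HgH)/|H|$, equivalently $w=\sum_D \frac{|D|}{|H|}\,\lambda(D)\,\eta_H x_D\,\eta_H$ over double coset representatives $x_D$. Then $w\in\eta_H\C[G]\eta_H$, $w(G)=\sum_D\frac{|D|}{|H|}\lambda(D)=1$ by stochasticity of $Q$, and the displayed formula gives induced transition probability $|H|\,w(g^{-1}g')=\lambda(D)=Q_{(gH,g'H)}$, completing (iii)$\Rightarrow$(ii); in the language of~\eqref{eq:orbital} and Proposition~\ref{prop:Hecke}, this is just writing $Q$ as the convex combination $\sum_D \frac{|D|}{|H|}\lambda(D)\cdot\frac{|H|}{|D|}M(HxH)$ of the stochastic normalized orbital matrices and transporting it back to the Hecke algebra. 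With that correction your cycle of implications is complete.
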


Our final main result is on the case when
$H$ is abelian. In this case the set $\Eb{H}$ is finite, and so we can make Theorem~\ref{thm:mainGL}
very explicit.
Let \smash{$\widehat{H}$} denote the
set of irreducible linear characters of $H$ and let \smash{$\triv_H \in \widehat{H}$} denote the trivial
character. Let \smash{$e_\phi = |H|^{-1} \sum_{h \in H} \phi(h^{-1}) h$}
denote the centrally primitive idempotent in $\C[H]$ corresponding to the irreducible character $\phi \in \widehat{H}$. 
Let $\langle - , -\rangle$ denote the $G$-invariant inner product on $\C[G]$ defined by
\begin{equation}\label{eq:innerProduct} \Bigl\langle \sum_{g\in G} \alpha(g)g , \sum_{k \in G} \beta(k)k \Bigr\rangle = \frac{1}{|G|}
\sum_{g \in G} \overline{\alpha}(g) \beta(g). \end{equation} 
Given a subspace  $V$ of $\C[G]$, 
let $V^\perp = \{ w \in \C[G] : \text{$\langle v, w \rangle = 0$ for 
all $v \in V$} \}$. 

\begin{corollary}\label{cor:abelian}
Let $D$ be a set of double coset representatives for $H\backslash G / H$.
The left-invariant random walk on $G$ driven by an irreducible weight $w$ lumps weakly on the left cosets of $H$ if 
and only if 
there exists a subset $P \subseteq \widehat{H}$ containing  $\triv_H$ such that
for all $x \in D$ we have $w \in \bigcap_{x \in D} W_x^\perp$, where 
\[ W_x=  \bigl\langle e_\beta  x  e_\gamma : 
\: \beta \in P,\: \gamma \in (\widehat{H} \backslash P) \cup \{\triv_H\} ,\: 
(\beta, \gamma) \not= (\triv_H, \triv_H) 
\bigr\rangle. \]
\end{corollary}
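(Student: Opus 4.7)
My plan is to apply Theorem~\ref{thm:mainGL} directly, exploiting that when $H$ is abelian the idempotents of $\C[H]$ admit a complete and finite parametrisation. First I parametrise $\Eb{H}$: since $\C[H]$ is the direct sum of one-dimensional ideals $\C e_\phi$ indexed by $\phi \in \widehat{H}$, every idempotent of $\C[H]$ has the form $e_P = \sum_{\phi \in P} e_\phi$ for a unique subset $P \subseteq \widehat{H}$. The defining condition $\eta_H e = \eta_H$ of $\Eb{H}$ translates to $\triv_H \in P$, since $\eta_H = e_{\triv_H}$ and $e_{\triv_H} e_P = e_{\triv_H}$ iff $\triv_H \in P$.

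Next I unpack conditions (ii) and (iii) of Theorem~\ref{thm:mainGL} with $e = e_P$, observing that $1 - e_P = \sum_{\phi \notin P} e_\phi$. Since the $e_\phi$ are mutually orthogonal central idempotents of $\C[H]$, multiplying each of the identities $e_P w (1-e_P) = 0$ and $(e_P - \eta_H) w \eta_H = 0$ on the left by $e_{\beta_0}$ and on the right by $e_{\gamma_0}$ extracts individual scalar summands. This shows that the conjunction of (ii) and (iii) is equivalent to the system $e_\beta w e_\gamma = 0$ indexed by $\beta \in P$, $\gamma \in (\widehat{H} \setminus P) \cup \{\triv_H\}$ with $(\beta, \gamma) \ne (\triv_H, \triv_H)$, which is exactly the index set appearing in the definition of $W_x$.

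The final step converts each $e_\beta w e_\gamma = 0$ into orthogonality against $w$. Using the identity $|G| \langle u, v \rangle = [\mathrm{coef}_1](u^\star v)$ and the cyclic invariance of $[\mathrm{coef}_1]$, together with the self-adjointness $e_\phi^\star = e_\phi$ (a short computation from $\overline{\phi(h^{-1})} = \phi(h)$ for unitary $\phi$ and the substitution $h \mapsto h^{-1}$), one obtains $e_\beta w e_\gamma = 0$ iff $\langle e_\beta z e_\gamma, w \rangle = 0$ for every $z \in \C[G]$, i.e.\ $w \perp e_\beta \C[G] e_\gamma$. To finish, decompose $\C[G] = \bigoplus_{x \in D} \C[HxH]$ as a $(\C[H], \C[H])$-bimodule; using $e_\phi \cdot h x h' \cdot e_\psi = \phi(h) \psi(h') e_\phi x e_\psi$, each piece $e_\beta \C[HxH] e_\gamma$ is spanned (possibly trivially) by the single element $e_\beta x e_\gamma$. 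Summing over $x \in D$ and the admissible pairs $(\beta, \gamma)$ recovers precisely $W_x$, and the corollary follows.

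The main obstacle is the identity $e_\phi^\star = e_\phi$: a naive computation suggests $e_\phi^\star = e_{\overline{\phi}}$, which would force complex conjugates $(\overline{\beta}, \overline{\gamma})$ into the index set for $W_x$. The resolution is that the inversion $h \mapsto h^{-1}$ inside $\star$ and the conjugate character appearing under complex conjugation precisely cancel in the explicit formula for $e_\phi$, so no conjugation survives and the indexing of the corollary matches cleanly with the indexing coming from Theorem~\ref{thm:mainGL}.
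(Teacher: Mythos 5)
Your proof is correct, and it reaches the corollary by a more direct route than the paper does. The paper's own proof is a two-line deduction from its double-coset machinery: the decomposition $\Theta(e) = \bigoplus_{x} \Theta(e)\cap\C[HxH]$ of~\eqref{eq:ThetaSplit} combined with Corollary~\ref{cor:ThetaDoubleCosetConditionAbelian}, which in turn rests on the right-idealizer formula of Lemma~\ref{lemma:RId} and on Proposition~\ref{prop:abelianThetaPerp}, where the relevant perpendicular space is computed as the perp of images of the left/right multiplication operators by $e_P$ and $1-e_P$. You instead work straight from conditions (ii) and (iii) of Theorem~\ref{thm:mainGL} with $e = e_P$: orthogonality of the centrally primitive idempotents splits these into the scalar system $e_\beta w e_\gamma = 0$ over exactly the index set of the corollary, and each equation is then dualized via the adjointness $\langle e_\beta z e_\gamma, w\rangle = \langle z, e_\beta w e_\gamma\rangle$, which rests on $e_\phi^\star = e_\phi$ — your resolution of the apparent $e_{\overline{\phi}}$ worry is right, since $\star$ combines inversion with conjugation and these cancel for a linear character, whereas plain complex conjugation would indeed give $e_{\overline{\phi}}$. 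The bimodule identity $e_\beta\, hxh'\, e_\gamma = \beta(h)\gamma(h')\, e_\beta x e_\gamma$ then collapses $e_\beta\C[G]e_\gamma$ to the spans $\bigl\langle e_\beta x e_\gamma \bigr\rangle$, so the double-coset split enters at the end of your argument rather than at the start of the paper's. What the paper's route buys is reusable machinery valid for non-abelian $H$ (Proposition~\ref{prop:ThetaDoubleCosetCondition} and the dimension counts of that section); yours buys a short, self-contained derivation from Theorem~\ref{thm:mainGL}. One small point worth making explicit: the corollary is about weak lumping in the sense of Definition~\ref{defn:lumpsWeakly}(b), so in the ``if'' direction you must also produce an initial distribution lying in $\C[G]e_P$; since $\eta_H e_P = \eta_H$ forces $e_P$ to have augmentation $1$, we get $\eta_G e_P = \eta_G$, so $\eta_G$ works — this is exactly the observation the paper records just after~\eqref{eq:Theta}.
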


We emphasise that the subspace $W_x$ of $\C[G]$ 
appearing above is contained in the double coset $HxH$ and so each perpendicular space
in the intersection supplies $\dim W_x$ 
independent equations that must be satisfied by the values $w(g)$ for $g \in HxH$.
This is illustrated in our final example in \S\ref{subsec: abelian example}.

\subsection{Extended example: shuffles that frustrate card counters}
\label{subsec:shuffles}

This extended example includes \S 1.2.4 
where we
give an infinite family of shuffles in the symmetric groups $\Sym_n$ 
that lump weakly but not lump strongly
or exactly. 

To get started,
consider a deck of four cards, the Queen on top in position~$1$, then the Jack in position $2$,
then the Ace in position $3$ and finally the King in position $4$ at the bottom, as shown
in the margin. 
\marginpar{$\begin{array}{cc}
     \mathrm{Q} & 1 \\ \mathrm{J} & 2 \\ \mathrm{A} & 3 \\ \mathrm{K}  & 4
    \end{array}$}
(We pick
this non-obvious order to distinguish more forcefully between positions and card values.)
Permutations in $\Sym_4$ act on the deck by permuting positions: if card $C$ is in position
$j$ then, after the shuffle $g \in \Sym_4$, card $C$ is in position~$jg$.
(Note that we act on the right, and so permutations compose left-to-right: $j(gh) = (jg)h$.)

To put this into our algebraic setting, 
let $G = \Sym_4$ and let $H = \Sym_{\{2,3,4\}}$ be the subgroup of permutations that
fix the top card.
The left cosets $G / H$ are $H, (1,2)H, (1,3)H, (1,4)H$;
the left coset $(1,k)H$ is precisely those permutations
moving the card in position~$k$ to position~$1$.
The induced process on left cosets therefore models the changing \emph{value} of the top card:
`what a card-counter sees'. A card-counter acquires no useful information by
memorizing the past history of 
values of the top card if and only if this induced process is a Markov chain. (We shall see an explicit
example of this shortly.)
Dually, the right cosets $H \backslash G$ are $H, H(1,2), H(1,3), H(1,4)$;
the right-coset $H(1,k)$ is precisely those permutations
moving the card in position $1$ to position~$k$.
As noted earlier by Pang in \cite[Example 2.9]{Pang} the 
induced process on right-cosets models the changing \emph{position} of the initial
top card, here the Queen: `follow the lady'.

\subsubsection*{Lumping  on right cosets}
Since the left-invariant random walk (defined for general $G$ and $H$) is left-invariant, we have in particular
\[  \P[X_t = g' | X_{t-1} = g]  = \P[X_t = hg' | X_{t-1} = hg] \]
for all $g$, $g' \in G$ and $h \in H$.
Hence $\P[H X_t = Hg' | X_{t-1} = hg]$ is constant as~$h$ varies,
and any left-invariant random walk lumps strongly on right cosets.  
In particular it is always a Markov chain.
In our shuffling setup, both claims are obvious: the position of the Queen after a shuffle
depends only on its position before the shuffle.
Since strong lumping implies weak lumping, it follows that
 the left-invariant random walk lumps weakly on right cosets
for \emph{any} weight $w$ and initial distribution.
Similarly, if $X_{t-1}$ is distributed uniformly on $Hg$, then
the distribution of~$X_t$ conditioned on $HX_t = Hg'$ is uniform on $Hg'$.
Hence the left-invariant random walk lumps exactly on right cosets, in the sense of Definition~\ref{defn:exactLumping},
for \emph{any} weight~$w$. In our shuffling setup, this says (for instance) that 
if we know the Ace, King and Jack are uniformly distributed on the three positions known
not to have the Queen, then the same is true after the deck is shuffled.
These relatively easy observations contrast with the much deeper theory
for left cosets lumping in this article. 

\subsubsection*{Lumping on left cosets}
Since $H$ has two orbits on $\{1,2,3,4\}$, there are two double cosets, $H$ itself,
corresponding to the orbit $1 H = \{1\}$, and $H(1,2)H$ corresponding to the orbit $1 (1,2) H = \{2,3,4\}$.
The larger double coset is shown in Figure~1.

\smallskip
\begin{figure}[h]
\begin{center}
\includegraphics[page=1]{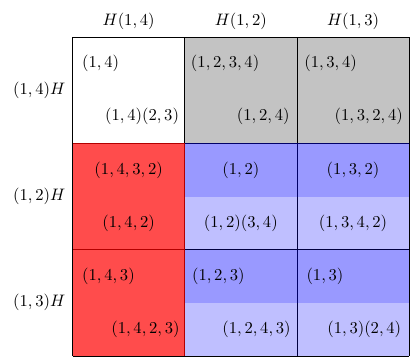}
\end{center}
\caption{The double coset $H(1,2)H$ when $G = \Sym_4$ and $H = \Sym_{\{2,3,4\}}$.
Rows are left cosets and columns are right cosets. 
For later
use in \S1.2.3, the double
cosets $TxT$ where $T = \Sym_{\{2,3\}}$ are coloured. 
Thus \smash{$T(1,3)T = (1,2)T \cup (1,3)T = \bigl\{ (1,3), (1,2,3) \bigl\} \,\cup\, \bigl\{ (1,3,2), (1,2) 
\bigr\}$} is dark blue and $T(1,2,3)T$ is light blue; in black and white any remaining
ambiguity can be resolved by noting that 
since $(1,4) \in N_{\Sym_4}(T)$, we have $T(1,4) = (1,4)T$ and hence \smash{$H(1,4) = T(1,4) \cup T(1,4,3)T$}
and \smash{$(1,4)H = (1,4)T \cup T(1,3,4)T$.}}
\label{fig:DoubleCosets}
\end{figure}
\subsubsubsection{1.2.1.~Strong lumping}\label{1.2.1}
By Corollary~\ref{cor:strongExact}(i), the left-invariant random walk driven by a weight $w$ lumps strongly to $G/H$ if and only
$w(gH)$ is constant for $g \in H(1,2)H$, or equivalently, if and only if $w\bigl( (1,2)H \bigr) = w\bigl( (1,3)H \bigr) = w\bigl( (1,4)H \bigr)$.
(Note the subgroup $H$ is itself a double coset and gives no restriction.)
In Figure~1, the condition is that each row has equal weight.
In particular, this holds whenever the non-identity part of $w$
is uniformly distributed on any fixed right coset of $H$.
Taking one element of the right coset $H(1,2)$ in each left coset of $H$, together with the identity,
we obtain the \emph{random-to-top} shuffle 
$\mfrac{1}{4}\Id + \mfrac{1}{4}(1,2) + \mfrac{1}{4}(1,2,3) + \mfrac{1}{4}(1,2,3,4)$.

\subsubsubsection{1.2.2.~Exact lumping}\label{1.2.2}
By Corollary~\ref{cor:exactLumping},
the left-invariant random walk driven by a weight~$w$ lumps exactly to $G/H$ if,
when started at the uniform distribution on $G$, the distribution conditioned on  the sequence of observations of the value of the top card
is uniform on the relevant left coset. By Corollary~\ref{cor:strongExact}(ii),
this holds if and only if $w(Hg)$ is constant for $g \in H(1,2)H$, or equivalently,
if and only if $w\bigl( H(1,2) \bigr) = w\bigl( H (1,3)\bigr) = w\bigl( H(1,4) \bigr)$.
Dualizing the previous remarks, this holds whenever each column in Figure~1 has equal weight,
and in particular, whenever the non-identity part of $w$ is uniformly distributed on any left coset of~$H$.
Taking one element of the left coset $(1,2)H$ in each right coset of $H$, together with the identity,
 we obtain the \emph{top-to-random} shuffle
$\mfrac{1}{4}\Id + \mfrac{1}{4}(1,2) + \mfrac{1}{4}(1,3,2) + \mfrac{1}{4}(1,4,3,2)$.
See \S\ref{sec: time reversal} for more on the time-reversal
symmetry between the random-to-top and top-to-random shuffles.

\subsubsubsection{1.2.3.~Weak lumping: weights compatible with an idempotent}
Consider the subgroup $T = \Sym_{\{2,3\}}$ of $H = \Sym_{\{2,3,4\}}$. The idempotent
in $\C[\Sym_4]$ corresponding to the uniform distribution on~$T$ is
\smash{$\eta_T = \mfrac{1}{2}\Id + \mfrac{1}{2} (2,3)$}.
By Proposition~\ref{prop:interpolating}, the left-invariant random walk
driven by $w$ lumps stably for $\C[G]\eta_T$,
in the sense of Definition~\ref{defn:lumpsStably},
if and only if
the left-invariant random walk driven by a weight $w$ lumps exactly on the left cosets $G/T$
and \smash{$\mfrac{1}{|TgH|}w(TgH)$} is constant for $TgH \subseteq H(1,2)H$.
In particular, if the non-identity part of $w$ is supported on $H(1,2)H$, then
it is necessary and sufficient that
\begin{itemize}
\item $w(Tg) = w\bigl(Tg(2,3)\bigr)$ for all $g \in H(1,2)H$;
\item $w\bigl( (1,4) H \bigr) = \mfrac{1}{2} w\bigl( (1,2) H \bigr) + \mfrac{1}{2} w\bigl( (1,3)H\bigr)$.
\end{itemize}

In particular, if $0 \le \lambda \le 1$ and 
\begin{equation}\label{eq:w123} w = (1-\lambda) \Id + \mfrac{\lambda}{3} \bigl(
(1,4)(2,3) + (1,4,3) + (1,4,2,3) \bigr) \end{equation}
then both conditions hold. (For the first condition, note that $T(1,4) = \{(1,4)(2,3), (2,3)\} = T(1,4)(2,3)$
so the condition holds for $T(1,4)$, and  since
 $(1,4,3)(2,3) = (1,4,2,3)$, so it holds for $T(1,4,3)$ and $T(1,4,2,3)$, the other relevant cosets.)
But since $w$ is neither constant  on the rows nor on the columns in Figure~1, this weight
does not lump strongly or exactly.

The probabilistic interpretation is as follows: ask a friend to take the deck of cards in its starting configuration,
and, if a fair coin lands heads, swap the middle two cards. 
This gives an initial distribution corresponding to the idempotent
$\eta_T \in \C[G]$.
The deck is then shuffled repeatedly according to $w$, and after each shuffle, the top card is revealed. 
Since $\MC(\eta_T, w)$ lumps weakly, the sequence
of values of the top card
is a Markov chain on the set of cards $\{\mathrm{A}, \mathrm{K}, \mathrm{Q}, \mathrm{J} \}$.
Thus the shuffle frustrates card counters: thanks to the Markov 
property, the card counter gets no benefit from memorizing the past history of the top card. 
Moreover
the distribution, conditioned on the observation that
the card starting in position $k$ is on top, gives equal probability to $g$ and $g(2,3)$ 
for each relevant $g$,
namely those $g$ such that $kg = 1$.
In the special case when $\lambda = \mfrac{3}{4}$ and so $w$ gives equal probability
$\mfrac{1}{4}$ to each shuffle,
 the probabilities~$p_k$ that the card in position $k$ moves to the top are

\smallskip
\begin{center}
\begin{tabular}{ccccc} \bottomrule\\[-8pt] 
$k$ & 1 & 2 & 3 & 4 \\ \midrule
$p_k$ & $\mfrac{1}{4}$ & 0 & $\mfrac{1}{2}$ & $\mfrac{1}{4}$ \\[2pt] \bottomrule
\end{tabular}
\end{center}
\smallskip

\noindent and it follows that, at every time, each card is equally likely to be at the top position. Therefore the sequence of top cards
is not only Markov, but in fact independent and uniformly equidistributed. 
We remark that the entropy of $w$ in this case is $-4 \times \mfrac{1}{4} \log_2 \mfrac{1}{4} = 2$; this is the least
possible entropy of a shuffle that induces a weak lumping with these properties.
The shuffle defined with $\lambda = \mfrac{1}{4}$, \emph{with our chosen initial distribution},
is therefore not only frustrating to card counters,
but also fair and efficient! 

Finally, we  show that while our chosen weight $w$, namely
$(1-\lambda)\Id + \mfrac{\lambda}{3}\bigl(
 (1,4)(2,3) +  (1,4,3) +  (1,4,2,3)\bigr)$, lumps weakly,
 $\MC(\alpha, w)$ may fail to lump weakly to $G/H$ if the initial
distribution is outside $\C[\Sym_4]\eta_T$.
Started deterministically at the identity, the pack reads QJAK top-to-bottom. We have
$$\P[X_3 = \mathrm{J} \mid X_2 = \mathrm{Q}, X_1 = \mathrm{Q}]= 0\,$$
since the event $\{ X_2 = \mathrm{Q}, X_1 = \mathrm{Q} \}$ implies that each of the first two shuffles is the identity, and no shuffle in the support of $w$ moves position $2$ to position~$1$.
On the other hand,
\[ \P[X_3 = \mathrm{J} \mid X_2 = \mathrm{Q}] \neq 0, \]
since the sequence of shuffles
\[ \mathrm{QJAK} \; \vlongmapsto{$\; \scriptstyle (1,4,3)$} \; \mathrm{AJKQ} 
\; \vlongmapsto{$\scriptstyle (1,4)(2,3)$}\;
\mathrm{QKJA} \; \vlongmapsto{$\scriptstyle (1,4,3)$}\; \mathrm{JKAQ} \]
is consistent with the event $\{ X_2 = \mathrm{Q} \}$.

\subsubsubsection{1.2.4.~A variation and infinitely many weakly lumping shuffles}
Take a deck of $n$
cards, and shuffle as follows:
\begin{quote}
Remove the bottom card, insert it under a random card chosen uniformly 
from the remaining deck, then move the top card to the bottom.
\end{quote}
In the case $n=4$, this is the shuffle driven by the weight $w$ above, defined with $\lambda = 0$
so that the identity permutation has zero probability. By Proposition~\ref{prop:interpolatingShuffle},
this shuffle lumps weakly to left cosets of $\Sym_{\{2,\ldots, n\}}$ inside $\Sym_n$, but, as seen in
the special case $n=4$, it does not lump strongly or exactly. 
Proposition~\ref{prop:interpolating} may be used to give many other such examples of infinite families
of weakly lumping left-invariant random walks, parametrised by degree, that do not lump strongly or exactly.

\subsubsubsection{1.2.5.~Weak lumping: idempotents compatible with a weight}
We have seen that the left-invariant random walk driven by the weight $w$ defined in~\eqref{eq:w123} 
weakly lumps
for distributions in $\C[G]\eta_T$. Correspondingly, as expected from Theorem~\ref{thm:mainGL} in the case
$e = \eta_T$, 
we have 
\begin{align*} \eta_T w (1-\eta_T) &= 0, \\ (\eta_T - \eta_H) w \eta_H &= 0. \end{align*}
Consider the weight
$w' = \eta_T w = \bigl(\mfrac{1}{2} \id  + \mfrac{1}{2} (2,3)\bigr) w$
corresponding to first swapping the middle two cards according to a coin-flip, and then shuffling
according to $w$ (defined with general $\lambda$). 
Since $\eta_T$ is an idempotent and $\eta_H \eta_T = \eta_H$,
the two displayed equations above hold replacing $w$ with $w'$.
Therefore $w'$ satisfies the conditions of Theorem~\ref{thm:mainGL} for $e = \eta_T$
and, by this theorem, the weight~$w'$ also weakly lumps 
for distributions in $\C[G] \eta_H$. Using the definition 
$w = (1-\lambda) \mfrac{1}{4} \Id + \lambda \bigl( \mfrac{1}{4} (1,4)(2,3) \bigr) + 
\mfrac{1}{4} (1,4,3) + \mfrac{1}{4} (1,4,2,3) \bigr)$
and the coset diagram in Figure~1, it is easy to see that
\[ w' = \eta_T w = (1-\lambda) \bigl( \mfrac{1}{8} \hskip-1pt \id + \mfrac{1}{8} (2,3) \bigr) +
\ \mfrac{\lambda}{8} \sum_{g \in H(1,2)} g \]
where the first two summands are $\mfrac{1-\lambda}{4}\eta_T$.
It easily follows that $w'$ satisfies the condition in Corollary~\ref{cor:strongExact}(i) for strong lumping.
This should be expected from the previous subsection, 
because~$w'$ includes the randomising effect of the choice of initial distribution
in $\C[G]\eta_T$. This example also shows
that weak lumping and stable lumping are still of interest, \emph{even in the case
when the left-invariant random walk lumps strongly}.

We use this example to illustrate the constructive
methods of \S\ref{sec: tests} in Examples~\ref{ex:LwConstruction} and~\ref{ex:JwConstruction}.

\subsection{Why irreducible weights?}\label{subsec:whyIrreducible}
Figure~2 shows a pentagon whose vertices are labelled by the residue classes modulo $5$ and whose front and back faces are marked F and B, respectively.
Let $\sigma$ be anticlockwise rotation by $2\pi/5$ and let $\tau$ be reflection through the vertical axis. These symmetries preserve
the position of the pentagon in the plane and generate the dihedral group of order $10$ with presentation
\[ G = \langle \sigma, \tau : \sigma^5 = \tau^2 = (\sigma \tau)^2 = 1 \rangle .\]
Let $H = \langle \tau \rangle$. Observe that the left coset $\sigma^i H$ 
consists of the permutations $\sigma^i$ and $\sigma^i\tau$ that move
the vertex labelled $-i$
to the top position. Thus the left-invariant random walk on $G$ 
lumps weakly on $G/H$ if and only if the sequence of observations of the label
at the top position forms a Markov chain. By Corollary~\ref{cor:HOrder2Or3}, if the driving weight
is irreducible, this holds if and only if the chain lumps exactly or strongly.
The weakly lumping irreducible weights are therefore classified by Corollary~\ref{cor:strongExact}.
For example, the uniform weight on $\sigma H = \{\sigma, \sigma \tau \}$ lumps strongly; the uniform weight
on $H \sigma = \{\sigma, \sigma^{-1}\tau \}$ lumps exactly, and the uniform weight on $\{ \sigma \tau, \sigma^{-1} \tau \}$ lumps
strongly \emph{and} exactly; the latter is expected because the support is a conjugacy class of~$G$;
as we explain in the literature survey in \S\ref{subsec:earlierWork} below, this follows from the main theorems of either
\cite{BW} or \cite{DiaconisRamSimper}.

\begin{figure}

\begin{center}
\includegraphics[page=2]{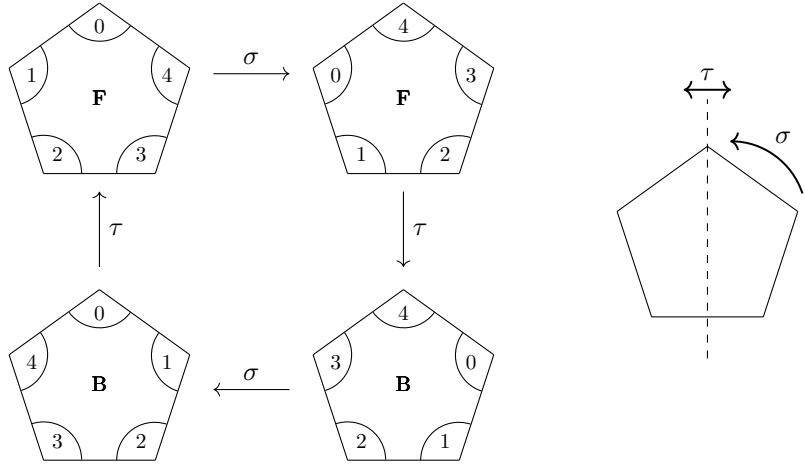}
\end{center}

\caption{The pentagon in its fixed position on the plane showing the anticlockwise rotation
$\sigma$ by $\pi/5$, the reflection $\tau$ and the sequence of symmetries $\sigma, \tau, \sigma, \tau$.
The rotation $\sigma$ acts on the vertex labels as the $5$-cycle $(0,4,3,2,1)$ if the front face is uppermost
and as the $5$-cycle $(0,1,2,3,4)$ if the back face is uppermost.}
\end{figure}

\begin{example}\label{ex: reducible dihedral}
We take the weight $1 + \sigma \in \C[G]$. 
If the initial distribution $\alpha$ is supported on~$\langle \sigma \rangle$, respectively $\langle \sigma \rangle \tau$,
then at every time, $\sigma$ permutes the labels of the pentagon by the $5$-cycle $(0,4,3,2,1)$, respectively $(0,1,2,3,4)$.
(This is shown visually in Figure 2.)
In the first case the induced process on the top label is the Markov chain on $\{0,1,2,3,4\}$ in which the steps $j \mapsto j-1$ mod $5$
and $j \mapsto j$ are equally likely; the same holds in the second case replacing $j-1$ with $j+1$. Therefore $\MC(\alpha, w)$ lumps weakly to $G/H$.
Conversely, suppose that $\alpha \bigl(\langle \sigma \rangle\bigr) > 0$ and $\alpha\bigl(\langle \sigma \rangle \tau\bigr) > 0$.
The sequence of observations of the label at the top position is then of one of the two forms
\begin{align*} & i,  \ldots, i, i + 1, \ldots, i + 1, i +2, \ldots, \\
               & i,  \ldots, i, i - 1, \ldots, i - 1, i - 2, \ldots \end{align*} 
where the first label seen other than $i$ is $i-1$ if  at time $0$ face F was uppermost
and $i+1$ if  at time $0$ face B was uppermost.
Writing $Y_t$ for the label at the top position at time $t$, 
and noting that it is possible that $Y_4 = 0$ and $Y_5 = 1$, we have
$\P[Y_6 = 0 \mid Y_5 = 1, Y_4 = 0] = 0$ whereas $\P[Y_6 = 0 \mid Y_5 = 1] > 0$.
Therefore $\MC(\alpha, 1+ \sigma)$ lumps weakly if and only if $\alpha$ is supported
on a single coset of~$\langle \sigma \rangle$.
\end{example}

This example shows that, in the reducible case, the set of initial distributions $\alpha$ such that $\MC(\alpha,w)$ 
lumps weakly need not be convex. This rules out any routine generalization of our main result, Theorem~\ref{thm:mainGL}, beyond the irreducible case.
See also Example~\ref{periodic example} for another illustration of the more complicated behaviour
seen in the reducible case. 
We believe this case is of considerable interest and plan to study it further in a sequel to this paper.

\subsection{Earlier work}\label{subsec:earlierWork}
For two excellent surveys of 
the  vast literature on  left-invariant random walks on finite groups
we refer the reader to  Diaconis~\cite{DiaconisBook} and Saloff-Coste \cite{SaloffCoste}.
In particular, by \cite[Proposition 2.3]{SaloffCoste} the left-invariant random walk on a finite group $G$ 
driven by a weight $w$ is irreducible if and only if the support of $w$ generates $G$, and is aperiodic if and only if
the support of $w$ is not contained in a coset of a proper normal subgroup of $G$. 
As we saw in~\S\ref{subsec:shuffles} the left-invariant random walk on a symmetric group  models repeated 
shuffles of a deck of cards. Notable examples where representation theory has been used 
to analyse the mixing times of shuffles include the $r$-top-to-random shuffle
studied in \cite{BritnellWildonBell}, \cite{DiaconisFillPitman}, \cite{FulmanCardShufflingTensorProducts} 
and \cite{Phatarfod},
and the riffle shuffle, studied in \cite{AldousDiaconis} and \cite{BayerDiaconis}. 
The recent book \cite{DiaconisFulman} is an excellent introduction to this area.
The left-invariant random walks on general linear groups,
or more broadly, finite groups of Lie type, have also been extensively studied: see 
\cite[Theorems 1.8, 1.9]{LiebeckShalev}
and \cite[\S 9.4]{SaloffCoste}
for further references.

\subsubsection*{Weak lumping in general}
We refer the reader to \S 2 for a comprehensive survey of weak lumping
including a characterisation of weak lumping due to Gurvits and Ledoux \cite{GL} that
is the basis of all our main results. Earlier introductory accounts include 
\cite[\S 6.4]{KS} and \cite[\S 2.4]{Pang}.

\subsubsection*{Weak lumping of the top-to-random shuffle}
Applying the RSK correspondence (see for instance \cite{FultonYT}) 
to a permutation $\sigma \in \Sym_n$ one obtains a pair of standard tableaux $(S, T)$ of the same shape.
For example, 
$\sigma$ is a non-trivial top-to-random shuffle
if and only if $S$ and $T$ have shape $(n-1,1)$ and the unique entry in the second row of the insertion tableau $S$
is $2$; the entry in the second row of the recording tableau~$T$
is then the position of the top card after the shuffle. 
In \cite[Theorem 3.1]{FulmanCardShufflingTensorProducts}, Fulman proves that after $t$ steps of the top-to-random shuffle,
starting at the identity, the distribution of the RSK shape of the permutation agrees with the probability
distribution after~$t$ steps of a random walk on partitions of $n$ defined using
Plancherel measure, started at $(n)$. Since the RSK shape of the identity is~$(n)$,
this might suggest the top-to-random shuffle 
lump strongly to partitions by taking RSK shapes, but
by Proposition 7.2 in \cite{BritnellWildonBell}, this is \emph{not} the case. 
However, in \cite[Theorem 3.9]{Pang}, Pang showed that 
taking RSK shapes in a modified version of the shuffle \emph{does} lump weakly
on a subspace of distributions (as in Corollary~\ref{cor: GL certificate theorem}) that are constant 
on sets of permutations having equal insertion tableau $S$. This leads to a conceptual proof via weak lumping of Fulman's result,
which, as Fulman remarks on page 12 of \cite{FulmanCardShufflingTensorProducts} initially seemed `quite mysterious'.

\subsubsection*{Strong lumping to right cosets}
As we saw in \S\ref{subsec:shuffles}, the left-invariant random walk lumps strongly on right cosets for any subgroup
$H$ of $G$. In \cite[Appendix~1]{AssafDiaconisSoundararajan} (where left- and right- are swapped relative to this paper)
the authors prove this fact and  describes the transition
matrix of the lumped walk. This fact is used in \cite[Example 2.8]{Pang} to show that the
Markov chain on $\F_2^n$ driven by flipping a position chosen uniformly at random lumps strongly
by deleting any single bit and in \cite[Example 2.9]{Pang} to give a shuffling example in which the lumped
chain tracks the position of a chosen card.
In \cite[Example 2.11]{Pang} the case $G = \Sym_n$ and $H = \Sym_r \times \Sym_{n-r}$ is used
to show that the $r$-top-to-random shuffle lumps strongly
by recording the \emph{positions} of the $r$ cards beginning at the top of the deck.

\subsubsection*{Strong lumping to left and double cosets}
As we explain in \S\ref{sec:doubleCosets}, 
the \emph{double cosets} of a finite group~$G$ for subgroups~$H$ and $K$ are the subsets $HxK$ for $x \in G$. 
In \cite{DiaconisRamSimper},
Diaconis, Ram and Simper prove
that if the weight $w$ is conjugacy-invariant, that is $w(g) = w(t^{-1}gt)$ for all $g$, $t\in G$,
then the left-invariant random walk lumps strongly to the double
cosets $H \backslash G / K$ for any subgroups $H$ and $K$. 
Theorem~1.2 in \cite{DiaconisRamSimper} characterises
the stationary distribution of the lumped chain and gives a good bound on the mixing time.
Later, in \cite{BW}, Britnell and Wildon 
gave a necessary and sufficient condition
for the left-invariant random walk to lump strongly to the double cosets $H \backslash G / K$. 
Their Corollary~1.2 implies that
a left-invariant random walk lumps strongly to $H \backslash G / H$  if and only if it lumps
strongly to the left cosets $G/H$; by Corollary~\ref{cor:strongExact}(i) this holds if and only if $w(gH)$
is constant for left cosets $gH$ in the same double coset.

\subsubsection*{Notable examples of strong lumping to left and double cosets}
The $r$-random-to-top shuffle is the shuffle of a deck of $n$ cards
in which $r$ cards, chosen uniformly at random, 
are moved to the top of the pack, while maintaining
their relative order. As we saw in the special case $r=1$
in \S\ref{subsec:shuffles}, it lumps strongly to the left cosets of $\Sym_r \times \Sym_{n-r}$;
the lumped Markov chain has states corresponding to
the $\binom{n}{r}$ possible sets of labels of the top $r$ cards in the deck, and in fact it always transitions to a uniform random state. 
In \cite[\S 3.2]{BW} it is shown that 
a family of shuffles generalizing the $r$-random-to-top shuffle
lumps strongly to the double cosets of $\Sym_r \times \Sym_{n-r}$ in $\Sym_n$; 
the lumped random walks are reversible and have some remarkable spectral properties.
The time reversal of the $r$-random-to-top shuffle
is the $r$-top-to-random shuffle, in which 
the top $r$ cards from the pack are moved to $r$ positions in the pack chosen uniformly at random,  again maintaining the relative order of the $r$ moved cards and the relative order of the other $n-r$ cards.
By Theorem~\ref{thm:mainTimeReversal},
the $r$-top-to-random shuffle
lumps exactly to the left cosets of $\Sym_r \times \Sym_{n-r}$ 
in $\Sym_n$, and lumps exactly to the double cosets of $\Sym_r \times \Sym_{n-r}$ in $\Sym_n$. 
These shuffles are notable because the weights are not conjugacy
invariant (except in the trivial case $r=n$), and so the full power of
the results from
\cite{BW} and Theorem~\ref{thm:mainTimeReversal} is required. 

We refer the reader to~\cite{DiaconisRamSimper} for a wealth of further examples of strong lumping to double
cosets of random walks driven by conjugacy invariant weights: 
particularly notable is Example~2.3, 
that the random walk on a finite group of Lie type driven by
multiplication by such a weight lumps strongly to the double cosets of a Borel subgroup,
and so induces a random walk on its Weyl group.

\subsubsection*{Weak lumping of other left-invariant random walks}
Besides the result of Pang already mentioned, where the lumping is to partitions, we know of 
no substantial examples in the literature of weak lumpings of left-invariant random walks
that are not also strong lumpings. In particular, the present paper is the first to study weak lumpings of the left-invariant random walk to left cosets; 
as our  extended examples in \S\ref{subsec:shuffles} and \S\ref{subsec: abelian example} show,
the theory we develop is broadly applicable.
We hope this paper will lead to further interesting examples of weak lumping.

\subsection{Outline}
The outline of this paper is as follows.
\begin{itemize}
\item In \S\ref{sec: probabilityPreliminaries} 
we present the necessary background from probability theory. In particular 
Theorem~\ref{thm: GL characterisation of WL} and
Corollary~\ref{cor: GL certificate theorem} give a short self-contained
proof of a characterisation of weak lumping due to Gurvits and Ledoux \cite{GL}.
\item In \S\ref{sec:algebraicPreliminaries} we give the necessary background from representation theory
including a primer on induced representations and the Wedderburn decomposition.
\item In \S\ref{sec:doubleCosets} we collect basic results on the double coset decomposition of groups
including a special case of Mackey's restriction formula.
\item In \S\ref{sec: a characterisation of WL} we use Theorem~\ref{thm: GL characterisation of WL}
to show that weak lumping of the right-invariant random walk is controlled by the behaviour of left ideals
of~$\C[G]$ of the form $\C[G]e$ where $e \in \Eb{H}$ and to prove
Theorem~\ref{thm:mainGL}, Corollary~\ref{cor:GLmodules},
Corollary~\ref{cor:LTest} and Theorem~\ref{thm: main testDist}.
\item In \S\ref{sec: tests} we give efficient computational algorithms
for the tests in Corollary~\ref{cor:LTest} and Theorem~\ref{thm: main testDist}, illustrated by the running
example in~\S\ref{subsec:shuffles}.
\item In \S\ref{sec: all WL weights} we make a detailed study of the weak lumping algebras $\Theta(e)$ 
defined in~\eqref{eq:Theta}
and prove Proposition~\ref{prop: irredundant}.
\item In \S\ref{sec:real} we prove Corollary~\ref{cor:real}, which
shows that only idempotents with real coefficients need to be considered
in Theorem~\ref{thm:mainGL} to obtain all weakly lumping weights.
\item In \S\ref{sec: proof of mainTransitionMatrices} we prove Theorem~\ref{thm:mainTransitionMatrices}.
\item In \S\ref{sec: time reversal} we state and prove Theorem~\ref{thm:DualityForMarkovChainLumping}, a new
result relating duality and time reversal for weak lumping of general finite time-homogeneous Markov chains, and use it to prove Theorem~\ref{thm:mainTimeReversal} and Corollary~\ref{cor:strongExact}.
\item In \S\ref{sec:real} we prove Corollary~\ref{cor:real}, which
shows that only idempotents with real coefficients need to be considered
in Theorem~\ref{thm:mainGL} to obtain all weakly lumping weights.
\item In \S\ref{sec:interpolating} we prove Proposition~\ref{prop:interpolating}; this generalizes part of the
shuffling example in \S\ref{subsec:shuffles} and gives a rich supply of left ideals for which the
left-invariant random walk lumps stably.
\item In \S\ref{sec:ThetaCharacterisation} we show how to interpret the conditions in Theorem~\ref{thm:mainGL}
and our other main results working double coset by double coset.
\item In \S\ref{sec:abelian} we finish by applying the results of the previous section to the case where
$H$ is abelian, obtaining a very sharp description of the weakly lumping weights in 
Proposition~\ref{prop:abelianThetaPerp} and proving Corollary~\ref{cor:abelian}.
 We finish with an extended example in which $G = \Sym_4$ and $H
= \langle (1,2,3,4) \rangle$.
\end{itemize}

\section{Background from Markov chain theory}
\label{sec: probabilityPreliminaries}

In this paper we are 
concerned with \emph{discrete time-homogeneous Markov chains} (DTHMCs) with finite state spaces.  
A DTHMC is specified by a state space $A$, a probability measure $\mu$ on $A$,  and a stochastic matrix $P$  whose rows and columns are indexed by the elements of $A$. We call $\mu$ the \emph{initial distribution}, $P$ the 
\emph{transition matrix}, and we denote this DTHMC by $X = \MC(\mu,P)$. It is a random variable $X = \left(X_t\right)_{t \in \mathbb{N}_0}$ with the properties that
\[\P[X_0 = x] = \mu(x), \quad \text{for each $x \in A$,}\]
and for any  $t \ge 1$ and any sequence $x_0, \dots, x_t \in A$,
\[
    \P[
    X_0=x_0,\dots,X_t=x_t
    ]
    = 
    \mu(x_0)
    P(x_0,x_1)
    \dots 
    P(x_{t-1},x_t)
    .
\]
It follows from this definition that for any $t \ge 0$, any $y \in A$, and any sequence $x_0, \dots, x_t$ such that $\P[X_0 = x_0, \dots, X_t = x_t] > 0$,  we have
\[
\P[X_{t+1} = y \mid X_0 = x_0, \dots, X_t = x_t] = \P[X_{t+1} = y \mid X_t = x_t] = P(x_t, y).
\]

In a predecessor to this paper, Britnell and Wildon \cite{BW}, the term `Markov chain' refers to a transition matrix, with the initial distribution left unspecified. This is how the term was used in the older probability literature, for example Kemeny and Snell \cite{KS}. In the recent probability literature, a `Markov chain' more often means a stochastic process with a countable state space. This is the usage here. 

We  follow the usual convention of probabilists 
that transition matrices act on the right on row vectors. Thus the matrix entry $P(x,y)$ or $P_{x,y}$ stands for the conditional probability that our Markov chain steps next to $y$ given that it is currently at $x$. For example, the left-invariant random walk driven by a weight $w$
has transition matrix  $P(x, y) = w(x^{-1}y)/w(G)$. If $w(G) = 1$ we say the weight is \emph{normalized}. The statement that a probability distribution $\mu$ is \emph{stationary} for $P$ is written algebraically as $\mu P = \mu$. When $\mu$ is stationary for $P$ and $X = \MC(\mu,P)$, we have for each $t \ge 0$ that $X_t \sim \mu$, meaning that $X_t$ is distributed according to $\mu$.

When $X = (X_t)_{t \geq 0}$ is a DTHMC with state space $A$ and $f: A~\to~B$ is a function, we will write $f(X)$ to mean the discrete time stochastic process $\left(f(X_t)\right)_{t \geq 0}$.  The process $f(X)$ is \emph{typically not a Markov chain}. This is neatly illustrated by the deterministic bottom-to-top shuffle of a deck of $n \ge 3$ cards that is initially uniformly distributed among the $n!$ possible orders. It is also possible for $f(X)$ to be a Markov chain without being time-homogeneous, as the following example shows.

\begin{example}[Inhomogeneous Markovian image process]\label{periodic example} Let $n > 2$ and let
$G = \langle \sigma, \tau \mid \sigma^n, \tau^2,  (\sigma\tau)^2 = 1\rangle$ be the dihedral group of order $2n$, 
as seen in \S\ref{subsec:whyIrreducible} in the case $n=5$.  
Let $(X_t)_{t \ge 0}$ be the left-invariant random walk on $G$ driven by the weight $w = \frac{1}{2}(\tau + \sigma \tau)$, with initial distribution $\alpha$. This weight is irreducible, but the transition matrix of the walk is periodic. Let~$H$ be the order $2$ subgroup $\langle \tau\rangle$. We get an inhomogeneous Markov process $(X_t H)_{t \ge 0}$ on $G/H$ if the initial distribution $\alpha$ is either concentrated on $\langle \sigma \rangle$ or concentrated on $\langle \sigma \rangle \tau$. In either case, the transition matrices of $(X_t H)_{t \ge 0}$ form an alternating sequence. For any other
initial distribution $\alpha$, the process $(X_t H)_{t \ge 0}$ is not a Markov chain.
\end{example}

\begin{definition}[Weak lumping with a given initial distribution]
    When $X = \MC(\mu,P)$ and $f(X)$ is a DTHMC, we say that $X$ \emph{lumps weakly} 
    under $f$, and we say that $P$ \emph{lumps weakly under $f$ starting in distribution} $\mu$.
\end{definition} 
\begin{definition}[Weak lumpability]
    A transition matrix $P$ on a state space $A$ is \emph{weakly lumpable under $f$} if there exists an initial distribution $\mu$ on $A$ such that $f(\MC(\mu,P))$ is a DTHMC.
\end{definition}
If the map $f$ is understood in the context, for example, when it is the quotient map from $A = G$ to $B = G/H$, we may also talk about weak lumping \emph{to} $B$ or \emph{on} $B$ instead of weak lumping under $f$. We avoid using the terms \emph{lumping}, \emph{lumpable}, and \emph{lumpability} on their own, because in some of the literature
(for instance \cite{BW}) this has been used to refer to the concept of strong lumping
in the sense of Definition~\ref{defn:strongLumping} below.

\subsection{The Gurvits--Ledoux characterisation of weak lumping of time homogeneous
Markov chains}\label{SS:GL}

Gurvits and Ledoux \cite{GL} characterised weak lumping of DTHMCs with finite state spaces using linear algebra. 
They also considered higher order Markov chains and hidden Markov chains with probabilistic output functions. In their notation, transition matrices act on the left on column vectors. To establish our preferred notation and to keep our paper self-contained, we now give a rapid exposition of some results from \cite{GL} which we will use in our study of weak lumping of left-invariant random walks from $G$ to $G/H$. We also introduce a new notion of \emph{stable} weak lumping and develop some of its properties.

Let $A$ and $B$ be finite sets and $f : A\to B$ a surjective function. Then $A$ is partitioned into the non-empty \emph{lumps} $f^{-1}(b)$ for $b \in B$. 
The linear map $F: \mathbb{R}^A \to \mathbb{R}^B$ induced by $f$ is
defined on the canonical bases of $\R^A$ and $\R^B$ by 
$F(e_a) = e_{f(a)}$. Acting on the right on row vectors,~$F$ is represented by the matrix defined by
$F_{a, b} = 1$ if $f(a) = b$ and $0$ otherwise. 
For any vector subspace $V$ of $\mathbb{R}^A$, we define 
\begin{equation}\label{eq:VcircDefn}
    V^\circ = V \cap \ker F.
\end{equation}
The space $V^\circ$ consists of those vectors in $V$ such that the sum of the coordinates over the lump $f^{-1}(b)$ vanishes for each $b \in B$.
We also define linear endomorphisms of $\mathbb{R}^A$ which act on the right on standard basis vectors ($e_a$ for $a \in A$) by 
\[
    e_a\Pi_b = 
    \begin{cases} 
        e_a & \text{if $f(a) = b$,} \\ 
        0   & \text{otherwise.}
    \end{cases}
\]
Thus $\Pi_b$ is the projection onto the direct summand indexed by $b$ in the direct sum
\[
    \mathbb{R}^A = \bigoplus_{b \in B} \mathbb{R}^{f^{-1}(b)}.
\]
Since we are using row vectors, the matrices $P$, $F$ and $\Pi_b$ act on the right. When $V$ is a linear subspace of $\mathbb{R}^A$ and $M$ is a linear map with domain $\mathbb{R}^A$, we write 
\[
    VM = \{vM: v \in V\}.
\]
We also use the above notations adapted to complex vector spaces.   

\begin{definition}\label{def: GL space} If $V$ is a real linear subspace of $\R^A$ or a complex linear subspace of $\C^A$ then we call $V$ a \emph{Gurvits--Ledoux space} if it satisfies $VP \subseteq V$ and $V\Pi_b \subseteq V$ for every $b \in B$.
\end{definition}
Every Gurvits--Ledoux space $V$ satisfies $V = \bigoplus_{b \in B} V\Pi_b$.
\begin{definition}\label{def: minimal GL space}
    For $P\in\Mat(\R^A)$ a stochastic matrix and $\alpha \in \R^A$ a probability distribution, let $V(f,P,\alpha)$ be the minimal vector space $V \subseteq \mathbb{R}^A$ such that
    \begin{defnlist} 
        \item[(a)] $\alpha \in V$,
        \item[(b)] $V P \subseteq V$,
        \item[(c)] $V\Pi_{b} \subseteq V$ for all $b\in B$.
    \end{defnlist}    
\end{definition}
This definition makes sense because each of conditions (a)--(c) is closed under intersection and $V = \R^A$ satisfies (a)--(c). Note that $V(f,P,\alpha)$ is the minimal real Gurvits--Ledoux space containing $\alpha$. It is straightforward to check that the minimal complex Gurvits--Ledoux space containing $\alpha$ is $\C \otimes_\R V(f,P,\alpha)$. 

Readers who wish to compare our exposition with that of Gurvits and Ledoux \cite{GL} will find that they take $A = \{1, \dots, N\}$ and $B = \{1, \dots, M\}$, their map $\varphi$ is our $f$, and the space we have called $V(f,P,\alpha)$ is called $\mathcal{CS}(\alpha,\Pi_.,P)$ there. We shall not comment further on the translation between \cite{GL} and our notation, but simply state their results in our notation. 

 We have $\ker F = \bigoplus_{b \in B} (\ker F)\Pi_b$, so 
\begin{equation}\label{eq: Vcirc is a direct sum}
    V(f,P,\alpha)^\circ 
    =
    \bigoplus_{b \in B} \bigl( (V(f,P,\alpha)\Pi_b) \cap \ker F \bigr)
    =
    \bigoplus_{b \in B} V(f,P,\alpha)^\circ \,\Pi_b.
\end{equation}
Considering $X = \MC(\alpha, P)$, we say that a finite sequence $(b_0, \dots, b_t)$ of elements of $B$ is 
an \emph{$\alpha$-possible sequence} if $\P[f(X_0) = b_0, \dots, f(X_t) = b_t] > 0$.  For any such sequence, let $C(\alpha;b_0, \dots, b_t)$ be the conditional distribution of $X_t$ given $f(X_0) = b_0, \dots, f(X_t) = b_t$, thought of as a row vector in $\R^A$.
The vector space $V(f,P,\alpha)$  is the minimal linear subspace of $\R^A$ that contains $C(\alpha; b_0, \dots, b_t)$ for every $\alpha$-possible sequence $(b_0, \dots, b_t)$.  

Notice that $f(X)$ is a time-homogeneous Markov chain if and only if the conditional distribution of $f(X_{t+1})$ given $f(X_0) = b_0, \dots, f(X_t) = b_t$ depends only on the value $b_t$,
and is given by the same function for all times $t$.  In other words, $f(X)$ is a time-homogeneous Markov chain if and only if 
\[
    C(\alpha; b_0, \dots, b_t)PF = C(\alpha; b_0', \dots, b'_{t'})PF
\]
whenever $(b_0, \dots, b_t)$ and $(b_0', \dots, b'_{t'})$ are $\alpha$-possible sequences and~$b_t = b'_{t'}$. 
For each $b \in B$, let
\[
    S(\alpha,b) = \bigl\{C(\alpha; b_0, \dots, b_t): t \ge 0,\, b_t = b, \,(b_0, \dots, b_t) \;\text{$\alpha$-possible}\bigr\}.
\]
Observe that 
\[
    V(f,P,\alpha)\Pi_b 
    = 
    \bigl\langle S(\alpha,b)\bigr\rangle.
\]
where, as ever, angled brackets denote the linear span of a set of vectors.  Hence
\[
    V(f,P,\alpha)^\circ\, \Pi_b = V(f,P,\alpha)\Pi_b \cap \ker F  = \bigl\langle \mu - \nu : \mu, \nu \in S(\alpha,b)\bigr\rangle.
\]
We see that the following are equivalent: 
\begin{itemize}
    \item $f(X)$ is a time-homogeneous Markov chain,
    \item for all $b \in B$, and for all $\mu, \nu \in S(\alpha, b)$, we have $(\mu-\nu)PF = 0$, 
    \item for all $b \in B$, we have $V(f,P,\alpha)^\circ\, \Pi_b PF = \{0\}$, and
    \item $V(f,P,\alpha)^\circ PF = 0$.
\end{itemize}
The third and fourth conditions above are equivalent by~\eqref{eq: Vcirc is a direct sum}. Finally, 
\[
    V(f,P,\alpha)^\circ P \subseteq V(f,P,\alpha)P \subseteq V(f,P,\alpha),
\]
so we have 
\[
    V(f,P,\alpha)^\circ PF = 0 
    \iff
    V(f,P,\alpha)^\circ P \subseteq V(f,P,\alpha)^\circ.
\]
We have proved the following result of Gurvits and Ledoux.

\begin{theorem}[Gurvits and Ledoux {\cite[Corollary 9]{GL}}]{\,}\label{thm: GL characterisation of WL}
The Markov chain $\MC(\alpha,P)$ lumps weakly under $f$ if and only if
     $V(f,P,\alpha)^\circ P \subseteq V(f,P,\alpha)^\circ$.
\end{theorem}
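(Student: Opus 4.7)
The plan is to translate the Markov property for the image process $f(X)$ into a purely linear-algebraic condition on the finite-dimensional space $V(f,P,\alpha)$, following the reformulation laid out in the text preceding the statement. I would proceed via a chain of equivalences.

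First, I would rewrite when $f(X)$ is a time-homogeneous Markov chain entirely in terms of the conditional distributions $C(\alpha;b_0,\ldots,b_t)$. Since $\P[f(X_{t+1}) = c \mid f(X_0) = b_0, \ldots, f(X_t) = b_t]$ equals the $c$-coordinate of $C(\alpha;b_0,\ldots,b_t)PF$, the Markov property holds precisely when $(\mu-\nu)PF = 0$ for every $b \in B$ and every $\mu,\nu \in S(\alpha,b)$.

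Second, I would identify $V(f,P,\alpha)\Pi_b$ with the linear span $\langle S(\alpha,b)\rangle$. The inclusion $\langle S(\alpha,b)\rangle \subseteq V(f,P,\alpha)\Pi_b$ I would prove by induction on the length of the conditioning sequence: the base case is $C(\alpha;b_0) \propto \alpha\Pi_{b_0}$, which lies in $V(f,P,\alpha)\Pi_{b_0}$ by conditions (a) and (c) of Definition~\ref{def: minimal GL space}; the inductive step uses $C(\alpha;b_0,\ldots,b_{t+1}) \propto C(\alpha;b_0,\ldots,b_t)P\Pi_{b_{t+1}}$ together with conditions (b) and (c). The reverse inclusion follows from minimality: one checks that $\bigoplus_b \langle S(\alpha,b)\rangle$ contains $\alpha = \sum_{b \in B} \P[f(X_0)=b]\,C(\alpha;b)$ and is stable under $P$ and under each $\Pi_b$, so must contain $V(f,P,\alpha)$.

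Taking differences of vectors in $S(\alpha,b)$ and applying the direct sum decomposition~\eqref{eq: Vcirc is a direct sum}, the Markov condition rearranges to $V(f,P,\alpha)^\circ PF = 0$. Finally, since $V(f,P,\alpha)P \subseteq V(f,P,\alpha)$ by condition (b), we have $V(f,P,\alpha)^\circ P \subseteq V(f,P,\alpha)$, and within this ambient space the kernel of $F$ coincides with $V(f,P,\alpha)^\circ$. Hence $V(f,P,\alpha)^\circ PF = 0$ is equivalent to $V(f,P,\alpha)^\circ P \subseteq V(f,P,\alpha)^\circ$, the desired criterion. The main obstacle will be the identification $V(f,P,\alpha)\Pi_b = \langle S(\alpha,b)\rangle$: it hinges on verifying compatibility of the recursive construction of the conditional distributions with both the stability conditions defining $V(f,P,\alpha)$ and the normalisation constants, which are nonzero exactly along $\alpha$-possible sequences; the remainder of the argument is then routine manipulation of subspaces.
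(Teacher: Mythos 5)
Your proposal is correct and follows essentially the same route as the paper, which proves this statement in the exposition immediately preceding it: rewrite the Markov property of $f(X)$ via the conditional distributions $C(\alpha;b_0,\ldots,b_t)$, identify $V(f,P,\alpha)\Pi_b = \langle S(\alpha,b)\rangle$ so that the condition becomes $V(f,P,\alpha)^\circ PF = 0$, and then use $V(f,P,\alpha)^\circ P \subseteq V(f,P,\alpha)$ to convert this into the stated inclusion. The only difference is that you spell out (by induction and a minimality argument) the identification $V(f,P,\alpha)\Pi_b = \langle S(\alpha,b)\rangle$, which the paper records as an observation.
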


Now consider some non-empty set $\mathcal{A} \subset \R^A$ of probability vectors. We ask whether the image process $f(\MC(\alpha,P))$ is a DTHMC for every $\alpha \in \mathcal{A}$, with a transition matrix that does not depend on the choice of $\alpha$ from $\mathcal{A}$.

\begin{definition}
For $P\in\Mat(\R^A)$ a stochastic matrix and $\mathcal{A} \subset \mathbb{R}^A$ a non-empty set of probability vectors, $V(f,P,\mathcal{A}) = \sum_{\alpha \in \mathcal{A}} V(f,P,\alpha)$.
\end{definition}
It is straightforward to see that $V(f,P,\mathcal{A})$ is the minimal Gurvits--Ledoux subspace of $ \mathbb{R}^A$ that contains every element of $\mathcal{A}$.
\begin{theorem}[Gurvits and Ledoux {(\cite[Corollary 11]{GL})} ]
\label{thm: GL same image transition matrix}
For a non-empty set $\mathcal{A}$ of probability vectors in $\R^A$, the following are equivalent:
\begin{thmlistEL}\item[\emph{(a)}] there exists a stochastic matrix $Q$ from $B$ to $B$ such that for every $\alpha \in \mathcal{A}$, $f(\MC(\alpha,P))$ is a time-homogeneous Markov chain with transition matrix $Q$, 
 \item[\emph{(b)}] $V(f,P,\mathcal{A})^\circ P \subseteq V(f,P,\mathcal{A})^\circ$, 
\item[\emph{(c)}] $V(f,P,\mathcal{A})^\circ P F = 0$.
\end{thmlistEL}
\end{theorem}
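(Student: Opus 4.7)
The plan is to adapt the argument leading up to Theorem~\ref{thm: GL characterisation of WL} to a whole family of initial distributions, exploiting the fact that $V(f,P,\mathcal{A})$ is itself a Gurvits--Ledoux space (namely, the minimal one containing every element of $\mathcal{A}$). First I would dispatch the equivalence (b)$\iff$(c) exactly as in the single-distribution case: since $V(f,P,\mathcal{A})P \subseteq V(f,P,\mathcal{A})$ and $V(f,P,\mathcal{A})^\circ = V(f,P,\mathcal{A})\cap\ker F$, the inclusion $V(f,P,\mathcal{A})^\circ P \subseteq V(f,P,\mathcal{A})^\circ$ is equivalent to $V(f,P,\mathcal{A})^\circ P \subseteq \ker F$, which is just (c).

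Next I would generalize the description of the coordinate pieces $V(f,P,\alpha)\Pi_b$ used in the proof of Theorem~\ref{thm: GL characterisation of WL}. Setting $S(\mathcal{A},b) = \bigcup_{\alpha\in\mathcal{A}} S(\alpha,b)$, the characterisation of $V(f,P,\alpha)$ as the span of the conditional distributions $C(\alpha;b_0,\dots,b_t)$ gives
\[
    V(f,P,\mathcal{A})\Pi_b = \langle S(\mathcal{A},b)\rangle,
\]
and the analogue of~\eqref{eq: Vcirc is a direct sum} yields
\[
    V(f,P,\mathcal{A})^\circ \Pi_b = \langle \mu - \nu : \mu,\nu\in S(\mathcal{A},b)\rangle,
\]
so that $V(f,P,\mathcal{A})^\circ = \bigoplus_{b\in B} V(f,P,\mathcal{A})^\circ\Pi_b$.

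For the implication (a)$\Rightarrow$(c), I would observe that under (a), every $\mu\in S(\mathcal{A},b)$ satisfies $\mu P F = e_b Q$, so $(\mu-\nu)PF = 0$ for all $\mu,\nu \in S(\mathcal{A},b)$. Summing over $b$ using the direct sum decomposition above gives $V(f,P,\mathcal{A})^\circ PF = 0$. For the converse (c)$\Rightarrow$(a), condition (c) forces all elements of $S(\mathcal{A},b)$ to share a common image $q_b \in \R^B$ under $PF$; define $Q$ by $e_b Q = q_b$. Since each $\mu \in S(\mathcal{A},b)$ is a probability vector on $A$ and $PF$ sends probability vectors on $A$ to probability vectors on $B$, the matrix $Q$ is stochastic. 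For every $\alpha \in \mathcal{A}$ and every $\alpha$-possible sequence $(b_0,\dots,b_t)$ with $b_t = b$ we then have $C(\alpha;b_0,\dots,b_t)PF = e_b Q$, which is the statement that $f(\MC(\alpha,P))$ is a time-homogeneous Markov chain with transition matrix $Q$, uniformly in $\alpha \in \mathcal{A}$.

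There is no real obstacle here beyond bookkeeping: the only subtlety is that the common transition matrix $Q$ in (a) is not given a priori, so one must verify in the (c)$\Rightarrow$(a) direction that the candidate rows $q_b$ are well-defined (which is the content of (c) combined with the spanning description of $V(f,P,\mathcal{A})^\circ\Pi_b$) and that they assemble into a genuine stochastic matrix, independent of $\alpha$. Both points are immediate once the decomposition $V(f,P,\mathcal{A})^\circ = \bigoplus_b V(f,P,\mathcal{A})^\circ\Pi_b$ is in place.
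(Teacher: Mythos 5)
Your proposal is correct. The equivalence (b)$\iff$(c) and the direction (a)$\Rightarrow$(c) are essentially what the paper does: the paper packages the latter as $V(f,P,\mathcal{A})\subseteq\ker(PF-FQ)$ and then uses $V^\circ F=0$, which is the same computation as your observation that every $\mu\in S(\mathcal{A},b)$ has $\mu PF=e_bQ$, so differences are killed by $PF$. Where you genuinely diverge is (c)$\Rightarrow$(a). The paper first applies the single-distribution criterion (Theorem~\ref{thm: GL characterisation of WL}) to each $\alpha\in\mathcal{A}$ to get matrices $Q^\alpha$, and then proves the $y$-rows of $Q^\alpha$ and $Q^\beta$ coincide by exhibiting the specific element $w(f^{-1}(y))v-v(f^{-1}(y))w\in V(f,P,\mathcal{A})^\circ$ built from $v=\alpha P^m\Pi_y$ and $w=\beta P^n\Pi_y$. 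You instead rerun the pre-Theorem~\ref{thm: GL characterisation of WL} analysis uniformly over $\mathcal{A}$: pooling $S(\mathcal{A},b)=\bigcup_\alpha S(\alpha,b)$, using the analogue of \eqref{eq: Vcirc is a direct sum} (valid since $V(f,P,\mathcal{A})$ is a Gurvits--Ledoux space) to see that (c) forces a common image $q_b=\mu PF$ for all $\mu\in S(\mathcal{A},b)$, and assembling $Q$ directly. This buys a single construction of $Q$ with no matching step and no appeal to the earlier theorem; the paper's route buys a reduction to an already-proved result at the cost of the extra row-matching argument. Both are legitimate and of comparable length.

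One small point to tidy: your construction only defines the row $q_b$ when $S(\mathcal{A},b)\neq\emptyset$, i.e.\ when some chain $\MC(\alpha,P)$, $\alpha\in\mathcal{A}$, visits the lump $f^{-1}(b)$ with positive probability. For lumps that are never visited you should declare the corresponding row of $Q$ to be an arbitrary probability vector; this does not affect the verification of (a), and it matches the paper's remark after Theorem~\ref{thm: GL same image transition matrix} that $Q$ need not be uniquely determined for exactly this reason.
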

\begin{proof}
Since $V(f,P,\mathcal{A})P \subseteq V(f,P,\mathcal{A})$, (b) and (c) are equivalent. Condition (a) implies that $V(f,P,\alpha) \subseteq \ker(PF - FQ)$ for each $\alpha \in \mathcal{A}$, so $V(f,P,\mathcal{A}) \subseteq \ker(PF-FQ)$. Since $V(f,P,\mathcal{A})^\circ F = 0$ by definition, we deduce $V(f,P,\mathcal{A})^\circ PF = 0$. Thus (a) implies $(c)$.  Now assume (c) and let $\alpha \in \mathcal{A}$. We have $V(f,P,\alpha)^\circ \subseteq V(f,P,\mathcal{A}
)^\circ$ so $V(f,P,\alpha)^\circ PF = 0$ and $V(f,P,\alpha)^\circ P \subseteq V(f,P,\alpha)^\circ$ and, by Theorem~\ref{thm: GL characterisation of WL}, $\MC(\alpha,P)$ lumps weakly under $f$. Let $Q^\alpha$ be the transition matrix of $f(\MC(\alpha,P))$, which is defined only on the set 
$$\{f(x): (\alpha P^m)_x > 0 \text{ for some } m \ge 0\}.$$  
To complete the proof that (c) implies (a) we must show that the $y$-rows of $Q^\alpha$ and $Q^\beta$ agree, for any $\alpha, \beta \in \mathcal{A}$ and any $y \in B$ such that row $y$ is defined for both $Q^\alpha$ and $Q^\beta$. Choose $m \ge 0$ such that $v = \alpha P^m \Pi_y \neq 0$, and $n \ge 0$ such that $w = \beta P^n \Pi_y \neq 0$. Then $v$ and $w$ are both supported on the lump $f^{-1}(y)$, and $w(f^{-1}(y))v - v(f^{-1}(y)) w \in V(f,P,\mathcal{A})^\circ$, so 
\begin{eqnarray*}0 &=& \bigl(  w\bigl(f^{-1}(y)\bigr)v - v\bigl(f^{-1}(y)\bigr) w\bigr) PF 
\\ &=& w\bigl( f^{-1}(y) \bigr) vFQ^\alpha - v\bigl( f^{-1}(y)\bigr) w F Q^\beta \\ &=& w\bigl(f^{-1}(y)\bigr)v
\bigl(f^{-1}(y)\bigr) e_y (Q^\alpha - Q^\beta).\end{eqnarray*}
Hence the $y$-rows of $Q^\alpha$ and $Q^\beta$ coincide. It follows that (c) implies (a).
\end{proof}

The matrix $Q$ in the above theorem may not be uniquely determined, because there may be some element $b \in B$ such that for any $\alpha \in \mathcal{A}$ the Markov chain $\MC(\alpha,P)$ never visits $f^{-1}(b)$. 

We shall also make use of the following consequences of Theorem~\ref{thm: GL same image transition matrix}.
\begin{corollary}[Algorithm to test for weak lumping of $\MC(\alpha,P)$ under $f$]\label{cor: algorithmic weak lumping test}
Begin by setting $V_0 = \langle \alpha \Pi_b: b \in B \rangle$. Then, inductively for $\tn = 0,1,2,\dots$, define
$$V_{\tn+1} = V_{\tn} + \sum_{b \in B} V_{\tn}P\Pi_b.$$
Let $k = \inf \{ \tn \ge 0: V_{\tn+1} = V_\tn\}$. Then $k \le |A|-1$, $V_n = V_k$ for all $\tn > k$, and 
$V_k = V(f,P,\alpha)$. Hence $\MC(\alpha,P)$ lumps weakly under $f$ if and only if $V_k^\circ P F = \{0\}$.  
\end{corollary}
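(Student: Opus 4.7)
The plan is to verify three things about the sequence $(V_\tn)$: that it is an ascending chain stabilizing within $|A|$ steps, that its eventual value $V_k$ satisfies the three defining properties of the minimal Gurvits--Ledoux space $V(f,P,\alpha)$, and that every Gurvits--Ledoux space containing $\alpha$ must contain every $V_\tn$. The final assertion about weak lumping is then immediate from Theorem~\ref{thm: GL characterisation of WL}.

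The inductive core is to show that each $V_\tn$ is stable under every projector $\Pi_b$. For $\tn=0$ this holds because the $\Pi_b$ are pairwise orthogonal idempotents summing to the identity, and $V_0 = \langle \alpha \Pi_b : b \in B\rangle$ visibly satisfies $V_0 \Pi_{b'} = \langle \alpha \Pi_{b'}\rangle \subseteq V_0$. For the inductive step, an arbitrary element of $V_\tn P \Pi_b$ is fixed by $\Pi_b$ and annihilated by $\Pi_{b'}$ for $b' \ne b$, using $\Pi_b \Pi_{b'} = \delta_{bb'}\Pi_b$. Combined with $V_\tn \Pi_{b'} \subseteq V_\tn$, this gives $V_{\tn+1}\Pi_{b'} \subseteq V_{\tn+1}$ for every $b'$.

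Next I would note that the chain is ascending by construction, and as soon as $V_{\tn+1} = V_\tn$ the same recursion immediately forces $V_{\tn+2} = V_{\tn+1}$, so the sequence is constant beyond index $k$. The strict inclusions $V_0 \subsetneq V_1 \subsetneq \cdots \subsetneq V_k$ each raise dimension by at least $1$; since $\alpha$ is a probability vector, $\alpha \ne 0$ and hence $\dim V_0 \ge 1$, while $\dim V_k \le |A|$, yielding $k \le |A|-1$.

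Then I would show $V_k = V(f,P,\alpha)$ by double inclusion. For $V(f,P,\alpha) \subseteq V_k$, I check that $V_k$ meets the three conditions (a)--(c) of Definition~\ref{def: minimal GL space}: condition (a) holds because $\alpha = \sum_b \alpha\Pi_b \in V_0 \subseteq V_k$; condition (c) is the $\tn=k$ case of the inductive step above; and for condition (b), any $v \in V_k$ satisfies $vP = \sum_b vP\Pi_b$, a sum of elements of $V_k P\Pi_b \subseteq V_{k+1} = V_k$. For the reverse inclusion $V_\tn \subseteq V(f,P,\alpha)$, I argue by induction on $\tn$: the base case uses $\Pi_b$-invariance of $V(f,P,\alpha)$ together with $\alpha \in V(f,P,\alpha)$, and the inductive step applies $P$- and $\Pi_b$-invariance to conclude $V_\tn P\Pi_b \subseteq V(f,P,\alpha)$. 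Finally, the last sentence of the corollary follows from Theorem~\ref{thm: GL characterisation of WL} together with the already-established fact that $V_k P \subseteq V_k$, which converts the condition $V(f,P,\alpha)^\circ P \subseteq V(f,P,\alpha)^\circ$ into the equivalent equation $V_k^\circ P F = \{0\}$.

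The only subtle point, I think, is bookkeeping the inductive proof that every $V_\tn$ is $\Pi_b$-invariant; this invariance is what allows the algorithm to stabilize to a genuine Gurvits--Ledoux space rather than to some larger closure. The rest is straightforward linear algebra.
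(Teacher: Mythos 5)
Your proposal is correct and follows essentially the same route as the paper's proof: a dimension count for $k \le |A|-1$, verification that $V_k$ satisfies conditions (a)--(c) of Definition~\ref{def: minimal GL space} (using $\Pi_b$-invariance of each $V_\tn$ and $V_kP \subseteq \sum_b V_kP\Pi_b \subseteq V_{k+1} = V_k$), minimality by induction on $\tn$, and the final equivalence via Theorem~\ref{thm: GL characterisation of WL}. You merely spell out the $\Pi_b$-invariance induction that the paper treats as immediate, which is a fair point of care but not a different argument.
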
 
\begin{proof}
 The assertion that $k \le |A|-1$ follows from the fact that $\dim V_0 \ge 1$, $V_{\tn+1} \supseteq V_\tn$ and therefore $ \dim V_{\tn+1} \ge \dim V_\tn$, for each $\tn \ge 0$, and $V_\tn \subseteq \mathbb{R}^A$ so $\dim V_\tn \le |A|$ for all $\tn$. 
 So $$k = \inf \{\tn \ge 0: \dim V_{\tn+1} = \dim V_\tn\} \le |A|- \dim V_0 \le |A| - 1.$$
 From the inductive definition of $V_{\tn+1}$ in terms of $V_\tn$ it is clear that $V_{k+1} = V_k$ implies $V_\tn = V_k$ for all $\tn > k$.  Each $V_\tn$ is the direct sum of its $\Pi_b$-projections, and each $V_\tn$ contains $\alpha$. Moreover, $$V_k P \subseteq \sum V_k P \Pi_b \subseteq V_{k+1} = V_k$$ so $V = V_k$ is a vector space satisfying conditions (a)--(c) of Definition~\ref{def: minimal GL space}. To show that $V_k$ is the minimal such space, check by induction on $\tn$ that any such $V$ must contain $V_\tn$ for every $\tn \ge 0$, and in particular $V \supseteq V_k$.  
\end{proof}
\begin{definition}[Stable lumping]\label{def: stable} Let $P\in\Mat(\R^A)$ be a stochastic matrix and let $V$ be any real vector subspace of $\R^A$ or complex vector subspace of $\mathbb{C}^A$ such that
    \begin{thmlist}
        \item[(a)] $V$ contains at least one probability vector.
        \item[(b)] $V P \subseteq V$,
        \item[(c)] $V\Pi_{b} \subseteq V$ for all $b\in B$,
        \item[(d)] $V^\circ P F = 0$ or equivalently $V^\circ P \subseteq V^\circ$, where $V^\circ := V \cap \ker F$.
    \end{thmlist}
     Then we say that $P$ \emph{lumps weakly under $f$ with stable space $V$}, or more briefly that $P$ \emph{lumps stably for} $V$. 
\end{definition}
If $\MC(\alpha,P)$ lumps weakly under $f$ then $P$ lumps stably for the real vector space $V(f,P,\alpha)$. The same matrix $P$ may also lump stably for other spaces $V \subseteq \R^A$ such that $V(f,P,\alpha) \subseteq V$. If~$P$ lumps stably for a complex vector space $V \subseteq \C^A$ then $P$ also lumps stably for the real vector space $\mathbb{R}^A \cap V$. 
On the other hand, if $P$ lumps stably for a real vector space $V \subseteq \R^A$ then~$P$ also lumps stably for the complexification $V \otimes_{\R} \C$. Our reason for considering complex vector spaces in this paper is that,
as remarked in the introduction,
 we wish to make use of results in representation theory that hold for the group algebra $\C[G]$, but do not hold
  for $\R[G]$,
 because~$\mathbb{R}$ is not algebraically closed. 

\begin{remark}\label{rem: stable spaces and stable ideals} We show below (see Proposition~\ref{prop:lumpsStably}) that Definition~\ref{def: stable} is compatible with Definition~\ref{defn:lumpsStably} in the case where $P = P_w$ is the transition matrix of a left-invariant random walk on a finite group $G$ driven by a weight $w$, $f$ is the natural map $G \rightarrow G/H$, and the space $V \subseteq \mathbb{C}^G$ is a left ideal $\C[G]e$ of $\C[G]$, where $e \in \Eb{H}$. That is, $w$ lumps stably for $\C[G]e$ in the sense of Definition~\ref{defn:lumpsStably} if and only if $P_w$ lumps stably for $V$ in the sense of Definition~\ref{def: stable}. 
\end{remark}

The next result is a corollary of Theorem~\ref{thm:  GL characterisation of WL}; a partial converse to it is Lemma~\ref{lem:probcharStable}.
\begin{corollary}[Stable spaces as certificates of weak lumping]\label{cor: GL certificate theorem}{\,}
Suppose $P$ lumps weakly under $f$ with (real or complex) stable space $V$, and let $\alpha$ be any probability vector in $V$. Then $X = \MC(\alpha,P)$ lumps weakly under $f$. Moreover, for each $t \ge 0$ the conditional distribution of $X_t$ given $f(X_0), \dots, f(X_t)$ always lies in $V$. 
\end{corollary}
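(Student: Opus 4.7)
The plan is to deduce this corollary directly from Theorem~\ref{thm: GL characterisation of WL} together with the minimality property defining $V(f,P,\alpha)$. The key observation is that the stable space $V$ in the hypothesis satisfies the three conditions (a)--(c) of Definition~\ref{def: minimal GL space} with the specific probability vector $\alpha \in V$, so by minimality we must have $V(f,P,\alpha) \subseteq V$ (in the real case; in the complex case we work with the complexification $\C \otimes_\R V(f,P,\alpha)$, noting that the minimal complex Gurvits--Ledoux space containing $\alpha$ is $\C \otimes_\R V(f,P,\alpha)$, as remarked immediately after Definition~\ref{def: minimal GL space}).

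From this containment, intersecting both sides with $\ker F$ yields
\[ V(f,P,\alpha)^\circ \subseteq V^\circ. \]
The stability hypothesis gives $V^\circ P \subseteq V^\circ$, hence $V^\circ P F = 0$, and therefore $V(f,P,\alpha)^\circ P F = 0$. Combined with the trivial inclusion $V(f,P,\alpha)^\circ P \subseteq V(f,P,\alpha) P \subseteq V(f,P,\alpha)$ coming from condition (b), we conclude
\[ V(f,P,\alpha)^\circ P \subseteq V(f,P,\alpha) \cap \ker F = V(f,P,\alpha)^\circ. \]
Theorem~\ref{thm: GL characterisation of WL} then immediately gives that $\MC(\alpha,P)$ lumps weakly under $f$.

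For the final assertion, recall from the discussion preceding Theorem~\ref{thm: GL characterisation of WL} that $V(f,P,\alpha)$ was defined precisely as the linear span of the conditional distributions $C(\alpha; b_0,\ldots,b_t)$ over all $\alpha$-possible sequences, and in particular each such conditional distribution lies in $V(f,P,\alpha)$. Since $V(f,P,\alpha) \subseteq V$ (or $\C\otimes_\R V(f,P,\alpha) \subseteq V$ in the complex case), every conditional distribution $C(\alpha; b_0, \ldots, b_t)$ lies in $V$, which is exactly what the statement claims. There is no serious obstacle here; the only point requiring care is the real/complex distinction, handled uniformly by complexification.
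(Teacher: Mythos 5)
Your proof is correct and follows essentially the same route as the paper: minimality gives $V(f,P,\alpha)\subseteq V$ (complexifying if needed), stability of $V$ then forces $V(f,P,\alpha)^\circ P\subseteq V(f,P,\alpha)^\circ$, and Theorem~\ref{thm: GL characterisation of WL} applies. For the final claim the paper runs the induction $\alpha\Pi_{b_0}(P\Pi_{b_1})\cdots(P\Pi_{b_t})\in V$ directly using (b) and (c) of Definition~\ref{def: stable}, whereas you invoke the (equivalent, already-established) fact that these vectors lie in $V(f,P,\alpha)\subseteq V$; this is the same argument in substance.
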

\begin{proof}
 Conditions (b) and (c) in Definition~\ref{def: stable} imply that $V(f,P,\alpha) \subseteq V$, and (d) implies that $(V \cap \ker F)P \subseteq \ker F$, so $$V^\circ(f,P,\alpha)P =  \bigl(V(f,P,\alpha) \cap \ker F\bigr)P \subseteq (V \cap \ker F)P \subseteq \ker F.$$
 We also have from the definition of $V(f,P,\alpha)$ that $V(f,P,\alpha) P \subseteq V(f,P,\alpha)$, so
 $V^\circ(f,P,\alpha) P \subseteq V^\circ(f,P,\alpha)$, which by Theorem~\ref{thm: GL characterisation of WL} implies that $\MC(\alpha,P)$ lumps weakly under $f$.
 
 For any $\alpha$-possible sequence $(b_0, \dots, b_t)$,  the conditional distribution of $X_t$ given $f(X_0) = b_0, \dots, f(X_t) = b_t$ is the normalisation of the vector $v$, where $v = \alpha \Pi_{b_0}$ if $t = 0$, and if $t=1$ then
 $ v = \alpha\Pi_{b_0}(P\Pi_{b_1})\dots(P\Pi_{b_t})$. By induction using (b)  and (c), we have $v \in V$.
\end{proof}
\subsection{The irreducible case}
\begin{lemma}\label{lem:useErgodicThm}
 If $P \in \mathrm{Mat}(\R^A)$ is an irreducible stochastic matrix and $\mu$ is its unique stationary distribution then $\mu \in V(f,P,\alpha)$ for every probability distribution $\alpha$ on $A$. Moreover, if $P$ lumps stably for $V$, then $\mu \in V$. 
\end{lemma}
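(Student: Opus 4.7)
The plan is to use the ergodic theorem for finite irreducible Markov chains in its Cesàro form, which avoids any aperiodicity hypothesis: for any probability distribution $\alpha$ on $A$, the averages
\[ \mu_T = \frac{1}{T} \sum_{t=0}^{T-1} \alpha P^t \]
converge to the unique stationary distribution $\mu$ as $T \to \infty$. This holds because $P$ is an irreducible stochastic matrix on the finite set $A$, so $\mu$ is the unique invariant distribution and every initial distribution is attracted to it in the Cesàro sense.

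First I would prove the statement that $\mu \in V(f,P,\alpha)$. By Definition~\ref{def: minimal GL space}(a), $\alpha \in V(f,P,\alpha)$, and by (b), $V(f,P,\alpha) P \subseteq V(f,P,\alpha)$, so inductively $\alpha P^t \in V(f,P,\alpha)$ for every $t \ge 0$. Being a subspace, $V(f,P,\alpha)$ also contains every Cesàro average $\mu_T$. Since $V(f,P,\alpha)$ is a finite-dimensional subspace of $\R^A$, it is closed in the usual topology, so it contains the limit $\mu$.

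For the second statement, suppose $P$ lumps stably for $V$ in the sense of Definition~\ref{def: stable}. By condition (a) of that definition, there exists at least one probability vector $\alpha \in V$, and by condition (b), $\alpha P^t \in V$ for all $t \ge 0$. The same Cesàro argument as above, using that $V$ is a finite-dimensional (hence closed) subspace of $\R^A$ or $\C^A$, shows that $\mu \in V$. (In the complex case, note that $\mu$ is a real vector and $V \cap \R^A$ is closed in $\R^A$, so the limit lies in $V$.)

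There is no substantial obstacle: the argument rests entirely on the Cesàro ergodic theorem, the stability of $V$ (or $V(f,P,\alpha)$) under right multiplication by $P$, and the fact that finite-dimensional subspaces are closed. The only minor point worth attending to is the distinction between the real and complex cases in Definition~\ref{def: stable}, but this is handled by observing that the approximating vectors $\mu_T$ all lie in the real convex hull of iterates of a real probability vector.
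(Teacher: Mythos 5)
Your proof is correct and follows essentially the same route as the paper: the Cesàro form of the ergodic theorem, invariance of the subspace under right multiplication by $P$, and closedness of finite-dimensional subspaces. The only cosmetic difference is in the second claim, where the paper deduces $\mu \in V$ via the containment $V(f,P,\alpha) \subseteq V$ (minimality), whereas you rerun the Cesàro argument directly inside $V$; both are fine.
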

\begin{proof}
 By the ergodic theorem for irreducible Markov chains (see for instance \cite[Theorem 5.1.2(b)]{KS}), 
 we have $\frac{1}{n}\sum_{i=0}^{n-1} \alpha P^i \to \mu$ as $n \to \infty$. The terms in this convergent sequence belong to $V(f,P,\alpha)$ by properties (a) and~(b) in Definition~\ref{def: minimal GL space}. Since $V(f,P,\alpha)$ is a finite-dimensional vector space, it is topologically closed, hence $\mu \in V(f,P,\alpha)$. 
  If $P$ lumps stably for~$V$ then we may pick a probability vector $\alpha \in V$ and then $V(f,P,\alpha) \subseteq V$ so $\mu \in V$.
\end{proof}

\begin{corollary}[Weak lumpability of irreducible transition matrices]\label{cor: weak lumpability of irreducible transition matrices}
 If $P \in \mathrm{Mat}(\R^A)$ is an irreducible stochastic matrix and $\mu$ is its unique stationary distribution
 then $P$ is weakly lumpable under $f:A \to B$ if and only if $\MC(\mu,P)$ lumps weakly under $f$.
 \end{corollary}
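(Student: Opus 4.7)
The forward implication is immediate: if $\MC(\mu,P)$ lumps weakly under $f$, then $P$ is weakly lumpable under $f$ by Definition of weak lumpability, taking $\mu$ as the witnessing initial distribution.

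For the reverse implication, the plan is to show that starting the chain at the stationary distribution can only produce a \emph{smaller} Gurvits--Ledoux space than starting it at any other distribution, and then to invoke Theorem~\ref{thm: GL characterisation of WL}. Concretely, suppose $P$ is weakly lumpable under $f$, so that there exists a probability distribution $\alpha$ on $A$ with $\MC(\alpha,P)$ lumping weakly under $f$. By Theorem~\ref{thm: GL characterisation of WL} we have $V(f,P,\alpha)^\circ P \subseteq V(f,P,\alpha)^\circ$. By Lemma~\ref{lem:useErgodicThm}, $\mu \in V(f,P,\alpha)$.

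The key step is to observe that $V(f,P,\alpha)$ is itself a Gurvits--Ledoux space in the sense of Definition~\ref{def: GL space} and it contains the probability vector $\mu$. By the minimality clause in Definition~\ref{def: minimal GL space}, it follows that $V(f,P,\mu) \subseteq V(f,P,\alpha)$, and hence also $V(f,P,\mu)^\circ \subseteq V(f,P,\alpha)^\circ$. Combining the containment $V(f,P,\mu)^\circ P \subseteq V(f,P,\alpha)^\circ \subseteq \ker F$ with the invariance $V(f,P,\mu)^\circ P \subseteq V(f,P,\mu)P \subseteq V(f,P,\mu)$ gives
\[
V(f,P,\mu)^\circ P \subseteq V(f,P,\mu)\cap \ker F = V(f,P,\mu)^\circ,
\]
so by Theorem~\ref{thm: GL characterisation of WL} again, $\MC(\mu,P)$ lumps weakly under $f$, as required.

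I do not anticipate any real obstacle: the only non-trivial ingredient is that the stationary distribution always belongs to the Gurvits--Ledoux space of any starting distribution, which is exactly the content of Lemma~\ref{lem:useErgodicThm}. Everything else is a formal manipulation of the Gurvits--Ledoux criterion.
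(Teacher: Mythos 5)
Your proof is correct and follows essentially the same route as the paper: Lemma~\ref{lem:useErgodicThm} places $\mu$ in $V(f,P,\alpha)$, minimality gives $V(f,P,\mu)\subseteq V(f,P,\alpha)$, and the intersection with $\ker F$ combined with invariance of $V(f,P,\mu)$ under $P$ yields the Gurvits--Ledoux criterion for $\mu$. No gaps.
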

 \begin{proof}
 If $\MC(\mu,P)$ lumps weakly under $f$ then $P$ is weakly lumpable under~$f$, by the definition of weak lumpability. For the converse, suppose that $\MC(\alpha,P)$ lumps weakly under $f$. 
 Then $\mu \in V(f,P,\alpha)$ by Lemma~\ref{lem:useErgodicThm}.  It follows that $V(f,P,\mu) \subseteq V(f,P,\alpha)$ and hence $V^\circ(f,P,\mu) \subseteq V^\circ(f,P,\alpha)$ and therefore $V^\circ(f,P,\mu)P \subseteq \ker F$. Since also $$V^\circ(f,P,\mu)P \subseteq V(f,P,\mu)P \subseteq V(f,P,\mu),$$ we find
 $$V^\circ(f,P,\mu)P \subseteq V(f,P,\mu) \cap \ker F = V^\circ(f,P,\mu)$$ which by Theorem~\ref{thm: GL characterisation of WL} implies that $\MC(\mu,P)$ lumps weakly under $f$. 
 \end{proof}
 We remark that Corollary~\ref{cor: weak lumpability of irreducible transition matrices} was known long before the work of Gurvits and Ledoux: see for example \cite[Theorem 4.6.3]{KS}. We believe that the following observation is new.

\begin{lemma}[Lattice of stable spaces for a weakly lumpable irreducible transition matrix]\label{L: lattice of stable spaces}
 Let $P \in \mathrm{Mat}(\R^A)$ be an irreducible stochastic matrix that is weakly lumpable
under $f :A \rightarrow B$, with unique stationary distribution $\mu$.  The set of real subspaces of $\R^A$ for which $P$ lumps stably is a lattice under intersection and sum, with bottom element $V(f,P,\mu)$. Likewise the set of complex subspaces of $\C^A$ for which $P$ lumps stably is a lattice with bottom element $V(f,P,\mu) \otimes_{\R} \C$.
\end{lemma}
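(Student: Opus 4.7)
The plan is to verify the axioms of Definition~\ref{def: stable} for $V(f,P,\mu)$, for $V_1\cap V_2$, and for $V_1+V_2$ when $V_1,V_2$ are stable, and then to show that $V(f,P,\mu)$ is contained in every stable space. First, for $V(f,P,\mu)$: properties (a)--(c) of Definition~\ref{def: stable} are immediate from Definition~\ref{def: minimal GL space} with $\alpha=\mu$, and since $P$ is irreducible and weakly lumpable under $f$, Corollary~\ref{cor: weak lumpability of irreducible transition matrices} yields that $\MC(\mu,P)$ lumps weakly, so Theorem~\ref{thm: GL characterisation of WL} gives $V(f,P,\mu)^\circ P\subseteq V(f,P,\mu)^\circ$, establishing (d). For any stable $V$, Lemma~\ref{lem:useErgodicThm} gives $\mu\in V$, and the minimality of $V(f,P,\mu)$ among Gurvits--Ledoux spaces containing $\mu$ (Definition~\ref{def: minimal GL space}) yields $V(f,P,\mu)\subseteq V$, identifying the bottom element.

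Closure under intersection is routine: $V_1\cap V_2$ contains $\mu$, is $P$- and $\Pi_b$-invariant as an intersection of such, and any $v\in(V_1\cap V_2)^\circ$ lies in each $V_i^\circ$, so $vP\in V_i^\circ\subseteq\ker F$ while also $vP\in(V_1\cap V_2)P\subseteq V_1\cap V_2$, giving $vP\in(V_1\cap V_2)^\circ$.

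The main obstacle is closure under sum, where (a)--(c) are clear but (d) is delicate because $(V_1+V_2)\cap\ker F$ can properly contain $V_1^\circ+V_2^\circ$. The plan is to exploit irreducibility via the fact that $\mu$ is strictly positive, so the conditional vectors $\mu^b:=\mu\Pi_b/\mu(f^{-1}(b))$ are well-defined probability vectors lying in $V_1\cap V_2$, and a direct computation using the Markov property gives $\mu^bPF=Q(b,\cdot)$, where $Q$ is the transition matrix of $f(\MC(\mu,P))$; crucially $Q$ is intrinsic to the lumped chain and does not depend on which stable space we work in. Given $v\in(V_1+V_2)^\circ$, decompose $v=\sum_b v\Pi_b$; each summand lies in $\ker F$ since $\ker F=\bigoplus_b(\ker F)\Pi_b$. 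For each $b$ pick a decomposition $v\Pi_b=u_1+u_2$ with $u_i\in V_i\Pi_b$, let $m_i$ be the mass of $u_i$ on the lump $f^{-1}(b)$, and observe $m_1+m_2=0$. Then $u_i-m_i\mu^b\in V_i\Pi_b\cap\ker F\subseteq V_i^\circ$, so property (d) for $V_i$ gives $u_iPF=m_i\mu^bPF=m_iQ(b,\cdot)$, whence $v\Pi_b\,PF=(m_1+m_2)Q(b,\cdot)=0$. Summing over $b$ yields $vPF=0$, so $V_1+V_2$ is stable.

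The complex case follows by the same arguments: any complex stable $V$ contains a real probability vector and is $P$-invariant, so the averaging argument of Lemma~\ref{lem:useErgodicThm} still places $\mu\in V$, giving $V(f,P,\mu)\otimes_\R\C\subseteq V$; closure under intersection and sum is verbatim, with the same real matrix $Q$.
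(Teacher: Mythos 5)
Your proof is correct and follows essentially the same route as the paper: the bottom element and intersection steps are identical, and for closure under sum your element-wise subtraction of $m_i\mu^b$ (using irreducibility to ensure $\mu(f^{-1}(b))>0$) is exactly the paper's decomposition $W\Pi_b = (U^\circ\Pi_b + V^\circ\Pi_b)\oplus\langle\mu\Pi_b\rangle$, which the paper uses to conclude $W^\circ = U^\circ + V^\circ$ and hence $W^\circ PF=0$. The appeal to the lumped transition matrix $Q$ is harmless but unnecessary, since your cancellation only uses $m_1+m_2=0$ and the fact that $\mu^b PF$ is the same vector for both summands.
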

\begin{proof}
The proof of Corollary~\ref{cor: weak lumpability of irreducible transition matrices} shows that when $P$ is irreducible, every~$V$ for which $P$ lumps stably must contain $\mu$ and hence all of $V(f,P,\mu)$, and if~$V$ is complex, then $V(f,P,\mu) \otimes_{\R} \C \subseteq V$.

Next, consider two spaces $U$ and $V$ (both real or both complex) such that~$P$ lumps stably for $U$ and also for $V$.  Let $V = U \cap V$. Then $\mu \in U$ and $\mu \in V$ so $\mu \in U \cap V$. We have $VP \subseteq UP \subseteq U$ and likewise $VP \subseteq V$ so $VP \subseteq V$. For any $b \in B$, $V\Pi_b \subseteq U\Pi_b \subseteq U$ and likewise $V\Pi_b \subseteq V$ so $V\Pi_b \subseteq V$. Finally, $V^\circ  = U \cap V \cap \ker F  = U^\circ \cap V^\circ$ so $V^\circ PF \subseteq U^\circ PF = 0$. We have shown that $P$ lumps stably for $V$.

Now let $W = U + V$. Then $\mu \in W$ and  $WP  = UP + V P \subseteq U + V = W$. 
We have $$W = U + V = \bigoplus_{b \in B} U \Pi_b  + \bigoplus_{b \in B} V \Pi_b = \bigoplus_{b \in B} (U \Pi_b + V \Pi_b)$$ so $$ W\Pi_b = U\Pi_b +V\Pi_b \subseteq U + V = W$$
hence $W = \bigoplus_{b \in B} W\Pi_b$. Since $P$ is irreducible and $f$ is surjective, for each $b\in B$ we have $\mu(f^{-1}(b)) > 0$. By Lemma~\ref{lem:useErgodicThm} we have $\mu \in U$ and $\mu \in V$, and so $U\Pi_b = \langle \mu\Pi_b \rangle \oplus U^\circ \Pi_b$, for each $b \in B$, and likewise for $V$. Hence $$W\Pi_b = (U^\circ\Pi_b + V^\circ\Pi_b) \oplus \langle \mu\Pi_b\rangle,$$ and therefore $W^\circ = U^\circ + V^\circ$. Finally, $W^\circ PF = U^\circ PF + V^\circ PF = 0+0$. We have shown that~$P$ lumps stably for $W$.
\end{proof} 
It follows that for any irreducible $P$ that is weakly lumpable under $f$, there exists a unique maximal space $V_{\max}(f,P)$ for which $P$ lumps stably. It contains $V(f,P,\alpha)$ as a subspace for every initial distribution $\alpha$ such that $\MC(\alpha,P)$ lumps weakly under $f$.

 \begin{corollary}[Initial distributions compatible with an irreducible weight]\label{cor: GL good initial distributions}
Let $P \in \mathrm{Mat}(\R^A)$ be an irreducible stochastic matrix with unique stationary distribution $\mu$. Suppose $\MC(\mu,P)$ lumps weakly under the surjective map $f: A \to B$. Let $\mathcal{I}$ be the set of probability distributions $\alpha$ such that $\MC(\alpha,P)$ lumps weakly under $f$. Then $\mathcal{I}$ is equal to the convex polytope
$\Delta \cap V_{\max}(f,P)$, where $\Delta$ is the simplex of probability vectors in $\R^A$. Moreover, there exists a unique stochastic matrix $Q \in \mathrm{Mat}(\R^B)$ that serves as a transition matrix for $f(\MC(\alpha, P))$ for every $\alpha \in I$. This $Q$ is given by
\begin{equation}\label{eq: pseudo-aggregation} Q(i,j) = \frac{1}{\mu(f^{-1}(i))}\sum_{x \in f^{-1}(i)}\;\sum_{y \in f^{-1}(j)} \mu(x)P(x,y).
\end{equation}
 \end{corollary}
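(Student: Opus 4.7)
The plan is to identify $\mathcal{I}$ with $\Delta \cap V_{\max}(f,P)$ by a straightforward two-way inclusion, and then to pin down the common transition matrix $Q$ by running the chain from its stationary distribution $\mu$, which belongs to $V_{\max}(f,P)$ by Lemma~\ref{lem:useErgodicThm}.

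For $\Delta \cap V_{\max}(f,P) \subseteq \mathcal{I}$, I would appeal directly to Corollary~\ref{cor: GL certificate theorem}: because $P$ lumps stably for $V_{\max}(f,P)$ by definition, any probability vector $\alpha$ in $V_{\max}(f,P)$ makes $\MC(\alpha,P)$ weakly lumping under~$f$. For the reverse inclusion, fix $\alpha \in \mathcal{I}$ and consider the minimal Gurvits--Ledoux space $V(f,P,\alpha)$. Properties~(a)--(c) of Definition~\ref{def: stable} hold from the defining properties of $V(f,P,\alpha)$, and property~(d) holds by Theorem~\ref{thm: GL characterisation of WL}, since $\MC(\alpha,P)$ lumps weakly. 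Hence $P$ lumps stably for $V(f,P,\alpha)$, so $V(f,P,\alpha) \subseteq V_{\max}(f,P)$ and $\alpha \in V_{\max}(f,P)$. Convexity of $\mathcal{I}$ is automatic, since $V_{\max}(f,P)$ is a linear subspace and $\Delta$ is a convex polytope.

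For the existence of a common $Q$, set $\mathcal{A} = \mathcal{I}$. Since every $\alpha \in \mathcal{A}$ lies in $V_{\max}(f,P)$, we have $V(f,P,\mathcal{A}) \subseteq V_{\max}(f,P)$ and hence $V(f,P,\mathcal{A})^\circ \subseteq V_{\max}(f,P)^\circ$; the stability of $V_{\max}(f,P)$ then gives $V(f,P,\mathcal{A})^\circ P F = 0$. Theorem~\ref{thm: GL same image transition matrix} now supplies a matrix $Q$ that is a transition matrix of $f(\MC(\alpha,P))$ simultaneously for every $\alpha \in \mathcal{A}$. For uniqueness I would note that, since $\mu \in \mathcal{I}$ and $\MC(\mu,P)$ is irreducible with $f$ surjective, the lumped chain $f(\MC(\mu,P))$ visits every state of $B$ with positive probability, so every row of $Q$ is forced.

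The formula~\eqref{eq: pseudo-aggregation} then follows by an elementary stationarity-plus-conditioning computation starting from the distinguished choice $\alpha = \mu$: at every time $t$ we have $X_t \sim \mu$, so conditioning on $f(X_t) = i$ produces the distribution $x \mapsto \mu(x)/\mu(f^{-1}(i))$ on $f^{-1}(i)$, and applying the Markov property of the underlying chain $X$ once to compute $\P[f(X_{t+1}) = j \mid f(X_t) = i]$ reproduces the right-hand side of~\eqref{eq: pseudo-aggregation}. I do not foresee a genuine obstacle: the corollary is essentially a repackaging of Corollary~\ref{cor: GL certificate theorem}, Theorem~\ref{thm: GL same image transition matrix} and Lemma~\ref{lem:useErgodicThm}, with the formula being the standard conditional-probability calculation for a lumped stationary chain.
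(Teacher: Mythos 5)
Your proposal is correct and follows essentially the same route as the paper: the certificate corollary for one inclusion, maximality of $V_{\max}(f,P)$ (resting on the lattice of stable spaces) together with the Gurvits--Ledoux characterisation for the other, Theorem~\ref{thm: GL same image transition matrix} with $\mathcal{A}=\mathcal{I}$ for a common $Q$, irreducibility plus surjectivity of $f$ for uniqueness, and the one-step conditioning computation from the stationary start for the formula~\eqref{eq: pseudo-aggregation}.
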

\begin{proof}
The characterisation of $\mathcal{I}$ is a straightforward corollary of Lemma~\ref{L: lattice of stable spaces} and Corollary~\ref{cor: GL certificate theorem}.
The existence of a single transition matrix $Q$ from~$B$ to $B$ which serves as a transition matrix for $f(\MC(\alpha,P))$ for every $\alpha \in \mathcal{I}$ follows from Theorem~\ref{thm: GL same image transition matrix}, by taking $\mathcal{A} = \mathcal{I}$ and noting that then $\mathcal{I} \subset V(f,P,\mathcal{I}) \subseteq V_{\max}(f,P)$ so condition (c) of Theorem~\ref{thm: GL same image transition matrix} is satisfied. Because $P$ is irreducible and $f$ is surjective, there is in fact a unique transition matrix for $f(\MC(\alpha,P))$, for each $\alpha \in \mathcal{I}$. The expression~\eqref{eq: pseudo-aggregation} for $Q$ in terms of $P$ and $\mu$ is obtained by calculating $$\P[f(X_1) = j \mid f(X_0) = i] = \frac{\P[f(X_0) = i \text{ and 
 } f(X_1) = j]}{\P[X_0 = i]}.$$
 Note that by the irreducibility of $P$ and the surjectivity of $f$, the denominator $\P[X_0 = i] = \mu(f^{-1}(i))$ is non-zero for each $i \in B$. 
\end{proof}

Let us emphasise that in the irreducible case the transition matrix for
the lumped process $f(\MC(\alpha, P))$ is independent of the initial distribution~$\alpha$. In the reducible case this may fail, as we saw in Example~\ref{ex: reducible dihedral}. It can even fail in the case where $\MC(\alpha,P)$ lumps weakly 
under~$f$ for every initial distribution $\alpha$. An example of this is given by $A = \{1,2,3,4\}$, $B = \{1,3,4\}$, $f(1) = f(2) = 1$, $f(3) = 3$, $f(4) = 4$ and 
$$ P = \left(\begin{matrix} 0 & 0 & 2/3 & 1/3\\ 0 & 0 & 1/3 & 2/3\\ 0 & 0 & 1 & 0\\ 0 & 0 & 0 & 1 \end{matrix}\right).$$

Returning to the setting of Corollary~\ref{cor: GL good initial distributions}, it follows that $V_{\max}(f,P)$ is the maximal vector subspace $V \subseteq \mathbb{R}^A$ with the properties
 \begin{defnlist}
 \item[(1)] $V P \subseteq V$,
 \item[(2)] $V \Pi_b \subseteq V$ for each $b \in B$, and
 \item[(3)] $V \subseteq \ker(PF - FQ)$.
 \end{defnlist}
The space $V_{\max}(f,P)$ may  therefore be computed by the following linear algebra algorithm.
To begin, set 
\[ V = \bigoplus_{b \in B} \left\{v \in \mathbb{R}^{f^{-1}(b)}: v(PF - FQ) = 0\right\}.\] 
Note that this $V$ satisfies condition (3) above.
While $VP \not\subseteq V$, replace $V$ by
$$ V \cap \bigoplus_{b \in B} \left\{v \in \mathbb{R}^{f^{-1}(b)} : vP \in V \right\}.$$
The dimension of $V$ cannot increase under this operation, and if it does not decrease then $$V = \bigoplus_{b \in B} \left\{v \in \mathbb{R}^{f^{-1}(b)} : vP \in V \right\}  $$ and hence $V$ satisfies conditions (1)--(3) above, i.e.~$P$ lumps stably for $V$. If $W$ is any space for which $P$ lumps stably, then throughout the algorithm we have $W \subseteq V$. Therefore the final $V$ is equal to $V_{\max}$. By the assumption that $\MC(\mu,P)$ lumps weakly under $f$, we have $\langle \Pi_b\mu: b \in B\rangle \subseteq V_{\max}$, hence $\dim V_{\max} \ge |B|$. Therefore the algorithm terminates after no more than $|A|-|B|$ replacement steps. 

We end with a new result that we shall use in the proof of Proposition~\ref{prop:probcharrealIdeals}.

\begin{lemma}[Probabilistic characterisation of stable lumping]\label{lem:probcharStable}
Let $P \in \mathrm{Mat}\left(\R^A\right)$ be an irreducible stochastic matrix, let $f: A \to B$ be a surjection, and let $V \subseteq \R^A$ be a linear subspace containing at least one probability vector. Suppose that for every probability vector $\alpha \in V$ the Markov chain $X = \MC(\alpha,P)$ satisfies both
\begin{thmlistE}
\item $X$ lumps weakly under $f$, and
\item for $t \ge 0$, the conditional distribution of $X_t$ given $f(X_0), \dots, f(X_t)$ always lies in $V$. 
\end{thmlistE}
Then $P$ lumps weakly under $f$ with stable space $V$.
\end{lemma}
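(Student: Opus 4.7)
The plan is to verify the four conditions (a)--(d) of Definition~\ref{def: stable}; condition (a) is given. The strategy is to show that $V$ coincides with
\[
W := V(f, P, V \cap \Delta) \;=\; \sum_{\alpha \in V \cap \Delta} V(f,P,\alpha),
\]
the minimal Gurvits--Ledoux subspace of $\R^A$ generated by the probability vectors in~$V$, where $\Delta$ denotes the simplex of probability vectors in $\R^A$. Once $V = W$ is established, conditions (b) and (c) are immediate because $W$ is Gurvits--Ledoux by construction, and condition (d) will follow from Theorem~\ref{thm: GL same image transition matrix}.

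To produce the equality $V = W$, I would first exhibit the stationary distribution $\mu$ of $P$ as an element of $V$. Averaging hypothesis (ii) over the conditioning sequence $(f(X_0),\dots,f(X_t))$ shows $\alpha P^t \in V$ for every probability vector $\alpha \in V$ and each $t \ge 0$, since $\alpha P^t$ is the convex combination of the conditional distributions $C(\alpha; b_0, \dots, b_t)$ weighted by their probabilities. Because $V$ is a finite-dimensional, hence closed, subspace of $\R^A$, the ergodic theorem for the irreducible chain $P$ yields $\mu = \lim_n \tfrac{1}{n}\sum_{t=0}^{n-1}\alpha P^t \in V$, and irreducibility ensures $\mu$ has full support. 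Using the positive vector $\mu$, any $v \in V$ decomposes as a real linear combination of probability vectors in $V$: for $t > 0$ sufficiently large, $v + t\mu$ has non-negative entries and positive total mass, so after normalisation it lies in $V \cap \Delta$, and $v$ is a real linear combination of that normalised vector and $\mu$. This gives $V \subseteq W$; the reverse inclusion $W \subseteq V$ follows directly from (ii), since $V(f,P,\alpha)$ is spanned by conditional distributions asserted to lie in~$V$. Hence $V = W$, which immediately establishes (b) and (c).

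For (d), I would apply Corollary~\ref{cor: GL good initial distributions}: because $\mu \in V$ and $\MC(\mu,P)$ lumps weakly by~(i), all lumped chains $f(\MC(\alpha,P))$ for $\alpha \in V \cap \Delta$ share the same transition matrix $Q$ given by~\eqref{eq: pseudo-aggregation}. Taking $\mathcal{A} = V \cap \Delta$ in Theorem~\ref{thm: GL same image transition matrix} then yields $W^\circ P \subseteq W^\circ$, so $V^\circ PF = W^\circ PF = 0$. The main conceptual obstacle will be exactly condition (d): hypothesis~(i) only controls $V(f,P,\alpha)^\circ$ for each individual $\alpha$, not $V^\circ$ itself; the identification $V = W$, together with the uniqueness of the lumped transition matrix forced by irreducibility, is what transfers control from the individual subspaces $V(f,P,\alpha)^\circ$ to the full space $V^\circ$.
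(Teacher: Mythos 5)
Your proposal is correct, but it is organised differently from the paper's own proof, so a comparison is worth making. The paper verifies conditions (b), (c), (d) of Definition~\ref{def: stable} one at a time: after placing $\mu$ in $V$ by the same ergodic averaging of hypothesis (ii) that you use, it applies (ii) only at times $t=0$ and $t=1$ to the strictly positive perturbed initial distributions $(\mu+\epsilon\beta)/(1+\epsilon\sum_{a}\beta_a)$ to conclude $\beta P\in V$ and $\beta\Pi_b\in V$ for arbitrary $\beta\in V$, and then proves (d) by observing that for $\gamma\in V^\circ$ the probability vectors $\mu\pm\epsilon\gamma$ lie, by (i) and Corollary~\ref{cor: GL good initial distributions}, in the maximal stable space $V_{\max}(f,P)$, so that $2\epsilon\gamma\in V_{\max}(f,P)^\circ$ and hence $\gamma PF=0$. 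You instead establish the global identity $V=W$ where $W=V(f,P,\mathcal{A})$ and $\mathcal{A}$ is the set of probability vectors in $V$: the inclusion $W\subseteq V$ rests on the description (stated in \S\ref{SS:GL}) of $V(f,P,\alpha)$ as the span of the conditional distributions $C(\alpha;b_0,\dots,b_t)$, and $V\subseteq W$ on the same full-support perturbation trick with $\mu$; conditions (b) and (c) are then automatic from the Gurvits--Ledoux property of $W$, and (d) follows from Theorem~\ref{thm: GL same image transition matrix} once Corollary~\ref{cor: GL good initial distributions} supplies the common lumped transition matrix $Q$ for all $\alpha\in\mathcal{A}$. The key ingredients ($\mu\in V$ via the ergodic theorem, perturbation by the strictly positive stationary distribution, Corollary~\ref{cor: GL good initial distributions} in the irreducible case) are shared, and there is no circularity since that corollary precedes the lemma. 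What your route buys is the slightly stronger structural conclusion that $V$ is itself the minimal Gurvits--Ledoux space generated by its probability vectors; what the paper's route buys is economy, since beyond the averaging step its verification of (b) and (c) uses hypothesis (ii) only at the first two time steps, which makes visible how little of (ii) is actually needed for those conditions.
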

\begin{proof}
By hypothesis, $V$ contains at least one probability vector, say  $\alpha$, so condition (a) of Definition~\ref{def: stable} is satisfied. We may apply condition (ii) for $X = \MC(\alpha,P)$ and average over the distribution of $(f(X_0), \dots, f(X_t))$, to see that the distribution of $X_t$ lies in $V$ for every $t$. By considering the mean of the distributions of $X_0, \dots, X_t$ and taking the limit as $t \to \infty$, and using that $V$ is closed, we find that the unique stationary distribution $\mu$ for~$P$ also lies in $V$. 
Now take an arbitrary $\beta \in V$. For sufficiently small $\epsilon$ we have a strictly positive probability vector $\alpha = (\mu + \epsilon \beta)/(1+\epsilon\sum_{a \in A} \beta_a)$, to which we may apply condition (ii). Looking at the distribution of $X_1$ where $X = \MC(\alpha,P)$, we find that $\alpha P \in V$. Since $\mu P = P$, we deduce $\beta P \in V$, verifying condition (b). 
Considering the conditional distribution of $X_0$ given $f(X_0) = b$ (which is well-defined for each $b \in B$ because $\alpha$ has strictly positive coordinates) we deduce that $(\mu + \epsilon \beta)\Pi_b \in V$ for each $b \in B$. This holds for all sufficiently small $\epsilon$, so $\beta \Pi_b \in V$, verifying condition~(c).
Finally, let $\gamma \in V^\circ$, so $\gamma F = 0$ and in particular $\sum_{a \in A} \gamma_a = 0$. Then $\alpha^+ = \mu + \epsilon \gamma$ and $\alpha^- = \mu - \epsilon \gamma$ are probability vectors in $V$ for sufficiently small $\epsilon$, so by condition (i) and Corollary~\ref{cor: GL good initial distributions} we have $\alpha^+, \alpha^- \in V_{\max}(f,P)$. We have $\alpha^+F = \alpha^-F$ by construction, so $\alpha^+ - \alpha^- = 2\epsilon \gamma \in V_{\max}(f,P)^\circ$ and hence $\gamma PF = 0$, verifying condition (d). 
\end{proof}

\subsubsection{Earlier work of Rubino and Sericola}
The set $\mathcal{I}$ described in Corollary~\ref{cor: GL good initial distributions} was first characterised by Rubino and Sericola in \cite{RS1, RS2} by an algorithm that computes the set of extreme vertices of $\mathcal{I}$. 
As far as we know, this algorithm may have exponential worst-case complexity because the number of extreme points of a polytope can be exponentially large in the dimension and the number of faces of the polyhedron. Indeed, the dual version of McMullen's upper bound theorem 
says that a convex polyhedron of dimension $d$ defined by $N$ linear inequalities may have at most \[ \binom{N - \lceil d/2\rceil}{\lfloor d/2\rfloor}  + \binom{N - \lfloor d/2 \rfloor - 1}{\lceil d/2 \rceil} \] vertices, and this is sharp; see \cite[\S5.4 and \S5.5]{Matousek}.
In addition, Rubino and Sericola gave an explicit set of $\bigl(|B| + |B|^2 + \dots + |B|^{|A|}\bigr)$ equations in the entries of $\alpha$ and $P$, linear in $\alpha$ and polynomial of degree at most $|A|$ in $P$, such that $\MC(\alpha,P)$ lumps weakly under $f$ if and only if all of these equations are satisfied by $(\alpha, P)$. The set of such pairs $(\alpha, P)$ is therefore a semi-algebraic set. More precisely, it is the intersection of an affine algebraic variety with a product of simplices.

\subsection{Strong lumping}
Strong lumping is a special case of weak lumping, defined by a simple algebraic condition on the transition matrix.
\begin{definition}\label{defn:strongLumping} A transition matrix $P$ on $A$ \emph{lumps strongly} under a map $f: A \to B$ if whenever $f(a) = f(a')$, for every $b \in B$ we have
$$ \sum_{x \in f^{-1}(b)} P(a,x) = \sum_{x \in f^{-1}(b)} P(a', x). $$
\end{definition}
This property is also known as \emph{Dynkin's condition}.  If $P$ lumps strongly under $f$, then for every probability distribution $\alpha$ on $A$, the time-homogeneous Markov chain $\MC(\alpha,P)$ lumps weakly under $f$. In fact, when Dynkin's condition holds then $P$ lumps stably for $V = \mathbb{R}^A$. Indeed, conditions (a)--(c) of Definition~\ref{def: stable} hold trivially, and for condition (d) we must check that $(\ker F) P F = 0$. Since $\ker F$ is spanned by vectors of the form $e_a - e_{a'}$ where $f(a) = f(a')$, this is equivalent to Dynkin's condition. Now apply Corollary~\ref{cor: GL certificate theorem}.

For the case of a left-invariant random walk on a finite group $G$, a simple necessary and sufficient condition for strong lumping to $G/H$ was given by Britnell and Wildon \cite{BW}; see Corollary~\ref{cor:strongExact}.

\subsection{Exact lumping}
Exact lumping is another special case of weak lumping. It is the opposite extreme: in strong lumping the stable space $V$ certifying weak lumping is as large as possible, satisfying $V = \R^A$, while in exact lumping the stable space is as small as possible, 
satisfying $V^\circ = 0$, where $V^\circ$ is as defined in~\eqref{eq:VcircDefn}. 
\begin{definition}\label{defn:exactLumping}
Let $\alpha \in \mathbb{R}^A$ be a probability distribution and $P$ a transition matrix from $A$ to $A$. Then we say that $\MC(\alpha, P)$ \emph{lumps exactly} under $f: A \to B$ if $V(f,P,\alpha)^\circ = 0$. In the case where $P$ is irreducible, with unique stationary distribution $\mu$, we say that $P$ \emph{lumps exactly} under $f$ when $\MC(\mu,P)$ lumps exactly under $f$.
\end{definition}
Exact lumping implies weak lumping, by Theorem~\ref{thm: GL characterisation of WL}, because $$V(f,P,\alpha)^\circ = 0 \implies  V(f,P,\alpha)^\circ \, P \subseteq V(f,P,\alpha)^\circ.$$

\begin{lemma}\label{lem:EquivalentConditionsForExactLumping}
Consider the stationary distribution $\mu = \lim_{n \to \infty} \frac{1}{n} \sum_{i=0}^{n-1}\alpha P^i$. The following are equivalent:
\begin{thmlist} 
    \item[\emph{(a)}] $\MC(\alpha,P)$ lumps exactly under $f$,
    \item[\emph{(b)}]  $\dim(V(f,P,\alpha)\Pi_b) \le 1$ for all $b \in B$,
    \item[\emph{(c)}]  $V(f,P,\alpha)=\bigoplus_{b \in B} \langle \mu \Pi_b \rangle$.
\end{thmlist}
\end{lemma}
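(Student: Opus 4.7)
The plan is to exploit the direct sum decomposition $V := V(f,P,\alpha) = \bigoplus_{b \in B} V\Pi_b$ coming from the defining property $V\Pi_b \subseteq V$ in Definition~\ref{def: minimal GL space}, which in turn gives $V^\circ = \bigoplus_{b \in B} (V\Pi_b \cap \ker F)$ exactly as in~\eqref{eq: Vcirc is a direct sum}. All three conditions can then be tested lump by lump.

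For (a) $\iff$ (b), I would use the characterisation $V\Pi_b = \langle S(\alpha, b)\rangle$ established in the discussion preceding Theorem~\ref{thm: GL characterisation of WL}. Since the elements of $S(\alpha, b)$ are probability vectors in $\R^{f^{-1}(b)}$, and no nonzero probability vector can lie in $\ker F$ (its coordinates sum to $1$), we have $V\Pi_b \not\subseteq \ker F$ whenever $V\Pi_b \neq 0$. Therefore $\dim(V\Pi_b \cap \ker F) = \max\bigl(0,\, \dim V\Pi_b - 1\bigr)$, and summing over $b$ gives $V^\circ = 0$ if and only if $\dim V\Pi_b \le 1$ for every $b \in B$.

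For (b) $\iff$ (c), I would first verify that $\mu \in V$. Since $\alpha \in V$ and $VP \subseteq V$, every $\alpha P^i$ belongs to $V$; the Cesaro averages $\frac{1}{n}\sum_{i=0}^{n-1}\alpha P^i$ therefore lie in $V$, and $V$ is closed as a finite-dimensional subspace of $\R^A$, so the limit $\mu$ is in $V$. This is essentially the argument of Lemma~\ref{lem:useErgodicThm}, and it gives $\mu\Pi_b \in V\Pi_b$ for each $b$. The implication (c) $\implies$ (b) is then immediate, since each $\langle \mu\Pi_b\rangle$ has dimension at most $1$. Conversely, if $\dim V\Pi_b \le 1$ and $\mu\Pi_b \neq 0$, then $\mu\Pi_b \in V\Pi_b$ forces $V\Pi_b = \langle \mu\Pi_b\rangle$; summing over $b$ then yields (c).

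The step requiring care is (b) $\implies$ (c), because we must know that $\mu\Pi_b \neq 0$ whenever $V\Pi_b \neq 0$. Under the standing irreducibility assumption in force in this part of the paper, $\mu$ has strictly positive support on $A$, and since $f$ is surjective this gives $\mu\Pi_b \neq 0$ for every $b \in B$; the remaining $1$-dimensional case in the argument above is then settled at once. Without irreducibility one would have to argue separately that the chain cannot reach a lump carrying no long-run mass, so this is the only place where the hypotheses of the surrounding subsection are genuinely used.
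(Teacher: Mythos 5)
Your argument is correct and is essentially the paper's: both proofs work lump by lump through the decompositions $V = \bigoplus_{b\in B} V\Pi_b$ and $V^\circ = \bigoplus_{b\in B}\bigl(V\Pi_b\cap\ker F\bigr)$, use that $V\Pi_b$ is spanned by conditional probability distributions and so cannot be a nonzero subspace of $\ker F$, and obtain $\mu\in V$ by Cesaro averaging. Your treatment of (a) $\iff$ (b) via the codimension formula $\dim(V\Pi_b\cap\ker F)=\max\bigl(0,\dim V\Pi_b-1\bigr)$ is a tidier packaging of the paper's contradiction argument with two linearly independent vectors in $V\Pi_b$, and since you prove the genuine equivalence of (a) and (b) you do not need the paper's separate direct verification that (c) implies (a).

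The only real divergence is in (b) $\Rightarrow$ (c), and your instinct there is sound, although your justification overstates what the paper supplies: Lemma~\ref{lem:EquivalentConditionsForExactLumping} carries no irreducibility hypothesis (irreducibility enters only at Corollary~\ref{cor:exactLumping}), so there is no ``standing assumption'' to invoke. The paper instead splits into accessible and inaccessible lumps and asserts that for an accessible lump the one-dimensional space $V\Pi_b$ must equal $\langle\mu\Pi_b\rangle$; that step silently needs $\mu\Pi_b\neq 0$ for every accessible lump, which is precisely the point you isolate, and it can fail for an accessible but transient lump: take $A=B=\{1,2\}$ with $f$ the identity, $P(1,2)=P(2,2)=1$ and $\alpha=e_1$, so that (a) and (b) hold while $V=\R^A\neq\langle e_2\rangle$ and (c) fails. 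So your restriction to the situation where $\mu$ charges every lump the chain can reach --- in particular the irreducible case, which is where the equivalence with (c) is invoked later in the paper --- is the honest reading; just state it as an explicit hypothesis rather than as something already in force in this subsection.
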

\begin{proof}
    Suppose for a contradiction that (a) holds but (b) does not, that is to say $V(f,P,\alpha)^\circ = 0$ and there exist linearly independent vectors $v$ and $v'$ in $V(f,P,\alpha)\Pi_b$. Then $v\Pi_b = c e_b$ and $v' \Pi_b = c'e_b$ for some $c,c' \in \mathbb{R}$. So either $c'=c = 0$, in which case $v$ is a non-zero vector in $V(f,P,\alpha)$, or $c'v - cv'$ is a non-zero vector in $V(f,P,\alpha)$, a contradiction. Thus (a) implies (b). 
    
    To show that (b) implies (c), note $\mu \in V(f,P,\alpha)$, so $V(f,P,\alpha) \supseteq \bigoplus_{b \in B} \langle \mu \Pi_b\rangle$. Now for each lump $f^{-1}(b)$ that is accessible from some point in the support of $\alpha$, we have a one-dimensional space $V(f,P,\alpha)\Pi_b$, which must be $\langle \mu\Pi_b \rangle$, and for each inaccessible lump we have $V(f,P,\alpha)\Pi_b = \langle \mu\Pi_b \rangle = 0$.

    To show that (c) implies (a), let $v=\sum_{b\in B} c_b \mu\Pi_b$ for some coefficients $(c_b)_{b\in B}$ and suppose that $v \in \ker F$. Then 
    \[
        0
        =
        vF
        =
        \sum_{b\in B} c_b \mu\Pi_b F 
        = 
        \sum_{b\in B} c_b
        \Big(
        \sum_{a\in f^{-1}(b)}{\mu(a)}
        \Big)
        e_b
        . 
    \]
    Hence for each $b\in B$, we have \smash{$c_b\big(\sum_{a\in f^{-1}(b)}{\mu(a)}\big)=0$}, and 
    so we have either  \smash{$\sum_{a\in f^{-1}(b)}{\mu(a)}=0$} or $c_b=0$. In the former case, $\mu(a)=0$ for every $a\in f^{-1}(b)$ because $\mu$ is a non-negative vector. Hence $v = \sum_{b \in B} c_b \mu\Pi_b = 0$, as required.
\end{proof}
\begin{corollary}\label{cor:exactLumping}
Let $P$ be an irreducible transition matrix and $\mu$  its unique stationary probability distribution. Then the following are equivalent:
\begin{thmlistES}
\item[$\bullet$] $\MC(\alpha,P)$ lumps exactly under $f$, 
\item[$\bullet$] $\MC(\mu,P)$ lumps exactly under $f$ and $\alpha \Pi_b \propto \mu \Pi_b$ for each $b \in B$,
\item[$\bullet$] $\alpha$ belongs to the linear span $\langle \sum_{x \in f^{-1}(b)} \mu(x) e_x : b \in B\rangle$ and this span is preserved by right-multiplication by $P$.
\end{thmlistES}
\end{corollary}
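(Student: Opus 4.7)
The plan is to deduce the three-way equivalence from Lemma~\ref{lem:EquivalentConditionsForExactLumping} together with the fact (Lemma~\ref{lem:useErgodicThm}) that in the irreducible case $\mu \in V(f,P,\alpha)$ for every initial distribution $\alpha$. Throughout I write $W = \bigoplus_{b \in B} \langle \mu \Pi_b \rangle$, which is precisely the linear span appearing in the third condition. By irreducibility $\mu(x) > 0$ for every $x \in A$, so $\mu \Pi_b \neq 0$ for each $b \in B$, and hence $\dim W = |B|$. Lemma~\ref{lem:useErgodicThm} gives $\mu \in V(f,P,\alpha)$, which combined with condition (c) of Definition~\ref{def: minimal GL space} yields $W \subseteq V(f,P,\alpha)$ for every $\alpha$. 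Consequently, by Lemma~\ref{lem:EquivalentConditionsForExactLumping}(a)$\Leftrightarrow$(c), the first condition is equivalent to the equality $V(f,P,\alpha) = W$.

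For the first implies second, suppose $V(f,P,\alpha) = W$. Then $\alpha \in W$, so $\alpha\Pi_b \in \langle \mu\Pi_b\rangle$, giving $\alpha\Pi_b \propto \mu\Pi_b$. Moreover $V(f,P,\mu) \subseteq V(f,P,\alpha) = W$ (since $W$ satisfies conditions (a)--(c) of Definition~\ref{def: minimal GL space} for $\mu$, this would already follow from minimality applied to $\mu$, but it is immediate from $\mu \in W$) and $W \subseteq V(f,P,\mu)$ by the observation above, so $V(f,P,\mu) = W$, i.e.~$\MC(\mu,P)$ lumps exactly. For second implies first, if $\alpha \Pi_b \propto \mu \Pi_b$ for all $b$ then $\alpha = \sum_{b \in B} \alpha \Pi_b \in W$; combined with $V(f,P,\mu) = W$ this gives $\alpha \in V(f,P,\mu)$, hence $V(f,P,\alpha) \subseteq V(f,P,\mu) = W$, and with the reverse inclusion we obtain $V(f,P,\alpha) = W$.

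Finally, the third condition is just a rephrasing: $W$ is the span written out in the third bullet, and the condition that this span is preserved by right multiplication by $P$ is $WP \subseteq W$. If $V(f,P,\mu) = W$ then $WP = V(f,P,\mu)P \subseteq V(f,P,\mu) = W$, and also $\alpha \in W$ as shown, so second implies third. Conversely, if $\alpha \in W$ and $WP \subseteq W$, then $W$ satisfies conditions (a), (b), (c) of Definition~\ref{def: minimal GL space} with respect to $\mu$ in place of $\alpha$ (noting $W \Pi_b = \langle \mu \Pi_b \rangle \subseteq W$), so by minimality $V(f,P,\mu) \subseteq W$; together with $W \subseteq V(f,P,\mu)$ this yields $V(f,P,\mu) = W$ and hence exact lumping of $\MC(\mu,P)$, while $\alpha \in W$ gives $\alpha\Pi_b \propto \mu\Pi_b$.

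There is no real obstacle here beyond careful bookkeeping of the inclusions between $V(f,P,\alpha)$, $V(f,P,\mu)$, and $W$; the proof is essentially a three-way chase of minimality using Lemma~\ref{lem:EquivalentConditionsForExactLumping} and the ergodic consequence of Lemma~\ref{lem:useErgodicThm}, so the only place requiring slight attention is verifying the hypotheses of Definition~\ref{def: minimal GL space} when converting the span condition in the third bullet into the statement $V(f,P,\mu) = W$.
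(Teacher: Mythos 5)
Your proof is correct and follows the paper's intended route: the paper simply declares the corollary immediate from Lemma~\ref{lem:EquivalentConditionsForExactLumping}, and your argument fills in exactly that bookkeeping, using Lemma~\ref{lem:useErgodicThm} and the minimality in Definition~\ref{def: minimal GL space} to identify each condition with the equality $V(f,P,\alpha)=\bigoplus_{b\in B}\langle\mu\Pi_b\rangle$ (your only slightly loose spot is the parenthetical claiming $V(f,P,\mu)\subseteq V(f,P,\alpha)$ is ``immediate from $\mu\in W$''; it really rests on minimality applied to $\mu$ with candidate space $V(f,P,\alpha)$, which you also note). No gap.
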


\begin{proof} 
This is immediate from Lemma~\ref{lem:EquivalentConditionsForExactLumping}.
\end{proof}

Thus in the irreducible case, exact lumping means that if the Markov chain is started in its stationary distribution, then at each later time $t$, conditional on the history of lumps $f(X_0) = b_0, \dots, f(X_t) = b_t$, the conditional distribution of $X_t$ on its lump $f^{-1}(\{b_t\})$ is proportional to the restriction of the stationary distribution to that lump.

It appears that the term `exact lumping' was coined in 1984 by Schweitzer \cite{Schweitzer} for the special case where $\alpha$ is the uniform distribution on $A$ and $\alpha$ is stationary. It is used in this sense in several later papers, for example \cite{Buchholz, MarinRossi}. Our definition of exact lumping extends this to more general stationary distributions, and does not require the uniform distribution to be stationary. This more general notion already appeared in 1976 in Kemeny and Snell \cite[Thm.~6.4.4]{KS} (without the name `exact') as a sufficient condition for weak lumping in the case where $P$ is irreducible and aperiodic and $\alpha$ is the unique stationary distribution of $P$. Our definition does not require irreducibility or aperiodicity.  

Exact lumping is very closely related to the well-known Pitman--Rogers condition. Suppose $\MC(\alpha,P)$ lumps exactly under $f$. Since $V(f,P,\alpha)$ is spanned by vectors with non-negative entries, there exists a sub-stochastic matrix $U$ with rows indexed by $B$ and columns indexed by $A$ such that the row of $U$ indexed by $b$ is the unique probability vector in $V(f,P,\alpha)\Pi_b$, if $\dim(V(f,P,\alpha)\Pi_b) = 1$, and is $0$ if $\dim(V(f,P,\alpha)\Pi_b) = 0$. Let $Q$ be the transition matrix of the lumped process $f(\MC(\alpha,P))$. Then we have
\begin{equation}
\label{eq: Pitman-Rogers}
UP = QU.
\end{equation}
This equation is an \emph{algebraic intertwining}, and it is often called the Pitman--Rogers condition, after \cite{PR}. The matrix $U$ is often referred to as the \emph{link} matrix. Note that~\eqref{eq: Pitman-Rogers} does not explicitly mention the initial distribution.  

Conversely, suppose that $P$ is a stochastic matrix from $A$ to $A$ and that~\eqref{eq: Pitman-Rogers} holds for some matrices $U$ and $Q$, where $Q$ is a stochastic matrix from $B$ to $B$ and $U$ is a matrix from $B$ to $A$ whose rows are either $0$ or probability vectors, with the property that $U_{b,a} \neq 0 \implies f(a) = b$. Let $\alpha$ be any probability vector that is a convex combination of the non-zero rows of $U$. (Note this requires at least one row of $U$ to be non-zero, ruling out the trivial solution $U = 0$ of~\eqref{eq: Pitman-Rogers}.) Then $V(f,P,\alpha)$ is a subspace of the row span of $U$ 
and so $\MC(\alpha,P)$ lumps exactly under~$f$. Moreover, $Q$ serves as a  transition matrix for the lumped process $f(\MC(\alpha,P))$. Thus the Pitman--Rogers condition implies exact lumping for suitable initial distributions. In the case where $P$ is irreducible, the link matrix $U$ is necessarily stochastic. In fact Pitman and Rogers \cite{PR} 
introduced their intertwining equation as a sufficient condition for weak lumping in the context of continuous time Markov chains; the theory in that case is very similar, with the transition matrices $P$ and $Q$ replaced by generator matrices.

\begin{example}\label{Ex:exactLumpingToLeftCosets}
In the context of a left-invariant random walk on a group~$G$ driven by an irreducible weight $w$, lumping to $G/H$, the only stationary distribution is the uniform distribution~$\eta_G$, and so exact lumping may be defined in terms of $V = \langle b\eta_H: bH \in G/H\rangle$. We have $V^\circ = 0$ and $V = \bigoplus_{bH \in G/H} V \Pi_b$ so the only non-trivial condition to check is  $VP \subseteq P$. This corresponds to the case $e = \eta_H$ in Theorem~\ref{thm:mainGL}. See Corollary~\ref{cor:strongExact} for a simpler characterisation of exact lumping in this case.
\end{example}

\subsection{The reversible case}
A stationary Markov chain $X = \MC(\alpha,P)$ with state space $A$ is said to be \emph{reversible} if for all $x,y \in A$ we have $$\alpha(x)P(x,y) = \alpha(y)P(y,x).$$ This implies that for every $n \ge 0$ the sequence $(X_0, \dots, X_n)$ has the same joint distribution as its time reversal $(X_n, \dots, X_0)$. It also means that the chain can be extended to be indexed by times in $\mathbb{Z}$, and this extended chain is equal in law to its own time reversal. The theory of weak lumping simplifies greatly in the reversible case:
\begin{theorem}[Burke and Rosenblatt (1958) {\cite[Thm. 1] {BR}}]\label{thm:BurkeRosenblatt}
 Let $P$ be a stochastic matrix from $A$ to $A$ and let $\alpha$ be a stationary distribution for $P$ such that $\alpha(x) > 0$ for all $x \in A$. Suppose that $\MC(\alpha,P)$ is reversible and lumps weakly under $f: A \to B$.  Then $f$ is a strong lumping of $\MC(\alpha,P)$.
\end{theorem}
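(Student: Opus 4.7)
The plan is to establish Dynkin's condition for $P$ under $f$, i.e.\ to show that $\tilde{P}(a, b') := \sum_{c \in f^{-1}(b')} P(a, c)$ depends on $a$ only through $f(a)$. First I would observe that the lumped chain $Y = f(X)$ is itself stationary and reversible: writing $\beta(b) = \alpha(f^{-1}(b))$ and $Q$ for the transition matrix of $Y$, reversibility of $X$ passes through via
\[
 \P[Y_0=b_0, Y_1=b_1]=\sum \alpha(a_0)P(a_0,a_1)=\sum \alpha(a_1)P(a_1,a_0)=\P[Y_0=b_1,Y_1=b_0],
\]
so $\beta$ and $Q$ satisfy detailed balance, $\beta(b_0)Q(b_0,b_1)=\beta(b_1)Q(b_1,b_0)$. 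Note $\beta(b)>0$ for every $b$ by the hypothesis $\alpha(a)>0$, so every conditional probability below is well-defined.

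The core of the argument is a key identity obtained by computing $\P[Y_0=b_0,Y_1=b_1,Y_2=b_2]$ two ways. On the one hand, the Markov property of $Y$ gives $\beta(b_0)Q(b_0,b_1)Q(b_1,b_2)$. On the other hand, conditioning on $X_1=a$ and using the Markov property of $X$ gives $\sum_{a\in f^{-1}(b_1)}\bigl(\sum_{a_0\in f^{-1}(b_0)}\alpha(a_0)P(a_0,a)\bigr)\tilde{P}(a,b_2)$; applying reversibility of $X$ to rewrite $\alpha(a_0)P(a_0,a)=\alpha(a)P(a,a_0)$ before summing $a_0$ collapses the inner sum to $\alpha(a)\tilde{P}(a,b_0)$. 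Equating and then symmetrising the right-hand side using detailed balance for $Y$ yields, for every $b_0,b_1,b_2\in B$,
\[
\sum_{a\in f^{-1}(b_1)}\alpha(a)\tilde{P}(a,b_0)\tilde{P}(a,b_2)=\beta(b_1)Q(b_1,b_0)Q(b_1,b_2).
\]

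Now fix $b_1$ and regard $v_b:=\tilde{P}(\cdot,b)$ as a vector in $\R^{f^{-1}(b_1)}$. The identity above says the Gram matrix $\langle v_{b_0},v_{b_2}\rangle_\alpha$ in the $\alpha$-weighted inner product factors as the rank-one outer product of the function $b\mapsto Q(b_1,b)$ with itself. Hence all $v_b$ lie on a common line, so there exist scalars $c_b$ and a vector $u\in\R^{f^{-1}(b_1)}$ with $\tilde{P}(a,b)=c_b u(a)$. The stochastic identity $\sum_b\tilde{P}(a,b)=1$ then forces $u(a)\sum_b c_b=1$ for every $a\in f^{-1}(b_1)$, so $u$ is constant on $f^{-1}(b_1)$ and $\tilde{P}(a,b)$ is independent of $a\in f^{-1}(b_1)$. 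Since $b_1$ was arbitrary, Dynkin's condition holds and $P$ lumps strongly under $f$.

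The main obstacle is recognising and exploiting the rank-one structure of the Gram matrix: it is precisely the reversibility of $X$ that converts $\P[Y_0, Y_1, Y_2]$ into the product form $\beta(b_1)Q(b_1,b_0)Q(b_1,b_2)$, and without this the vectors $v_b$ need not be proportional. A minor subtlety is checking that $\sum_b c_b\neq 0$ in the final step, which is forced by the positivity of $\alpha$ (so that some $\tilde{P}(a,b)$ must be nonzero) together with the normalisation $\sum_b\tilde P(a,b)=1$.
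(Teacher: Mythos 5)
Your proof is correct. Note that the paper does not prove this theorem itself --- it simply cites Burke and Rosenblatt \cite{BR} --- so your argument is a self-contained substitute for that reference. It is essentially a linear-algebraic repackaging of the classical conditioning argument: the identity you derive,
\[
\sum_{a\in f^{-1}(b_1)}\alpha(a)\tilde{P}(a,b_0)\tilde{P}(a,b_2)=\beta(b_1)Q(b_1,b_0)Q(b_1,b_2),
\]
is the same two-step computation that underlies the original proof (reversibility converts the conditional law of $X_1$ given $Y_0=b_0$, $Y_1=b_1$ into something proportional to $\alpha(a)\tilde P(a,b_0)$ on the lump), but your observation that this identity exhibits the Gram matrix of the vectors $v_b=\tilde P(\cdot,b)\vert_{f^{-1}(b_1)}$ as a rank-one outer product, so that all $v_b$ are proportional and the row-sum normalisation then forces them to be constant on the lump, packages the conclusion cleanly. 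Each step checks out: detailed balance for $(\beta,Q)$ follows as you say from stationarity, weak lumping and reversibility of $X$; the rank of a Gram matrix with respect to a positive-definite inner product equals the dimension of the span; and $\sum_b c_b\neq 0$ is immediate from $u(a)\sum_b c_b = 1$. One small point worth flagging: the full-support hypothesis is used precisely to make the $\alpha$-weighted inner product on $\R^{f^{-1}(b_1)}$ positive definite, so your argument as written would not directly yield the weakened hypothesis mentioned in the paper's remark after the theorem (positive weight on each lump of size at least two, rather than on each state); for the theorem as stated this is no gap.
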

A close inspection of the proof given in \cite{BR} shows that the assumption of full support may be weakened to assuming that $\alpha$ assigns positive weight to each lump of at least two elements. 

In \S\ref{sec: time reversal} we will prove a new duality result, Theorem~\ref{thm:DualityForMarkovChainLumping}, which  relates the weak lumping of a stationary finite Markov chain to the weak lumping of its time reversal, under the mild assumption that the stationary distribution has full support. A special case (Corollary~\ref{cor:ExactStrongDuality}) is that for such stationary Markov chains,  time reversal exchanges strong lumping and exact lumping. 
Applying this to the reversible case, we obtain the following corollary of Theorem~\ref{thm:BurkeRosenblatt}.

\begin{corollary}\label{cor:ReversibleLumpingsAreExact}
Under the conditions of Theorem~\ref{thm:BurkeRosenblatt}, $f$ is an exact lumping of $\MC(\alpha,P)$.
\end{corollary}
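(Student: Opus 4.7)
The plan is to deduce this corollary by combining Theorem~\ref{thm:BurkeRosenblatt} with the time-reversal duality foreshadowed in the paragraph immediately preceding the statement, namely Corollary~\ref{cor:ExactStrongDuality}, which is stated there as a special case of the forthcoming Theorem~\ref{thm:DualityForMarkovChainLumping} in \S\ref{sec: time reversal}. The hypotheses of Theorem~\ref{thm:BurkeRosenblatt} are exactly the hypotheses of the present corollary, so applying that theorem already gives us that $f$ is a \emph{strong} lumping of $\MC(\alpha,P)$. Hence the only remaining content is to upgrade ``strong'' to ``exact'', and this is exactly what time-reversal duality will do, once we use reversibility to identify the chain with its own time reversal.

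More concretely, the first step is to recall that for a stationary Markov chain whose stationary distribution $\alpha$ has full support, the time-reversed transition matrix $P^\dagger$ is defined by the standard relation $\alpha(x)\,P^\dagger(x,y) = \alpha(y)\,P(y,x)$, and that reversibility of $\MC(\alpha,P)$ is precisely the statement $P^\dagger = P$. Thus the time reversal of $\MC(\alpha,P)$ is nothing other than $\MC(\alpha,P)$ itself.

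The second step is to invoke Corollary~\ref{cor:ExactStrongDuality}: for stationary finite-state Markov chains with fully supported stationary distribution, $f$ is a strong lumping of $\MC(\alpha,P)$ if and only if $f$ is an exact lumping of $\MC(\alpha,P^\dagger)$. Applying this to our case, strong lumping of $\MC(\alpha,P)$ converts into exact lumping of $\MC(\alpha,P^\dagger)$. Combined with $P^\dagger = P$ from reversibility, this is precisely the desired conclusion that $f$ is an exact lumping of $\MC(\alpha,P)$.

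The main thing to be careful about is making sure the hypothesis of Corollary~\ref{cor:ExactStrongDuality} matches what we have: namely, full support of the stationary distribution. This is granted directly in the statement of Theorem~\ref{thm:BurkeRosenblatt} (and hence in our hypotheses), so no weakening is needed. Once the duality is available as stated, the proof is a one-line concatenation of Burke--Rosenblatt, Corollary~\ref{cor:ExactStrongDuality}, and the identity $P^\dagger = P$; no further computation is required.
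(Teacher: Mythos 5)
Your proof is correct and is exactly the argument the paper intends (the sentence preceding the corollary sketches the same chain: Burke--Rosenblatt gives strong lumping, Corollary~\ref{cor:ExactStrongDuality} converts this to exact lumping of the time reversal, and reversibility with full support gives $P^\star = P$). No gaps: the full-support hypothesis needed for the duality is supplied directly by Theorem~\ref{thm:BurkeRosenblatt}.
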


\subsection{Probabilistic consequences of strong and exact lumping} 
We finish with two results that are not logically required but serve to illuminate the definitions of strong
and exact lumping.
We maintain our usual notation in which $f :A \rightarrow B$ is a surjective function.

\begin{proposition}
\label{prop:condindepstrong}
    Suppose that $X=\MC(\alpha,P)$ lumps strongly under $f$. 
    Then for all $t\geq0$, $X_t$ and $(f(X_{t+1}),f(X_{t+2}),\dots)$ are conditionally independent given $f(X_t)$.
\end{proposition}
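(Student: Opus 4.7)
The plan is to show that for every $k\ge 1$, the conditional distribution of $\bigl(f(X_{t+1}),\ldots,f(X_{t+k})\bigr)$ given $X_t=a$ depends on $a$ only through $f(a)$. Conditional independence of $X_t$ and $\bigl(f(X_{t+1}),f(X_{t+2}),\ldots\bigr)$ given $f(X_t)$ then follows by a standard argument, since conditional independence of an element and an infinite sequence, relative to a third element, is equivalent to the corresponding statement for every finite initial segment of the sequence.

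By Dynkin's condition (Definition~\ref{defn:strongLumping}) the quantity
\[ Q(b,b') = \sum_{y\in f^{-1}(b')} P(a,y) \]
is independent of the choice of $a\in f^{-1}(b)$, so we obtain a well-defined stochastic matrix $Q$ on $B$. The main claim to prove by induction on $k$ is that, whenever $\P[X_t=a]>0$ and $b_1,\ldots,b_k\in B$,
\[ \P\bigl[f(X_{t+1})=b_1,\ldots,f(X_{t+k})=b_k \,\bigm|\, X_t=a\bigr] = Q\bigl(f(a),b_1\bigr)\,Q(b_1,b_2)\cdots Q(b_{k-1},b_k). \]
The base case $k=1$ is exactly Dynkin's condition. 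For the inductive step I would decompose
\begin{align*}
&\P\bigl[f(X_{t+1})=b_1,\ldots,f(X_{t+k})=b_k \bigm| X_t=a\bigr] \\
&\qquad = \sum_{y\in f^{-1}(b_1)} P(a,y)\,\P\bigl[f(X_{t+2})=b_2,\ldots,f(X_{t+k})=b_k \bigm| X_{t+1}=y\bigr],
\end{align*}
using the Markov property of $X$ and the fact that $X$ is time-homogeneous (so I can apply the inductive hypothesis to the conditional probability at time $t+1$). The inductive hypothesis replaces the inner conditional probability by $Q(f(y),b_2)\cdots Q(b_{k-1},b_k) = Q(b_1,b_2)\cdots Q(b_{k-1},b_k)$, since $f(y)=b_1$ on the summation range. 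Pulling this factor out of the sum leaves $\sum_{y\in f^{-1}(b_1)} P(a,y) = Q(f(a),b_1)$ by the definition of $Q$, completing the induction.

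Since the right-hand side depends on $a$ only through $f(a)$, averaging over $a\in f^{-1}(b)$ weighted by $\P[X_t=a \mid f(X_t)=b]$ (restricting to those $b$ with $\P[f(X_t)=b]>0$) yields the same value; hence
\[ \P\bigl[f(X_{t+1})=b_1,\ldots,f(X_{t+k})=b_k \bigm| X_t=a\bigr] = \P\bigl[f(X_{t+1})=b_1,\ldots,f(X_{t+k})=b_k \bigm| f(X_t)=f(a)\bigr], \]
which is the required conditional independence on every finite cylinder event in the future of $f(X)$. Since the whole future is determined measurably by its finite initial segments, this upgrades to conditional independence of $X_t$ and the full tail $\bigl(f(X_{t+1}),f(X_{t+2}),\ldots\bigr)$ given $f(X_t)$. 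The only subtlety, rather than a genuine obstacle, is handling values of $a$ or $b$ with zero probability; these are excluded from the definition of conditional probability, so they cause no trouble.
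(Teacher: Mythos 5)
Your proposal is correct and follows essentially the same route as the paper: define the lumped transition matrix $Q$ via Dynkin's condition, prove by induction that $\P[f(X_{t+1})=b_1,\dots,f(X_{t+k})=b_k \mid X_t=a]$ equals the product $Q(f(a),b_1)Q(b_1,b_2)\cdots Q(b_{k-1},b_k)$, and conclude conditional independence because this depends on $a$ only through $f(a)$. You merely spell out the inductive step and the passage from finite-dimensional marginals to the full tail in more detail than the paper does.
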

\begin{proof} 
For every $y,b \in B$, let $Q(y,b)$ denote the common value of 
\[ \sum_{a \in f^{-1}(b)} P(x,a) \] 
over all $x \in f^{-1}(y)$.
The conditional distribution of $(f(X_{t+1}), f(X_{t+2}), \dots)$ given $f(X_t)$ is determined by its finite-dimensional marginals. From Dynkin's condition, it is easy to show by induction over $n$ that, 
for each $t \ge 0$, each $x_t \in A$ such that $\P[X_t = x_t] > 0$ and each $n \ge 1$, we have
\[
\P[f(X_{t+1})=y_{t+1},\dots,f(X_{t+n})=y_{t+n} 
        \mid 
         X_t=x_t] = \prod_{i=1}^n Q(y_{t+i-1},y_{t+i}), 
\]         
where $y_t = f(x_t)$. This suffices to demonstrate the conditional independence because the right-hand side 
depends on $x_t$ only through $f(x_t)$.
\end{proof}

\begin{proposition}
\label{prop:condindepexact}
    Suppose that $X=\MC(\alpha,P)$ lumps exactly under $f$. Then for all $t\geq0$, $X_t$ and $(f(X_0),\dots,f(X_{t-1}))$ are conditionally independent given $f(X_t)$.
\end{proposition}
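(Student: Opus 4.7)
The strategy is to show that under exact lumping, the conditional distribution of $X_t$ given the entire history of lumps $f(X_0), \ldots, f(X_t)$ depends only on the terminal lump $f(X_t)$. This is because each such conditional distribution is forced to lie in a subspace of dimension at most one.

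More concretely, for any $\alpha$-possible sequence $(b_0,\ldots,b_t)$ in $B$, an easy induction (the same one carried out in the proof of Corollary~\ref{cor: GL certificate theorem}) shows that the conditional distribution of $X_t$ given $f(X_0)=b_0,\ldots,f(X_t)=b_t$ is the normalisation of
\[
    v(b_0,\ldots,b_t) \;=\; \alpha\Pi_{b_0}(P\Pi_{b_1})\cdots(P\Pi_{b_t}).
\]
By construction $v(b_0,\ldots,b_t) \in V(f,P,\alpha)\Pi_{b_t}$. Under the assumption of exact lumping, Lemma~\ref{lem:EquivalentConditionsForExactLumping}(b) gives $\dim V(f,P,\alpha)\Pi_{b_t} \le 1$; hence the set of probability vectors contained in $V(f,P,\alpha)\Pi_{b_t}$ has at most one element. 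It follows that the normalised vector $v(b_0,\ldots,b_t)/\mathbf{1}\cdot v(b_0,\ldots,b_t)$ depends only on $b_t$, and not on the earlier entries $b_0,\ldots,b_{t-1}$.

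Therefore for every $x\in A$ and every $\alpha$-possible sequence $(b_0,\ldots,b_t)$,
\[
    \P[X_t=x \mid f(X_0)=b_0,\ldots,f(X_t)=b_t] \;=\; \varphi_t(x,b_t)
\]
for some function $\varphi_t$ that does not depend on $b_0,\ldots,b_{t-1}$. Averaging over the admissible values of $b_0,\ldots,b_{t-1}$ consistent with $f(X_t)=b_t$ shows that $\varphi_t(x,b_t) = \P[X_t=x \mid f(X_t)=b_t]$; that is,
\[
    \P[X_t=x \mid f(X_0)=b_0,\ldots,f(X_t)=b_t] = \P[X_t=x \mid f(X_t)=b_t],
\]
which is precisely the asserted conditional independence of $X_t$ from $(f(X_0),\ldots,f(X_{t-1}))$ given $f(X_t)$.

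There is essentially no obstacle: the content of the proposition is encapsulated in the algebraic fact that the $\Pi_b$-projections of $V(f,P,\alpha)$ are at most one-dimensional under exact lumping, so that the conditional distribution on each lump is pinned down once the lump is known. The only minor care needed is to restrict attention to $\alpha$-possible sequences, so that the conditional probabilities are well defined.
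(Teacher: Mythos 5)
Your proposal is correct and follows essentially the same route as the paper: identify the conditional law of $X_t$ given the lump history as an element of $V(f,P,\alpha)\Pi_{b_t}$, then use Lemma~\ref{lem:EquivalentConditionsForExactLumping} to conclude this space contains at most one probability vector, so the conditional law depends on $b_t$ alone. Your explicit product formula for the conditional distribution and the final averaging step are just spelled-out versions of what the paper treats as immediate.
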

\begin{proof}
 Let $V = V(f,P,\alpha)$. Let $b_0, \dots, b_t$ be any sequence of lumps such that $\P[X_0 = b_0, \dots, X_t - b_t] > 0$, and let $\beta$ be the conditional distribution of  $X_t$ given $f(X_0) = b_0, \dots, f(X_t) = b_t$. From the construction of $V$ we have $\beta \in V$. Since $\beta$ is supported on $f^{-1}(b_t)$, we have $\beta \in V\Pi_{b_t}$. Since $\dim(V\Pi_b) \le 1$ by Lemma~\ref{lem:EquivalentConditionsForExactLumping}, $\beta$ is the unique probability vector in $V\Pi_{b_t}$. In particular, $\beta$ is a function of $b_t$ alone, which yields the claimed conditional independence.
\end{proof}

\section{Background from character theory}\label{sec:algebraicPreliminaries}
We refer the reader to the textbooks
of Benson \cite{BensonI}, Dornhoff \cite{Dornhoff} and Isaacs \cite{Isaacs} 
for further background on representation
theory and characters; we use the latter two as our basic references in this section. See also \cite{JamesLiebeck}
for an excellent introduction. 
Recall that throughout $G$ is a fixed finite group. 

\subsection{Group algebras}\label{subsec:groupAlgebra}
The group algebra $\C[G]$ is, by definition, the $|G|$-dimensional vector space 
of all formal linear combinations $\sum_{g \in G} \beta(g) g$ of the group elements,
with coefficients $\beta(g) \in \C$. The multiplication is defined by bilinear extension
of the group multiplication: thus
\[ \bigl( \sum_{x \in G} \beta(x) x \bigr) \bigl( \sum_{y \in G} \gamma(y) y \bigr) =
\sum_{g \in G} \bigl( \sum_{x \in G} \beta(x) \gamma(x^{-1}g) \bigr) g. \]
We identify weights on $G$ with non-zero elements of $\C[G]$ having non-negative
real coefficients and 
probability measures on $G$ with non-negative elements of $\C[G]$ whose coefficient
sum is~$1$. We say that a weight with this property is \emph{normalized}.
As the following lemma shows, this makes $\C[G]$
the natural setting for computing with random walks on $G$.

\begin{lemma}\label{lemma:stepIsMultiplication}
In the left-invariant random walk on $G$ driven by a normalized weight $w$, if $X_t \sim \alpha$
then $X_{t+1} \sim \alpha w$.
\end{lemma}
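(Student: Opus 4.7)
The plan is to compute the distribution of $X_{t+1}$ by conditioning on $X_t$ and then recognise the resulting expression as the coefficients of the product $\alpha w$ in $\C[G]$, using the explicit bilinear formula for multiplication displayed earlier in \S\ref{subsec:groupAlgebra}.

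First, I would fix $g \in G$ and apply the law of total probability to write
\[ \P[X_{t+1} = g] = \sum_{x \in G} \P[X_t = x]\, \P[X_{t+1} = g \mid X_t = x]. \]
By hypothesis $\P[X_t = x] = \alpha(x)$. Setting $y = x^{-1}g$ so that $g = xy$, the defining formula for the left-invariant random walk gives $\P[X_{t+1} = xy \mid X_t = x] = w(y)/w(G) = w(x^{-1}g)$, using that $w$ is normalized. Hence
\[ \P[X_{t+1} = g] = \sum_{x \in G} \alpha(x)\, w(x^{-1}g). \]

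Next, I would compare this with the coefficient of $g$ in the product $\alpha w$ computed in $\C[G]$. By the bilinear extension formula displayed in \S\ref{subsec:groupAlgebra}, this coefficient is exactly $\sum_{x \in G} \alpha(x)\, w(x^{-1}g)$, matching the expression above. Therefore $X_{t+1} \sim \alpha w$ as elements of $\C[G]$.

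There is no substantive obstacle: the result is essentially a bookkeeping identification of Markov chain one-step evolution with group algebra multiplication, and the only care needed is in keeping track of the normalisation $w(G) = 1$ and the direction of multiplication (right by $w$), which is consistent with the left-invariance of the walk. One might also add a brief remark that $\alpha w$ is automatically a probability measure, since it has non-negative coefficients (as both $\alpha$ and $w$ do) and its coefficient sum equals $\bigl(\sum_x \alpha(x)\bigr)\bigl(\sum_y w(y)\bigr) = 1$, giving a sanity check.
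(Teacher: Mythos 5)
Your proposal is correct and follows essentially the same argument as the paper: condition on $X_t$, use the transition probability $w(x^{-1}g)$ (with $w(G)=1$), and identify the resulting sum with the coefficient of $g$ in $\alpha w$ via the group algebra multiplication formula. The extra sanity check that $\alpha w$ is a probability measure is fine but not needed.
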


\begin{proof}
By definition of the walk
\[ \P[X_{t+1} = y] = \sum_{x \in G} \P[X_{t+1} = y | X_t = x] \alpha(x) 
= \sum_{x \in G} \alpha(x) w(x^{-1}y) = (\alpha w)(y) \]
as required.
\end{proof}

It follows that if $w$ is a normalized weight and $X_0 \sim \alpha$ then $X_t \sim \alpha w^t$
for each $t \ge 0$. 

\subsection{Representations, modules and characters}
Let $V$ be a finite-dim\-ensional $\C$-vector space. A \emph{representation} of $G$ is a homomorphism
$\rho : G \rightarrow \GL(V)$ from $G$ into the general linear group of invertible
linear maps on $V$. We may abuse notation and refer to this representation as $V$.
The group algebra $\C[G]$ then acts on~$V$ on the left by
\[ \Big( \sum_{g \in G} w(g) g\Big) v = \sum_{g \in G} w(g) \rho(g) v. \]
Thus $V$ becomes a left $\C[G]$-module. (See \cite[\S 1]{Dornhoff} \cite[Definition 1.3]{Isaacs}
for the definition of modules for an algebra.) Conversely, 
given a left $\C[G]$-module~$V$, there is a corresponding representation $\rho : G \rightarrow \GL(V)$ defined by letting $\rho(g)$ be the linear transformation by which $g$ acts on $V$.
It will often be convenient to pass between these two equivalent languages.
The character of a representation $\rho : G \rightarrow \GL(V)$, or the corresponding $\C[G]$-module,
is the function $\chi_V : G \rightarrow \C$
defined by $\chi_V(g) = \tr \rho(g)$, where $\tr$ denotes trace.
Note that $\chi_V(1) = \dim V$.

\begin{example}\label{ex:basicRepresentations}
The
\emph{trivial representation} of $G$ defined by $\rho(g) = (1) \in \GL_1(\C)$ for each $g \in G$
has character $\triv_G$, defined by $\triv_G(g) = 1$ for each $g\in G$.
The \emph{regular representation of $G$} is the representation afforded by the left
action of $G$ on $\C[G]$. Thus in the canonical basis of $\C[G]$ of group elements,
each $g \in G$ acts as a $|G| \times |G|$-permutation matrix.
It has character~$\phi_G$ defined by $\phi_G(1) = |G|$ and
$\phi_G(g)  = 0$ for each non-identity $g \in G$.
\end{example}

The regular representation is the case $\Omega = G$ of the following construction.

\begin{example}[Permutation representations]\label{ex:permutationRepresentations}
Suppose that $G$ acts on the left on a set~$\Omega$.
The \emph{permutation representation of $G$ on $\Omega$}, denoted $\C[\Omega]$, has underlying vector space
with canonical basis $\{ v_\omega : \omega \in \Omega \}$ and action defined by
$\rho(g) v_\omega = v_{g\omega}$. If $G$ is a subgroup of $\Sym_\Omega$ then we say that $\C[\Omega]$
is the \emph{natural} representation of $G$.
\end{example}

Since we work with left modules, in the action of $\Sym_\Omega$
on $\Omega$ we compose permutations from right to left; thus $(gh)(\omega) = g(h(\omega))$
for $g, h \in \Sym_\Omega$. This convention is in force for the examples in this section only.

\subsubsection*{Irreducible representations}
A \emph{subrepresentation} of a representation $\rho : G \rightarrow \GL(V)$ is 
a subspace $W$ of $V$
such that $\rho(g) W \subseteq W$ for each $g \in G$.
A representation $V$ is \emph{irreducible} if it does not contain a
non-trivial proper subrepresentation.
Representations of finite groups over $\C$ are completely reducible, 
meaning that one can always write $V = \bigoplus_{W} W$ where each subrepresentation $W$
is irreducible. (See \cite[Theorem 3.1]{Dornhoff} or \cite[Definition~1.7, Theorem~1.9]{Isaacs}.) 
We write $\Irr(G)$ for the finite set of irreducible representations of $G$
up to isomorphism, and also for the set of their characters; this creates no ambiguity in practice.

We now begin a running example using the symmetric group $\Sym_3$.
By the general theory (see \cite[\S 4]{James}), the irreducible
representations of $\Sym_n$ are canonically labelled by the partitions of $n$.
Here we give an \emph{ad hoc} construction.

\begin{example}[Irreducible representations of $\Sym_3$] \label{eg: irreps of Sym3}
The natural representation of the symmetric group $\Sym_3$ on $\langle v_1, v_2, v_3 \rangle$
decomposes as $$\langle v_1 + v_2 + v_3 \rangle \oplus \langle v_1 - v_3, v_2-v_3 \rangle.$$
The first summand affords the trivial representation of $\Sym_3$ and the second an irreducible
$2$-dimensional representation.
The $1$-dimensional sign representation, in which $g \in G$ acts as $\sgn(g) \in \{+1,-1\}$
is the remaining irreducible representation of $\Sym_3$.
The corresponding modules are $S^{3}$, $S^{21}$ and $S^{111}$, respectively;
we have $S^{3} \cong \C$ (the trivial representation) and $S^{111} \cong \sgn$ (the sign representation).
\end{example}

By definition $\C[G]$-modules $U$ and $W$ are \emph{isomorphic}
if there is an invertible linear map $T : U \rightarrow W$ such that $g T(u) = T(g u)$ for all $u \in U$
and $g \in G$. This holds if and only if $U$ and $V$ have the same character. 
The character of $U \oplus W$ is $\chi_U + \chi_W$. 
The number of times an irreducible representation $U$ appears as a summand in a
direct sum decomposition of $V$ is $\langle \chi_V, \chi_U\rangle$, where the 
inner product on characters of $G$ is
defined by~\eqref{eq:innerProduct}. Thus
\begin{equation}\label{eq:characterInnerProduct}
\langle \chi_V, \chi _W \rangle = \frac{1}{|G|}\sum_{g\in G}  \overline{\chi_V(g)}\chi_W(g).
\end{equation}

\begin{example}[Regular representation of $\Sym_3$]\label{eg: regular rep of S3}
The characters of $\Sym_3$ of the irreducible $\C[\Sym_3]$-modules constructed
in Example~\ref{eg: irreps of Sym3} are
$\chi_{S^3}(g) = 1$, 
$\chi_{S^{21}}(g) = |\mathrm{Fix}(g)| - 1$ and $\chi_\mathrm{sgn}(g) = \sgn(g)$
for each $g \in \Sym_3$. Note $\chi_{S^3} = \triv_{\Sym_3}$.
The regular character of $\Sym_3$ 
decomposes as
\[
\phi_{\Sym_3} = \chi_{S^3} + 2 \chi_{S^{21}} + \chi_{S^{111}}.
\]
This is generalized by the Wedderburn decomposition seen in \S\ref{subsec:WedderburnDecomposition} following.
\end{example}

As already seen from the regular representation, $\C[G]$ is itself a left $\C[G]$-module.
Moreover, a left $\C[G]$-submodule of $\C[G]$ is simply a left ideal in $\C[G]$. Indeed, both are defined as subspaces $L\subseteq\C[G]$ with the property $gL \subseteq L$ for all $g\in G$.

\subsection{Wedderburn decomposition}\label{subsec:WedderburnDecomposition}
Modules for algebras
are defined by generalizing the constructions already seen for the group algebra $\C[G]$:
see for instance \cite[\S 1]{Dornhoff}. For our purposes, besides group algebras, the only example we need is
$\Mat_d(\C)$, the algebra of $d \times d$ matrices.
By a basic result (see for instance \cite[Theorem 2.18(a)]{Dornhoff}), $\Mat_d(\C)$ has a unique irreducible 
module up to isomorphism, namely the space of column vectors $\C^n$.
Moreover, as a left $\Mat_d(\C)$-module,
\[ \Mat_d(\C) = W_1 \oplus \cdots \oplus W_d \]
where $W_i$ is the left ideal of $\Mat_d(\C)$
of matrices zero except in column $i$. Each $W_i$
is isomorphic as a $\Mat_d(\C)$-module to the irreducible module $\C^n$.
If $V$ is a $d$-dimensional vector space we write $\Mat(V)$ for $\Mat_d(\C)$;
then $V$ is, up to isomorphism, the unique irreducible module for $\Mat(V)$.

\begin{proposition}[Wedderburn decomposition]
\label{prop:Wedderburn}
The group algebra $\C[G]$ admits a decomposition
\[ \C[G] \cong \bigoplus_{V\in\Irr(G)} \isoblock{V} \]
as a left $\C[G]$-module and as an algebra,
where the sum ranges over a set of representatives of the irreducible representations of $G$.
Moreover the isomorphism may be chosen so that if, in one direct sum decomposition of
$\C[G]$ as a left $\C[G]$-module, the $\dim V$\! summands isomorphic to the irreducible $\C[G]$-module
$V$\! are $V_1 \oplus \cdots \oplus V_{\dim V}$, then
for each $i$,
the image of $V_i$ in $\isoblock{V}$ is the left ideal of $\isoblock{V}$
of matrices that are zero except in column $i$.
\end{proposition}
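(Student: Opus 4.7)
My plan is to establish the Wedderburn decomposition in three stages: verify semisimplicity and compute the multiplicities in the left‐module decomposition, upgrade this to an isomorphism of algebras, and finally arrange the isomorphism to respect a prescribed decomposition into simple submodules.

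For the first stage, Maschke's theorem — which follows by averaging any linear projection onto a submodule over $G$ — shows that $\C[G]$ is semisimple as a left module over itself, so $\C[G] \cong \bigoplus_{V \in \Irr(G)} V^{\oplus m_V}$ for some non-negative integer multiplicities $m_V$. Taking characters and using the inner product \eqref{eq:characterInnerProduct}, I would compute $m_V = \langle \phi_G, \chi_V\rangle = \chi_V(1) = \dim V$, since $\phi_G$ is supported at the identity with value $|G|$ there (Example~\ref{ex:basicRepresentations}). This already gives the left $\C[G]$-module decomposition, and along the way the identity $\sum_V (\dim V)^2 = |G|$.

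For the second stage, define the algebra homomorphism $\Phi : \C[G] \to \bigoplus_{V \in \Irr(G)} \isoblock{V}$ whose $V$-component sends $x$ to the matrix representing the action of $x$ on $V$ in a fixed basis. If $x \in \ker \Phi$ then $x$ annihilates every irreducible representation of $G$, hence by complete reducibility annihilates every $\C[G]$-module; applying this to the regular module $\C[G]$ gives $0 = x \cdot 1 = x$, so $\Phi$ is injective. A dimension count using the first stage and $\dim \isoblock{V} = (\dim V)^2$ shows $\dim \bigoplus_V \isoblock{V} = |G| = \dim \C[G]$, so $\Phi$ is an isomorphism of algebras.

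For the last stage, suppose we are given a decomposition $\C[G] = \bigoplus_{V \in \Irr(G)} (V_1 \oplus \cdots \oplus V_{\dim V})$ as in the statement. Within each target block $\isoblock{V}$, the column ideals $W_1, \ldots, W_{\dim V}$ of matrices supported in a single column give another decomposition into $\dim V$ copies of $V$. The $V$-isotypic component of $\C[G]$ is carried by $\Phi$ isomorphically onto $\isoblock{V}$, and two ordered decompositions of $V^{\oplus \dim V}$ into simple summands isomorphic to $V$ differ by an element of the unit group of $\End_{\C[G]}(V^{\oplus \dim V}) \cong \Mat_{\dim V}(\C)$ (by Schur's lemma), which acts transitively on such ordered decompositions. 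Post-composing $\Phi$ block-by-block with a suitable automorphism of $\bigoplus_V \isoblock{V}$ therefore yields an isomorphism under which $V_i \mapsto W_i$, as claimed. The main conceptual step is this coherence: once one notices that the non-uniqueness of $\Phi$ is exactly measured by Schur's $\GL_{\dim V}(\C)$ acting on each isotypic component, the final adjustment is automatic.
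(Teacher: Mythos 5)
Your first two stages are correct and standard: Maschke plus the character computation $\langle \phi_G, \chi_V\rangle = \dim V$ gives the module decomposition and $\sum_V (\dim V)^2 = |G|$, and the representation map $\Phi$ is injective by the annihilation argument and then bijective by dimension count. (The paper itself only cites Dornhoff here, so a self-contained argument is a reasonable thing to attempt.) The weak point is your third stage, the `moreover' part, which is exactly where the content of the statement lies.

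The gap is the phrase ``post-composing $\Phi$ block-by-block with a suitable automorphism''. The transitivity you invoke is transitivity of the unit group of $\End_{\C[G]}(V^{\oplus d})$, i.e.\ of \emph{module} automorphisms of the block; for the block $\Mat(V)$ with $\C[G]$ acting through $\rho_V$ these are precisely the right multiplications $m \mapsto mu$ by invertible $u \in \Mat(V)$, and such a map is not an algebra automorphism (it does not even fix the identity). So if you compose $\Phi$ with it you obtain a left-module isomorphism sending each $V_i$ to the $i$-th column ideal, but you destroy multiplicativity, whereas the proposition asks for a single isomorphism that is simultaneously an algebra and a module isomorphism. If instead you intend ``automorphism'' to mean algebra automorphism, then its existence is not what your Schur-lemma transitivity argument supplies. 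The repair is short: write $e_{\chi_V} = f_1 + \cdots + f_d$ with $f_i \in \Phi(V_i)$; uniqueness of components shows the $f_i$ are orthogonal idempotents with $\Phi(V_i) = \Mat(V) f_i$, and simplicity of $V_i$ makes each $f_i$ primitive, hence of rank one in $\Mat(V)$. Choosing an invertible $u$ with $u^{-1} f_i u = E_{ii}$ for all $i$ and replacing $\Phi$ on this block by $m \mapsto u^{-1}\hskip1pt\Phi(m)\hskip1pt u$ uses an inner, hence algebra, automorphism, and it still carries $\Mat(V) f_i$ onto $\Mat(V) E_{ii}$, the $i$-th column ideal. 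With that substitution (conjugation in place of a module automorphism) your argument is complete.
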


\begin{proof}
See \cite[Theorem 3.2]{Dornhoff}; this is proved using Theorem 2.18 earlier in \cite{Dornhoff},
from which the `moreover' part is clear. \end{proof}

A more condensed proof suitable for experts is given in \cite[Theorem 1.3.4]{BensonI}.
The proposition is also proved in \cite[Theorem 1.15]{Isaacs}, our reference for character theory, 
but it takes some work to deduce the version
stated above from the subsequent remarks.

\begin{example}\label{eg: Wedderburn iso Sym3}
By Proposition~\ref{prop:Wedderburn}, the     
Wedderburn decomposition of $\Sym_3$ into algebra summands is
    \begin{align*}
        \C[\Sym_3] &\cong \Mat_1(\C) \oplus \Mat_2(\C) \oplus \Mat_1(\C)\\
        &\cong \Mat(S^{3}) \oplus \Mat(S^{21}) \oplus \Mat(S^{111})
    \end{align*}
where, by the `moreover' part,
$\Mat(S^{21}) \cong S^{21} \oplus S^{21}$ as a left $\C[\Sym_3]$-module.
    A Weddernburn isomorphism, as in this proposition, therefore identifies $\C[\Sym_3]$ with the algebra of $4\times 4$ block matrices
    \[
    \left(
    \begin{matrix}
        \star & 0 & 0 & 0 \\
        0 & \star&\star & 0\\
        0 & \star&\star & 0\\
        0 & 0 & 0 & \star
    \end{matrix}
    \right).
    \]
    In what comes, we favour the more compact diagrams below, which show a choice of Wedderburn decomposition as in the  `moreover' part.
    \[
    \raisebox{-\mbaseline}{\includegraphics[page=3]{AllPictures.pdf}}.
    \]
\end{example}
\begin{example}\label{eg: Wedderburn iso Sym4}
    The symmetric group $\Sym_4$ has five irreducible representations, labelled by the partitions of $4$. 
    The Wedderburn decomposition is
    \begin{align*}
        \C[\Sym_4] &\cong \Mat_1(\C) \oplus \Mat_3(\C) \oplus \Mat_2(\C) \oplus \Mat_3(\C) \oplus \Mat_1(\C)\\
        &\cong \Mat(S^{4}) \oplus \Mat(S^{31}) \oplus \Mat(S^{22}) \oplus \Mat(S^{211}) \oplus \Mat(S^{1111}).
    \end{align*}
    We can therefore identify $\C[\Sym_4]$ with the algebra of $10\times 10$ block matrices of 
    the form
    \[
    \raisebox{-\mbaselinemed}{\includegraphics[page=4]{AllPictures.pdf}}.
    \]
   We continue this example in Example~\ref{eg: induction Sym3 to Sym4}.
\end{example}

\subsection{Idempotents}\label{subsec:idempotents}
As we stated in the introduction, an element $e \in \C[G]$ is \emph{idempotent} if $e^2 = e$.
It is a basic result that
if $L$ is a left ideal in $\C[G]$ (or equivalently,
a left $\C[G]$-submodule of $\C[G]$) 
there exists an idempotent
$e \in L$ such that $L = \C[G] e$.
Such an idempotent can be constructed by choosing a linear
projection $\pi : \C[G] \rightarrow L$ and then taking its `average' 
\smash{$\overline{\pi} = \frac{1}{|G|} \sum_{g \in G} g^{-1} \pi g$}. It is routine to check that $\overline{\pi}$
is also a projection onto~$L$, 
that~\smash{$\overline{\pi}$} commutes with the action of $G$,
and hence that the map $\overline{\pi}$ agrees with $x \mapsto x e$ where
\smash{$e = \frac{1}{|G|} \sum_{g \in G} g^{-1} \pi(g) \in L$}
is the image  of $\id_G$ under $\overline{\pi}$. Thus $e$ is a suitable idempotent. 
We remark that $e$ is not in general unique: 
see Example~\ref{ex:Sym3PrimitiveIdempotents}; in fact
$e$ is unique if and only if it is a sum of distinct centrally primitive idempotents
in the sense defined below.

\begin{example}\label{ex:trivialIdempotent}
    The trivial representation of $G$ 
    is isomorphic to the
    left ideal of $\C[G]$ spanned by the idempotent $\eta_G$, defined earlier to be $|G|^{-1} \sum_{g\in G} g$.
\end{example}

\begin{lemma}[Idempotents versus characters] \label{lem:IdempotentsVSCharacters}
    Let $e\in \C[G]$ be an idempotent and let $W$ be a $\C[G]$-module.
    Then $\dim eW =  \langle \chi_{\C[G]e}, \chi_W\rangle$.
\end{lemma}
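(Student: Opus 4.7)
The plan is to identify the space $eW$ with a space of $\C[G]$-module homomorphisms and then invoke the standard character-theoretic interpretation of $\dim\Hom_{\C[G]}(-,-)$. This reduces the statement to two routine facts whose combination is conceptual rather than computational.

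The main step is to construct a $\C$-linear isomorphism
\[ \varphi : \Hom_{\C[G]}(\C[G]e, W) \longrightarrow eW, \qquad \varphi(f) = f(e). \]
The image lies in $eW$ because, using $\C[G]$-linearity and $e^2 = e$,
\[ f(e) = f(e\cdot e) = e\, f(e). \]
The map $\varphi$ is injective since $\C[G]e$ is generated as a $\C[G]$-module by $e$, so $f(e)=0$ forces $f(xe) = x f(e) = 0$ for every $x \in \C[G]$. For surjectivity, given $w \in eW$ (so $ew = w$), define $f_w : \C[G]e \to W$ by $f_w(xe) = xw$; this is well-defined because $xe = ye$ implies $xw = xew = yew = yw$, it is $\C[G]$-linear by construction, and $\varphi(f_w) = f_w(e) = ew = w$.

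The second step is to apply the standard identity
\[ \dim_{\C}\Hom_{\C[G]}(U, W) = \langle \chi_U, \chi_W \rangle \]
valid for any finite-dimensional $\C[G]$-modules $U, W$ (see for instance \cite[Theorem~2.13]{Isaacs}). Taking $U = \C[G]e$ and combining with the isomorphism from the previous paragraph gives
\[ \dim eW \;=\; \dim \Hom_{\C[G]}(\C[G]e, W) \;=\; \langle \chi_{\C[G]e}, \chi_W \rangle. \]

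There is essentially no obstacle: both steps are standard, and the main point is noticing that evaluation at $e$ makes the identification work cleanly. If one prefers a self-contained proof not invoking the Hom--character formula, one can instead argue directly: since $e^2 = e$, the operator $e$ acts as a projection on $W$, so $\dim eW = \tr(e|_W) = \chi_W(e) = \sum_g e(g)\chi_W(g)$; on the other side, computing $\chi_{\C[G]e}(g)$ as the trace of the endomorphism $x \mapsto gxe$ of $\C[G]$ in the group-element basis gives $\chi_{\C[G]e}(g) = \sum_h e(h^{-1}g^{-1}h)$. Substituting this into $\langle \chi_{\C[G]e}, \chi_W\rangle = \frac{1}{|G|}\sum_g \chi_{\C[G]e}(g^{-1})\chi_W(g)$ (using $\overline{\chi(g)} = \chi(g^{-1})$) and reindexing via the conjugation $g' = h^{-1}gh$ together with the class-function property of $\chi_W$ collapses the double sum to $\sum_g e(g)\chi_W(g)$, which matches. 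The only slightly delicate bookkeeping is the conjugation identity, and after that everything is mechanical.
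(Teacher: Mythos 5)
Your proof is correct, but it takes a different route from the paper. The paper argues via its Wedderburn machinery: by complete reducibility it reduces to the case where $\C[G]e$ is irreducible, observes from the Wedderburn decomposition that $eU$ is one-dimensional when $U \cong \C[G]e$ and that $eV = 0$ for irreducibles $V \not\cong \C[G]e$, and then reads off $\dim eW$ as a multiplicity count, which equals the character inner product. You instead establish the evaluation isomorphism $\Hom_{\C[G]}(\C[G]e, W) \cong eW$, $f \mapsto f(e)$, and invoke the standard identity $\dim\Hom_{\C[G]}(U,W) = \langle \chi_U, \chi_W\rangle$; all the verifications (image in $eW$ via $f(e) = f(e\cdot e) = e f(e)$, injectivity since $e$ generates, surjectivity via $f_w(xe) = xw$ well-defined because $w = ew$) are sound. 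This is arguably more conceptual: the isomorphism holds for any idempotent in any ring (it is exactly the projectivity of $\C[G]e$ at work), and only the final step uses the complex group-algebra setting. Your alternative self-contained argument is also correct and is in some ways the most elementary of the three: $\dim eW = \tr(e|_W) = \sum_g e(g)\chi_W(g)$ since $e$ acts as a projection, while the trace of $x \mapsto gxe$ on $\C[G]$ (which kills $\C[G](1-e)$ and is left multiplication by $g$ on $\C[G]e$) gives $\chi_{\C[G]e}(g) = \sum_h e(h^{-1}g^{-1}h)$, and the reindexing $g' = h^{-1}gh$ together with $\overline{\chi(g)} = \chi(g^{-1})$ and the class-function property collapses the double sum correctly. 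The paper's proof is shortest given that it has already set up Proposition~\ref{prop:Wedderburn}; your Hom argument buys generality and transparency at the cost of citing the Hom--character formula (do double-check the exact theorem number in Isaacs, who phrases these results character-theoretically; Benson or Dornhoff state the module-theoretic form more directly), and your trace argument buys complete self-containedness at the cost of a small computation.
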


\begin{proof} 
By complete reducibility 
we may assume that $\C[G] e$ is isomorphic to the
irreducible $\C[G]$-module $U$. It follows easily from the Wedderburn decomposition that
$e U$ is one-dimensional, and $e V = 0$ if $V$ is an irreducible $\C[G]$-module
not isomorphic to $U$. Therefore
$\dim eW$ is the number of simple modules isomorphic to $U$ in a direct
sum decomposition of $W$ into simple modules; this is the right-hand side.
\end{proof}

By  \cite[Theorem~2.12]{Isaacs} the \emph{centrally primitive idempotent} for an 
irreducible $\C[G]$ module $V$ with
character $\chi$ is
\begin{equation}\label{eq:centralPrimitiveIdempotent}
e_\chi = \frac{|\chi(1)|}{|G|} \sum_{g \in G} \chi(g^{-1}) g.
\end{equation}
(The case where $V$ is the trivial module was seen in Example~\ref{ex:trivialIdempotent}.)
The image of $e_\chi$ in the Wedderburn decomposition is zero except in
the block $\isoblock{V}$, where it is the identity matrix. 
It follows easily that 
$e_\chi L = L e_\chi =  L\cap\isoblock{V}$ for any left ideal $L$ of $\C[G]$.
In general a matrix
 $e\in\Mat_d(\C)$ is an idempotent if and only if it is conjugate to a matrix of the form $\diag(1,...,1,0,...,0)$.

\begin{example}[Centrally primitive idempotents of $\Sym_3$]\label{ex:Sym3CentralIdempotents}
 The centrally primitive idempotents for $\Sym_3$ are
$e_3 = \frac{1}{6} \sum_{g \in \Sym_3}(g)$, $e_{21} = \frac{2}{3} - \frac{1}{3}(1,2,3) - \frac{1}{3}(1,3,2)$
and $e_{111} = \frac{1}{6} \sum_{g \in \Sym_3} \sgn(g) g$, where $(1,2,3)$ and $(1,3,2)$ are the two $3$-cycles
in $\Sym_3$.
In any chosen Wedderburn isomorphism they correspond to the identity matrices in the relevant 
matrix blocks:

\vspace*{-12pt}
\[
\includegraphics[page=5]{AllPictures.pdf}
    \]
\end{example}

\vspace*{-6pt}
A not necessarily central idempotent $e$ is \emph{primitive} if it cannot be expressed
as a sum $f + f'$ with $f$ and $f'$ idempotents and $ff' = ff' = 0$. 
In the previous example, the centrally primitive idempotents $e_3$ and $e_{111}$ are primitive, since the left ideals that they generate are one-dimensional. Under the Wedderburn decomposition we have

\vspace*{-12pt}
\[ 
e_{21} \longmapsto \raisebox{-\mbaseline}{\includegraphics[page=6]{AllPictures.pdf}}
\]

\vspace*{-6pt}\noindent
which shows that $e_{21}$ is \emph{not} primitive. We give an explicit decomposition of $e_{21}$
working in $\C[\Sym_3]$
in Example~\ref{ex:Sym3PrimitiveIdempotents} below.

\subsection{Restriction and induction}\label{subsec:restrictionAndInduction}
Fix throughout a subgroup $H$ of~$G$. We shall often use the restriction
and induction functors relating $\C[G]$-modules to $\C[H]$-modules
as defined for modules and characters in \smash{\cite[\S 9]{Dornhoff}} and for characters
in \cite[Ch.~5]{Isaacs}. 

\begin{samepage}
\begin{definition}[Restricted and induced modules]\label{defn:restrictedAndInducedModules}{\ }
\begin{defnlist}
\item 
The \emph{restriction of a $\C[G]$-module $W$ to $H$}, denoted $W\res^G_H$, is the $\C[H]$-module
with the same underlying vector space as $W$, but the action defined only on $H$.
\item The \emph{induction of a $\C[H]$-module $U$ to $G$}, denoted $U \ind_H^G$, is
defined to be $\C[G] \otimes_{\C[H]} U$ where
$\C[G]$ is regarded as a right $\C[H]$-module by right multiplication.
\end{defnlist}
\end{definition}
\end{samepage}

In (b), the space $\C[G] \otimes_{\C[H]} U$ may be defined as the quotient of $\C[G] \otimes U$
by the subspace spanned by all $gh \otimes u - g \otimes hu$ for $x \in G$, $h \in H$ and $u \in U$.
Thus the relation $gh \otimes u = x \otimes hu$ holds in $\C[G] \otimes_{\C[H]} U$. This vector space
is a $\C[G]$-module with action defined by linear extension
of $k (g \otimes u) = kg \otimes u$ for $k \in G$.
In many cases one can avoid thinking about the technical construction using
tensor products and instead employ the following characterisation.

\begin{proposition}[Characterisation of induced modules]\label{prop:inductionCharacterisation}
Let $U$ be a $\C[H]$-module. The following are equivalent for a $\C[G]$-module~$V$:
\begin{thmlist}
\item $V \cong U \ind_H^G$;
\item $V$ has a $F[H]$-submodule $X$ isomorphic to $U$ such that $X$ generates~$V$ as a $\C[G]$-module
and $\dim V = [G : H]\dim X$; 
\item $V$ has a $F[H]$-submodule $X$ isomorphic to $U$ and there is a vector space decomposition 
$V = \bigoplus_{g \in G/H} gX$.
\end{thmlist}
\end{proposition}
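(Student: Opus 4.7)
The plan is to prove the cycle of implications (i) $\Rightarrow$ (iii) $\Rightarrow$ (ii) $\Rightarrow$ (i), where (iii) $\Rightarrow$ (ii) is essentially trivial and the other two are short computations using the construction of $U \ind_H^G$ via tensor products.

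For (i) $\Rightarrow$ (iii), I would work directly in $U \ind_H^G = \C[G] \otimes_{\C[H]} U$. Fixing a transversal for the left cosets, one has $\C[G] = \bigoplus_{g \in G/H} g\,\C[H]$ as a right $\C[H]$-module, so compatibility of the tensor product with direct sums gives
\[ \C[G] \otimes_{\C[H]} U \;=\; \bigoplus_{g \in G/H} \bigl(g\,\C[H]\bigr) \otimes_{\C[H]} U \;=\; \bigoplus_{g \in G/H} g \otimes U. \]
Setting $X = 1 \otimes U$, the map $u \mapsto 1 \otimes u$ is a $\C[H]$-module isomorphism $U \to X$ (using the defining relation $h \otimes u = 1 \otimes hu$ for $h \in H$), and by definition of the left $\C[G]$-action one has $g \otimes U = gX$, yielding the decomposition in (iii). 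The implication (iii) $\Rightarrow$ (ii) is then immediate: $\dim V = [G:H] \dim X$ is read off from the direct sum, and $V = \sum_{g \in G/H} gX \subseteq \C[G]\,X$ shows that $X$ generates $V$ as a $\C[G]$-module.

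For (ii) $\Rightarrow$ (i), let $\phi : U \to X$ be the given $\C[H]$-isomorphism. I would check that the bilinear map $\C[G] \times U \to V$ sending $(x,u) \mapsto x\,\phi(u)$ is $\C[H]$-balanced: for $h \in H$, $u \in U$, $x \in \C[G]$,
\[ (xh)\phi(u) \;=\; x\cdot h\phi(u) \;=\; x\,\phi(hu), \]
using the $\C[H]$-linearity of $\phi$. By the universal property of the tensor product this descends to a $\C[G]$-module homomorphism $\Phi : U \ind_H^G \to V$ satisfying $\Phi(g \otimes u) = g\,\phi(u)$. The image of $\Phi$ is a $\C[G]$-submodule of $V$ containing $X$, and so equals $V$ by the generation hypothesis; hence $\Phi$ is surjective. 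Applying the already-proved implication (i) $\Rightarrow$ (iii) to $U \ind_H^G$ itself gives $\dim(U \ind_H^G) = [G:H]\dim U = [G:H]\dim X = \dim V$, where the middle equality uses $X \cong U$ and the last equality is the hypothesis in (ii). Therefore $\Phi$ is an isomorphism.

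The only step requiring genuine care is the construction of $\Phi$ in (ii) $\Rightarrow$ (i): one must verify that sending $g \otimes u$ to $g\,\phi(u)$ respects the tensor relation $gh \otimes u = g \otimes hu$, which is exactly the $\C[H]$-linearity of $\phi$. Everything else reduces to bookkeeping with coset representatives and dimensions, so I do not anticipate any serious obstacle.
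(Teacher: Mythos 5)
Your proof is correct, but it takes a genuinely different route from the paper. The paper disposes of the equivalence of (i) and (ii) by citing Alperin (Corollary 3, p.~56) and then handles (ii) $\Leftrightarrow$ (iii) with a one-line dimension count, whereas you give a self-contained argument straight from Definition~\ref{defn:restrictedAndInducedModules}(b): you decompose $\C[G] = \bigoplus_{g \in G/H} g\,\C[H]$ as a right $\C[H]$-module to get (i) $\Rightarrow$ (iii) inside $\C[G]\otimes_{\C[H]} U$ itself, and you build the comparison map $\Phi$ for (ii) $\Rightarrow$ (i) via the universal property of the tensor product, finishing with the same dimension count. What your approach buys is independence from the external reference and an explicit isomorphism $\Phi(g\otimes u) = g\,\phi(u)$, which is essentially the ``routine, but somewhat technical, check'' the paper alludes to in Example~\ref{ex:permutationIdeal}; what the paper's approach buys is brevity. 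Two small points you should make explicit: since (i) only says $V \cong U\ind_H^G$, the submodule $X$ and the decomposition you construct inside $U\ind_H^G$ must be transported to $V$ along the isomorphism (harmless, as (iii) is isomorphism-invariant); and injectivity of $u \mapsto 1\otimes u$ onto $X = 1\otimes U$ should be justified by the dimension of the summand $1 \otimes U$ coming from your direct sum decomposition, rather than taken for granted. Neither is a gap in substance.
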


\begin{proof}
For the equivalence of (i) and (ii), see \cite[page 56, Corollary 3]{Alperin}.
By dimension counting one sees that that (ii) implies (iii) and the converse is obvious.
\end{proof}

\begin{example}\label{ex:inductionCharacterisation}
Let $\Omega = G/H$. The permutation module $\C[\Omega]$ of $G$ acting on the cosets of $H$, as defined
in Example~\ref{ex:permutationRepresentations}, has as a canonical basis $\{v_{bH} : b \in G/H \}$.
Observe that $X = \langle v_H \rangle$ affords the trivial representation $\C$ of $H$ and that
$\dim \C[\Omega] = |G/H|$. Therefore by Proposition~\ref{prop:inductionCharacterisation}
we have $\C[\Omega] \cong \C\ind_H^G$. 
\end{example}

In particular, if $U$ is a left ideal of $\C[H]$ then 
by Proposition~\ref{prop:inductionCharacterisation}(iii) the left ideal of $\C[G]$ generated by $U$,
namely \smash{$\bigoplus_{g \in G/H} gU$}, is isomorphic to \smash{$U \ind_H^G$}.
Such ideals are ubiquitous in this work. Given a left coset $bH \in G/H$,
Let $\pi_{bH} : \C[G] \rightarrow \C[H]$ denote the projection map
defined by 
\[ \pi_{bH} \bigl( \sum_{g \in G} x(g) g \bigr) = \sum_{g \in bH} x(g) g.\]

\begin{definition}\label{defn:inducedIdeal}
We say that an ideal $L$ of $\C[G]$ containing $\pi_H(L)$
is an \emph{induced ideal} from~$H$ to $G$.
\end{definition}

We typically omit `from $H$ to $G$' as it is clear from context.

\begin{example}\label{ex:permutationIdeal}
Observe that $\langle \eta_H \rangle$ is a left ideal of $\C[H]$ affording 
the trivial representation of $H$.
Therefore, by the remark before Definition~\ref{defn:inducedIdeal}, 
\[ \langle \eta_H \rangle \Ind_H^G \cong \C[G]\eta_H
 = \langle b \eta_H : b \in G /H \rangle.\]
Working directly from Definition~\ref{defn:restrictedAndInducedModules}(ii)
one could instead show that $g \otimes \eta_H \rightarrow g \eta_H$ defines a $\C[G]$-isomorphism
$\C[G] \otimes_{\C[H]} \langle \eta_H \rangle \cong \C[G] \eta_H$; this is a routine, but somewhat
technical, check.
\end{example}

The following proposition justifies the name `induced ideal' and generalizes the features
seen in the previous example.

\begin{proposition}\label{prop:inducedIdealCharacterisation}
Let $L$ be a left ideal of $\C[G]$ containing $\pi_H(L)$. 
Setting $U = \pi_H(L)$, we have
\begin{thmlist}
\item $U$ is a left ideal of $\C[H]$;
\item $L = \C[G] U$;
\item $L = \bigoplus_{b \in G/H} bU$;
\item there is an isomorphism of left $\C[G]$-modules \smash{$L \cong U\ind_H^G$};
\item there exists an idempotent $e \in \C[H]$ such that $L = \C[G]e$.
\end{thmlist}
\end{proposition}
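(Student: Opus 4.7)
The plan is to prove (i)--(v) in order, each building on the previous, using as the central computational tool the identity
\[ \pi_{bH}(x) = b \, \pi_H(b^{-1}x) \]
valid for every $x \in \C[G]$ and every $b \in G$. This is immediate because left multiplication by $b^{-1}$ permutes the standard basis of $\C[G]$ and carries $bH$ bijectively onto $H$. The hypothesis that $L$ is a left ideal containing $\pi_H(L)$ is then precisely the leverage needed to combine this identity with the left-ideal property and reduce everything to the structure of the $\C[H]$-ideal $U$.

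For (i), any $u \in U$ lies in both $L$ (by hypothesis) and $\C[H]$ (by construction); for $h \in H$ the element $hu$ is in $L$ by the left-ideal property, and in $\C[H]$, so $hu = \pi_H(hu) \in \pi_H(L) = U$. For (ii), the inclusion $\C[G]U \subseteq L$ is immediate since $U \subseteq L$ and $L$ is a left ideal; conversely, any $x \in L$ decomposes as $x = \sum_{b \in G/H} \pi_{bH}(x)$, and, by the displayed identity together with $b^{-1}x \in L$, each summand equals $b\,\pi_H(b^{-1}x) \in bU \subseteq \C[G]U$. Statement (iii) then follows from (ii) once one notes that the subspaces $bU \subseteq \C[bH]$ for distinct $b \in G/H$ have pairwise disjoint support, forcing the sum $\sum_{b \in G/H} bU$ to be direct.

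Statement (iv) is then essentially immediate from Proposition~\ref{prop:inductionCharacterisation}(iii): part (iii) realises $L$ as $\bigoplus_{b \in G/H} bU$ with the $\C[H]$-submodule $U$ (an $\C[H]$-ideal by (i)) playing the role of $X$ in that characterisation, so $L \cong U \Ind_H^G$. Finally, (v) follows by applying the general existence of an idempotent generator for a left ideal, reviewed in \S\ref{subsec:idempotents}, to the left $\C[H]$-ideal $U$: this yields an idempotent $e \in \C[H]$ with $U = \C[H]e$, and then $L = \C[G]U = \C[G]\C[H]e = \C[G]e$ by (ii).

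I anticipate no substantial obstacle; the only point worth verifying carefully is the coset-projection identity, after which each item is a short formal consequence of the previous ones together with the hypothesis $\pi_H(L) \subseteq L$.
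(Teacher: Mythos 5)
Your proof is correct and follows essentially the same route as the paper: identify $U$ with $L \cap \C[H]$ to get (i), decompose $L$ via the coset projections (using left-invariance of $L$) to get (ii) and (iii), then invoke Proposition~\ref{prop:inductionCharacterisation}(iii) and the idempotent generator of $U$ from \S\ref{subsec:idempotents}. The only cosmetic difference is that you make the identity $\pi_{bH}(x) = b\,\pi_H(b^{-1}x)$ explicit, where the paper argues directly with $L \cap b\,\C[H] \subseteq b(L \cap \C[H])$.
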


\begin{proof}
Since $L$ is a left ideal of $\C[G]$ we have $\C[H] U \subseteq L$ 
and since $\C[H]$ is closed under multiplication by elements of $H$, we have $\C[H]U \subseteq \C[H]$.
Therefore $\C[H]U \subseteq L \cap \C[H] = U$, proving~(i). 
Since $L$ is a left ideal of $\C[G]$,
$L$ contains $\bigoplus_{b \in G/H} bU$, and since 
\[ L \cap b \hskip1pt \C[H] \subseteq b (L \cap \C[H]) = bU, \]
it follows that
$L = \bigoplus_{b \in G/H} bU$. This proves (ii) and (iii); now (iv) follows from Proposition~\ref{prop:inductionCharacterisation}(iii). Finally by 
the remark at the start of \S\ref{subsec:idempotents}, applied to the left ideal
$U$ of $\C[H]$, there exists an idempotent $e \in \C[H]$ such that $U = \C[H] e$. It now
follows easily from (iii) that $L = \C[G]e$.
\end{proof}

\begin{example}[Primitive idempotents of $\Sym_3$]\label{ex:Sym3PrimitiveIdempotents}
We remarked after
Example~\ref{ex:Sym3CentralIdempotents}
that the centrally primitive idempotents $e_3 = \eta_{\Sym_3}$ and $e_{111} =
\frac{1}{6} \sum_{g \in \Sym_3} \sgn(g) g$
are primitive. Let $H = \Sym_{\{2,3\}}$. The natural action of $\Sym_3$ on $\{1,2,3\}$ corresponds to 
the action of $\Sym_3$ on the left cosets of~$H$.
(More formally, the map $g H \mapsto g(1)$ is a permutation isomorphism.)
By Example~\ref{ex:permutationIdeal},
the natural permutation representation $\langle v_1, v_2,v_3 \rangle$ for $\Sym_3$
(seen earlier in Example~\ref{eg: irreps of Sym3}) is isomorphic to the left ideal $\C[G] \eta_H$,
by an isomorphism satisfying $v_1 \mapsto \eta_H$. 
We saw earlier that $\langle v_1, v_2,v_3 \rangle \cong S^3 \oplus S^{21}$ where $S^3$
is the trivial module. Therefore subtracting $\eta_{\Sym_3}$ from $\eta_H$
we obtain
\[ f = \eta_H \!-\! \eta_{\Sym_3}
=
 \mfrac{1}{3} \!\id_{\Sym_3} \!-\! \ \mfrac{1}{6} (1,2,3) -\! \ \mfrac{1}{6} (1,3,2) 
-\! \ \mfrac{1}{6} (1,2) -\! \ \mfrac{1}{6} (1,3) +\! \mfrac{1}{3}
(2,3) . \]
Since $\eta_{\Sym_3}$ is central, $f$ is an idempotent such that $\C[\Sym_3]f \cong S^{21}$.
Taking $f' = e_{21} - f$ gives an explicit decomposition of the central
primitive idempotent $e_{21}$ into primitive idempotents, as promised
after Example~\ref{ex:Sym3CentralIdempotents}. 
It is worth noting that there is nothing canonical about this decomposition:
indeed $f$ may be replaced with any conjugate $u fu^{-1}$ for a unit $u \in \C[G]$.
The images of $\eta_H$, $\eta_G$ and $f = \eta_H - \eta_G$
are shown below,
with respect to the Wedderburn decomposition 
$\C[\Sym_3] = \C[\Sym_3] \eta_G + \bigl( \C[\Sym_3]f + \C[\Sym_3]f' \bigr) + \C[\Sym_3]e_{111}$:
 \[       
 \includegraphics[page=7]{AllPictures.pdf}
    \]
\end{example}

Definition~\ref{defn:restrictedAndInducedModules} extends
to characters and representations in the obvious way:
let $\chi_W\res^G_H$ be the character of~$W\res^G_H$
and let $\chi_U \ind_H^G$ be the character of~$U\ind_H^G$.

\begin{example}
    \label{eg: induction Sym3 to Sym4}
    Let $H = \Sym_3$ and $G = \Sym_4$. Refer to Examples \ref{eg: irreps of Sym3}, \ref{eg: Wedderburn iso Sym3} and \ref{eg: Wedderburn iso Sym4}. We have
    
    \smallskip
    \begin{defnlist}
        \item[(1)] $\chi^{3}\ind_{\Sym_3}^{\Sym_4} = \chi^4 + \chi^{31},$
        \item[(2)] $\chi^{21}\ind_{\Sym_3}^{\Sym_4} = \chi^{31}+ \chi^{22} + \chi^{211}$, and 
        \item[(3)] $\chi^{111}\ind_{\Sym_3}^{\Sym_4} = \chi^{211}+ \chi^{1111}$. 
    \end{defnlist}
    
    \smallskip\noindent
    In each case the character of $\Sym_4$ is obtained by adding a box to the relevant partition of $3$,
    in all possible ways. (See \cite[Ch.~9]{James} for the general result.) 
    Corresponding to the Wedderburn decomposition 
    \[ \C[\Sym_3] \cong S^3 \oplus S^{21} \oplus S^{21} \oplus S^{111} \]
    from Example~\ref{eg: Wedderburn iso Sym3}, there is a choice of Wedderburn isomorphism for
     $\C[\Sym_4]$ such that
    \[
    \C[\Sym_4] \cong \C[\Sym_3]\Ind_{\Sym_3}^{\Sym_4} \cong
    \raisebox{-16pt}{\includegraphics[page=8]{AllPictures.pdf}}
    \]
    where the four summands are the modules induced from the summands of $\C[\Sym_3]$ displayed above
    and the blocks are ordered from top-left to bottom-right $4$, $31$, $22$, $211$, $1111$.
    This isomorphism is chosen so that tensoring by the sign representation $S^{1111}$ corresponds to rotating diagrams by a half-turn; note that $S^{31} \otimes \sgn \cong S^{211}$ and $S^{22} \otimes \sgn \cong S^{22}$.
        We return to this example in Example~\ref{eg: exact lumping algebra Sym3 Sym4}.
\end{example}

We end this subsection with a fundamental result relating induced and restricted 
modules and characters. Below the subscripts $G$ and $H$ indicate the group relevant to the
character inner product defined in~\eqref{eq:characterInnerProduct}. 

\begin{proposition}[Frobenius reciprocity]\label{prop:FrobeniusReciprocity}
If $U$ is a $\C[H]$-module and $W$ is a $\C[G]$-module then
\[ \bigl\langle \chi_U \Ind_H^G, \chi_W \bigr\rangle_G = 
\bigl\langle \chi_U, \chi_W \Res^G_H \bigr\rangle_H. \]
\end{proposition}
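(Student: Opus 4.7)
My plan is the standard two-step approach: first derive the induced character formula, then prove the reciprocity by a direct manipulation of the character inner products.

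For the first step, I would fix a set $\{b_1, \ldots, b_n\}$ of left coset representatives for $G/H$ and use Proposition~\ref{prop:inductionCharacterisation}(iii) to write $U\Ind_H^G = \bigoplus_{i=1}^n b_i \otimes U$ as vector spaces. For $g \in G$ and each $i$, write $gb_i = b_{\sigma(i)} h_i$ with $h_i \in H$ and $\sigma \in \Sym_n$. Then $g$ acts on $b_i \otimes U$ by sending it to $b_{\sigma(i)} \otimes h_i U$, so only the fixed points of $\sigma$ contribute to the trace, yielding
\[ (\chi_U \Ind_H^G)(g) = \sum_{i : b_i^{-1} g b_i \in H} \chi_U(b_i^{-1} g b_i) = \frac{1}{|H|} \sum_{\substack{x \in G \\ x^{-1}gx \in H}} \chi_U(x^{-1}gx), \]
where the final equality uses the fact that $\chi_U$ is an $H$-class function, together with the summation over the $|H|$ elements of each coset.

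For the second step, I would substitute this formula into the left-hand inner product and swap the order of summation:
\begin{align*}
\langle \chi_U \Ind_H^G, \chi_W \rangle_G
&= \frac{1}{|G|\cdot|H|} \sum_{g \in G} \sum_{\substack{x \in G \\ x^{-1}gx \in H}} \overline{\chi_U(x^{-1}gx)}\, \chi_W(g) \\
&= \frac{1}{|G|\cdot|H|} \sum_{x \in G} \sum_{y \in H} \overline{\chi_U(y)}\, \chi_W(xyx^{-1}),
\end{align*}
where I have substituted $y = x^{-1}gx$ (so for fixed $x$ the map $g \mapsto x^{-1}gx$ is a bijection of $G$, and the constraint becomes simply $y \in H$). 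Since $\chi_W$ is a $G$-class function, $\chi_W(xyx^{-1}) = \chi_W(y)$; the sum over $x$ then contributes a factor of $|G|$ and I am left with $\frac{1}{|H|}\sum_{y \in H}\overline{\chi_U(y)}\chi_W(y)$, which is exactly $\langle \chi_U, \chi_W \Res^G_H\rangle_H$.

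The only real obstacle is bookkeeping in step one: being careful that the trace picks up only the diagonal blocks indexed by the fixed points of the coset permutation $\sigma$, and that the passage from summing over coset representatives to summing over all of $G$ introduces precisely the factor $1/|H|$. Everything after that is a change of variables and the class-function property. No appeal beyond Proposition~\ref{prop:inductionCharacterisation} and the definition~\eqref{eq:characterInnerProduct} of the inner product is needed.
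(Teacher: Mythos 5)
Your argument is correct, but it is worth noting that the paper does not actually prove this proposition: it simply cites Dornhoff and Isaacs for computational proofs and Benson for the conceptual proof via the tensor-hom adjunction $\Hom_{\C[G]}(U\Ind_H^G, W) \cong \Hom_{\C[H]}(U, W\Res^G_H)$. What you have written out is the classical character-theoretic proof that the cited elementary sources give: derive the induced character formula from the decomposition $U\Ind_H^G = \bigoplus_i b_i \otimes U$ of Proposition~\ref{prop:inductionCharacterisation}(iii), observing that only cosets fixed by the permutation action of $g$ contribute to the trace, then unfold the inner product, change variables $y = x^{-1}gx$, and use that $\chi_W$ is a class function on $G$. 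All the steps check out, including the two points that usually cause trouble: the passage from coset representatives to a sum over all of $G$ correctly introduces the factor $1/|H|$ because $\chi_U$ is an $H$-class function, and the complex conjugation sits on the first argument throughout, consistent with the inner product in~\eqref{eq:innerProduct}. Compared with the adjunction proof the paper points to, your route buys self-containedness within the paper's stated toolkit (only Proposition~\ref{prop:inductionCharacterisation} and the definition of the inner product) at the cost of some bookkeeping, whereas the adjunction argument is shorter and explains \emph{why} the two multiplicity counts agree, since both sides compute the dimension of the same Hom space.
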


\begin{proof}
See \cite[Theorem 9.4(c)]{Dornhoff}, \cite[Lemma 5.2]{Isaacs} for proofs with minimal prerequisites,
or, for an 
elegant and conceptual proof using the tensor-hom adjunction,
\cite[Proposition 2.8.3]{BensonI}.
\end{proof}

\subsection{Borel and parabolic subalgebras}
We continue with some basic results on subalgebras of $\Mat_d(\C)$
needed in the proof of Proposition~\ref{prop: irredundant}. Proofs are included
to make the article self-contained and to introduce some ideas relevant to the proof of this proposition.
Recall that a subalgebra $P$ of $\Mat_d(\C)$ is \emph{parabolic} 
if there is a chain of subspaces
$\C^d \supset V_1 \subset \ldots \supset V_r \supset 0$
such that $P = \{ T \in \Mat_d(\C) : T(V_i) \subseteq V_i \text{ for $1 \le i \le r$} \}$.
In this case we write
\[ P = \Stab( \C^d \supset V_1 \supset \ldots \supset V_r \supset 0 ). \]
Observe that if $\MX$ is an invertible matrix then
\begin{equation}
\label{eq:parabolicConjugacy}
\MX P \MX^{-1} = \Stab\bigl( \C^d \supset \MX(V_1) \supset \ldots \supset \MX(V_r) \supset 0 \bigr) .
\end{equation}
Stated slightly informally, the following lemma is that parabolic subalgebras are self-normalizing.

\begin{lemma}\label{lemma:parabolicSelfNormalizing}
Let $P$ be a parabolic subalgebra of $\Mat_d(\C)$. Let $\MX$ be an invertible matrix
in $\Mat_d(\C)$. If $\MX P \MX^{-1} = P$ then $\MX \in P$.
\end{lemma}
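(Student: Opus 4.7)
The plan is to show that the flag $\C^d \supset V_1 \supset \cdots \supset V_r \supset 0$ can be recovered from $P$ alone, so that conjugation by $\MX$ must preserve every $V_i$, which is the definition of $\MX \in P$. By~\eqref{eq:parabolicConjugacy},
\[ \MX P \MX^{-1} = \Stab\bigl( \C^d \supset \MX(V_1) \supset \cdots \supset \MX(V_r) \supset 0 \bigr), \]
so the hypothesis $\MX P \MX^{-1} = P$ realises $P$ simultaneously as the stabiliser of two strict flags. If these two flags coincide, then $\MX(V_i) = V_i$ for each $i$ and hence $\MX \in P$.

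To prove that the flag is recoverable, I would show by induction on $r$ that the $P$-invariant subspaces of $\C^d$ are exactly $\{0, V_r, V_{r-1}, \ldots, V_1, \C^d\}$. The base case $r=0$ reduces to the irreducibility of the natural action of $\Mat_d(\C)$ on $\C^d$. For the inductive step, first observe that $V_r$ is a simple $P$-module: restriction defines a surjection $P \twoheadrightarrow \Mat(V_r)$ (every endomorphism of $V_r$ extends to an element of $P$ by placing zeros in the other blocks), and $\Mat(V_r)$ acts irreducibly on $V_r$. Hence for any $P$-invariant $W \subseteq \C^d$, $W \cap V_r$ is either $0$ or $V_r$. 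In the case $V_r \subseteq W$, the quotient $W/V_r$ is invariant under the induced action of $P$ on $\C^d/V_r$, whose image is the parabolic stabilising the shorter flag $V_1/V_r \supset \cdots \supset V_{r-1}/V_r \supset 0$; the inductive hypothesis then forces $W = V_i$ for some $i \in \{0,1,\ldots,r\}$, where $V_0 = \C^d$.

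The main obstacle is ruling out the alternative $W \cap V_r = 0$ with $W \ne 0$; this requires exploiting the unipotent radical of $P$. Fix a basis $e_1, \ldots, e_d$ adapted to the flag, so that $V_i = \langle e_1, \ldots, e_{d_i} \rangle$ with $d_i = \dim V_i$, noting that $d_r \ge 1$ by the strictness $V_r \supsetneq 0$. For every $i \le d_r$, $j > d_r$ and $t \in \C$, the elementary matrix $I + t E_{ij}$ lies in $P$: it preserves each $V_k$ because whenever $e_j \in V_k$ one has $i \le d_r \le d_k$ and so $e_i \in V_k$ too. Applying it to $w = \sum_k w_k e_k \in W$ yields $w + t w_j e_i \in W$, so that $t w_j e_i \in W \cap V_r = 0$ for all $t$. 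Taking $i = 1$ this forces $w_j = 0$ for every $j > d_r$, giving $w \in V_r$; combined with $W \cap V_r = 0$ this gives $w = 0$, a contradiction. This completes the flag recovery and hence the lemma.
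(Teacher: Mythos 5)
Your proof is correct and takes essentially the same route as the paper: recover the flag from $P$ alone as its chain of invariant subspaces, then use \eqref{eq:parabolicConjugacy} to conclude that $\MX(V_i) = V_i$ for all $i$ and hence $\MX \in P$. The only difference is that you reconstruct the flag bottom-up (simplicity of $V_r$ via the surjection onto $\Mat(V_r)$, the elementary matrices $I + tE_{ij}$ to exclude invariant complements, and induction on the quotient $\C^d/V_r$), thereby fully justifying what the paper dispatches with the one-line observation that $V_i$ is the greatest proper $P$-invariant subspace of $V_{i-1}$.
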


\begin{proof}
Let $P = \Stab( \C^d \supset V_1 \supset \ldots \supset V_r \supset 0)$ be a parabolic subalgebra
of $\Mat_d(\C)$.
Observe that $V_1$ is the greatest proper subspace of $\C^d$ preserved by $P$, and inductively,
$V_i$ is the greatest proper subspace of $V_{i-1}$ preserved by $P$, for each $i$.
Thus $P$ determines the chain of subspaces $\C^d \supset V_1 \supset \ldots \supset V_r \supset 0$.
It now follows from~\eqref{eq:parabolicConjugacy} that
$\MX P\MX ^{-1} = P$ if and only if $\MX (V_i) = V_i$ for each $i$, or equivalently,
if and only if $\MX$ is an invertible matrix in $P$. The lemma follows.
\end{proof}

We define the \emph{standard Borel subalgebra} of $\Mat_d(\C)$ 
to be its subalgebra of invertible lower triangular matrices. 
Equivalently, if $C_i$ is the subspace of~$\C^d$ of column vectors zero in their top $d - i$ positions,
then the standard Borel subalgebra is
$\Stab(\C^d \supset C_{d-1} \supset \ldots \supset C_{1} \supset 0 )$. 
We say that a subalgebra of $\Mat_d(\C)$ is \emph{Borel} if it is conjugate 
by an invertible matrix to the standard Borel. 
We say that a parabolic subalgebra of $\Mat_d(\C)$ is \emph{standard}
if it contains the standard Borel subalgebra.

\begin{lemma}\label{lemma:standardParabolic}
A parabolic subalgebra is standard if and only if it is equal to 
$\Stab(\C^d \supset C_{d_1} \supset \ldots \supset C_{d_r} \supset 0)$
for some $d > d_1 > \ldots > d_r > 0$.
\end{lemma}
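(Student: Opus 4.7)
The plan is to prove both directions, with the content of the forward direction being the classification of subspaces of $\C^d$ that are invariant under all lower triangular matrices.

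First, the easy direction: if $P = \Stab(\C^d \supset C_{d_1} \supset \cdots \supset C_{d_r} \supset 0)$, then since every lower triangular matrix preserves each subspace $C_i$ (indeed, if $v \in C_i$ has first $d-i$ coordinates zero, and $L$ is lower triangular, then $(Lv)_j = \sum_{k\le j} L_{jk} v_k = 0$ for $j \le d-i$), in particular it preserves each $C_{d_j}$. Thus the standard Borel is contained in $P$, so $P$ is standard by definition.

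For the converse, suppose $P$ is standard and write $P = \Stab(\C^d \supset V_1 \supset \cdots \supset V_r \supset 0)$ using the definition of parabolic. Let $B$ denote the standard Borel. Since $B \subseteq P$ and each element of $P$ preserves every $V_i$, each $V_i$ is a $B$-invariant subspace of $\C^d$. I claim that the only $B$-invariant subspaces of $\C^d$ are the $C_k$ for $0 \le k \le d$. Given this, each $V_i$ equals $C_{d_i}$ for some $d_i$, and the strict inclusions $\C^d \supsetneq V_1 \supsetneq \cdots \supsetneq V_r \supsetneq 0$ force $d > d_1 > \cdots > d_r > 0$, completing the proof.

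The key step is therefore the classification of $B$-invariant subspaces. Let $V \subseteq \C^d$ be a nonzero $B$-invariant subspace, and let
\[ j_0 = \min\{\, j : \text{there exists $v \in V$ with $v_j \ne 0$} \,\}. \]
Then every $v \in V$ satisfies $v_j = 0$ for $j < j_0$, so $V \subseteq C_{d-j_0+1}$. For the reverse inclusion, pick $v \in V$ with $v_{j_0} \ne 0$. For any $m$ with $j_0 \le m \le d$, the matrix $E_{m,j_0}$ with a single nonzero entry equal to $1$ in position $(m, j_0)$ is lower triangular, hence lies in $B$, and $E_{m,j_0} v = v_{j_0} e_m$. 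Thus $e_m \in V$ for all $m \ge j_0$, and so $C_{d-j_0+1} \subseteq V$, proving $V = C_{d - j_0 + 1}$.

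There is no real obstacle here; the only content is the elementary fact that elementary lower triangular matrices of the form $E_{m,j_0}$ (for $m \ge j_0$) can be used to generate all basis vectors $e_m$ with $m \ge j_0$ from any vector in $V$ with nonzero $j_0$-th coordinate, which pins down $V$ as one of the $C_k$.
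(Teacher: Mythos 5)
Your proof is correct and follows essentially the same route as the paper: the ``if'' direction is immediate from the fact that lower triangular matrices preserve each $C_i$, and the ``only if'' direction reduces, exactly as in the paper, to showing that the only subspaces of $\C^d$ invariant under the standard Borel are the spaces $C_k$. The only difference is in how that classification is carried out: the paper uses the orbits $\mathcal{O}_i$ of the group of invertible lower triangular matrices and the fact that an invariant subspace is a union of orbits, while you apply the singular elementary matrices $E_{m,j_0}$ of the Borel subalgebra directly to generate $C_{d-j_0+1}$ from a vector with leading nonzero coordinate in position $j_0$; both arguments are elementary and equally valid.
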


\begin{proof}
The `if' direction is clear from the equivalent definition of the standard Borel
algebra just given. For the `only if' direction, let $P = \Stab( \C^d \supset
V_1 \supset \ldots \supset V_r \supset 0)$ be a standard parabolic
subalgebra. Let $B$ be its subgroup
of invertible lower triangular matrices. Observe that the orbits of $B$ on
$\C^d$ are $\{0\}$ and 
$\mathcal{O}_i$ for $1 \le i \le d$,
where 
\[ \mathcal{O}_i = \{v \in \C^d : v_1 = \ldots = v_{i-1} = 0, v_i \not=0 \}.\]
If $V$ is a subspace of $\C^d$ such that $P(V) \subseteq V$
then since $B(V) = V$, the subspace $V$ is a union of orbits of these orbits.
Applying this observation to each $V_i$ in turn
we find that $V_i = C_{d_i}$ where $d_i = \dim V_i$, for each $1 \le i \le r$.
Hence $P = \Stab(\C^d \supset C_{d_1} \supset \ldots \supset C_{d_r} \supset 0)$
as required.
\end{proof}

\begin{proposition}\label{prop:parabolicConjugacy}
Each parabolic subalgebra of $\Mat_d(V)$ is conjugate to a unique standard parabolic.
\end{proposition}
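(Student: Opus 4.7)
The plan is to prove existence and uniqueness separately, both reducing to the key observation from the proof of Lemma~\ref{lemma:parabolicSelfNormalizing} that the flag of subspaces in a parabolic presentation is intrinsic to the parabolic.

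For existence, I would start from $P = \Stab(\C^d \supset V_1 \supset \ldots \supset V_r \supset 0)$ and set $d_i = \dim V_i$, so $d > d_1 > \ldots > d_r > 0$. The plan is then to find an invertible $\MX \in \Mat_d(\C)$ with $\MX(V_i) = C_{d_i}$ for each $i$. This is a routine linear algebra construction: build a basis $v_1, \ldots, v_d$ of $\C^d$ by first choosing a basis for $V_r$, extending successively through each $V_i$, and finally to a basis of~$\C^d$, arranged so that $v_{d-d_i+1}, \ldots, v_d$ spans $V_i$ for each~$i$. Letting $\MX$ send $v_j$ to $e_j$ yields the required matrix, and then~\eqref{eq:parabolicConjugacy} gives $\MX P \MX^{-1} = \Stab(\C^d \supset C_{d_1} \supset \ldots \supset C_{d_r} \supset 0)$, which is standard by Lemma~\ref{lemma:standardParabolic}.

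For uniqueness, I would take two standard parabolic subalgebras
\[ P_1 = \Stab(\C^d \supset C_{d_1} \supset \ldots \supset C_{d_r} \supset 0), \quad
   P_2 = \Stab(\C^d \supset C_{e_1} \supset \ldots \supset C_{e_s} \supset 0), \]
and suppose $\MX P_1 \MX^{-1} = P_2$ for some invertible $\MX$. By~\eqref{eq:parabolicConjugacy}, the left-hand side has the presentation $\Stab(\C^d \supset \MX(C_{d_1}) \supset \ldots \supset \MX(C_{d_r}) \supset 0)$. The key step is then to invoke the observation made at the start of the proof of Lemma~\ref{lemma:parabolicSelfNormalizing}: a parabolic subalgebra uniquely determines its defining chain, since $V_1$ must be the greatest proper subspace of $\C^d$ preserved by the parabolic, and inductively each $V_{i+1}$ must be the greatest proper subspace of $V_i$ with the same property. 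Applying this to $P_2$ forces $r = s$ and $\MX(C_{d_i}) = C_{e_i}$ for every~$i$. Taking dimensions gives $d_i = e_i$ for all $i$, hence $P_1 = P_2$.

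There is no real obstacle here; the construction in the existence half is elementary, and the uniqueness half is almost immediate once one reuses the intrinsic characterisation of the chain from the previous lemma. The only point worth handling with some care is to make sure that the observation about intrinsic flags applies verbatim in the present setting, which it does because the argument in Lemma~\ref{lemma:parabolicSelfNormalizing} is stated for an arbitrary parabolic presentation.
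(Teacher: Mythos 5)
Your proof is correct and follows essentially the same route as the paper: construct $\MX$ by building a basis adapted to the flag starting from $V_r$, apply~\eqref{eq:parabolicConjugacy} and Lemma~\ref{lemma:standardParabolic} for existence, and use the fact that conjugation carries flags to flags of the same dimensions for uniqueness. The only difference is cosmetic: you make explicit the appeal to the intrinsic-flag observation from the proof of Lemma~\ref{lemma:parabolicSelfNormalizing}, which the paper's shorter uniqueness sentence leaves implicit.
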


\begin{proof}
Let $P = \Stab(\C^d \supset V_1 \supset \ldots \supset V_r \supset 0)$ be a parabolic subalgebra
of $\Mat_d(\C)$. Starting with $V_r$ and working backwards, one may
construct an invertible matrix $\MX$ such that
$\MX(V_i) = C_{\dim {V_i}}$ for each $1 \le i \le r$. By~\eqref{eq:parabolicConjugacy} 
we have $\MX P \MX^{-1} = \Stab(\C^d \supset C_{\dim V_1} \supset \ldots \supset C_{\dim V_r} \supset 0)$.
By the `if' direction of Lemma~\ref{lemma:standardParabolic}, $\MX P\MX^{-1}$ is a standard parabolic.
By~\eqref{eq:parabolicConjugacy}, the dimensions of the $V_i$ are preserved by
conjugacy. Therefore $\MX P \MX^{-1}$ is the unique standard parabolic conjugate to $P$.
\end{proof}

\subsection{Right idealizers}\label{subsec:rightIdealizers}
We finish our algebraic background
with a result on idealizers needed in the proof of Theorem~\ref{thm:mainGL}.
Given a left ideal $L$ in $\C[G]$ its
\emph{right idealizer} $\RId_{\C[G]}(L)$ 
is the largest subspace~$W$ of $\C[G]$ 
such that $L$ is a right ideal in~$W$. Note that since $L$ is closed under multiplication,
$W$ contains $L$. In symbols,
\[
\RId_{\C[G]}(L) = \{w \in \C[G] : Lw \subseteq L\}.
\]
\vspace{14pt}
\begin{lemma}\label{lemma:RId}
Let $e\in\C[G]$ be an idempotent, let $L = \C[G]e$ be a left ideal of $\C[G]$.
Then $\RId_{\C[G]}(L) = \C[G]e + (1-e)\C[G]$. Moreover, the Wedderburn component of 
$\RId_{\C[G]}(L)$ in the block $\Mat(V)$ is (up to a choice of isomorphism) the parabolic subalgebra
$\Stab(\C^d \supset C_{r} \supset \C^d)$

\vspace*{-12pt}
\[
\includegraphics[page=9]{AllPictures.pdf}\vspace*{-6pt}
\]
where $r$ is the multiplicity of the irreducible module $V$ in $L$ and $d = \dim(V)$.
\end{lemma}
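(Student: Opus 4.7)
The plan is to handle the two parts separately: first derive the algebraic description of the right idealizer, then analyse it Wedderburn block by Wedderburn block.

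For the first part, I would start from the tautology that $Lw \subseteq L$ if and only if $\C[G]ew \subseteq \C[G]e$, which (since $\C[G]ee = \C[G]e$) is equivalent to $ew \in \C[G]e$, and hence to $ew = ewe$, i.e.\ $ew(1-e) = 0$. This gives
\[ \RId_{\C[G]}(L) = \bigl\{ w \in \C[G] : ew(1-e) = 0 \bigr\}. \]
The inclusion $\C[G]e + (1-e)\C[G] \subseteq \RId_{\C[G]}(L)$ is then immediate from $e(1-e)=0$: for $w = xe + (1-e)y$ we compute
\[ ew(1-e) = exe(1-e) + e(1-e)y(1-e) = 0. \]
For the reverse inclusion, any $w$ with $ew = ewe$ admits the decomposition $w = ewe + (1-e)w$, where $ewe \in \C[G]e$ and $(1-e)w \in (1-e)\C[G]$.

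For the Wedderburn statement, I would first observe that the defining condition $ew(1-e)=0$ is preserved by the block decomposition: writing $e = \sum_V e_V$ and $w = \sum_V w_V$ under the isomorphism $\C[G] \cong \bigoplus_V \Mat(V)$ of Proposition~\ref{prop:Wedderburn}, one has $ew(1-e) = \sum_V e_V w_V (1-e_V)$, and this vanishes if and only if each block vanishes. Hence
\[ \RId_{\C[G]}(L) = \bigoplus_{V \in \Irr(G)} \RId_{\Mat(V)}\bigl(\Mat(V)\, e_V\bigr), \]
and it suffices to analyse a single block. Inside $\Mat(V) \cong \Mat_d(\C)$ with $d = \dim V$, the element $e_V$ is an idempotent matrix; its rank equals the multiplicity $r$ of $V$ as a summand of $L$, since $\Mat(V)e_V$ is the image of $L$ in the block and has dimension $dr$ as a left ideal of $\Mat(V)$ (giving $r$ copies of the unique simple $\Mat(V)$-module $V$, cf.\ Lemma~\ref{lem:IdempotentsVSCharacters}).

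Because the Wedderburn isomorphism is only determined up to inner automorphism of each $\Mat(V)$, I am free to choose a basis of $V$ in which $e_V = \diag(1,\ldots,1,0,\ldots,0)$ with $r$ leading ones; this is always possible as an idempotent matrix of rank $r$ is conjugate to this diagonal form. In $r + (d-r)$ block form, the condition $e_V w_V (1-e_V) = 0$ then says precisely that the top-right $r \times (d-r)$ block of $w_V$ vanishes, so $w_V$ is block lower-triangular. This is the standard parabolic $\Stab\bigl(\C^d \supset C_{d-r} \supset 0\bigr)$ pictured in the statement, completing the proof.

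The only subtlety is the basis-choice step: one must be sure that different blocks can be trivialised simultaneously. This is automatic because the inner automorphism of $\Mat(V)$ used to diagonalise $e_V$ can be performed independently in each block, so there is no obstruction to making the whole Wedderburn isomorphism compatible with every $e_V$ at once.
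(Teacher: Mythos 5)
Your proof is correct and follows essentially the same route as the paper: decompose $\C[G]\cong\bigoplus_V\Mat(V)$, identify $e_V$ with a rank-$r$ diagonal idempotent (with $r$ the multiplicity of $V$ in $L$, via Lemma~\ref{lem:IdempotentsVSCharacters}), and read off that the condition $e_Vw_V(1-e_V)=0$ cuts out the block-triangular parabolic — with the welcome addition that you verify the identity $\RId_{\C[G]}(L)=\C[G]e+(1-e)\C[G]$ directly from $ew(1-e)=0$ and the decomposition $w=ewe+(1-e)w$, where the paper leaves this as a ``linear algebra exercise'' after drawing the block pictures. Your identification of the stabilised flag as the $(d-r)$-dimensional coordinate subspace, i.e.\ $\Stab(\C^d\supset C_{d-r}\supset 0)$, is the correct reading of the statement's garbled chain $\Stab(\C^d\supset C_r\supset\C^d)$, so there is no genuine discrepancy.
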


After this lemma, the formula
\(
\RId_{\C[G]}(L) = \C[G]e + (1-e)\C[G]
\)
can be represented (on each Wedderburn block) as
\[
\includegraphics[page=10]{AllPictures.pdf}
\]
\begin{proof}[Proof of Lemma~\ref{lemma:RId}]
    By the Wedderburn decomposition in Proposition~\ref{prop:Wedderburn}, we can write $e = \sum_{V \in \Irr(G)} e_V$ in a unique way, where $e_V$ is the part of $e$ supported on the Wedderburn block $\isoblock{V}$. Moreover, since each Wedderburn block is an algebra, we deduce that $e_V$ is an idempotent for every $V\in\Irr(G)$.
    Up to a choice of Wedderburn isomorphism, we can identify each $e_V$ with the diagonal matrix $\diag(1,...,1,0,...,0)$ with exactly $r = \langle \chi_{\C[G]e}, \psi_V\rangle$ ones.
    This gives
    
    \vspace*{-18pt}
    \[
    \C[G]e = \raisebox{-\mbaseline}{\includegraphics[page=11]{AllPictures.pdf}}
    \quad\quad \text{and}\quad\quad
    (1-e)\C[G]e = \raisebox{-\mbaseline}{\includegraphics[page=12]{AllPictures.pdf}}.
    \]
    
    \vspace*{-6pt}\noindent
    Checking the claims is now a linear algebra exercise.
\end{proof}

\begin{remark}\label{remark: RId is normaliser}
    Note that the right idealizer of a left ideal and its normalizer are closely related. Indeed, let $L$ be a left ideal. Then working in the group $\C[G]^\times$ of invertible elements of $\C[G]$, we have
    \[ N_{\C[G]^\times}L = \{w\in\C[G]^\times :  w^{-1}Lw = L\}    
    = \{w\in\C[G]^\times :  Lw = L\} 
    = \RId_{\C[G]}(L) \cap \C[G]^\times \]
where the second equality holds because $L$ is a left ideal
and the third because $Lw = L$ if and only if $Lw \subseteq L$ for invertible
elements $w$.
\end{remark}

\section{Double cosets}\label{sec:doubleCosets}
In this section we collect some basic results on double cosets, giving a short algebraic
proof of a key `averaging' lemma and a special case of Mackey's restriction formula.
 Fix throughout this section a finite group $G$ and subgroups $T$ and $H$ of $G$.
By definition, the double coset $TxH$ is the set $\{ t x h : t \in T, h \in H\}$.

\subsection{Counting results}
It is clear that $TxH$ is a union of left cosets of $H$, and also
a union of right cosets of $T$. The different expressions for elements of $TxH$ all come from the equation
\begin{equation} txh = tsx s' h \label{eq:doubleCosetAllForms} \end{equation}
where $s \in T \cap xHx^{-1}$ and so $s' = x^{-1}s^{-1} x \in xTx^{-1} \cap H$.
It follows that
\begin{align*}
TxH &= \{txh : t \in T, h \in (x^{-1}Tx \cap H) \backslash H \} \\ 
    &= \{txh': t \in t / (T \cap xHx^{-1}), h \in H \}. 
\end{align*}
Thus $TxH$ is a disjoint union of the right cosets $Txh$ for $h$ in a set of representatives
for the right cosets of $x^{-1}T x \cap H$ in $H$, and also a disjoint
union of the left cosets $txH$ for $t$ in a set of representatives for the left cosets
of $T \cap xHx^{-1}$ in $T$. This is shown diagrammatically below,
using the identity as one coset representative in each case.

\begin{center}
\includegraphics[page=13]{AllPictures.pdf}
\end{center}
By~\eqref{eq:doubleCosetAllForms}, each box has $|T \,\cap\, xHx^{-1}| 
= |x^{-1}Tx \,\cap\, H|$ different elements
of $HxH$. 
Denoting this common value by $r$, there are $|T|/r$ rows and $|H|/r$ columns.
By counting elements we obtain the equation $r |T||H|/r^2 = |TxH|$, or equivalently
\begin{equation}
|TxH| \, |x^{-1}Tx \cap H| = |H|\,|T|. 
\label{eq:doubleCosetCountTxH} 
\end{equation}
As an immediate application of~\eqref{eq:doubleCosetCountTxH} we prove
the following key `averaging' lemma.
Recall from the start of \S\ref{subsec:mainResults} that if $K \le G$ is a subgroup
then $\eta_K = |K|^{-1}\sum_{k \in K}k \in \C[G]$, where $\C[G]$ is the group
algebra defined in~\S\ref{subsec:groupAlgebra}.

\begin{samepage}
\begin{lemma}\label{lemma:averaging}
For $w \in \C[G]$ we have
\begin{thmlistEL}
\item $\eta_T w$ is constant on each right coset $Tg$ in $TxH$ and its common value on $Tx$
is $w(Tx)/|T|$;
\item $w \eta_H$ is constant on each left coset $gH$ in $HxH$ and its common value on $xH$ is $w(xH)/|H|$;
\item $\eta_T w \eta_H$ is constant on $TxH$ and its common value on the double coset is $w(TxH)/|TxH|$.
\end{thmlistEL}
\end{lemma}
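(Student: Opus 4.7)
The plan is to prove all three parts by exploiting the identity that $\eta_T g$, $g \eta_H$, and $\eta_T g \eta_H$ are each the uniform distribution on the corresponding right coset, left coset, and double coset, respectively. This reduces the averaging statements to a single line of linearity once we know the sizes of these cosets, for which we can invoke the counting identity~\eqref{eq:doubleCosetCountTxH} proved just above.

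First I would verify the three uniformising identities. For $g \in G$, the element $\eta_T g = |T|^{-1}\sum_{t \in T} tg$ is supported on $Tg$ and assigns mass $1/|T|$ to each of the $|T|$ elements of $Tg$, so $\eta_T g = |T|^{-1}\sum_{y \in Tg} y$. Dually, $g \eta_H = |H|^{-1}\sum_{y \in gH} y$. For $\eta_T g \eta_H$, I would compute directly: for $y \in G$,
\[ (\eta_T g \eta_H)(y) = \frac{1}{|T||H|}\bigl|\{(t,h) \in T \times H : tgh = y\}\bigr|. \]
If $y \notin TgH$ this is zero; if $y \in TgH$, then the fibre $\{(t,h) : tgh = y\}$ is a torsor for $\{(u, g^{-1}u^{-1}g) : u \in T \cap gHg^{-1}\}$, so has size $|T \cap gHg^{-1}|$. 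By~\eqref{eq:doubleCosetCountTxH} this equals $|T||H|/|TgH|$, and therefore $\eta_T g \eta_H = |TgH|^{-1}\sum_{y \in TgH} y$.

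With these identities in hand, each part follows by bilinearity of multiplication in $\C[G]$. For (i), I decompose $w = \sum_R \sum_{g \in R} w(g) g$ where $R$ ranges over the right cosets of $T$ contained in $TxH$ (together with cosets outside $TxH$, which behave analogously); for each such $R$, the sum $\sum_{g \in R} w(g) \eta_T g$ collapses to $w(R)/|T|$ times $\sum_{y \in R} y$ by the first identity, and the claim follows upon restricting to $y \in Tx$. Part (ii) is the dual argument using the second identity. For (iii), the third identity gives
\[ \eta_T w \eta_H = \sum_{g \in G} w(g)\, \eta_T g \eta_H = \sum_{D} \frac{w(D)}{|D|} \sum_{y \in D} y, \]
where $D$ ranges over the double cosets $T\backslash G / H$, from which the value $w(TxH)/|TxH|$ on $TxH$ is immediate.

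There is no real obstacle here; the only point requiring care is the fibre size computation for $\eta_T g \eta_H$, which is where the counting identity~\eqref{eq:doubleCosetCountTxH} enters. Alternatively, one can derive (iii) as the composition of (i) and (ii) applied in turn, but the direct route via the uniform-on-double-coset identity is cleaner and makes transparent the appealing structural fact that $\eta_T \C[G] \eta_H$ is spanned by the indicator measures of the double cosets in $T \backslash G / H$.
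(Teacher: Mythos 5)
Your proof is correct. Parts (i) and (ii) are, modulo phrasing, the same coefficient computation as in the paper (the paper computes $(\eta_T w)(x) = |T|^{-1}\sum_{t\in T} w(tx) = w(Tx)/|T|$ directly; you obtain the same thing by first noting $\eta_T g$ is the uniform measure on $Tg$ and then using linearity). Where you diverge is part (iii): the paper derives it by composing (i) and (ii) — computing $(\eta_T w \eta_H)$ first as an average over $T$, then over $H$, and invoking the counting identity $|TxH|\,|x^{-1}Tx\cap H| = |T||H|$ at the final step — whereas you prove the single-element identity $\eta_T g \eta_H = |TgH|^{-1}\sum_{y\in TgH} y$ directly via the fibre count $\bigl|\{(t,h)\in T\times H : tgh = y\}\bigr| = |T\cap gHg^{-1}|$ and then conclude by linearity. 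Your fibre computation is exactly right (the fibre is a torsor under $\{(u, g^{-1}u^{-1}g): u \in T\cap gHg^{-1}\}$), and in fact it is the deterministic form of the "alternative proof" the paper itself sketches immediately after its proof, namely that sampling $t\in T$ and $h\in H$ uniformly and forming $txh$ gives the uniform distribution on $TxH$. What your route buys is the structural statement that $\eta_T\,\C[G]\,\eta_H$ is spanned by the normalised indicator elements of the $(T,H)$-double cosets, made explicit element by element; what the paper's route buys is that (iii) is seen transparently as the composition of the one-sided averaging statements (i) and (ii), with the counting identity entering only once at the end. Both arguments rest on the same identity~\eqref{eq:doubleCosetCountTxH}, so the difference is one of organisation rather than substance.
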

\end{samepage}

\begin{proof}
We have $(\eta_T wv)(x) = |T|^{-1}\sum_{t \in T} w(tx)$;
this is $w(Tx)/|T|$, as required for (i). The proof of (ii) is dual.
By (i) and (ii) each weight in $\eta_T \C[G] \eta_H$ is constant on $TxH$.
Let the common value of $\eta_T w  \eta_H$ be $c$. By (i), then (ii), then~\eqref{eq:doubleCosetCountTxH},
we have 
\[ \begin{split}
    c = \frac{(w \eta_H)(Tx) }{|T|}= \frac{1}{|T||H|} \sum_{h \in H} w(Txh) 
    = \frac{|x^{-1}Tx \cap H|}{|T||H|} w(TxH) = \frac{w(TxH)}{|TxH|} \end{split} \]
as required.
\end{proof}

We outline an alternative proof of (iii) that probability-minded readers
will find very intuitive:
one can sample uniformly at random from $TxH$  by choosing $t \in T$ and $h \in H$
according to the distributions $\eta_T$ and $\eta_H$ and then taking $txh$.
Hence the expected value of $w$ on $TxH$, namely $w(TxH)/|TxH|$,
is the common value of $\eta_T w \eta_H$ on~$TxH$.

\bigskip 
\subsection{Mackey's rule for the trivial representation}
We have seen that the 
orbits of $T$ acting on the left on the set $G/H$  of left cosets are each of the form $\{txH : t \in T\}$,
and that the stabiliser of the distinguished orbit 
representative $xH$ is 
$T\cap xHx^{-1}$. 

\begin{lemma}[Mackey's rule for the trivial representation] 
    \label{lemma:MackeyTrivial} Let $\C$ be the trivial representation of $H$. There is an isomorphism of left $\C[H]$-modules
    \[
    \C\Ind_H^G\Res_H \cong \bigoplus_{x \in H \backslash G / H} \C\Ind_{H \cap xHx^{-1}}^H,
    \]
    where the sum ranges over a set of double coset representatives for $H\backslash G/H$.
\end{lemma}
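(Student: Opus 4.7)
The plan is to realize both sides as permutation modules and match them up orbit by orbit. By Example~\ref{ex:inductionCharacterisation}, the induced module \smash{$\C\Ind_H^G$} is isomorphic to the permutation module $\C[G/H]$ with basis $\{v_{gH} : gH \in G/H\}$ on which $G$ acts by left multiplication on cosets. Restricting the action from $G$ to $H$, we obtain $\C\Ind_H^G\Res_H$ as the permutation module of $H$ on the left cosets $G/H$.

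Next I would decompose this permutation module according to $H$-orbits. From the discussion at the start of \S\ref{sec:doubleCosets}, each double coset $HxH$ is a disjoint union of left cosets of $H$, and the set of left cosets inside $HxH$ is precisely the $H$-orbit of $xH$ under left multiplication. Thus, choosing a set of double coset representatives for $H \backslash G / H$, we obtain an $H$-stable decomposition
\[
\C[G/H] = \bigoplus_{x \in H\backslash G / H} \bigl\langle v_{txH} : t \in T \bigr\rangle
\]
where I am using~$T = H$ to stress the left-acting copy; each summand is an $H$-invariant subspace.

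It then remains to identify each summand with $\C\Ind_{H \cap xHx^{-1}}^H$. The stabiliser in~$H$ of the left coset $xH \in G/H$ is $\{h \in H : hxH = xH\} = \{h \in H : x^{-1}hx \in H\} = H \cap xHx^{-1}$. Hence the map $h(H \cap xHx^{-1}) \mapsto hxH$ gives an $H$-set isomorphism between $H / (H \cap xHx^{-1})$ and the $H$-orbit of $xH$. Applying Example~\ref{ex:inductionCharacterisation} to the subgroup $H \cap xHx^{-1}$ of $H$, the corresponding permutation module is isomorphic to \smash{$\C\Ind_{H \cap xHx^{-1}}^H$}, and summing over $x$ gives the claimed decomposition.

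The only nontrivial step is identifying the orbit and its stabiliser correctly; since the group acts on the \emph{left} on cosets, one has to be careful that the stabiliser of $xH$ is $H \cap xHx^{-1}$ and not $H \cap x^{-1}Hx$. Everything else is a routine assembly of the permutation-module description of induction with the double coset decomposition already recorded in \S\ref{sec:doubleCosets}.
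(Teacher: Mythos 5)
Your proof is correct and follows essentially the same route as the paper: identify $\C\Ind_H^G\Res_H$ with the permutation module of $H$ acting on $G/H$, decompose it over the $H$-orbits (which are exactly the sets of left cosets inside a double coset $HxH$), and use that the stabiliser of $xH$ in $H$ is $H \cap xHx^{-1}$ to recognise each orbit summand as $\C\Ind_{H \cap xHx^{-1}}^H$. The paper compresses this into a one-line appeal to the remark preceding the lemma (taking $T = H$), but the underlying argument, including the stabiliser computation you flag, is identical.
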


\begin{proof}
Taking $T = H$ in the remark before the lemma, it follows that
the $\C[H]$-permutation module of $H$ acting transitively on its orbit $\{ hxH : h \in H\}$ of left cosets
is $\C\Ind_{H \cap xHx^{-1}}^H$.
\end{proof}

This result is generalized by Lemma~\ref{lemma:MackeyOnHxH} below.

\section{A characterisation of weak lumping of left-invariant random walks}
\label{sec: a characterisation of WL}

Recall that in our standing notation $H$ is a fixed subgroup of the finite group $G$.
In this section we work throughout with an irreducible
weight $w$ and characterise the initial distributions $\alpha$
such that the left-invariant random walk driven by $w$ with initial distribution $\alpha$
lumps weakly to the left cosets $G/H$, in the sense of Definition~\ref{defn:lumpsWeakly}(a).

\subsection{The minimal Gurvits--Ledoux space and minimal Gur\-vits--Ledoux left
ideal 
}\label{subsec:minimalGLideal}
Recall from before Definition~\ref{defn:inducedIdeal}
that if $bH \in G/H$ then
$\pi_{bH} : \C[G] \rightarrow \C[G]$ is the projection map onto the coset $bH$, defined by
$\pi_{bH} \bigl( \sum_{g \in G} x(g) g \bigr) = \sum_{g \in bH} x(g) g$.
By Definition~\ref{def: minimal GL space}, the minimal complex Gurvits--Ledoux
vector space  for the left-invariant random walk driven by $w$
started at the probability distribution $\alpha \in \C[G]$
is the intersection of all vector subspaces~$V$ of $\C[G]$ satisfying
\begin{axioms}{V}
\setcounter{enumi}{-1}
    \item $\alpha\in V$, \label{axiom V0}
    \item $Vw \subseteq V$, \label{axiom V1}
    \item $\pi_{bH}(V) \subseteq V$ for all $bH\in G/H$. \label{axiom V2}
\end{axioms}
Note this intersection is well-defined as  $\C[G]$ satisfies \ref{axiom V0}, \ref{axiom V1} and \ref{axiom V2} 
and each of the conditions is closed under intersection.
We denote this minimal vector space by $V_{\alpha, w}$.  
To compare with the definitions in \S\ref{SS:GL}, $V_{\alpha,w}$ can be identified with 
$\C\otimes_{\R}V(f,P,\alpha)$ where $P$ is the transition matrix of the left-invariant walk driven by $w$ and $f$ is the left coset mapping $G \to G/H$. Note that $V_{\alpha,w}$ is a complex vector subspace of the group algebra $\C[G]$.

Recall that if $T$ is a non-empty subset of $G$ then $\eta_T \!=\! |T|^{-1} \sum_{t \in T} t \in \C[G]$.

\begin{lemma}\label{lem: irreducible case}
Let $w$ be an irreducible weight.
If $V$ is a subspace of $\C[G]$ satisfying \ref{axiom V0}, \ref{axiom V1} and \ref{axiom V2} then
$\eta_G \in V$.
\end{lemma}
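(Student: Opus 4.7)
The plan is to apply the ergodic theorem for irreducible finite Markov chains, translated into the language of the group algebra $\C[G]$. The key point is that since the weight $w$ is irreducible (its support generates $G$), the associated transition matrix is irreducible, and its unique stationary distribution is the uniform distribution on $G$, which as an element of $\C[G]$ is precisely $\eta_G$.

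First I would use axiom (V0) to place $\alpha \in V$, and then apply axiom (V1) iteratively to conclude that $\alpha w^t \in V$ for every $t \ge 0$. By Lemma~\ref{lemma:stepIsMultiplication}, the element $\alpha w^t \in \C[G]$ encodes the distribution of $X_t$ when $X_0 \sim \alpha$. Since $V$ is a vector subspace of $\C[G]$, each Cesàro average $\frac{1}{n}\sum_{t=0}^{n-1} \alpha w^t$ also lies in $V$.

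Next I would invoke the ergodic theorem for irreducible finite Markov chains, exactly as in Lemma~\ref{lem:useErgodicThm}: these Cesàro averages converge to the unique stationary distribution, namely $\eta_G$. Because $V$ is a finite-dimensional vector subspace of $\C[G]$, it is closed in the natural topology, so the limit $\eta_G$ lies in $V$.

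There is no serious obstacle here; the whole argument is a short translation of the finite-state ergodic theorem into algebraic language. It is worth remarking that axiom (V2), the projection condition, plays no role in the proof — only (V0) and (V1) are needed, which slightly strengthens the statement as it will be used.
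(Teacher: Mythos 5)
Your proof is correct and is essentially the paper's argument: the paper cites Lemma~\ref{lem:useErgodicThm} (after noting $V$ contains the complexified minimal Gurvits--Ledoux space $V(f,P,\alpha)$), and that lemma is exactly your Ces\`aro-average ergodic argument plus closedness of a finite-dimensional subspace. Your observation that \ref{axiom V2} is not needed is consistent with the paper's own proof of Lemma~\ref{lem:useErgodicThm}, which uses only conditions (a) and (b); the only cosmetic point is that one should normalize $w$ (harmless, since $V$ is a subspace) before identifying $\alpha w^t$ with the law of $X_t$ via Lemma~\ref{lemma:stepIsMultiplication}.
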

\begin{proof}
Since $w$ is irreducible, the unique stationary distribution of the left-invariant random walk is $\eta_G
\in \C[G]$. As in Definition~\ref{def: minimal GL space}, $V$ is a complex Gurvits--Ledoux space containing a probability vector $\alpha$, so $V \supseteq \C \otimes_{\R}V(f,P,\alpha)$. By Lemma~\ref{lem:useErgodicThm}, $\eta_G \in V(f,P,\alpha)$. \end{proof}

\begin{proposition}\label{prop: irreducible case}
If $w$ is an irreducible weight then $\LwAsV \subseteq V_{\alpha,w}$.
\end{proposition}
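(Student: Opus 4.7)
The plan is a very short deduction from the immediately preceding Lemma~\ref{lem: irreducible case}. We must show that the minimal complex Gurvits--Ledoux space containing $\eta_G$ is contained in the corresponding space for an arbitrary probability distribution $\alpha$.

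First, I would observe that by the very definition of $V_{\alpha,w}$ as the minimal subspace of $\C[G]$ satisfying axioms \ref{axiom V0}, \ref{axiom V1}, \ref{axiom V2} for the starting distribution $\alpha$, the space $V_{\alpha,w}$ in particular satisfies conditions \ref{axiom V1} and \ref{axiom V2}: it is closed under right multiplication by $w$ and under each projection $\pi_{bH}$ for $bH\in G/H$. These two closure properties are intrinsic to the space and do not depend on which probability vector we regard as the ``starting'' element.

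Next, since $w$ is irreducible, Lemma~\ref{lem: irreducible case} applies to the space $V=V_{\alpha,w}$ (which does contain the probability vector $\alpha$, by \ref{axiom V0}) and yields $\eta_G\in V_{\alpha,w}$. Combining this with the previous paragraph, $V_{\alpha,w}$ satisfies all three conditions \ref{axiom V0}, \ref{axiom V1}, \ref{axiom V2} with the role of the initial distribution played by $\eta_G$ instead of $\alpha$. By the minimality of $V_{\eta_G,w}$ among subspaces of $\C[G]$ enjoying these three properties with starting vector $\eta_G$, we conclude $V_{\eta_G,w}\subseteq V_{\alpha,w}$, which is the desired inclusion $\LwAsV\subseteq V_{\alpha,w}$.

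There is no real obstacle; the entire content lies in Lemma~\ref{lem: irreducible case}, which in turn rests on the ergodic theorem for irreducible Markov chains (Lemma~\ref{lem:useErgodicThm}) telling us that the unique stationary distribution $\eta_G$ is recovered as a Cesàro limit of iterates $\alpha w^t$ and must therefore lie in any closed, $w$-invariant subspace through $\alpha$. Once that has been established, the inclusion of minimal Gurvits--Ledoux spaces follows formally from the universal property defining them.
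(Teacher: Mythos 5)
Your proof is correct and follows essentially the same route as the paper: apply Lemma~\ref{lem: irreducible case} to deduce $\eta_G \in V_{\alpha,w}$ and then invoke the minimality of $V_{\eta_G,w}$. The paper phrases this by intersecting over all subspaces satisfying \ref{axiom V0}--\ref{axiom V2} for $\alpha$, which is the same argument since that intersection is $V_{\alpha,w}$.
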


\begin{proof}
If $V$ is a subspace of $\C[G]$ satisfying
\ref{axiom V0}, \ref{axiom V1} and \ref{axiom V2}
\emph{for $\alpha$} then, by Lemma~\ref{lem: irreducible case},~$V$ satisfies \ref{axiom V0} \emph{for $\eta_G$}, and hence $V$ contains the minimal complex
Gurvits--Ledoux space $V_{\eta_G, w}$. The proposition follows by
taking the intersection over all such $V$.
\end{proof}

\begin{lemma}
\label{lemma:LwIsInduced}
The subspace $\LwAsV$ is 
a left ideal of $\C[G]$. Moreover,
    $\pi_H(\LwAsV)$ is a left ideal of $\C[H]$ and $\LwAsV = \C[G]\bigl( \pi_H (\LwAsV) \bigr)$
    is an induced ideal.
\end{lemma}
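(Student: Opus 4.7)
The plan is to prove the left ideal property by exploiting the minimality of $\LwAsV = V_{\eta_G,w}$ among subspaces of $\C[G]$ satisfying axioms \ref{axiom V0}, \ref{axiom V1} and \ref{axiom V2} with $\alpha = \eta_G$, and then deduce the induced-ideal statement from Proposition~\ref{prop:inducedIdealCharacterisation}. The crucial observation enabling the minimality argument is that $g\eta_G = \eta_G$ for every $g \in G$: left translation of the relevant base point is trivial, so it does not disturb \ref{axiom V0}.

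Concretely, I would fix $g \in G$ and consider the subspace $g\LwAsV \subseteq \C[G]$. Axiom \ref{axiom V0} holds because $\eta_G = g\eta_G \in g\LwAsV$. Axiom \ref{axiom V1} holds because right multiplication by $w$ commutes with left multiplication by $g$, so $(g\LwAsV)w = g(\LwAsV w) \subseteq g\LwAsV$. The main step is axiom \ref{axiom V2}: here a short direct computation on the canonical basis shows that for any $v \in \C[G]$ and any left coset $bH$,
\[
\pi_{bH}(gv) = g\,\pi_{g^{-1}bH}(v),
\]
which lies in $g\LwAsV$ because $\pi_{g^{-1}bH}(v) \in \LwAsV$ by \ref{axiom V2} for $\LwAsV$. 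Hence $g\LwAsV$ satisfies \ref{axiom V0}--\ref{axiom V2}, so by the minimality of $\LwAsV$ we have $\LwAsV \subseteq g\LwAsV$. Replacing $g$ by $g^{-1}$ and left-multiplying by $g$ gives the reverse inclusion, so $\LwAsV = g\LwAsV$ for every $g \in G$, and $\LwAsV$ is a left ideal of $\C[G]$.

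For the second part, axiom \ref{axiom V2} applied to the distinguished coset $H \in G/H$ gives $\pi_H(\LwAsV) \subseteq \LwAsV$; in fact, since any element of $\LwAsV$ supported on $H$ is its own $\pi_H$-image, we have the identity $\pi_H(\LwAsV) = \LwAsV \cap \C[H]$. This intersection is stable under left multiplication by elements of $H$ (both factors are), so $\pi_H(\LwAsV)$ is a left ideal of $\C[H]$. Because $\LwAsV$ is a left ideal of $\C[G]$ containing $\pi_H(\LwAsV)$, it is an induced ideal in the sense of Definition~\ref{defn:inducedIdeal}, and Proposition~\ref{prop:inducedIdealCharacterisation}(ii) then yields $\LwAsV = \C[G]\bigl(\pi_H(\LwAsV)\bigr)$, completing the proof.

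The only nontrivial obstacle is the verification of \ref{axiom V2} for $g\LwAsV$; the identity $\pi_{bH}(gv) = g\pi_{g^{-1}bH}(v)$ is essentially the statement that left translation permutes the coset projections, and once this is in hand the rest follows immediately from minimality and the already-developed structure theory of induced ideals.
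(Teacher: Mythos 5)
Your proof is correct and follows essentially the same route as the paper: you verify \ref{axiom V0}--\ref{axiom V2} for the translate $g\LwAsV$ using the identity $\pi_{bH}(gv) = g\,\pi_{g^{-1}bH}(v)$, invoke minimality to conclude the left ideal property, and then apply Proposition~\ref{prop:inducedIdealCharacterisation} to the inclusion $\pi_H(\LwAsV) \subseteq \LwAsV$. The only cosmetic difference is that you prove $\pi_H(\LwAsV) = \LwAsV \cap \C[H]$ is a left ideal of $\C[H]$ directly, where the paper cites Proposition~\ref{prop:inducedIdealCharacterisation}(i); the substance is identical.
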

\begin{proof}
For readability, let $L = \LwAsV$.
To show that $\LwAsL$ is a left ideal it suffices
to show that $k^{-1}\LwAsL \supseteq \LwAsV$ for each $k \in G$, since by multiplying
by $k$ we then obtain $\LwAsL \supseteq k\LwAsL$. Setting $g = k^{-1}$
we therefore check that $g\LwAsL$ satisfies \ref{axiom V0}--\ref{axiom V2}. We may
then deduce 
$g\LwAsL \supseteq \LwAsL$ by minimality of~$\LwAsL$:
    \begin{defnlistE} 
        \item[(V0)] $g\eta_G = \eta_G \in \LwAsL$ by \ref{axiom V0} for $\LwAsL$;
        \item[(V1)] $(g\LwAsL) w \subseteq g\LwAsL$ since $\LwAsL w \subseteq \LwAsL$ by \ref{axiom V1} for $\LwAsL$;
        \item[(V2)] since $\pi_{bH} g = g \pi_{g^{-1}bH}$, we have
        $\pi_{bH}(g\LwAsL) = g\pi_{g^{-1}bH}(\LwAsL) \subseteq g\LwAsL$
 by \ref{axiom V2} for $\LwAsL$.
    \end{defnlistE}
Hence $\LwAsL$ is a left ideal of $\C[G]$.
Again by \ref{axiom V2} for $\LwAsL$, we have $\pi_H(\LwAsL) \subseteq \LwAsL$.
Therefore by Proposition~\ref{prop:inducedIdealCharacterisation}(ii), $\LwAsL = \C[G]\bigl( \pi_H(\LwAsL) \bigr)$.
\end{proof}

Thus $V_{\eta_G, w}$, which \emph{a priori} was merely a subspace of $\C[G]$,
is in fact an induced ideal in the sense
of Definition~\ref{defn:inducedIdeal}. 

\subsection{Application of the Gurvits--Ledoux characterisation to prove Corollary~\ref{cor:LTest}}
\label{subsec:applicationGL}
Let $\C^{G/H}$ be the vector space of complex valued functions on $G/H$
and let $\Lambda : \C[G] \to \C^{G/H}$ be the linear map induced by the canonical quotient
map $G \mapsto G/H$ defined by $g \mapsto gH$.
\begin{lemma}\label{lem:kerLambda}
$\ker \Lambda = \C[G](1-\eta_H)$.
\end{lemma}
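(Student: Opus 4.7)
The plan is to decompose $\C[G]$ using the complementary idempotents $\eta_H$ and $1-\eta_H$ (both in $\C[H]$), and check separately that $\C[G](1-\eta_H)$ lies in $\ker\Lambda$ and that $\Lambda$ is injective on $\C[G]\eta_H$.

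First I would verify the inclusion $\C[G](1-\eta_H) \subseteq \ker\Lambda$ by a one-line calculation: for any $g \in G$,
\[ \Lambda\bigl(g(1-\eta_H)\bigr) = \Lambda(g) - \tfrac{1}{|H|}\sum_{h \in H}\Lambda(gh) = e_{gH} - \tfrac{1}{|H|}\sum_{h \in H} e_{ghH} = e_{gH} - e_{gH} = 0, \]
since $ghH = gH$ for all $h \in H$. This handles one direction.

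Next I would use that $\eta_H$ is idempotent (so $1-\eta_H$ is too, and the two are orthogonal in the sense $\eta_H(1-\eta_H) = 0$) to get the internal direct sum
\[ \C[G] = \C[G]\eta_H \oplus \C[G](1-\eta_H). \]
By Example~\ref{ex:permutationIdeal}, $\C[G]\eta_H$ has the basis $\{b\eta_H : bH \in G/H\}$ and hence dimension $|G/H|$. A direct computation
\[ \Lambda(b\eta_H) = \tfrac{1}{|H|}\sum_{h \in H} e_{bhH} = e_{bH} \]
shows that $\Lambda$ sends this basis to the standard basis of $\C^{G/H}$, so $\Lambda$ restricted to $\C[G]\eta_H$ is a vector space isomorphism onto $\C^{G/H}$.

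Combining these two observations, any $x \in \ker\Lambda$ decomposes uniquely as $x = x\eta_H + x(1-\eta_H)$; applying $\Lambda$ and using that $\Lambda\bigl(x(1-\eta_H)\bigr)=0$ from the first step gives $\Lambda(x\eta_H) = 0$, so $x\eta_H = 0$ by injectivity on $\C[G]\eta_H$, whence $x = x(1-\eta_H) \in \C[G](1-\eta_H)$. The only non-routine point here is to be careful that the decomposition $\C[G]=\C[G]\eta_H \oplus \C[G](1-\eta_H)$ genuinely is a direct sum, which is immediate because $\eta_H$ and $1-\eta_H$ are orthogonal idempotents summing to $1$. I do not expect any serious obstacle.
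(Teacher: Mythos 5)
Your proof is correct, but it takes a slightly different route from the paper's. The paper argues in one step via the coset decomposition $\C[G] = \bigoplus_{b \in G/H} b\,\C[H]$: an element lies in $\ker\Lambda$ exactly when its coefficients sum to zero over each left coset $bH$, and within each block $b\,\C[H]$ that zero-sum condition is precisely membership in $b\,\C[H](1-\eta_H)$, so $\ker\Lambda = \bigoplus_{b\in G/H} b\,\C[H](1-\eta_H) = \C[G](1-\eta_H)$. You instead use the orthogonal idempotent splitting $\C[G] = \C[G]\eta_H \oplus \C[G](1-\eta_H)$, verify the easy inclusion $\C[G](1-\eta_H)\subseteq\ker\Lambda$, and show $\Lambda$ is injective on the complementary summand by sending the basis $\{b\eta_H : b \in G/H\}$ to the standard basis of $\C^{G/H}$. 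All of your steps check out: the splitting really is a direct sum because $\eta_H$ and $1-\eta_H$ are orthogonal idempotents summing to $1$, and the elements $b\eta_H$ have disjoint supports and so do form a basis of $\C[G]\eta_H$. The paper's version is shorter and entirely local to each coset; yours buys the extra observation that $\Lambda$ restricts to a linear isomorphism $\C[G]\eta_H \to \C^{G/H}$, which is the vector-space shadow of the identification of $\C[G]\eta_H$ with the permutation module $\C\Ind_H^G$ in Examples~\ref{ex:inductionCharacterisation} and~\ref{ex:permutationIdeal}, and which is in any case implicit elsewhere in the paper. Either argument is acceptable.
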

\begin{proof}
We have $x \in \ker \Lambda$ if and only 
if $\sum_{g \in bH} x(g) = 0$ for each $b \in G$,
and so if and only if $x \in \bigoplus_{b \in G/H} b \hskip1pt \C[H] (1-\eta_H) = \C[G](1-\eta_H)$.
\end{proof}
Given a vector subspace $V$ of $\C[G]$, let $V^\circ$ denote the subspace $V \cap \ker \Lambda$, in accordance with \eqref{eq:VcircDefn}.  We introduce a further property:
\begin{axioms}{V}
\setcounter{enumi}{2}
    \item $V^\circ w\subseteq V^\circ$. \label{axiom V3}
\end{axioms}

\smallskip\noindent
Since $\ker \Lambda$ and $V_{\alpha,w}$ have bases consisting of real vectors, $V_{\alpha,w}^\circ$ also has a basis consisting of real vectors, which span the real vector space $V(f,P,\alpha)^\circ$. Moreover, $V(f,P,\alpha)^\circ P \subseteq V(f,P,\alpha)^\circ$ if and only if $V_{\alpha,w}^\circ w \subseteq V_{\alpha,w}^\circ$. Thus in this setting the conclusion of 
Theorem \ref{thm: GL characterisation of WL} may be written as follows:
\[
\text{$\MC(\alpha,w)$ lumps weakly to $G/H$ if and only if $V_{\alpha,w}$ satisfies \ref{axiom V3}.}
\tag{$\star$}
\]
We use  ($\star$) to prove Corollary~\ref{cor:LTest}. The following lemmas are required.

\begin{lemma}\label{lem: weak lumpability of irreducible weights}
Let $w$ be an irreducible weight. 
If \ref{axiom V3} holds for $V_{\alpha,w}$ then \ref{axiom V3} holds 
for $\VOrLw$.
\end{lemma}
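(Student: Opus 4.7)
The plan is to show this is essentially a one-line consequence of the minimality statement (Proposition~3.2, i.e.\ $V_{\eta_G,w}\subseteq V_{\alpha,w}$) combined with the fact that $V_{\eta_G,w}$ satisfies \ref{axiom V1} on its own.

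First I would write $L = V_{\eta_G,w}$ and $V = V_{\alpha,w}$, so $L \subseteq V$ by Proposition~3.2. Intersecting both sides with $\ker\Lambda$ gives $L^\circ \subseteq V^\circ$. Then I would multiply by $w$ on the right and use the hypothesis \ref{axiom V3} for $V$ to conclude
\[
L^\circ w \;\subseteq\; V^\circ w \;\subseteq\; V^\circ \;\subseteq\; \ker\Lambda.
\]
Separately, since $L$ satisfies \ref{axiom V1}, we also have $L^\circ w \subseteq L w \subseteq L$. Combining these two containments,
\[
L^\circ w \;\subseteq\; L \cap \ker\Lambda \;=\; L^\circ,
\]
which is exactly \ref{axiom V3} for $L = V_{\eta_G,w}$.

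There is really no obstacle here: the lemma is a soft monotonicity statement, and all the work is hidden in Proposition~\ref{prop: irreducible case} (which is where irreducibility of $w$ enters, via the fact that $\eta_G$ lies in every Gurvits--Ledoux space containing a probability vector). The only thing to be careful about is that we genuinely need the inclusion $V_{\eta_G,w}\subseteq V_{\alpha,w}$ to flow through the intersection with $\ker\Lambda$, but this is automatic because intersection preserves inclusion. No computation on $w$ is required beyond using the property \ref{axiom V1} that $L$ inherits by construction.
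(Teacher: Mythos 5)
Your proof is correct and follows essentially the same route as the paper: both use Proposition~\ref{prop: irreducible case} to get $V_{\eta_G,w}\subseteq V_{\alpha,w}$, deduce $V_{\eta_G,w}^\circ w\subseteq V_{\alpha,w}^\circ\subseteq\ker\Lambda$ from \ref{axiom V3} for $V_{\alpha,w}$, and combine with \ref{axiom V1} for $V_{\eta_G,w}$ to land in $V_{\eta_G,w}\cap\ker\Lambda=V_{\eta_G,w}^\circ$. No differences worth noting.
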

\begin{proof}
We have 
$\VOrLw^\circ w \subseteq \VOrLw w \subseteq \VOrLw$.
By Proposition~\ref{prop: irreducible case} we have 
$\VOrLw\subseteq V_{\alpha,w}$. Hence
\[ \VOrLw^\circ w = (\VOrLw \cap \ker \Lambda)w \subseteq (V_{\alpha,w} \cap \ker \Lambda)w  =  V_{\alpha,w}^\circ w \subseteq V_{\alpha,w}^\circ \subseteq \ker \Lambda \,\]
and so $\VOrLw^\circ w \subseteq \VOrLw \cap \ker \Lambda = \VOrLw^\circ$.
\end{proof}

\begin{lemma}\label{lemma:Vcirc}
Let $V$ be a subspace of $\C[G]$ satisfying \ref{axiom V2}. Suppose that
$\eta_G \in V$.
Then $V^\circ = V(1-\eta_H)$.
\end{lemma}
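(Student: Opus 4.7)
The plan is to prove both inclusions, with the interesting direction being $V(1-\eta_H) \subseteq V^\circ$, which needs the hypotheses \ref{axiom V2} and $\eta_G \in V$; the reverse inclusion is a formal consequence of Lemma~\ref{lem:kerLambda}.

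\textbf{Reverse inclusion} $V^\circ \subseteq V(1-\eta_H)$. Since $\eta_H$ is idempotent, so is $1-\eta_H$, hence $(1-\eta_H)^2 = 1-\eta_H$. For any $v \in \ker \Lambda = \C[G](1-\eta_H)$ (by Lemma~\ref{lem:kerLambda}), we may write $v = u(1-\eta_H)$ with $u \in \C[G]$; then $v(1-\eta_H) = u(1-\eta_H)^2 = u(1-\eta_H) = v$. Therefore if $v \in V^\circ = V \cap \ker\Lambda$, then $v = v(1-\eta_H) \in V(1-\eta_H)$.

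\textbf{Forward inclusion} $V(1-\eta_H) \subseteq V^\circ$. For any $v \in \C[G]$ we have $v(1-\eta_H) \in \ker \Lambda$, since $\Lambda(g\eta_H) = gH = \Lambda(g)$ for every $g \in G$. So it suffices to show $V(1-\eta_H) \subseteq V$, i.e.\ that $v\eta_H \in V$ for every $v \in V$. The key computation is that for any left coset $bH$,
\[ \pi_{bH}(v)\,\eta_H \;=\; \Bigl(\sum_{h \in H} v(bh)\Bigr) b\eta_H, \]
which is a scalar multiple of $b\eta_H$. Hence $v\eta_H = \sum_{bH \in G/H} \pi_{bH}(v)\eta_H$ lies in the span of $\{b\eta_H : bH \in G/H\}$, and it remains to show each $b\eta_H$ lies in $V$.

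This last point is where the hypothesis $\eta_G \in V$ enters: a direct calculation shows
\[ \pi_{bH}(\eta_G) = \frac{1}{[G:H]}\,b\eta_H, \]
so applying \ref{axiom V2} to $\eta_G \in V$ gives $b\eta_H \in V$ for every left coset $bH$. Combining this with the previous paragraph shows $v\eta_H \in V$, hence $v(1-\eta_H) \in V \cap \ker\Lambda = V^\circ$, completing the proof.

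The main (minor) obstacle is the first direction, specifically the bookkeeping that isolates $v\eta_H$ coset-by-coset via \ref{axiom V2}; once one observes that $\pi_{bH}(v)\eta_H$ collapses to a multiple of $b\eta_H$, the argument follows immediately from the presence of $\eta_G$ in $V$.
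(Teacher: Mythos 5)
Your proof is correct and rests on the same ingredients as the paper's own argument: Lemma~\ref{lem:kerLambda}, the collapse $\pi_{bH}(v)\eta_H \in \langle b\eta_H\rangle$, and \ref{axiom V2} applied to $\eta_G$ to place each $b\eta_H$ in $V$. The paper merely packages these as a coset-by-coset computation of $V \cap \ker\Lambda$ via the decomposition $V = \bigoplus_{b \in G/H}\pi_{bH}(V)$, whereas you argue by the two inclusions directly; the content is essentially identical.
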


\begin{proof}
By \ref{axiom V2}, $V = \bigoplus_{b \in G/H} V \,\cap\, b \hskip1pt \C[H]$
where $V \cap b \hskip1pt \C[H] = \pi_{bH}(V)$. By \ref{axiom V0} and \ref{axiom V2} 
we have $\pi_{H}(\eta_G) = \eta_H \in V$.
Given $u \in b \hskip1pt \C[H]$ we have $ u \eta_H \in \langle b \eta_H \rangle$,
hence each subspace $\pi_{bH}(V)$ is closed under right multiplication
by the idempotent $1-\eta_H$.
Now using $V^\circ = V \cap \ker \Lambda$ 
and Lemma~\ref{lem:kerLambda} we have
\[ V^\circ = \Bigl(\, \bigoplus_{b \in G/H} \pi_{bH}(V) \Bigr) \cap
\Bigl(\, \bigoplus_{b \in G/H} b \hskip1pt \C[H](1-\eta_H) \Bigr)
= \bigoplus_{b \in G/H} \pi_{bH}(V) (1-\eta_H). \]
Therefore $V^\circ = V(1-\eta_H)$, as required. \end{proof}

\begin{lemma}\label{lemma: ker Lambda is Ann etaH}
If $L$ is an induced left ideal from $H$ to $G$ containing $\eta_G$ then $L^\circ = L (1-\eta_H)$.
\end{lemma}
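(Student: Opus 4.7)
The plan is to obtain this statement as an essentially immediate corollary of the preceding Lemma~\ref{lemma:Vcirc}, by verifying that the induced left ideal $L$ satisfies the two hypotheses of that lemma, namely \ref{axiom V2} and containment of $\eta_G$.

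First, I would unpack what it means for $L$ to be an induced ideal. By Definition~\ref{defn:inducedIdeal} we have $\pi_H(L) \subseteq L$, and then Proposition~\ref{prop:inducedIdealCharacterisation}(iii) provides the direct sum decomposition $L = \bigoplus_{b \in G/H} b \hskip1pt \pi_H(L)$. Consequently, for every coset $bH \in G/H$, the projection $\pi_{bH}(L) = b \hskip1pt \pi_H(L)$ is a summand of $L$ and hence contained in $L$; that is, $L$ satisfies axiom \ref{axiom V2}. The other hypothesis, $\eta_G \in L$, is precisely the assumption of the lemma.

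With both hypotheses verified, I would apply Lemma~\ref{lemma:Vcirc} with $V = L$ to conclude directly that $L^\circ = L(1-\eta_H)$.

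There is no real obstacle here; the content of the lemma is simply that for induced ideals containing $\eta_G$, the condition $x \in \ker \Lambda$ (cf.\ Lemma~\ref{lem:kerLambda}) can be rephrased as right multiplication by the idempotent $1-\eta_H$. The value of stating it separately is that the induced ideals $\LwAsV$ and $L_{\alpha,w}$ studied later automatically contain $\eta_G$ (by Lemma~\ref{lem: irreducible case}), so this reformulation will be the form in which the result is used in the proofs of Corollary~\ref{cor:LTest} and Theorem~\ref{thm: main testDist}.
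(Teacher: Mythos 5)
Your argument is correct and coincides with the paper's own proof: both verify axiom \ref{axiom V2} via Proposition~\ref{prop:inducedIdealCharacterisation}(iii) and then invoke Lemma~\ref{lemma:Vcirc} with $V = L$. Nothing further is needed.
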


\begin{proof}
By Proposition~\ref{prop:inducedIdealCharacterisation}(iii), $L$
satisfies (V2). Now apply Lemma~\ref{lemma:Vcirc}.
\end{proof}

For ease of notation, from now on we shall write $L_w$ for $V_{\eta_G, w}$; note that
by Lemma~\ref{lemma:LwIsInduced}, $L_w$ is a left ideal of $\C[G]$, so this is consistent
with our usual notational conventions.

\newcounter{tmp}
\setcounter{section}{1}

\setcounter{tmp}{\value{theorem}}
\setcounter{theorem}{\value{LTest}}

\begin{corollary}[Weak lumping test for a weight]
Let $w$ be an irreducible weight. The following are equivalent:
\begin{thmlist}
\item The left-invariant random walk driven by $w$ lumps weakly to $G/H$; 
\item $\MC(\eta_G, w)$ lumps weakly to $G/H$;
\item $L_w(1-\eta_H) w\eta_H = 0$;
\item The left-invariant random walk driven by $w$ lumps to $G/H$ with stable ideal $L_w$.
\end{thmlist}
\end{corollary}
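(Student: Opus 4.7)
The plan is to establish the chain of equivalences (i) $\Leftrightarrow$ (ii) $\Leftrightarrow$ (iii) $\Leftrightarrow$ (iv), leveraging the Gurvits--Ledoux characterisation $(\star)$ proved earlier in this section together with the structural results already obtained for $L_w$.

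For (i) $\Leftrightarrow$ (ii): The implication (ii) $\Rightarrow$ (i) is immediate from Definition~\ref{defn:lumpsWeakly}(b). For (i) $\Rightarrow$ (ii), assume $\MC(\alpha,w)$ lumps weakly for some $\alpha$. By $(\star)$, condition \ref{axiom V3} holds for $V_{\alpha,w}$. Lemma~\ref{lem: weak lumpability of irreducible weights} transfers \ref{axiom V3} from $V_{\alpha,w}$ to $L_w = V_{\eta_G,w}$, and then $(\star)$ applied at $\eta_G$ gives (ii).

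For (ii) $\Leftrightarrow$ (iii), the heart of the argument, apply $(\star)$ at $\alpha = \eta_G$ to rewrite (ii) as $L_w^\circ w \subseteq L_w^\circ$. Since $L_w$ is a left ideal (Lemma~\ref{lemma:LwIsInduced}) satisfying $L_w w \subseteq L_w$ by \ref{axiom V1}, the containment $L_w^\circ w \subseteq L_w^\circ = L_w \cap \ker\Lambda$ reduces to $L_w^\circ w \subseteq \ker\Lambda$. Two algebraic identities then make this explicit. First, Lemma~\ref{lemma: ker Lambda is Ann etaH} gives $L_w^\circ = L_w(1-\eta_H)$, using that $L_w$ is an induced ideal containing $\eta_G$. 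Second, I will record the observation (immediate from Lemma~\ref{lem:kerLambda} and the fact that $1-\eta_H$ is the idempotent complementary to $\eta_H$) that
\[ \ker \Lambda = \{x \in \C[G] : x\eta_H = 0\}, \]
since $x = x\eta_H + x(1-\eta_H)$ and $\C[G](1-\eta_H)$ annihilates $\eta_H$ on the right. Combining, $L_w^\circ w \subseteq \ker\Lambda$ is equivalent to $L_w(1-\eta_H) w \eta_H = 0$, which is (iii).

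For (iii) $\Leftrightarrow$ (iv): By Proposition~\ref{prop:inducedIdealCharacterisation}(v), write $L_w = \C[G]e$ for some idempotent $e \in \C[H]$; since $\eta_H = \pi_H(\eta_G) \in L_w$ by \ref{axiom V0} and \ref{axiom V2}, we have $\eta_H e = \eta_H$, i.e.~$e \in \Eb{H}$. Definition~\ref{defn:lumpsStably} says the walk lumps stably for $L_w$ iff $ew(1-e) = 0$ and $(e-\eta_H)w\eta_H = 0$; by the ideal reformulations from Lemma~\ref{lemma:circ} already stated in the introduction, these are equivalent to $L_w w \subseteq L_w$ and $L_w(1-\eta_H)w\eta_H = 0$. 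The first is \ref{axiom V1}, which holds by construction of $L_w$, and the second is (iii).

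The main obstacle is the (ii) $\Leftrightarrow$ (iii) step: translating the abstract Gurvits--Ledoux subspace condition $L_w^\circ w \subseteq L_w^\circ$ into the concrete product identity $L_w(1-\eta_H)w\eta_H = 0$ that can actually be tested. Everything hinges on the two identities $L_w^\circ = L_w(1-\eta_H)$ and $\ker\Lambda = \Ann_{\mathrm{right}}(\eta_H)$, which together allow one to detect the failure of containment in $\ker\Lambda$ by right-multiplying by $\eta_H$.
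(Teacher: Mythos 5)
Your proof is correct and follows essentially the same route as the paper: the same use of $(\star)$ together with Lemma~\ref{lem: weak lumpability of irreducible weights} for (i)$\Leftrightarrow$(ii), the identities $L_w^\circ = L_w(1-\eta_H)$ and $\ker\Lambda = \{x : x\eta_H = 0\}$ for (ii)$\Leftrightarrow$(iii), and the idempotent reformulation of Definition~\ref{defn:lumpsStably} via Lemma~\ref{lemma:circ} for (iii)$\Leftrightarrow$(iv); you merely organise it as pairwise biconditionals rather than the paper's cycle (i)$\Rightarrow$(ii)$\Rightarrow$(iii)$\Rightarrow$(iv)$\Rightarrow$(i). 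The only quibble is the line ``$\eta_H = \pi_H(\eta_G)$'', which is off by the scalar $|H|/|G|$, though this does not affect the membership $\eta_H \in L_w$ that you actually need.
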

\setcounter{theorem}{\value{tmp}}
\setcounter{section}{5}

\begin{proof}[Proof of Corollary~\ref{cor:LTest}]
 Suppose that (i) holds, so there exists
a starting distribution $\alpha$ such that $\MC(\alpha, w)$ lumps weakly to $G/H$.
Then, by ($\star$),
the minimal Gurvits--Ledoux vector space~$V_{\alpha, w}$ satisfies \ref{axiom V3}.
By Proposition~\ref{prop: irreducible case}, using our hypothesis
that $w$ is irreducible,~$\VOrLwInProof$ is contained in $V_{\alpha, w}$.
By Lemma~\ref{lem: weak lumpability of irreducible weights}, $\VOrLwInProof$ also satisfies \ref{axiom V3}.
Hence, by the `if' direction of ($\star$), $\MC(\eta_G, w)$ lumps weakly to $G/H$,
proving~(ii). 

By the `only if' direction of ($\star$),
(ii) implies in particular that $\VOrLwInProof$ satisfies \ref{axiom V3}, that is,
$\VOrLwInProof^\circ w \subseteq \VOrLwInProof$. 
By Lemma~\ref{lemma:LwIsInduced}, $\VOrLwInProof$ is an induced ideal and so
by Lemma~\ref{lemma: ker Lambda is Ann etaH}, $\VOrLwInProof^\circ = \VOrLwInProof(1-\eta_H)$.
Therefore $\VOrLwInProof(1-\eta_H) w \subseteq \VOrLwInProof(1-\eta_H)$.
In particular 
\[ \VOrLwInProof(1-\eta_H) w \eta_H \subseteq \VOrLwInProof(1-\eta_H) \eta_H = 0,\]
where the final equality holds because $(1-\eta_H)\eta_H = 0$. This proves (iii)
and shows that (iii) is equivalent to $\VOrLwInProof^\circ w \subseteq \VOrLwInProof^\circ$.
This is condition (d) in Definition~\ref{def: stable}, and conditions (a), (b) and~(c) in this definition
hold by (V0), \ref{axiom V1}, \ref{axiom V2} for $\VOrLwInProof$. Therefore (iii) implies (iv), namely that
the left-invariant random walk driven by $w$ lumps stably for the left ideal $\VOrLwInProof$.
In particular, the left-invariant random walk lumps weakly when started at $\eta_G$, which implies (i) on taking $\alpha = \eta_G$.
\end{proof}

\subsection{\texorpdfstring{The maximal Gurvits--Ledoux ideal $J_w$}{}}\label{subsec:Jw}
Playing an equally important role to the induced left ideal $L_w = V_{\eta_G, w}$,
which by Proposition~\ref{prop: irreducible case} should be thought of as the \emph{minimal}
Gurvits--Ledoux space for $\eta_G$ and $w$, 
we will shortly 
define the \emph{maximal} Gurvits--Ledoux ideal $J_w$. 
The following two lemmas are implied by
Lemma~\ref{L: lattice of stable spaces} but for the reader's convenience we give short proofs here in the language of group algebras.    
Recall property \ref{axiom V2} of a subspace $V$ of $\C[G]$ is that $\pi_{bH}(V) \subseteq V$ for all $bH \in G/H$.

\begin{lemma}\label{lemma:sumCirc}
Let $V$ and $W$ be subspaces of $\C[G]$ satisfying
\ref{axiom V2} and both containing $\eta_G$. Then $(V + W)^\circ = V^\circ + W^\circ$.
\end{lemma}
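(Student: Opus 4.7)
The plan is to reduce the claim to Lemma~\ref{lemma:Vcirc}, which already identifies $V^\circ$ with $V(1-\eta_H)$ for subspaces $V$ of $\C[G]$ that satisfy \ref{axiom V2} and contain $\eta_G$. The trivial inclusion $V^\circ + W^\circ \subseteq (V+W)^\circ$ is immediate, since $V^\circ \subseteq V \subseteq V + W$ and $V^\circ \subseteq \ker\Lambda$, and likewise for $W^\circ$.

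For the reverse inclusion, I first verify that the hypotheses of Lemma~\ref{lemma:Vcirc} are inherited by the sum $V+W$. Containment $\eta_G \in V + W$ is obvious. For \ref{axiom V2}, note that $\pi_{bH}$ is linear, so
\[ \pi_{bH}(V + W) = \pi_{bH}(V) + \pi_{bH}(W) \subseteq V + W \]
for every $bH \in G/H$. Hence Lemma~\ref{lemma:Vcirc} applies to each of $V$, $W$ and $V+W$, giving
\[ V^\circ = V(1-\eta_H), \quad W^\circ = W(1-\eta_H), \quad (V+W)^\circ = (V+W)(1-\eta_H). \]
Now right multiplication by $(1-\eta_H)$ is a linear operation, so distributes over the sum of subspaces:
\[ (V+W)(1-\eta_H) = V(1-\eta_H) + W(1-\eta_H) = V^\circ + W^\circ. \]
Combining these equalities yields $(V+W)^\circ = V^\circ + W^\circ$, as required.

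There is no real obstacle here; the entire proof is essentially a one-line application of Lemma~\ref{lemma:Vcirc} together with the trivial stability of \ref{axiom V2} under sums. The only thing worth stating explicitly is why the hypotheses transfer to $V+W$, which is why I would include the short check on $\pi_{bH}$.
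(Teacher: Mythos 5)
Your proof is correct and follows exactly the paper's argument: apply Lemma~\ref{lemma:Vcirc} to $V$, $W$ and $V+W$ (noting that $V+W$ inherits \ref{axiom V2} and contains $\eta_G$) and then distribute right multiplication by $1-\eta_H$ over the sum. The extra details you include (the trivial inclusion and the explicit check of \ref{axiom V2} for $V+W$) are fine but not a different route.
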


\begin{proof}
By Lemma~\ref{lemma:Vcirc} we have $V^\circ = V(1-\eta_H)$ and $W^\circ = W(1-\eta_H)$,
and since $V+W$ also satisfies \ref{axiom V2} and contains $\eta_G$, we also have
$(V+W)^\circ = (V+W)(1-\eta_H)$. The lemma follows.
\end{proof}

Recall that \ref{axiom V1} and \ref{axiom V3} are the properties that $Vw \subseteq V$ and $V^\circ w\subseteq V^\circ$,
respectively; \ref{axiom V2} has just been used.

\begin{lemma}\label{lem: maximal GL module exists}
Properties \ref{axiom V1}, \ref{axiom V2} and \ref{axiom V3} are closed under addition of vector subspaces that contain $\eta_G$.
\end{lemma}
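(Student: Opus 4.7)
The plan is to verify each of the three properties in turn for $V + W$, given that $V$ and $W$ both contain $\eta_G$ and each satisfies \ref{axiom V1}, \ref{axiom V2}, and \ref{axiom V3}. Note first that $\eta_G \in V + W$ trivially.

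For \ref{axiom V1}, multiplication by $w$ is linear, so $(V+W)w = Vw + Ww \subseteq V + W$. For \ref{axiom V2}, the projection $\pi_{bH}$ is linear, so for every left coset $bH$ we have $\pi_{bH}(V+W) = \pi_{bH}(V) + \pi_{bH}(W) \subseteq V + W$. Both verifications are immediate.

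The only non-obvious step is \ref{axiom V3}. The subtlety is that $(V+W)^\circ = (V+W) \cap \ker \Lambda$ is not \emph{a priori} equal to $V^\circ + W^\circ$: an element $v + u \in (V+W)^\circ$ with $v \in V$ and $u \in W$ need not have $v$ and $u$ individually in $\ker \Lambda$. The key step is therefore to invoke Lemma~\ref{lemma:sumCirc}, whose hypotheses are satisfied because $V$, $W$, and hence $V+W$, all contain $\eta_G$ and satisfy \ref{axiom V2} (the latter by the previous paragraph). This gives the clean identification $(V+W)^\circ = V^\circ + W^\circ$. Then, using \ref{axiom V3} for $V$ and for $W$ separately,
\[ (V+W)^\circ w = V^\circ w + W^\circ w \subseteq V^\circ + W^\circ = (V+W)^\circ, \]
as required.

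The main (modest) obstacle is recognising that \ref{axiom V3} cannot be verified directly on a sum decomposition of elements; it needs Lemma~\ref{lemma:sumCirc}, which in turn silently uses the conclusion of the \ref{axiom V2} check. Once that dependency is flagged, the proof is routine.
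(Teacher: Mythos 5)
Your proof is correct and follows the same route as the paper: \ref{axiom V1} and \ref{axiom V2} by linearity, and \ref{axiom V3} via Lemma~\ref{lemma:sumCirc} to identify $(V+W)^\circ$ with $V^\circ + W^\circ$. You have merely spelled out the final computation that the paper leaves implicit.
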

\begin{proof}
Let $V$ and $W$ be vector subspaces of $\C[G]$ satisfying \ref{axiom V1}--\ref{axiom V3}.
Then $(V+W)w = Vw + Ww \subseteq V + W$ giving \ref{axiom V1} and $\pi_{bH}(V + W) = \pi_{bH}(V) + \pi_{bH}(W)
\subseteq V + W$, giving \ref{axiom V2}. Finally \ref{axiom V3} holds by Lemma~\ref{lemma:sumCirc}.
\end{proof}

By Corollary~\ref{cor:LTest} and ($\star$) in \S\ref{subsec:applicationGL},
if the left-invariant random walk lumps weakly to $G/H$ in the sense of Definition~\ref{defn:lumpsWeakly}(b),
then $L_w$ (the new name for $V_{\eta_G,w}$) satisfies \ref{axiom V1}, \ref{axiom V2}, and~\ref{axiom V3}.
We can thus consider the unique maximal subspace of $\C[G]$ satisfying these properties,
which we denote $J_w$.
\begin{lemma}
\label{lemma:JwIsInduced}
    Suppose that the irreducible weight $w$ lumps weakly to $G/H$.
    The maximal subspace $J_w$ of $\C[G]$ containing $\eta_G$ and satisfying properties \ref{axiom V1}, \ref{axiom V2}, and \ref{axiom V3}
     is a left ideal of $\C[G]$. Moreover,
    $\pi_H(J_w)$ is a left ideal of $\C[H]$ and $J_w = \C[G]\bigl( \pi_H(J_w) \bigr)$ is 
    an induced ideal.
\end{lemma}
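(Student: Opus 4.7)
The plan is to mirror the argument used for $L_w$ in Lemma~\ref{lemma:LwIsInduced}, but now exploiting maximality rather than minimality, and leaning on Lemmas~\ref{lemma:Vcirc} and~\ref{lemma:sumCirc}. To show $J_w$ is a left ideal, I would fix $g \in G$ and verify that $J_w + gJ_w$ still satisfies \ref{axiom V1}, \ref{axiom V2}, \ref{axiom V3} and contains $\eta_G$; maximality of $J_w$ then forces $J_w + gJ_w \subseteq J_w$, whence $gJ_w \subseteq J_w$.

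Checking the three properties for $J_w + gJ_w$: containing $\eta_G$ is immediate, as is \ref{axiom V1}, since $(J_w + gJ_w)w = J_w w + g(J_w w) \subseteq J_w + gJ_w$. For \ref{axiom V2}, the identity $\pi_{bH}(gx) = g\pi_{g^{-1}bH}(x)$ (used already in the proof of Lemma~\ref{lemma:LwIsInduced}) gives $\pi_{bH}(gJ_w) = g\pi_{g^{-1}bH}(J_w) \subseteq gJ_w$.

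The substantive verification is \ref{axiom V3}, and this is the main obstacle. First note that $\eta_G = g\eta_G \in gJ_w$ and $gJ_w$ satisfies \ref{axiom V2}, so Lemma~\ref{lemma:sumCirc} applies to give $(J_w + gJ_w)^\circ = J_w^\circ + (gJ_w)^\circ$. Next, since $\ker\Lambda = \C[G](1-\eta_H)$ is a left ideal (Lemma~\ref{lem:kerLambda}) and $g$ is invertible, we have $g\ker\Lambda = \ker\Lambda$, whence
\[ (gJ_w)^\circ = gJ_w \cap \ker\Lambda = gJ_w \cap g\ker\Lambda = g(J_w \cap \ker\Lambda) = gJ_w^\circ. \]
Applying \ref{axiom V3} for $J_w$ then yields $(gJ_w)^\circ w = g(J_w^\circ w) \subseteq gJ_w^\circ = (gJ_w)^\circ$, while $J_w^\circ w \subseteq J_w^\circ$ is direct from \ref{axiom V3} for $J_w$. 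Summing these inclusions gives $(J_w + gJ_w)^\circ w \subseteq J_w^\circ + (gJ_w)^\circ = (J_w + gJ_w)^\circ$, establishing~\ref{axiom V3}.

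For the second part, property \ref{axiom V2} gives $\pi_H(J_w) \subseteq J_w$, and since $\pi_H(J_w) \subseteq \C[H]$ tautologically, we have $\pi_H(J_w) = J_w \cap \C[H]$. This intersection is closed under left multiplication by $\C[H]$ because $J_w$ is a left ideal of $\C[G]$ and $\C[H]$ is closed under multiplication, so $\pi_H(J_w)$ is a left ideal of $\C[H]$. Finally, $J_w$ is an induced ideal in the sense of Definition~\ref{defn:inducedIdeal}, and Proposition~\ref{prop:inducedIdealCharacterisation}(ii) gives $J_w = \C[G]\pi_H(J_w)$.
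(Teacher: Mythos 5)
Your argument is correct and follows essentially the same route as the paper: verify that the translate of $J_w$ by $g \in G$ still satisfies \ref{axiom V1}--\ref{axiom V3} (the key point being that $\ker\Lambda = \C[G](1-\eta_H)$ is a left ideal, so $(gJ_w)^\circ = gJ_w^\circ$), invoke maximality to conclude $gJ_w \subseteq J_w$, and then use \ref{axiom V2} together with Proposition~\ref{prop:inducedIdealCharacterisation}(ii) for the induced-ideal statement. Passing through $J_w + gJ_w$ and Lemma~\ref{lemma:sumCirc} rather than comparing $gJ_w$ with $J_w$ directly is only a cosmetic variation on the paper's proof.
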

\begin{proof}
We check in a very similar way to the proof of Lemma \ref{lemma:LwIsInduced} that if $g \in G$ then $gJ_w$ satisfies conditions \ref{axiom V1}, \ref{axiom V2}, and \ref{axiom V3}. Then, by maximality of $J_w$ it follows that $gJ_w \subseteq J_w$, and hence $J_w$ is a left ideal. We leave checking \ref{axiom V1}
and \ref{axiom V2} to the reader. To check \ref{axiom V3}, note that $\ker \Lambda = \C[G](1-\eta_H)$
is a left ideal of $\C[G]$ and so
\[ g(V^\circ) = g(V \cap \ker \Lambda) = gV \cap g \ker \Lambda = gV \cap \ker \Lambda = (gV)^\circ. \]
The end is exactly as in the earlier proof: by \ref{axiom V2} we have $\pi_H(J_w) \subseteq J_w$ and
so by Proposition~\ref{prop:inducedIdealCharacterisation}(ii), $J_w = \C[G]\bigl( \pi_H(J_w) \bigr)$. 
\end{proof}

We call $J_w$ the \emph{maximal Gurvits--Ledoux ideal for $w$}. As a complex vector space, $J_w = \C \otimes_R V_{\max}(f,P,\alpha)$.
We now use $J_w$ to prove
Theorem \ref{thm: main testDist}. 
See~\S\ref{sec: tests} for its algorithmic counterpart.

\setcounter{tmp}{\value{theorem}}
\setcounter{section}{1}
\setcounter{theorem}{\value{Jthm}}
\begin{theorem}[Weak lumping test for a distribution]
    Let $w\in\C[G]$ be an irreducible weakly lumping weight. 
    For each probability distribution $\alpha$, the Markov chain
    $\MC(\alpha,w)$ lumps weakly to $G/H$ if and only if $\alpha\in J_w$. 
\end{theorem}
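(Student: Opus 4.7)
The plan is to sandwich the minimal Gurvits--Ledoux space $V_{\alpha,w}$ between the two natural candidates by showing that $\MC(\alpha,w)$ lumps weakly to $G/H$ if and only if $V_{\alpha,w}\subseteq J_w$, and then to transfer condition \ref{axiom V3} from $J_w$ down to $V_{\alpha,w}$ along this containment. The equivalence ($\star$) from \S\ref{subsec:applicationGL} will do all the probabilistic work; the rest is algebraic bookkeeping with the properties \ref{axiom V0}--\ref{axiom V3}.

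For the forward direction, suppose $\MC(\alpha,w)$ lumps weakly to $G/H$. Then ($\star$) gives that $V_{\alpha,w}$ satisfies \ref{axiom V3}, and by construction it also satisfies \ref{axiom V1} and \ref{axiom V2}. Since $w$ is irreducible, Lemma~\ref{lem: irreducible case} ensures $\eta_G \in V_{\alpha,w}$. Thus $V_{\alpha,w}$ is a subspace of $\C[G]$ containing $\eta_G$ and satisfying \ref{axiom V1}, \ref{axiom V2}, \ref{axiom V3}, so maximality of $J_w$ forces $V_{\alpha,w} \subseteq J_w$. In particular $\alpha \in V_{\alpha,w} \subseteq J_w$.

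For the backward direction, suppose $\alpha \in J_w$. Then $J_w$ is a subspace of $\C[G]$ containing $\alpha$ and satisfying \ref{axiom V1} and \ref{axiom V2}, so by minimality of $V_{\alpha,w}$ we have $V_{\alpha,w} \subseteq J_w$. Intersecting both sides with $\ker \Lambda$ gives $V_{\alpha,w}^\circ \subseteq J_w^\circ$. Combining \ref{axiom V1} for $V_{\alpha,w}$ with \ref{axiom V3} for $J_w$ yields
\[ V_{\alpha,w}^\circ w \;\subseteq\; (V_{\alpha,w} w) \cap (J_w^\circ w) \;\subseteq\; V_{\alpha,w} \cap J_w^\circ \;\subseteq\; V_{\alpha,w} \cap \ker \Lambda \;=\; V_{\alpha,w}^\circ, \]
so $V_{\alpha,w}$ satisfies \ref{axiom V3}. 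Applying ($\star$) once more, we conclude that $\MC(\alpha,w)$ lumps weakly to $G/H$.

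I do not expect a substantive obstacle: both directions are short once the lattice structure of $J_w$ and $V_{\alpha,w}$ established in Lemmas~\ref{lemma:LwIsInduced} and~\ref{lemma:JwIsInduced} is in hand. The only point requiring mild care is the backward direction, where one must notice that $V_{\alpha,w}^\circ w$ needs to be shown to lie simultaneously in $V_{\alpha,w}$ (via \ref{axiom V1} for $V_{\alpha,w}$) and in $\ker \Lambda$ (via \ref{axiom V3} for the larger ideal $J_w$). Irreducibility of $w$ enters only to guarantee $\eta_G \in V_{\alpha,w}$ (which we use to conclude $V_{\alpha,w} \subseteq J_w$ in the forward direction) and to ensure that $J_w$ is well-defined as the maximal space with properties \ref{axiom V1}--\ref{axiom V3}.
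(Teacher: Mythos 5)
Your proposal is correct and follows essentially the paper's own route: the forward direction (irreducibility via Lemma~\ref{lem: irreducible case} to get $\eta_G \in V_{\alpha,w}$, then ($\star$) and maximality of $J_w$) is the paper's argument verbatim. Your converse differs only cosmetically: where the paper applies Corollary~\ref{cor: GL certificate theorem} to the stable space $J_w$ directly, you inline that corollary by transferring \ref{axiom V3} from $J_w$ down to the minimal space $V_{\alpha,w}$ via the sandwich $V_{\alpha,w}\subseteq J_w$ and then reapplying ($\star$) — the same transfer argument used in Lemma~\ref{lem: weak lumpability of irreducible weights} — so both steps are sound.
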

\setcounter{section}{5}
\setcounter{theorem}{\value{tmp}}
\begin{proof}
    Suppose $\MC(\alpha, w)$ lumps weakly to $G/H$. Consider the minimal space $V_{\alpha,w}$ satisfying 
    \ref{axiom V0}, \ref{axiom V1}, and \ref{axiom V2}. By $(\star)$ in \S\ref{subsec:applicationGL},
   it also satisfies~\ref{axiom V3}. By maximality, $J_w$ contains $V_{\alpha,w}$ 
   and so  $\alpha \in J_w$.
    Conversely, suppose that
    $\alpha \in J_w$. That is, $J_w$ satisfies \ref{axiom V0}. By definition,  $J_w$ satisfies \ref{axiom V1},
    \ref{axiom V2}, and \ref{axiom V3}. Since $J_w = \C \otimes_{\R} V_{\max}(f,P,\alpha)$, if $\alpha \in J_w$ is a probability vector then $\alpha \in V_{\max}(f,P,\alpha)$ and hence, by Corollary~\ref{cor: GL certificate theorem},
    $\MC(\alpha,w)$ lumps weakly to $G/H$.
\end{proof}
\setcounter{theorem}{\value{tmp}}

\subsection{Gurvits--Ledoux ideals: the general case and the proof of Theorem~\ref{thm:mainGL}}
We have seen that the minimal Gurvits--Ledoux vector space $L_w$ and
the maximal Gurvits--Ledoux vector space $J_w$ controlling weak lumping
of a weight $w$ to $G/H$ are induced left ideals of $\C[G]$.
This motivates the following definition.

\begin{samepage}
\begin{definition}\label{defn:GLideal}
     Let $L$ be a left ideal of $\C[G]$. We say $L$ is a \emph{Gurvits--Ledoux ideal for~$w$} if
    \begin{axioms}{L}
    \setcounter{enumi}{-1}
        \item $\eta_G\in L$, \label{axiom L0}
        \item $Lw \subseteq L$, \label{axiom L1} and
        \item $L$ is an induced ideal from $H$ to $G$.
         \label{axiom L2}
    \end{axioms}
    We say that a Gurvits--Ledoux ideal for $w$ is \emph{weakly lumping} if
    \begin{axioms}{L}
    \setcounter{enumi}{2}
        \item $L^\circ w \subseteq L^\circ$,\label{axiom L3}
    \end{axioms}
  We denote by $L_{\alpha,w}$ the minimal Gurvits--Ledoux ideal containing the distribution $\alpha$.
\end{definition}
\end{samepage}

We immediately justify the term `weakly lumping' in this definition.

\begin{proposition}\label{prop:lumpsStably}
Let $L$ be a Gurvits--Ledoux ideal for the weight $w$. The left-invariant
random walk driven by $w$ lumps weakly
to $ G/H$ with stable space
$L$ if and only if $L^\circ w \subseteq L^\circ$.
\end{proposition}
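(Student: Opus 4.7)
The plan is to deduce the proposition directly from Definition~\ref{def: stable} by translating each of its four conditions into the group-algebra language. First I would set $A = G$, $B = G/H$, and $P = P_w$, and identify the induced map $F: \C[G] \to \C^{G/H}$; then $\ker F = \ker \Lambda = \C[G](1-\eta_H)$ by Lemma~\ref{lem:kerLambda}, so the $V^\circ$ of Definition~\ref{def: stable} coincides with the $L^\circ = L \cap \ker\Lambda$ used throughout \S\ref{sec: a characterisation of WL}. The lump projections $\Pi_{bH}$ of \S\ref{SS:GL} agree with the coset projections $\pi_{bH}$ of \S\ref{subsec:restrictionAndInduction}, and by Lemma~\ref{lemma:stepIsMultiplication} the right action of $P_w$ on $\C[G]$ agrees with right multiplication by the normalized weight $w/w(G)$; thus $LP_w \subseteq L$ iff $Lw \subseteq L$, and similarly for $L^\circ$.

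With these identifications, Definition~\ref{def: stable} applied to the complex subspace $V = L \subseteq \C[G]$ reads: (a) $L$ contains a probability vector; (b) $Lw \subseteq L$; (c) $\pi_{bH}(L) \subseteq L$ for every $bH \in G/H$; (d) $L^\circ w \subseteq L^\circ$. Under the hypothesis that $L$ is a Gurvits--Ledoux ideal, the first three are automatic: (a) holds because $\eta_G \in L$ by (L0) and $\eta_G$ is a probability vector, (b) is (L1) verbatim, and (c) follows from (L2) combined with Proposition~\ref{prop:inducedIdealCharacterisation}(iii), which gives the decomposition $L = \bigoplus_{b \in G/H} b\,\pi_H(L)$ and hence $\pi_{bH}(L) = b\,\pi_H(L) \subseteq L$. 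Thus the content of Definition~\ref{def: stable} reduces to the single additional requirement (d), which is exactly the stated condition $L^\circ w \subseteq L^\circ$.

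There is no substantive obstacle: the argument is essentially pure bookkeeping, once one correctly matches the ingredients of \S\ref{SS:GL} with those of \S\ref{sec:algebraicPreliminaries} and~\S\ref{sec: a characterisation of WL}. The only care needed is the normalization $w/w(G)$ so that $P_w$ is stochastic, but this does not affect any of the $L$-containment conditions, which are invariant under scaling by the positive constant $1/w(G)$. Since Definition~\ref{def: stable} explicitly permits complex subspaces, no passage between real and complex vector spaces is required.
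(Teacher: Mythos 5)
Your proposal is correct and follows essentially the same route as the paper: identify right multiplication by $w$ with the transition matrix $P_w$ (Lemma~\ref{lemma:stepIsMultiplication}), observe that conditions (a)--(c) of Definition~\ref{def: stable} hold automatically for any Gurvits--Ledoux ideal via \ref{axiom L0}--\ref{axiom L2}, and note that condition (d) is exactly $L^\circ w \subseteq L^\circ$. The paper's proof is just a terser version of this bookkeeping, so nothing further is needed.
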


\begin{proof}
We use the basic fact proven in Lemma~\ref{lemma:stepIsMultiplication} that right-multiplication by $w$ is right-multiplication by the transition matrix $P$ of the left-invariant random walk driven by $w$. Conditions (a), (b), and (c) of Definition~\ref{def: stable} are satisfied when $V$ is taken to be any Gurvits--Ledoux ideal $L$. Condition~(d) reduces to $L^\circ w \subseteq L^\circ$.
\end{proof}
\begin{corollary}
If there exists a Gurvits--Ledoux ideal $L$ for the weight $w$ such that $L^\circ w \subseteq L^\circ$, then the left-invariant random walk driven by $w$ lumps weakly to $G/H$.
\end{corollary}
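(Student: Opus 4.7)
The corollary is an immediate packaging of results already proved, so the plan is simply to compose them transparently. Given the hypothesis, I will first apply Proposition~\ref{prop:lumpsStably} to conclude that the left-invariant random walk driven by $w$ lumps weakly to $G/H$ with stable space $L$ in the sense of Definition~\ref{def: stable}. This is direct: axiom \ref{axiom L0} supplies condition (a) of Definition~\ref{def: stable} (the probability vector $\eta_G$), axiom \ref{axiom L1} supplies condition (b), axiom \ref{axiom L2} together with Proposition~\ref{prop:inducedIdealCharacterisation}(iii) supplies condition (c) (since $L = \bigoplus_{b \in G/H} b\, \pi_H(L)$ is the direct sum of its projections to the cosets), and the standing hypothesis $L^\circ w \subseteq L^\circ$ is exactly condition (d) under the translation of Lemma~\ref{lemma:stepIsMultiplication} between right-multiplication by $w$ and the action of the transition matrix $P$.

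Next, I will invoke Corollary~\ref{cor: GL certificate theorem} with the probability vector $\eta_G \in L$: since $L$ is a stable space and $\eta_G$ is a probability vector in $L$, the chain $\MC(\eta_G, w)$ lumps weakly under the quotient $f : G \to G/H$. This exhibits an explicit initial distribution witnessing weak lumping, and therefore by Definition~\ref{defn:lumpsWeakly}(b) the left-invariant random walk driven by $w$ lumps weakly to $G/H$, as required.

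The only potential obstacle is bookkeeping between the three parallel vocabularies in play: the group-algebra conditions \ref{axiom L0}--\ref{axiom L3} on a left ideal $L \subseteq \C[G]$, the linear-algebra conditions (a)--(d) of Definition~\ref{def: stable} on a subspace $V \subseteq \C^A$, and the probabilistic definition of weak lumping in Definition~\ref{defn:lumpsWeakly}. Since all three have already been reconciled (the algebra-to-linear-algebra translation by Proposition~\ref{prop:lumpsStably}, and the linear-algebra-to-probability translation by Corollary~\ref{cor: GL certificate theorem}), the argument reduces to citing these two results in sequence with $\alpha = \eta_G$. No computation is needed beyond verifying the axioms, and I anticipate no technical difficulty.
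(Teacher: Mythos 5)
Your proposal is correct and matches the paper's own argument, which likewise just combines Proposition~\ref{prop:lumpsStably} with Corollary~\ref{cor: GL certificate theorem} (the paper leaves the axiom-checking and the choice $\alpha = \eta_G$ implicit, which you have simply spelled out). No issues.
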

\begin{proof} This follows from Proposition~\ref{prop:lumpsStably} and Corollary~\ref{cor: GL certificate theorem}.
\end{proof}

It is now natural to ask how $L^\circ$ can be computed. This has an appealing answer in terms of idempotents.
Observe that if $L$ is a Gurvits--Ledoux ideal for $w$
then by Proposition~\ref{prop:inducedIdealCharacterisation}(v) and \ref{axiom L2} there
exists an idempotent $e \in \C[H]$ such that $L = \C[G]e$. By \ref{axiom L0}, $L$ 
contains $\eta_G$,
and so $\eta_G e = \eta_G$. Applying
the projection map~$\pi_H$ we obtain
  $\eta_H e = \eta_H$.
Hence $e \in \Eb{H}$, the set of idempotents of $\C[H]$ such that $\eta_H e = \eta_H$.
Now by Lemma~\ref{lemma: ker Lambda is Ann etaH},
\[
\qquad L^\circ = L(1-\eta_H) = \C[G]e(1-\eta_H) = \C[G](e - e\eta_H) = \C[G](e - \eta_H).
\]
Thus $L^\circ$ is concretely described by idempotent multiplication.
Similarly, the following lemma translates conditions 
\ref{axiom L1} and \ref{axiom L3} into
the language of idempotents.

\begin{lemma}\label{lemma:circ}
Let $L = \C[G]e$ be an induced left ideal of $\C[G]$, for some $e \in \Eb{H}$. 
For $w \in \C[G]$ we have
\begin{thmlistE}
\item $Lw \subseteq L$ if and only if $ew(1-e) = 0$;
\item $L^\circ w \subseteq L^\circ$ if and only if $(e-\eta_H)w(1-e+\eta_H) = 0$.
\end{thmlistE}
Moreover \emph{(i)} and \emph{(ii)} are together equivalent to
\begin{thmlistE}\setcounter{thmlistcntE}{2}
\item $ew(1-e) = 0$ and $(e-\eta_H)w \eta_H = 0$.
\end{thmlistE}
\end{lemma}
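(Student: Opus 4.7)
The plan is to reduce parts (i) and (ii) to a standard idempotent calculation, and then handle the equivalence with (iii) by exploiting an orthogonal decomposition of $1$. To set up, I first note that since $e \in \Eb{H}$ gives $\eta_H e = \eta_H$, and since $\eta_H$ is central in $\C[H]$ (being the normalized sum of all of $H$), we also have $e\eta_H = \eta_H$. Setting $f = e - \eta_H$, a short calculation using $\eta_H e = e \eta_H = \eta_H$ and $\eta_H^2 = \eta_H$ verifies that $f^2 = f$, $\eta_H f = f\eta_H = 0$, and $(1-e)f = f(1-e) = 0$. Hence $\{\eta_H,\, f,\, 1-e\}$ is a triple of pairwise orthogonal idempotents summing to $1$, and this orthogonal decomposition is the organizing device for the whole proof.

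For (i), I invoke the standard idempotent fact that for any idempotent $p$ and any $x \in \C[G]$, $x \in \C[G]p$ if and only if $x(1-p) = 0$. Since $L = \C[G]e$ is a left ideal containing $e$, the condition $Lw \subseteq L$ reduces to $ew \in \C[G]e$, which is exactly $ew(1-e) = 0$. For (ii), I first use Lemma~\ref{lemma: ker Lambda is Ann etaH} (applicable because $L$ is an induced ideal from $H$ to $G$ containing $\eta_G$) to identify $L^\circ = L(1-\eta_H) = \C[G]e(1-\eta_H) = \C[G](e-\eta_H) = \C[G]f$. Since the complementary idempotent of $f$ is $1-f = 1-e+\eta_H$, the same argument gives $L^\circ w \subseteq L^\circ$ if and only if $f w(1-e+\eta_H) = 0$, which is (ii).

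For the `moreover' claim, I decompose each of (i), (ii), (iii) by using the orthogonal triple as a block decomposition of $\C[G]$. Since $e = \eta_H + f$ with $\eta_H f = f\eta_H = 0$, left-multiplying $ew(1-e) = 0$ by $\eta_H$ and by $f$ separately shows that (i) is equivalent to the pair $\eta_H w(1-e) = 0$ and $fw(1-e) = 0$. Similarly, since $1-f = \eta_H + (1-e)$ with $\eta_H(1-e) = (1-e)\eta_H = 0$, right-multiplying the equation in (ii) by $\eta_H$ and by $1-e$ shows that (ii) is equivalent to the pair $fw\eta_H = 0$ and $fw(1-e) = 0$. Condition (iii) consists of $ew(1-e) = 0$ (which by the same splitting is the first pair of equations) together with $fw\eta_H = 0$, so it yields the same three atomic equations. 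Hence (i) and (ii) together $\Leftrightarrow$ (iii).

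The only real subtlety is establishing pairwise orthogonality of $\{\eta_H, f, 1-e\}$, which hinges on upgrading $\eta_H e = \eta_H$ to $e\eta_H = \eta_H$; once that is in hand, everything else reduces to mechanical block bookkeeping.
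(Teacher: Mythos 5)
Your proof is correct and takes essentially the same route as the paper: parts (i) and (ii) rest on the same standard fact that $\C[G]p = \{x \in \C[G] : x(1-p) = 0\}$ for an idempotent $p$, together with the identification $L^\circ = \C[G](e-\eta_H)$ via Lemma~\ref{lemma: ker Lambda is Ann etaH} (and your unproved assertion that $\eta_G \in L$ is indeed immediate from $\eta_H e = \eta_H$). Your handling of the \emph{moreover} part through the orthogonal triple $\{\eta_H,\, e-\eta_H,\, 1-e\}$ is just a more explicit bookkeeping of the paper's one-line argument, which multiplies $ew(1-e)=0$ on the left by $1-\eta_H$ and then notes that, given (i), the conditions $(e-\eta_H)w(1-e+\eta_H)=0$ and $(e-\eta_H)w\eta_H=0$ agree.
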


\begin{proof}
Observe that if $f$ is an idempotent then $\C[G]f = \{ x \in \C[G]: x(1-f) = 0\}$.
Applying this with $f=e$ and then $f = e-\eta_H$ (using Lemma~\ref{lemma: ker Lambda is Ann etaH})
we get
\[
Lw \subseteq L \!\iff\! \C[G]ew \subseteq \C[G]e \!\iff\! \C[G]ew(1-e) = 0 \!\iff\! ew(1-e) = 0 
\]
proving (i)
and very similarly that $L^\circ w \subseteq L^\circ$ if and only if
$(e-\eta_H)w(1-e+\eta_H) = 0$
proving~(ii). Now multiplying $ew(1-e) = 0$ on the left by $1-\eta_H$
we get $(e-\eta_H)w (1-e) = 0$. Therefore, given that $ew(1-e) = 0$,
the conditions $(e-\eta_H)w(1-e+\eta_H) = 0$ and $(e-\eta_H)w\eta_H$
are equivalent. This proves (iii).
\end{proof}

The following further
remarks connect Definition~\ref{defn:GLideal} with the definitions and results presented
so far.

\begin{remarklist}
\item[(1)] If $w$ is an irreducible weight then, as in Lemma~\ref{lem:useErgodicThm}, then it follows from
\ref{axiom L1} that $\eta_G \in L_{\alpha, w}$. Thus in the irreducible case \ref{axiom L0} could be replaced by the condition that $L$ contains some probability vector.

\item[(2)] By \ref{axiom V2}, the subspace $V_{\alpha, w}$ 
of $\C[G]$ originally defined in \S\ref{subsec:minimalGLideal} is closed under the projection maps.
Therefore \[ \quad V_{\alpha, w} = \bigoplus_{b \in G/H} \pi_{bH} (V_{w,\alpha})
= \bigoplus_{b \in G/H} b \pi_H (b^{-1} V_{w,\alpha}) \] and it follows that
the left ideal of $\C[G]$ generated by $V_{\alpha, w}$
is $\C[G] U$ where $U = \sum_{b \in G/H} \pi_H(b^{-1}V_{w,\alpha})$.
It is therefore an induced ideal and so $L_{\alpha, w}$
is simply the smallest ideal containing the 
subspace $V_{\alpha,w}$.
\item[(3)]
By Lemma~\ref{lemma:LwIsInduced}, the minimal Gurvits--Ledoux space $V_{\eta_G, w}$ 
is an induced left ideal of $\C[G]$, and so we have 
\[\quad V_{\eta_G, w} = L_w = L_{\eta_G, w} \]
where the second equality holds by definition.
(Note the first equality is the change in notation introduced after 
the proof of Lemma~\ref{lemma: ker Lambda is Ann etaH}, so our usage of $L_w$ is consistent throughout.)
\item[(4)]
By Lemma~\ref{lemma:JwIsInduced},
the maximal Gurvits--Ledoux space $J_w$ 
is an induced left ideal and since, by its definition in Lemma~\ref{lemma:JwIsInduced}, it satisfies (V3),
the left-invariant random walk driven by $w$ lumps stably for the ideal~$J_w$.
\end{remarklist}

We can now prove Theorem \ref{thm:mainGL} and Corollary \ref{cor:GLmodules}, which we restate below.

\setcounter{tmp}{\value{theorem}}
\setcounter{section}{1}
\setcounter{theorem}{\value{mainGL}}
\begin{theorem}    Let $w$ be an irreducible
    weight on $G$ and let $\alpha$ be a distribution on $G$, both thought as elements of $\C[G]$.
    Then $\MC(\alpha, w)$ lumps weakly to $G/H$ if and only if there exists an idempotent $e\in\Eb{H}$ such that
    \begin{thmlistE}
    \item $\alpha\in\C[G]e$,
    \item $ e w (1-e) = 0$,
    \item $ (e - \eta_H) w \eta_H = 0$. 
    \end{thmlistE}
    In this case, for any $t\ge0$, the conditional distribution of $X_t$ given the sequence of cosets $X_0H$, ..., $X_{t-1}H$ always belongs to $\C[G]e$. 
\end{theorem}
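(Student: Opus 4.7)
The plan is to prove the two directions separately using the dictionary between weakly lumping Gurvits--Ledoux ideals and idempotents in $\Eb{H}$ provided by Proposition~\ref{prop:inducedIdealCharacterisation}(v) and Lemma~\ref{lemma:circ}.

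For sufficiency I start from an idempotent $e \in \Eb{H}$ satisfying (i)--(iii) and set $L = \C[G]e$. Then $L$ is an induced left ideal by Proposition~\ref{prop:inducedIdealCharacterisation}(v), and since $\eta_G \eta_H = \eta_G$ and $\eta_H e = \eta_H$, we have $\eta_G e = \eta_G$, so $\eta_G \in L$. By Lemma~\ref{lemma:circ}(iii), hypotheses (ii) and (iii) translate into $Lw \subseteq L$ and $L^\circ w \subseteq L^\circ$, so $L$ is a weakly lumping Gurvits--Ledoux ideal. Proposition~\ref{prop:lumpsStably} then tells us the left-invariant random walk driven by $w$ lumps stably for $L$, and since $\alpha \in L$ by (i), Corollary~\ref{cor: GL certificate theorem} delivers both that $\MC(\alpha,w)$ lumps weakly to $G/H$ and that the conditional distributions of $X_t$ given $X_0 H, \dots, X_t H$ always lie in $L = \C[G]e$.

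For necessity I consider the minimal complex Gurvits--Ledoux vector space $V_{\alpha,w}$ defined by \ref{axiom V0}--\ref{axiom V2} in \S\ref{subsec:minimalGLideal}. Irreducibility of $w$ gives $\eta_G \in V_{\alpha,w}$ by Lemma~\ref{lem: irreducible case}, and the weak lumping hypothesis together with the equivalence $(\star)$ recorded in \S\ref{subsec:applicationGL} yields $V_{\alpha,w}^\circ w \subseteq V_{\alpha,w}^\circ$. I then pass to the left ideal $L = \sum_{g \in G} g V_{\alpha,w}$ generated by $V_{\alpha,w}$. A direct computation using $\pi_{bH}(gv) = g\pi_{g^{-1}bH}(v)$ shows $L$ is closed under the projections $\pi_{bH}$, so $L$ is induced by Proposition~\ref{prop:inducedIdealCharacterisation}; together with $\eta_G \in L$ and $Lw \subseteq L$ (inherited from $V_{\alpha,w}$), this makes $L$ a Gurvits--Ledoux ideal containing $\alpha$. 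Writing $L = \C[G]e$ with $e \in \C[H]$ idempotent and applying $\pi_H$ to the identity $\eta_G = \eta_G e$ yields $\eta_H = \eta_H e$, so $e \in \Eb{H}$; condition (i) is then just $\alpha \in L$.

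The key step, which I expect to be the main obstacle, is promoting the closure property $V_{\alpha,w}^\circ w \subseteq V_{\alpha,w}^\circ$ to $L^\circ w \subseteq L^\circ$. Here I will apply Lemma~\ref{lemma:Vcirc} to both $V_{\alpha,w}$ and $L$ (each satisfies \ref{axiom V2} and contains $\eta_G$) to obtain the identities $V_{\alpha,w}^\circ = V_{\alpha,w}(1-\eta_H)$ and $L^\circ = L(1-\eta_H)$, and then the chain
\[
    L^\circ \;=\; L(1-\eta_H) \;=\; \sum_{g \in G} g V_{\alpha,w}(1-\eta_H) \;=\; \sum_{g \in G} g V_{\alpha,w}^\circ
\]
immediately gives $L^\circ w \subseteq \sum_{g \in G} g V_{\alpha,w}^\circ = L^\circ$. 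Lemma~\ref{lemma:circ}(iii) then converts $Lw \subseteq L$ together with $L^\circ w \subseteq L^\circ$ into conditions (ii) and (iii), completing the proof.
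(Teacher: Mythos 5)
Your proof is correct, and while the sufficiency direction coincides with the paper's (idempotent $\Rightarrow$ stable ideal via Lemma~\ref{lemma:circ}, Proposition~\ref{prop:lumpsStably} and Corollary~\ref{cor: GL certificate theorem}, which also yields the final claim about conditional distributions), your necessity argument takes a genuinely different route. The paper proves necessity through the \emph{maximal} Gurvits--Ledoux ideal $J_w$: it first establishes the lattice/maximality machinery of \S\ref{subsec:Jw} (Lemmas~\ref{lemma:sumCirc}--\ref{lemma:JwIsInduced}) and Theorem~\ref{thm: main testDist}, which gives $\alpha \in J_w$, and then reads off (i)--(iii) from an idempotent generator of $J_w$. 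You instead work with the \emph{minimal} object: you take $L = \sum_{g\in G} g V_{\alpha,w}$ (which is exactly $L_{\alpha,w}$, cf.\ Remark (2) after Lemma~\ref{lemma:circ}), verify it is an induced Gurvits--Ledoux ideal, and transfer property \ref{axiom V3} from $V_{\alpha,w}$ to $L$ via the identity $L^\circ = L(1-\eta_H) = \sum_g g\,V_{\alpha,w}(1-\eta_H) = \sum_g g\,V_{\alpha,w}^\circ$ supplied by Lemma~\ref{lemma:Vcirc} (both spaces satisfy \ref{axiom V2} and contain $\eta_G$, the latter by Lemma~\ref{lem: irreducible case}, so the lemma applies); the deduction $\eta_H e = \eta_H$ by projecting $\eta_G e = \eta_G$ matches the paper's own argument before Lemma~\ref{lemma:circ}. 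What each approach buys: yours is lighter and more self-contained --- it needs only \S\ref{subsec:minimalGLideal}--\ref{subsec:applicationGL} plus Lemmas~\ref{lemma:Vcirc} and~\ref{lemma:circ}, avoids $J_w$ and Theorem~\ref{thm: main testDist} entirely, and in effect proves Corollary~\ref{cor:GLmodules} directly rather than deducing it from the theorem (note the paper's logic runs the other way, so your route would let that corollary be proved without circularity concerns); the paper's route is heavier but produces the maximal stable ideal $J_w$, which it needs anyway to characterise \emph{all} admissible initial distributions, so the theorem there falls out of machinery with independent uses. The two proofs simply exhibit different witnesses $e$ (minimal ideal $L_{\alpha,w}$ versus maximal ideal $J_w$), and the theorem only asserts existence, so both are valid.
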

\setcounter{section}{5}
\setcounter{theorem}{\value{tmp}}

\begin{proof}
Suppose that $\MC(\alpha,w)$ lumps weakly to $G/H$.
Recall that $J_w$ is the maximal Gurvits--Ledoux ideal for $w$ defined in  
\S\ref{subsec:Jw}. By Theorem~\ref{thm: main testDist}, proved at the end of \S\ref{subsec:Jw},
we have $\alpha \in J_w$, giving~(i).
As seen at the start of \S\ref{subsec:idempotents}, there exists an idempotent
$e \in \C[H]$ such that $J_w = \C[G]e$. Since $\eta_G \in J_w$ by \ref{axiom L0},
we have $e \in \Eb{H}$. By Lemma~\ref{lemma:circ} and \ref{axiom L2} and \ref{axiom L3},~$e$ satisfies (ii) and~(iii).

Conversely, suppose that there is an idempotent $e \in \Eb{H}$ 
satisfying (i), (ii) and (iii). 
Set $L = \C[G]e$. By (i), $L$ contains $\alpha$.
By Lemma~\ref{lemma:circ}(ii) and (iii),
 $Lw \subseteq L$ and $L^\circ w \subseteq L^\circ$. 
 Therefore by Proposition~\ref{prop:lumpsStably}, 
$\MC(\alpha, w)$ lumps weakly to $G/H$ with stable ideal $L$.
In particular $\MC(\alpha, w)$ lumps weakly.

The final claim restates the analysis preceding Theorem \ref{thm: GL characterisation of WL}.
\end{proof}

\setcounter{tmp}{\value{theorem}}
\setcounter{section}{1}
\setcounter{theorem}{\value{GLmodules}}
\begin{corollary}
    The Markov chain $\MC(\alpha,w)$ lumps weakly to left cosets of $G/H$ if and only if $L_{\alpha,w}^\circ w \subseteq L_{\alpha,w}^\circ$.
\end{corollary}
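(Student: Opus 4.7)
\medskip

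The plan is to deduce the corollary from Theorem~\ref{thm:mainGL} (in the forward direction) and from Proposition~\ref{prop:lumpsStably} together with Corollary~\ref{cor: GL certificate theorem} (in the backward direction). The minimal Gurvits--Ledoux ideal $L_{\alpha,w}$ exists because the axioms \ref{axiom L0}, \ref{axiom L1}, \ref{axiom L2} defining a Gurvits--Ledoux ideal are closed under arbitrary intersection: \ref{axiom L0} and \ref{axiom L1} obviously, while closure of \ref{axiom L2} follows since for induced ideals $L_1, L_2$ one has $\pi_H(L_1 \cap L_2) \subseteq \pi_H(L_1) \cap \pi_H(L_2) \subseteq L_1 \cap L_2$, so $L_1 \cap L_2$ is induced by Proposition~\ref{prop:inducedIdealCharacterisation}. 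Since $\C[G]$ itself is a Gurvits--Ledoux ideal containing $\alpha$, the intersection is non-trivial.

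For the backward direction, assume $L_{\alpha,w}^\circ w \subseteq L_{\alpha,w}^\circ$. Then $L_{\alpha,w}$ is a Gurvits--Ledoux ideal for $w$ satisfying the stable-lumping condition \ref{axiom L3}, so by Proposition~\ref{prop:lumpsStably} the left-invariant random walk driven by $w$ lumps weakly to $G/H$ with stable ideal $L_{\alpha,w}$. Since $\alpha \in L_{\alpha,w}$, Corollary~\ref{cor: GL certificate theorem} yields that $\MC(\alpha,w)$ lumps weakly to $G/H$.

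For the forward direction, suppose $\MC(\alpha,w)$ lumps weakly to $G/H$. Apply Theorem~\ref{thm:mainGL} to obtain an idempotent $e \in \Eb{H}$ satisfying $\alpha \in \C[G]e$, $ew(1-e) = 0$, and $(e - \eta_H) w \eta_H = 0$. Set $L_e = \C[G]e$. The ideal $L_e$ contains $\eta_G$ (since $\eta_G \eta_H = \eta_G$ and $\eta_H e = \eta_H$ together give $\eta_G e = \eta_G$), is induced by construction, and by Lemma~\ref{lemma:circ} satisfies both $L_e w \subseteq L_e$ and $L_e^\circ w \subseteq L_e^\circ$. So $L_e$ is a Gurvits--Ledoux ideal containing $\alpha$, and by minimality $L_{\alpha,w} \subseteq L_e$.

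The key step — and the only mildly delicate point — is to transfer the stable-lumping property from $L_e$ down to the sub-ideal $L_{\alpha,w}$. From $L_{\alpha,w} \subseteq L_e$ we get
\[
L_{\alpha,w}^\circ \;=\; L_{\alpha,w} \cap \ker\Lambda \;\subseteq\; L_e \cap \ker\Lambda \;=\; L_e^\circ.
\]
Combining $L_{\alpha,w}^\circ w \subseteq L_e^\circ w \subseteq L_e^\circ \subseteq \ker\Lambda$ with $L_{\alpha,w}^\circ w \subseteq L_{\alpha,w} w \subseteq L_{\alpha,w}$ (by \ref{axiom L1} for $L_{\alpha,w}$) gives
\[
L_{\alpha,w}^\circ w \;\subseteq\; L_{\alpha,w} \cap \ker\Lambda \;=\; L_{\alpha,w}^\circ,
\]
as required. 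This argument only uses that $\ker \Lambda$ is a left ideal, that $L_{\alpha,w}^\circ = L_{\alpha,w} \cap \ker\Lambda$, and that the containment $L_{\alpha,w} \subseteq L_e$ forces the $^\circ$ operation to behave transitively; the irreducibility of $w$ has already been absorbed into the existence of the idempotent $e$ via Theorem~\ref{thm:mainGL}.
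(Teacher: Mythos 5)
Your proof is correct and follows essentially the same route as the paper's: the forward direction extracts an idempotent $e$ from Theorem~\ref{thm:mainGL}, uses Lemma~\ref{lemma:circ} and minimality to get $L_{\alpha,w} \subseteq \C[G]e$, and transfers the condition down, while the backward direction is the stable-lumping certificate argument. The only (harmless) difference is cosmetic: you transfer \ref{axiom L3} to $L_{\alpha,w}$ by intersecting with $\ker\Lambda$ directly (in the spirit of Lemma~\ref{lem: weak lumpability of irreducible weights}), whereas the paper applies Lemma~\ref{lemma:circ} to an idempotent generator of $L_{\alpha,w}$ and the identity $L_{\alpha,w}(1-\eta_H)w\eta_H = 0$.
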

\begin{proof}
    Suppose that $\MC(\alpha,w)$ lumps weakly to left cosets of $H$. Then 
    by Theorem~\ref{thm:mainGL},
    there exists an idempotent $e\in\Eb{H}$ satisfying conditions (i), (ii), and (iii) from 
    this theorem.
    By Lemma~\ref{lemma:circ}
    and by minimality, we have $L_{\alpha,w}\subseteq \C[G]e$. 
    In particular, by (iii), we have $(e-\eta_H) w \eta_H = 0$ and so
    $\C[G]e(1-\eta_H) w \eta_H = 0$ and thus $L_{\alpha,w}(1-\eta_H) w \eta_H = 0$. Again by Lemma~\ref{lemma:circ},
    this becomes \ref{axiom L3}, as desired.
    
    Conversely suppose that $L_{\alpha,w}^\circ w \subseteq L_{\alpha,w}^\circ$. Then an idempotent generator of $L_{\alpha,w}$ satisfies conditions (i), (ii), and (iii) of Theorem \ref{thm:mainGL}.
\end{proof}
\setcounter{section}{5}
\setcounter{theorem}{\value{tmp}}

\subsection{Corollaries for strong and exact lumping}
We summarise the results of this section for these two special cases.
Strong lumping is defined in Definition \ref{defn:strongLumping}.
Note that conditions (iii)--(vi) in both propositions are independent of $\alpha$.

\begin{proposition}[Characterisations of strong lumping]\label{prop:strongJwIsCG} 
Let $w$ be an irreducible weight. The following are equivalent:
\begin{thmlist} 
    \item the left-invariant random walk driven by $w$ lumps strongly to $G/H$;
    \item $\MC(\alpha, w)$ lumps weakly to $G/H$ for all initial distributions $\alpha$;
    \item $J_w = \C[G]$ and $J_w^\circ = \C[G](1-\eta_H)$; 
    \item $\C[G]$ is a weak lumping Gurvits--Ledoux ideal for $w$;
    \item $(\ker \Lambda) w \subseteq \ker \Lambda$;
    \item $(1-\eta_H)w\eta_H = 0$;
    \item For each $g \in G$, $w(hgH)$ is constant for $h \in H$.
    \end{thmlist}
\end{proposition}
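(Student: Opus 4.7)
The plan is to establish the equivalences in a chain that exploits the fact that (iii)--(vi) are essentially tautological reformulations of one another in $\C[G]$, while (i), (ii) and (vii) each reduce to one of them via a separate, short translation.

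First, I would identify (v), (vi) and (vii) with each other by direct manipulation. Using Lemma~\ref{lem:kerLambda}, $\ker\Lambda = \C[G](1-\eta_H)$, so (v) reads $\C[G](1-\eta_H)w \subseteq \C[G](1-\eta_H)$; multiplying on the right by $\eta_H$ and using $(1-\eta_H)\eta_H = 0$ shows that (v) is equivalent to $\C[G](1-\eta_H)w\eta_H = 0$, and hence to $(1-\eta_H)w\eta_H = 0$, which is (vi). For (vi) $\iff$ (vii): the equation $(1-\eta_H)w\eta_H = 0$ says $\eta_H(w\eta_H) = w\eta_H$, i.e.~$w\eta_H$ is left $H$-invariant; by Lemma~\ref{lemma:averaging}(ii), the value of $w\eta_H$ at any $g\in G$ is $w(gH)/|H|$, so left $H$-invariance of $w\eta_H$ is exactly $w(hgH) = w(gH)$ for all $h \in H$ and $g \in G$.

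Next, I would dispatch (iii) and (iv) by taking $L = \C[G]$ in Definition~\ref{defn:GLideal}. Axioms \ref{axiom L0}, \ref{axiom L1} and \ref{axiom L2} hold trivially for $L = \C[G]$ (with generating idempotent $1 \in \Eb{H}$), so the only nontrivial content of (iv) is \ref{axiom L3}, which is literally (v). For (iii) $\iff$ (iv): since $J_w$ is, by Lemma~\ref{lemma:JwIsInduced}, the unique maximal weak lumping Gurvits--Ledoux ideal for $w$, we have $J_w = \C[G]$ if and only if $\C[G]$ is itself such an ideal; in that case $J_w^\circ = \C[G](1-\eta_H)$ follows from Lemma~\ref{lemma: ker Lambda is Ann etaH}.

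Then (ii) $\iff$ (iii) follows from Theorem~\ref{thm: main testDist}. If (ii) holds then in particular $\MC(\eta_G, w)$ lumps weakly, so $w$ is weakly lumping and $J_w$ is defined; the theorem then gives $\alpha \in J_w$ for every probability distribution $\alpha$, and since the indicator distributions $\{g : g \in G\}$ are probability vectors spanning $\C[G]$ as a complex vector space, $J_w = \C[G]$. The converse is immediate from the same theorem. Finally, (i) $\iff$ (vii) is Corollary~\ref{cor:strongExact}(i), and can also be seen directly by unpacking Dynkin's condition: writing $P(x, yH) = w(x^{-1}yH)/w(G)$ and $x' = xh$ for $xH = x'H$ with $h \in H$, strong lumping collapses to $w(gH) = w(hgH)$ for all $g \in G$, $h \in H$, on setting $g = x^{-1}y$. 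The main obstacle is purely bookkeeping: confirming that (ii), in isolation, supplies the weak lumpability hypothesis needed to invoke Theorem~\ref{thm: main testDist}. Each individual equivalence is then a line or two once the algebraic dictionary from Lemma~\ref{lemma:circ} and Corollary~\ref{cor:LTest} is in hand.
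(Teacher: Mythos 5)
Your argument is correct and rests on the same ingredients as the paper's proof --- Theorem~\ref{thm: main testDist} for (ii)$\iff$(iii), the idempotent identities behind Lemma~\ref{lemma:circ} and Lemma~\ref{lemma:averaging}(ii) for (iv)--(vii), and an unpacking of Dynkin's condition (Definition~\ref{defn:strongLumping}) for (i)$\iff$(vii) --- merely reorganised into pairwise equivalences where the paper runs a single cycle (i)$\Rightarrow$(ii)$\Rightarrow\cdots\Rightarrow$(vii)$\Rightarrow$(i), obtaining (i)$\Rightarrow$(ii) from the general fact that Dynkin's condition forces stable lumping for the whole space. The one caution is to lean on your direct Dynkin computation rather than on Corollary~\ref{cor:strongExact}(i) for (i)$\iff$(vii): the paper states that precisely this equivalence is what proves that corollary (its proof in \S\ref{sec: time reversal} simply attributes part (i) to \cite{BW}), so quoting the corollary here is only legitimate as an appeal to the external result rather than to anything established earlier in the paper.
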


\begin{proof}
As we explained after Definition~\ref{defn:strongLumping}, strong lumping
implies weak lumping starting at an arbitrary initial distribution. Hence (i) implies (ii).
In this case, by Theorem~\ref{thm: main testDist}, $J_w = \C[G]$, and then
$J_w^\circ = \C[G](1-\eta_G)$ by Lemma~\ref{lemma: ker Lambda is Ann etaH}, giving (iii).
It is clear from Definition~\ref{defn:GLideal} that $\C[G]$ is a Gurvits--Ledoux
ideal for any weight. By definition $\C[G]$ is weakly lumping for the weight~$w$
if and only if $\C[G]^\circ w \subseteq \C[G]^\circ$; this holds by \ref{axiom V3}
for $J_w  = \C[G]$. Hence (iii) implies~(iv). By definition, 
$\C[G]^\circ = \ker \Lambda$ so (v) is a restatement of (iv).
Now (v) implies (vi) by taking $e = 1$ in Lemma~\ref{lemma:circ}
and (vi) implies (vii) by Lemma~\ref{lemma:averaging}(ii).
Finally suppose that (vii) holds.
The Dynkin condition for strong lumping (see Definition~\ref{defn:strongLumping})
is that 
\[ \sum_{x \in bH} w(a^{-1}x) = \sum_{x \in bH} w({a'}^{-1}x) \]
for all left cosets $bH$, all $a, a' \in G$ such that $aH = a'H$, and all $x \in G$. Equivalently,
$w(a^{-1}bH) = w((ah)^{-1}bH)$ for all $a$, $b \in G$ and $h \in H$, and this holds
by (vii) since $(ah)^{-1}bH = h^{-1}a^{-1}bH$. Hence (vii) implies (i), completing the cycle.
\end{proof}

Exact lumping is defined in Definition~\ref{defn:exactLumping}.

\begin{proposition}[Exact lumping]\label{prop:exactLwIsCGeta} 
Let $w$ be an irreducible weight. The following are equivalent:
\begin{thmlist} 
    \item the left-invariant random walk driven by $w$ lumps exactly to $G/H$; 
    \item $\eta_H w \in \langle b \eta_H : b \in G /H \rangle$;
    \item $L_w = \C[G]\eta_H$ and $L_w^\circ = 0$;
    \item $\C[G]\eta_H$ is a weak lumping Gurvits--Ledoux ideal for $w$;
    \item $\C[G]\eta_H w \subseteq \C[G]\eta_H$;
    \item $\eta_Hw(1-\eta_H) = 0$;
    \item For each $g \in G$, $w(Hgh)$ is constant for $h \in H$.
    \end{thmlist}
Moreover if $\alpha$ is an initial distribution then
$\MC(\alpha, w)$ lumps exactly to $G/H$ if and only if one of these conditions
holds and, in addition, the restriction of $\alpha$ to each left coset $bH$ is proportional to $b\eta_H$.
\end{proposition}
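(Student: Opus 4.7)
The plan is to establish a cycle of implications linking the seven characterisations, then deduce the parenthetical claim about general starting distributions from the established characterisation together with Corollary~\ref{cor:exactLumping}. My key observation is that $\eta_G \in \C[G]\eta_H$ (since $\eta_G\eta_H = \eta_G$), so $\C[G]\eta_H$ is always a candidate Gurvits--Ledoux ideal for $w$; what (i)--(vii) really assert is that this candidate works. Moreover, since $L_w$ is an induced ideal containing $\eta_G$, applying $\pi_H$ yields $\eta_H \in \pi_H(L_w)$, so $\C[G]\eta_H \subseteq L_w$ \emph{unconditionally}.

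First I would establish (i) $\Leftrightarrow$ (iii). By Definition~\ref{defn:exactLumping}, exact lumping of an irreducible weight is equivalent to $V(f,P,\eta_G)^\circ = 0$, which translates (via $L_w = V_{\eta_G, w}$ and Lemma~\ref{lemma: ker Lambda is Ann etaH}) to $L_w(1-\eta_H) = 0$, i.e.\ $L_w \subseteq \C[G]\eta_H$. Combined with the reverse inclusion just noted, this gives $L_w = \C[G]\eta_H$, and then $L_w^\circ = \C[G]\eta_H(1-\eta_H) = 0$ follows automatically. Conversely, $L_w = \C[G]\eta_H$ forces $L_w^\circ = 0$, hence exact lumping.

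Next I would close the cycle (iii) $\Rightarrow$ (iv) $\Rightarrow$ (v) $\Rightarrow$ (vi) $\Rightarrow$ (iv). The step (iii) $\Rightarrow$ (iv) is immediate because $L_w$ is a weakly lumping Gurvits--Ledoux ideal by definition, and (iv) $\Rightarrow$ (v) is the axiom \ref{axiom L1}. For (v) $\Rightarrow$ (vi), apply Lemma~\ref{lemma:circ}(i) with $e = \eta_H$. For (vi) $\Rightarrow$ (iv), take $L = \C[G]\eta_H$: axioms \ref{axiom L0}, \ref{axiom L2} are obvious, \ref{axiom L1} is (vi) via Lemma~\ref{lemma:circ}(i), and \ref{axiom L3} holds trivially because $L^\circ = \C[G]\eta_H(1-\eta_H) = 0$. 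To close back to (i), observe that (iv) gives a weakly lumping Gurvits--Ledoux ideal, so by minimality $L_w \subseteq \C[G]\eta_H$, forcing $L_w^\circ = 0$, which is (iii). The equivalence (ii) $\Leftrightarrow$ (vi) is elementary: $\langle b\eta_H : b \in G/H\rangle = \C[G]\eta_H = \{x \in \C[G] : x(1-\eta_H) = 0\}$. For (vi) $\Leftrightarrow$ (vii), note that $\eta_H w$ is constant on right cosets of $H$ with value $w(Hg)/|H|$ on $Hg$ (Lemma~\ref{lemma:averaging}(i)); the identity $\eta_H w = \eta_H w \eta_H$ says precisely that this value is additionally constant on each left coset $gH$, i.e.\ $w(Hg) = w(Hgh)$ for all $h \in H$.

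Finally, the statement about general $\alpha$ follows directly from Corollary~\ref{cor:exactLumping}: $\MC(\alpha,w)$ lumps exactly iff $\MC(\eta_G, w)$ does and $\alpha \Pi_{bH} \propto \eta_G \Pi_{bH}$ for every coset; since $\pi_{bH}(\eta_G) = (|H|/|G|)\,b\eta_H$, proportionality of $\pi_{bH}(\alpha)$ to $b\eta_H$ is exactly the stated condition. The main obstacle I anticipate is purely bookkeeping: keeping straight the translation between the abstract objects $V(f,P,\alpha)^\circ$ and their group-algebra avatars $L(1-\eta_H)$, and verifying that the inclusion $\C[G]\eta_H \subseteq L_w$ holds for \emph{any} irreducible $w$ (as opposed to only when exact lumping holds); once this is noted, all the equivalences reduce to routine applications of Lemmas~\ref{lemma:circ},~\ref{lemma:averaging}, and~\ref{lemma: ker Lambda is Ann etaH}.
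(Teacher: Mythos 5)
Your proof is correct, and it is close in spirit to the paper's: both arguments run a cycle through the algebraic conditions using Lemma~\ref{lemma:circ}, Lemma~\ref{lemma:averaging}, Lemma~\ref{lemma: ker Lambda is Ann etaH}, and both dispose of the final claim about general $\alpha$ by quoting Corollary~\ref{cor:exactLumping}. The genuine difference is the bridge between the probabilistic statement (i) and the algebra. The paper first proves (i) $\Leftrightarrow$ (ii) via the equivalence of (a) and (c) in Lemma~\ref{lem:EquivalentConditionsForExactLumping}, and then deduces (iii) by recognising $\bigoplus_{b\in G/H}\langle b\eta_H\rangle$ as the minimal space $V_{\eta_G,w}$. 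You instead go straight to (i) $\Leftrightarrow$ (iii) from Definition~\ref{defn:exactLumping} ($V(f,P,\eta_G)^\circ=0$), translated by Lemma~\ref{lemma: ker Lambda is Ann etaH} into $L_w(1-\eta_H)=0$, and you exploit the unconditional inclusion $\C[G]\eta_H\subseteq L_w$ (which the paper never states explicitly, though it is implicit in the proof of Lemma~\ref{lemma:Vcirc}); the same inclusion, together with minimality of $L_w$ among Gurvits--Ledoux ideals containing $\eta_G$, lets you close the cycle from (iv) back to (iii), where the paper closes it through (vi) $\Rightarrow$ (ii). Your version buys a slightly cleaner separation between the probabilistic input (used only once, in (i) $\Leftrightarrow$ (iii)) and the purely algebraic equivalences, at the cost of needing the inclusion $\C[G]\eta_H\subseteq L_w$ as an extra observation. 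One wording slip: in the step (iii) $\Rightarrow$ (iv) you say $L_w$ is a \emph{weakly lumping} Gurvits--Ledoux ideal ``by definition''; it is a Gurvits--Ledoux ideal by Lemma~\ref{lemma:LwIsInduced} together with \ref{axiom V0} and \ref{axiom V1}, but the weak lumping condition \ref{axiom L3} is not automatic --- here it holds because (iii) gives $L_w^\circ=0$, exactly as you note later in the (vi) $\Rightarrow$ (iv) step, so the repair is immediate.
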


\begin{proof}
By the equivalence of (a) and (c) in Lemma~\ref{lem:EquivalentConditionsForExactLumping},
(i) holds if and only if
$\bigoplus_{b \in G/H} \langle b \eta_H \rangle$ is preserved by right multiplication by $w$.
Thus (i) and (ii) are equivalent. 
Moreover, by Corollary~\ref{cor:exactLumping}, $\MC(\alpha, w)$ lumps exactly if and only
if (i) holds and the restriction of $\alpha$ to each left coset $bH$ is proportional to $b\eta_H$.
To complete the proof it 
suffices to prove that conditions (ii)--(vii) are equivalent.

If (ii) holds then, considering the definition of $V_{\eta_G, w}$
in \S\ref{subsec:minimalGLideal}, we have
\[ \bigoplus_{b \in G/H} \langle b \eta_H \rangle = V_{\eta_G, w}.\]
This space is by definition (see the change of notation after Lemma~\ref{lemma: ker Lambda is Ann etaH}),
the minimal Gurvits--Ledoux ideal $L_w$. Therefore (ii) implies that $L_w = \C[G]\eta_H$.
In this case, by Lemma~\ref{lemma:circ}, $L_w^\circ = \C[G]\eta_H (1-\eta_H) = 0$. Hence (ii) implies (iii).
Suppose that (iii) holds.
Then $L_w = \C[G]\eta_H$ satisfies $L_w w \subseteq L_w$ by \ref{axiom V1}
and clearly $\eta_G \in L_w$ and since $\eta_H \in E(H)$, Proposition~\ref{prop:inducedIdealCharacterisation}(v)
implies that $L_w$ is a Gurvits--Ledoux ideal in the sense of Definition~\ref{defn:GLideal}.
By (iii) we have $L_w^\circ = 0$, hence $L_w^\circ w \subseteq L_w^\circ$ and we have (iv). 
Part (v) simply restates that $\C[G]\eta_H w \subseteq \C[G] \eta_H$, so (iv) implies (v),
and since $\eta_H w \in \C[G]\eta_H$ if and only if $\eta_H w (1-\eta_H) = 0$, (v) implies (vi).
Since (vi) is equivalent to $\eta_H w = \eta_H w \eta_H$, Lemma~\ref{lemma:averaging}(i) and (iii)
applied with $T = H$ imply that $\eta_H w$ is constant on each right coset $Hg$ in a given double coset,
hence (vi) and (vii) are equivalent. Finally (vi) implies (ii) since 
$\langle b \eta_H : b \in G/H \rangle$ is the kernel of right multiplication by $1-\eta_H$.
\end{proof}

We remark that the equivalence of (i) and (vii) in Proposition~\ref{prop:strongJwIsCG} and Proposition~\ref{prop:exactLwIsCGeta} proves Corollary~\ref{cor:strongExact}. We later
deduce this result in a more conceptual way as a corollary of Theorem~\ref{thm:mainTimeReversal}:
see \S\ref{sec: time reversal}.

We end this section with a joint corollary of Theorem~\ref{thm:mainGL} and the two propositions above
that deals with cases when $H$ is very small.

\begin{corollary}\label{cor:HOrder2Or3}
    Let $H$ be a subgroup of $G$, and suppose that $|H|\le 3$. Let $w$ be an irreducible weight and $\alpha$ any probability distribution on $G$. If $\MC(\alpha,w)$ lumps weakly then it lumps either strongly or exactly.
\end{corollary}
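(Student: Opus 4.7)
The plan is to identify the maximal Gurvits--Ledoux ideal $J_w$ of \S\ref{subsec:Jw} and show that for $|H|\le 3$ it must be either $\C[G]$ or $\C[G]\eta_H$. By Theorem~\ref{thm: main testDist}, weak lumping of $\MC(\alpha,w)$ forces $\alpha\in J_w$, and by Lemma~\ref{lemma:JwIsInduced}, $J_w$ is an induced left ideal, so $J_w=\C[G]f$ for some idempotent $f\in\C[H]$; the containment $\eta_G\in J_w$ forces $\eta_H f=\eta_H$, so $f\in\Eb{H}$.

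The main technical step is to show that $f$ can be chosen in $\R[H]\cap\Eb{H}$. First, $J_w$ is closed under complex conjugation, because $w$, $\eta_H$, and each $\pi_{bH}$ have real coefficients, so the defining conditions \ref{axiom V1}, \ref{axiom V2}, \ref{axiom V3} together with $\eta_G\in V$ are all preserved under complex conjugation of subspaces; hence $\overline{J_w}$ also satisfies these conditions and equals $J_w$ by maximality. It follows that $J_w\cap\R[G]$ is a real induced left ideal of $\R[G]$, so $\pi_H(J_w\cap\R[G])$ is a left ideal of $\R[H]$. Since $\R[H]$ is semisimple by Maschke's theorem, this left ideal has a real idempotent generator $f\in\R[H]$, and running the proof of Proposition~\ref{prop:inducedIdealCharacterisation}(v) over $\R$ gives $J_w\cap\R[G]=\R[G]f$ and hence $J_w=\C[G]f$, still with $f\in\Eb{H}$.

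The classification now finishes the proof. When $|H|\le 3$, the group $H$ is cyclic and abelian, so by real Wedderburn theory $\R[H]$ decomposes as a direct sum of at most two simple factors, each isomorphic to $\R$ or to $\C$; since a field contains only the two idempotents $0$ and $1$, $\R[H]$ has at most the four idempotents $0,\eta_H,1-\eta_H,1$, and the condition $\eta_H f=\eta_H$ forces $f\in\{\eta_H,1\}$. If $f=1$ then $J_w=\C[G]$, so Proposition~\ref{prop:strongJwIsCG} implies that $w$ lumps strongly on $G/H$. If instead $f=\eta_H$ then $J_w=\C[G]\eta_H=\bigoplus_{b\in G/H}\langle b\eta_H\rangle$; in particular $\alpha=\sum_{b\in G/H}\alpha_b\,b\eta_H$, so the restriction of $\alpha$ to each coset $bH$ is proportional to $b\eta_H$. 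By Proposition~\ref{prop:exactLwIsCGeta}, $w$ then lumps exactly, and the moreover clause of that proposition yields that $\MC(\alpha,w)$ itself lumps exactly. The only delicate point is extracting a real idempotent generator $f\in\R[H]$ for $J_w$; once this is in hand, the small classification available when $|H|\le 3$ immediately dispatches the two cases.
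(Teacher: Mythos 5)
Your proof is correct, and it rests on the same core facts as the paper's --- the identification of strong and exact lumping with $\C[G]$ and $\C[G]\eta_H$ being weakly lumping Gurvits--Ledoux ideals (Propositions~\ref{prop:strongJwIsCG} and~\ref{prop:exactLwIsCGeta}), together with the observation that for $|H|\le 3$ conjugation-invariance leaves only these two candidates --- but the route is a genuine variant. The paper argues case by case: for $|H|\le 2$ it reads the dichotomy straight off Theorem~\ref{thm:mainGL}, since $\Eb{H}=\{1,\eta_H\}$, with no Gurvits--Ledoux machinery at all; for $|H|=3$ it applies the conjugation argument to the \emph{minimal} ideal $L_{\alpha,w}$, which contains $\alpha$ by construction, simply noting that the two non-real induced ideals are exchanged by complex conjugation. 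You instead treat all cases uniformly through the \emph{maximal} ideal $J_w$: Theorem~\ref{thm: main testDist} places $\alpha$ in $J_w$, Lemma~\ref{lemma:JwIsInduced} makes $J_w$ an induced ideal, and your conjugation/maximality argument plus the real-idempotent extraction (in effect the special case of Lemma~\ref{lem:realIdempotents} and Lemma~\ref{lemma:real} that you need) forces $J_w\in\{\C[G]\eta_H,\C[G]\}$; the cases $f=1$ and $f=\eta_H$ are then closed out correctly, the former because $J_w^\circ$ is automatically $\C[G](1-\eta_H)$, the latter via the moreover clause of Proposition~\ref{prop:exactLwIsCGeta} and $\alpha\in\C[G]\eta_H$. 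What your version buys is a single uniform argument for $|H|\in\{1,2,3\}$ and an explicit demonstration that only real idempotents can occur; what the paper's buys is economy --- its $|H|=2$ case needs only Theorem~\ref{thm:mainGL}, and its $|H|=3$ case avoids both the maximal-ideal machinery and the real-generator lemma by directly observing that the two complex ideals are swapped by conjugation.
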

\begin{proof}
     If $|H| = 1$ then every weight lumps both strongly and exactly to $G/H = G$.   The only possibility in  Theorem~\ref{thm:mainGL} is $e = \eta_H = \id_H$, 
     and conditions (vi) in Proposition~\ref{prop:strongJwIsCG} and
      Proposition~\ref{prop:exactLwIsCGeta} hold trivially.
    
    Now suppose $|H|=2$. Let $H = \langle h\rangle$. Then $\Eb{H} = \{\id_H, \eta_H\}$. 
    There are two possible cases in Theorem~\ref{thm:mainGL}. If $e = \eta_H$, then 
    the condition $ew(1-e) = 0$ gives condition (vi) of Proposition~\ref{prop:exactLwIsCGeta} so we have exact lumping. If instead $e = \id_H$ the condition $(e - \eta_H)w\eta_H = 0$ gives condition (vi) of Proposition~\ref{prop:strongJwIsCG} so we have strong lumping.
    
    Finally, suppose $|H|=3$. Again let $H = \langle h\rangle$. There are three primitive idempotents 
    \[
    \def\arraycolsep{1pt}
    \begin{array}{rcrcrcrcr}
        \eta_H &=& \frac{1}{3}( & 1 &+ & h &+ & h^2),\\[3pt]
        \xi_H &=& \frac{1}{3}( & 1 &+ & \zeta^2 h &+ & \zeta h^2),\\[3pt]
        \overline{\xi}_H &=& \frac{1}{3}( & 1 &+ & \zeta h &+ & \zeta^2 h^2)
    \end{array}\]
    where $\zeta$ is a complex primitive third root of unity. Thus $\zeta^2 = \overline{\zeta}$.
    Since the initial distribution~$\alpha$ and the weight $w$ take real values, 
    the minimal Gurvits--Ledoux ideal  $L_{\alpha,w}$  defined in Definition~\ref{defn:GLideal}
    is closed under complex conjugation. If $L_{\alpha,w} = \C[G]\eta_H$ then by Proposition~\ref{prop:exactLwIsCGeta}(iv), $\MC(\alpha,w)$ lumps exactly to $G/H$.  Otherwise $L_{\alpha,w}$ cannot be either of $\C[G](\eta_H + \xi_H)$ or $\C[G](\eta_H + \overline{\xi}_H)$ since these are exchanged by complex conjugation.
    The only remaining possibility is that 
    $L_{\alpha,w} = \C[G]$,
    and then  Proposition~\ref{prop:strongJwIsCG}(iv) implies
    that $\MC(\alpha,w)$ lumps strongly to $G/H$.
\end{proof}

\section{Tests for weak lumping}
\label{sec: tests}
Corollary \ref{cor: main testWeight} and Theorem \ref{thm: main testDist} are characterisations of weak lumpability of a weight $w$ and weak lumping of $\MC(\alpha,w)$ for a distribution $\alpha$.
They rely on the computation of the left ideals $L_w$ and $J_w$ given a weight $w$.
In this section, we provide two practical computational procedures to compute these left ideals of $\C[G]$. Each algorithm uses a nested sequence of left ideals of $\C[H]$ that eventually stabilises, in one case to $\pi_H(L_w)$ and in the other to $\pi_H(J_w)$, respectively. 

By Lemmas~\ref{lemma:LwIsInduced} and~\ref{lemma:JwIsInduced}, $L_w$ (which was earlier denoted
$V_{\alpha_G, w}$) and $J_w$ are induced ideals of $\C[G]$, and so satisfy
$L_w = \C[G]\bigl(\pi_H(L_w) \bigr)$ and $J_w = \C[G] \bigl( \pi_H(J_w) \bigr)$
by Proposition~\ref{prop:inducedIdealCharacterisation}(ii).
When the order of $H$ is small compared to that of $G$, computing ideals of $\C[H]$
and hence induced ideals of $\C[G]$ is significantly more efficient than computing vector subspaces of $\C[G]$; it is in this sense that our algorithms become more powerful than those of \cite{RS1} and \cite{GL}, for instance Corollary~\ref{cor: algorithmic weak lumping test} which gave an algorithm for determining whether a general DTHMC $\MC(\alpha,P)$ lumps weakly under $f: A \to B$. Magma \cite{Magma} code that implements the two algorithms for calculating $L_w$ and~$J_w$
 is available as part of the arXiv version
of this paper.

\subsection{Weak lumping test for a weight}
\label{subsec: testWeight}
The characterisation of weak lumpability provided by Corollary \ref{cor: main testWeight} relies on $L_{w}$, the minimal Gurvits--Ledoux ideal for $w$. We construct it algorithmically as follows. 

Start with the left ideal $M_0 = \C[H]\eta_H$. 
Define inductively
\[
M_n := \pi_H\left(\C[G]M_{n-1} + \C[G]\bigoplus_{bH\in G/H}\pi_{bH}(M_{n-1}w)\right).
\]
Since $L_{w}$ is a left ideal and satisfies \ref{axiom L1} and \ref{axiom L2}, we have $\C[G]M_n\subseteq L_{w}$. By construction $M_n\supseteq M_{n-1}$ so the sequence $(\dim(M_n))_{\ge0}$ is increasing in $n$, takes integer values, and is bounded above by $\dim\C[G]$. So it must stabilise: there exists $N$ such that $M_n = M_N$ for all $n\ge N$.
In each step, $\C[G]M_n$ is a left ideal of $\C[G]$ containing $\eta_G$, and thus $\C[G]M_N$ is too. This shows that $\C[G]M_N$ satisfies \ref{axiom L0}. Since $M_N$ is by definition
an ideal of~$\C[H]$, the ideal $\C[G]M_N$ is an induced ideal, as required by 
\ref{axiom L2}.
Since $\C[G]M_N = \C[G]\pi_H(M_N)$, we have
\[
\C[G]\bigoplus_{bH\in G/H}\pi_{bH}(M_{N}w) \subseteq \C[G] M_{N}.
\]
This gives \ref{axiom L1}. 
Since $\C[G]M_n$ is contained in $L_{w}$, we obtain $L_{w} = \C[G] M_N$ by minimality.

\begin{example}\label{ex:LwConstruction}
We take $G = \Sym_4$ and $H = \Sym_{\{2,3,4\}}$. 
In \S\ref{subsec:shuffles} we showed that the 
weight \smash{$w = (1-\lambda)  \Id + \mfrac{\lambda}{3} \bigl(
 (1,4)(2,3) + (1,4,3) + (1,4,2,3) \bigr)$} defined in~\eqref{eq:w123}
lumps weakly to $G/H$ with stable ideal $\C[G]\eta_T$, where $T = \Sym_{\{2,3\}}$.
We now use  Corollary~\ref{cor:LTest} to find the left ideal $L_w$
when $0 < \lambda < 1$ and deduce that $w$ does not lump
stably for any proper subideal of $\C[G]\eta_T$. 
Following the construction above, we set
$M_0 = \C[H] \eta_H = \langle \eta_H \rangle$.
Calculation shows that the normalized
projections to the left cosets $H$, $(1,2)H$, $(1,3)H$ and $(1,4)H$ 
of $\eta_H w$ are
\begin{align*}
\pi_H(\eta_H w) &= \eta_H \\ 
\pi_{(1,2)H} (\eta_H w) &= \mfrac{1}{2}(1,4,2) + \mfrac{1}{2}(1,4,3,2) \\
\pi_{(1,3)H} (\eta_H w) &= \mfrac{1}{2}(1,4,3) + \mfrac{1}{2}(1,4,2,3) \\
\pi_{(1,4)H} (\eta_H w) &= \mfrac{1}{2} (1,4) + \mfrac{1}{2}(1,4)(2,3)
\end{align*} 
and the ideal of $\C[G]$ generated by the projections is $\C[G]\eta_T$.
Therefore $M_1 = \C[H]\eta_T$.
Since we know that $w$ lumps weakly to $G/H$ with stable ideal $\C[G]\eta_T$, 
it follows by minimality of $L_w$ that $L_w = \C[G]\eta_T$.
Alternatively this can be checked by calculating directly that $M_3 = M_2$.
Hence  the minimal Gurvits--Ledoux space $L_w$ is $\C[\Sym_4]\eta_T$. 
A very similar calculation shows that if 
$w' = \eta_T w$ as earlier then $L_{w'} = \C[\Sym_4]\eta_T$,
and so the stable lumping ideals found in the earlier example were minimal in both cases.
\end{example}

\subsection{Weak lumping test for a distribution}
\label{subsec: testDist}
Let $w\in\C[G]$ be an irreducible weight, and assume that it lumps weakly on left cosets of~$H$.
The set of distributions $\alpha$ such that $\MC(\alpha,w)$ lumps weakly on left cosets of $H$ is $J_w$ by Theorem~\ref{thm: main testDist}. In this section, we provide a practical computational procedure to compute $J_w$. It may be compared with the general algorithm that we gave after Corollary~\ref{cor: GL good initial distributions} for computing the maximal Gurvits--Ledoux space $V_{\max}(f,P)$ associated to an irreducible stochastic matrix that lumps weakly under $f: A \to B$.

Let $A_0 = \C[H]$. We have $J_w \subseteq \C[G]A_0$. 
Of the conditions defined at the start of \S\ref{subsec:minimalGLideal}, the left ideal
 $\C[G]A_0$ satisfies conditions \ref{axiom V1} and \ref{axiom V2}, but not necessarily condition \ref{axiom V3}.
Also, note $L_w \subseteq \C[G]A_0$.
Define inductively $B_n$ so that $B_n^\circ$ is the largest subspace of $A_n^\circ$ such that 
\[
\C[G]B_n^\circ w \subseteq \C[G]A_n^\circ.
\]
The largest such subspace is well defined, since this property is closed under sum. It is a left ideal 
of $\C[H]$ since this property is closed under left $\C[H]$-multiplication. Moreover, $L_w^\circ w \subseteq L_w^\circ \subseteq \C[G]A_n^\circ$ and thus $L_w$ is a subideal of $\C[G]B_n$.
Define $A_{n+1}$ so that $A_{n+1}^\circ$ is the largest subspace of $B_n^\circ$ such that 
\[
\big(\C[G]A_{n+1}^\circ \oplus \C[G]\eta_H\big) w \subseteq \C[G]B_n.
\]
Since $\C[G]\eta_H w \subseteq L_w w \subseteq L_w \subseteq \C[G]B_n$,
the largest such space is well defined, and it is a left ideal of $\C[H]$. 
We therefore have a sequence of nested left ideals of $\C[G]$ given by
\begin{equation}
    \label{eq:nesting}
\C[G]A_0 \supseteq \C[G]B_0 \supseteq \C[G]A_1 \supseteq \C[G]B_1 \supseteq \cdots,
\end{equation}
which is bounded below by $J_w$, since $J_w$ is defined to be the largest space satisfiying \ref{axiom V1}--\ref{axiom V3}. 
Therefore, the sequence stabilises: there exists $N$ such that $A_n = B_n = A_N$ for all $n\ge N$. Moreover, 
every term in  \eqref{eq:nesting} is an induced ideal, and hence so is $A_N$. This shows that $\C[G]A_N$ satisfies \ref{axiom V2}. By construction, 
\[
\C[G]B_N^\circ w \subseteq \C[G]A_N^\circ
\quad\text{and}\quad
\C[G]A_N w \subseteq \C[G]B_N,
\]
which implies that $\C[G]A_N = \C[G]B_N$ satisfies \ref{axiom V1} and \ref{axiom V3}. We conclude that $\C[G]A_N = J_w$ by maximality of $J_w$.

\begin{example}\label{ex:JwConstruction}
Again we use the example from \S\ref{subsec:shuffles}, taking
$G = \Sym_4$ and $H = \Sym_{\{2,3,4\}}$ and the weight $w$, now in the uniform case with $\lambda = \mfrac{3}{4}$,
so $w = \mfrac{1}{4} \bigl( \Id + (1,4)(2,3) + (1,4,3) + (1,4,2,3) \bigr)$.
Following the construction above we take $A_0 = \mathbb{Q}[H]$ and find using computer algebra
that $B_0^\circ$ is
the $3$-dimensional left ideal of $\C[H]$ generated
by $1 - (2,4) - (3,4) + \eta_T$, where $T = \langle (2,3) \rangle \le H$. 
Noting that $\eta_H(1 - (2,4) - (3,4)) = -\eta_H$, it follows that
$B_0$ is the $4$-dimensional left ideal of $\C[H]$ generated by $1 - (2,4) - (3,4)$.
A similar computer algebra calculation now show that $A_1^\circ$ is $2$-dimensional, spanned by
\begin{align*}
\id_{\Sym_4}\! -(2,4,3)\!+\! (2,3) \!-\! (2,4) &= 2\id_{\Sym_4} \!+ (2,3,4)\! +\! (3,4) + \!2(2,3) \!- \!6\eta_H, \\
(2,3,4) \!- \!(2,4,3) \!-\! (2,4)\! +\! (3,4) &= \id_{\Sym_4} \!+ 2(2,3,4) \!+ 2(3,4) \! + \!(2,3)\! - \!
6\eta_H .
\end{align*} 
It follows that $1 + (2,3) \in A_1$, and so $A_1$ is the $3$-dimensional ideal $\C[H]\eta_T$.
We know that $w$ lumps weakly to $G/H$ with this stable ideal, so the algorithm stabilises
at this point: $B_1^\circ = A_1^\circ$, $B_1 = A_1$, $A_2^\circ = A_1^\circ$ and $A_2 = A_1$.
(Again this may be verified by computer algebra.)
We conclude that $J_w = \C[G]\eta_T$ and, given the previous example, that in this case
the minimal and maximal Gurvits--Ledoux ideals $L_w$ and $J_w$ coincide.
If we instead take the strongly lumping weight $w' = \eta_T w$ then 
calculation shows that $B_0^\circ = \C[H](1-\eta_H)$ and $B_0 = \C[H]$, and now
it is immediate that $A_1^\circ = \C[H](1-\eta_H)$ and $A_1 = \C[H]$. Hence the algorithm
stabilises one step sooner and we obtain $J_w = \C[G]$, as expected from Proposition~\ref{prop:strongJwIsCG}(iii).
\end{example}

\section{The structure of the set of all weakly lumping weights}
\label{sec: all WL weights}

By Definition~\ref{defn:lumpsWeakly}, a weight $w$ lumps weakly to $G/H$ if $\MC(\alpha, w)$ lumps
weakly to $G/H$ for some initial distribution $\alpha$.
In~\eqref{eq:Theta} we defined the sets
\[   
\Theta(e) =  \bigl\{ w \in \C[G]: ew(1-e) = 0, (e-\eta_H)w\eta_H = 0 \bigr\}. 
\]
As we mentioned after Theorem~\ref{thm:mainGL}, it follows easily from this theorem
that  the set of weakly lumping weights
is $\Delta \cap \Theta$ 
where 
\begin{equation} \label{eq: union Theta}
\Theta = \bigcup_{e\in\Eb{H}} \Theta(e). 
\end{equation}
and $\Delta \subseteq \R[G]$ is the simplex of probability distributions.
In this section we begin by studying the sets $\Theta(e)$, which turn out to have remarkable
algebraic properties. In particular, they  are subalgebras of $\C[G]$.
 We compute the dimension of each $\Theta(e)$ 
 in Corollary \ref{cor:dimTheta}. We continue by considering when the union in~\eqref{eq: union Theta} 
is redundant, in the sense that one of these sets is contained in another. We then prove Proposition~\ref{prop: irredundant}.

\subsection{Weak lumping algebras}
\label{subsec: weak lumping algebras}

Let $e \in \Eb{H}$ be an idempotent, and let $L = \C[G]e$.
Recall that $\Theta(e)$ is the set of weights $w$ for which $L = \C[G]e$ is a weakly lumping Gurvits--Ledoux ideal in the sense of Definition~\ref{defn:GLideal}.
By Lemma~\ref{lemma:circ}(i) and (ii), 
    \begin{align*}
    \Theta(e) &= \{w\in\C[G] : Lw \subseteq L,\  L^\circ w \subseteq L^\circ\}\\
    &= \{w\in\C[G] :  ew(1-e) = 0, \ (e-\eta_H)w(1-e+\eta_H) = 0\}.
    \end{align*}
The set $\{w\in \C[G] : Lw \subseteq L\}$ is the right idealizer $\RId_{\C[G]}(L)$ of $L$. 
By Lemma \ref{lemma:RId},
    \begin{equation}
        \label{eq: Theta as intersection of idealizers}
    \Theta(e) = \RId_{\C[G]}(L)\cap\RId_{\C[G]}(L^\circ),
    \end{equation}
is a subalgebra of $\C[G]$.
We call it the 
\emph{weak lumping algebra of $e$}.

\begin{lemma}\label{lem:ThetaAlgebra}
    Let $e \in \Eb{H}$ be an idempotent.
    The weak lumping algebra of $e$ is a parabolic subalgebra of $\C[G]$. It satisfies 
    \[
    \Theta(e) = (\C[G](e-\eta_H) + (1-e)\C[G]) \oplus \eta_H\C[G]\eta_H.
    \]
\end{lemma}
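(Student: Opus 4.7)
The plan is to combine equation \eqref{eq: Theta as intersection of idealizers} with Lemma~\ref{lemma:RId} to get the parabolic structure, and then verify the explicit formula for $\Theta(e)$ by a direct computation in $\C[G]$ using the identities $e^2 = e$, $\eta_H e = \eta_H$, and (crucially) condition (iii) of Theorem~\ref{thm:mainGL}.

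First I would record the basic identities. Writing $e = \sum_{h \in H} c_h h \in \C[H]$, the assumption $\eta_H e = \eta_H$ forces $\sum_h c_h = 1$, and then $e\eta_H = \eta_H$ as well. From this it follows that $e - \eta_H$ is idempotent in $\C[G]$ with $(e-\eta_H)\eta_H = \eta_H(e-\eta_H) = 0$ and $(e-\eta_H)(1-e) = 0$, and by Lemma~\ref{lemma: ker Lambda is Ann etaH} we have $L^\circ = \C[G](e-\eta_H)$. Applying Lemma~\ref{lemma:RId} to both $L$ and $L^\circ$ and intersecting gives
\[
\Theta(e) = \bigl(\C[G]e + (1-e)\C[G]\bigr) \,\cap\, \bigl(\C[G](e-\eta_H) + (1-e+\eta_H)\C[G]\bigr).
\]
For the parabolic claim, I would pass to each Wedderburn block $\Mat(V)$ and choose a basis compatible with the decomposition $V = A_V \oplus B_V \oplus C_V$, where $A_V = \mathrm{im}\,\bar\eta_H$, $B_V = \mathrm{im}(\bar e - \bar \eta_H)$, and $C_V = \mathrm{im}(1-\bar e)$. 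The inclusion $\mathrm{im}\,\bar\eta_H \subseteq \mathrm{im}\,\bar e$ follows from $\bar e \bar\eta_H = \bar\eta_H = \bar\eta_H \bar e$. In this basis, Lemma~\ref{lemma:RId} shows that the block of $\RId(L)$ consists of matrices killing the $(A_V \oplus B_V, C_V)$ entries, while the block of $\RId(L^\circ)$ kills the $(B_V, A_V \oplus C_V)$ entries; the intersection therefore kills the $(B_V, A_V)$, $(A_V, C_V)$, and $(B_V, C_V)$ blocks. These matrices are precisely the stabiliser of the flag $V \supset A_V \oplus C_V \supset C_V \supset 0$, which is parabolic.

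For the explicit formula, the containment $\supseteq$ is a direct check of the defining identities $ew(1-e) = 0$ and $(e-\eta_H)w\eta_H = 0$ for $w$ of the three forms $x(e-\eta_H)$, $(1-e)x$, and $\eta_H x \eta_H$; each case reduces to one of the absorption identities above. For $\subseteq$, given $w \in \Theta(e)$ I would use the condition $ew(1-e) = 0$ (so $ew = ewe$) to write
\[
w = (1-e)w + ewe = (1-e)w + \bigl(ewe - \eta_H w \eta_H\bigr) + \eta_H w \eta_H.
\]
The first summand lies in $(1-e)\C[G]$ and the third in $\eta_H\C[G]\eta_H$ by inspection. For the middle summand, observe that $\C[G](e-\eta_H) = \{x \in \C[G]e : x\eta_H = 0\}$ (using $(e-\eta_H)e = e-\eta_H$ for one direction and $x = xe = x(e-\eta_H) + x\eta_H$ for the other); thus it suffices to verify $(ewe - \eta_H w\eta_H)\eta_H = 0$, which collapses via $e\eta_H = \eta_H^2 = \eta_H$ to the condition $ew\eta_H = \eta_H w \eta_H$ — precisely condition (iii) of Theorem~\ref{thm:mainGL}.

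Finally, directness of the sum: if $\eta_H x \eta_H$ lies in $(1-e)\C[G] + \C[G](e-\eta_H)$, write it as $(1-e)y + z(e-\eta_H)$. Left-multiplying by $e$ kills the first summand (using $e(1-e) = 0$ and $e\eta_H = \eta_H$), giving $\eta_H x \eta_H = ez(e-\eta_H)$; right-multiplying by $\eta_H$ and using $(e-\eta_H)\eta_H = 0$ yields $\eta_H x \eta_H = 0$. The main obstacle is just keeping track of which of the several idempotent identities is in play at each step; once the equivalence $ew\eta_H = \eta_H w \eta_H$ is identified as the content of condition (iii), the rest is bookkeeping.
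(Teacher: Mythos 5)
Your proof is correct, and it is a hybrid of the paper's route and a more hands-on one. For the parabolic claim you do essentially what the paper does: pass to Wedderburn blocks, diagonalise the orthogonal idempotents $e-\eta_H$, $\eta_H$, $1-e$ (your basis adapted to $V = A_V\oplus B_V\oplus C_V$ is exactly the paper's choice of Wedderburn isomorphism), apply Lemma~\ref{lemma:RId} to $L$ and to $L^\circ = \C[G](e-\eta_H)$, and identify the intersection blockwise as $\Stab\bigl(V\supset A_V\oplus C_V\supset C_V\supset 0\bigr)$; this is the paper's diagrammatic computation in flag language. Where you genuinely diverge is the explicit formula: the paper reads off $\Theta(e) = \bigl(\C[G](e-\eta_H)+(1-e)\C[G]\bigr)\oplus\eta_H\C[G]\eta_H$ from the same block pictures (``a linear algebra exercise''), whereas you prove it intrinsically in $\C[G]$ --- the containment $\supseteq$ by the absorption identities $(e-\eta_H)(1-e)=0$, $(e-\eta_H)\eta_H=0$, $e\eta_H=\eta_H$, the containment $\subseteq$ via the decomposition $w=(1-e)w+(ewe-\eta_H w\eta_H)+\eta_H w\eta_H$ combined with the characterisation $\C[G](e-\eta_H)=\{x\in\C[G]e : x\eta_H=0\}$ and the condition $(e-\eta_H)w\eta_H=0$, and directness by multiplying on the left by $e$ and on the right by $\eta_H$. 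This gives a coordinate-free verification of the formula, independent of any choice of Wedderburn isomorphism, at the cost of more bookkeeping; the paper's route gets both statements at once from a single picture.

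Two small points to tidy, neither a gap: when invoking your characterisation of $\C[G](e-\eta_H)$ you should record the one-line fact that $ewe-\eta_H w\eta_H$ does lie in $\C[G]e$ (it follows from $\eta_H e=\eta_H$); and ``condition (iii) of Theorem~\ref{thm:mainGL}'' is simply the second defining equation of $\Theta(e)$ in~\eqref{eq:Theta} --- if you instead start from the idealizer condition $(e-\eta_H)w(1-e+\eta_H)=0$, you should cite Lemma~\ref{lemma:circ}(iii), or note $(e-\eta_H)w(1-e)=(1-\eta_H)\,ew(1-e)=0$, to pass to $(e-\eta_H)w\eta_H=0$.
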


\begin{proof}
    Let $L = \C[G]e$ and consider $\Theta(e) = \RId_{\C[G]}(L) \cap \RId_{\C[G]}(L^\circ)$. 
    Since $e\in\Eb{H}$ there is an idempotent decomposition of the identity element of $\C[G]$:
    \[
    1 = (e - \eta_H) + \eta_H + (1 - e).
    \]
    Using  Proposition \ref{prop:Wedderburn} we may choose a Wedderburn isomorphism 
   such that, on  the block $\isoblock{V}$ corresponding 
   to the irreducible character $\chi_V \in \IrrC{G}$, the elements $e$ and $\eta_H$ are sent to the diagonal matrices
    \[
    e = \diag(
    \underbrace{1, ..., 1, 1, ..., 1}_{\langle\chi_{L},\chi_V\rangle} ,\,  0, ..., 0)
    \text{ and }
    \eta_H = \diag(\underbrace{0, ..., 0}_{\langle\chi_{L^\circ},\chi_V\rangle}\hskip-2pt, \underbrace{1, ..., 1}_{\langle\triv\uparrow_H^G,\chi_V\rangle} \hskip-3pt,\, 0, ..., 0).    
    \]
    We represent this diagrammatically by
    \[
    \includegraphics[page=14]{AllPictures.pdf}
    \]
    By Lemma \ref{lemma:RId}, both $\RId_{\C[G]}(L)$ and $\RId_{\C[G]}(L^\circ)$ are
    standard parabolics, and their intersection is again a standard parabolic:
    using~\eqref{eq: Theta as intersection of idealizers} the part of this intersection in
    the block $\Mat(V)$ is
    \[
    \Theta(e)\cap\isoblock{V} =\!\!\raisebox{-\mbaseline}{
    \includegraphics[page=15]{AllPictures.pdf}}
    \]
    The part of the right-hand side in the lemma in the block $\Mat(V)$ is
    \[ \big((\C[G](e - \eta_H) + (1-e)\C[G]) \oplus \eta_H\C[G]\eta_H\big) \cap \isoblock{V},\] 
    which diagrammatically becomes
    \[
    \includegraphics[page=16]{AllPictures.pdf}
    \]
    Since the diagrams agree, the lemma holds for the part of the weak lumping algebra $\Theta(e)$
    in the block $\Mat(V)$. The lemma follows by summing over all blocks in the Wedderburn decomposition.
\end{proof}

\begin{remark}\label{remark: Hecke in weak lumping algebras}
    A consequence of the formula of Lemma \ref{lem:ThetaAlgebra} is that the subalgebra $\eta_H\C[G]\eta_H$,
    shown diagrammatically by the second summand above, is a direct summand common to all weak lumping algebras. This subalgebra can be identified with the set of $H$-bi-invariant functions on $\C[G]$; this is the Hecke algebra
    seen in Theorem~\ref{thm:mainTransitionMatrices} and its proof in \S\ref{subsec:HeckeAlgebras}.
\end{remark}

\begin{corollary}\label{cor:dimTheta}
Let $e \in \Eb{H}$ be an idempotent and let $L = \C[G]e$.
For each irreducible character $\psi\in\IrrC{G}$, define $a_\psi = \langle \chi_{L^\circ}, \psi\rangle$,
$c_\psi = \langle \triv_H\Ind_H^G, \psi\rangle$, and
$d_\psi = \dim \psi = \psi(1)$.
Then,
\[
\dim \Theta(e) = \sum_{\psi\in\IrrC{G}} (
a_\psi^2 + a_\psi c_\psi + c_\psi^2 - a_\psi d_\psi - c_\psi d_\psi + d_\psi^2).
\]
\end{corollary}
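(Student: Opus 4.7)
The plan is to compute the dimension of $\Theta(e)$ block by block in the Wedderburn decomposition of $\C[G]$, using the explicit description from Lemma~\ref{lem:ThetaAlgebra}, namely
\[
\Theta(e) = \bigl(\C[G](e-\eta_H) + (1-e)\C[G]\bigr) \oplus \eta_H\C[G]\eta_H.
\]
The heart of the argument is the diagrammatic calculation already carried out in the proof of Lemma~\ref{lem:ThetaAlgebra}: fix an irreducible character $\psi = \chi_V \in \Irr(G)$, choose a Wedderburn isomorphism identifying the corresponding block with $\Mat_{d_\psi}(\C)$, and represent $e-\eta_H$, $\eta_H$ and $1-e$ as diagonal idempotents with supports on disjoint consecutive blocks of sizes $a_\psi$, $c_\psi$ and $b_\psi := d_\psi - a_\psi - c_\psi$ respectively. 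Here $a_\psi = \langle \chi_{L^\circ}, \psi \rangle$ is the multiplicity of $\psi$ in $L^\circ = \C[G](e-\eta_H)$ and $c_\psi = \langle \chi_{\C[G]\eta_H}, \psi \rangle = \langle \triv_H\Ind_H^G, \psi\rangle$ is the multiplicity of $\psi$ in $\C[G]\eta_H$ (the latter equality by Example~\ref{ex:permutationIdeal} and Frobenius reciprocity).

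In this block, right multiplication by $e - \eta_H$ kills all columns outside the first $a_\psi$, so $\C[G](e-\eta_H) \cap \Mat(V)$ consists of matrices supported on the first $a_\psi$ columns; dually, left multiplication by $1-e$ kills all rows outside the last $b_\psi$, so $(1-e)\C[G] \cap \Mat(V)$ consists of matrices supported on the last $b_\psi$ rows. Their sum is therefore the subspace of $\Mat_{d_\psi}(\C)$ consisting of matrices whose entries in the rectangular ``upper-right'' region (rows $1,\ldots,a_\psi+c_\psi$ and columns $a_\psi+1,\ldots,d_\psi$) vanish, a subspace of dimension
\[
d_\psi^2 - (a_\psi+c_\psi)(c_\psi+b_\psi) = d_\psi^2 - (a_\psi+c_\psi)(d_\psi - a_\psi).
\]
Meanwhile $\eta_H\C[G]\eta_H \cap \Mat(V)$ is the central $c_\psi \times c_\psi$ block, of dimension $c_\psi^2$, and this is disjoint from the previous subspace (its support lies entirely inside the forbidden upper-right rectangle). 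Hence the direct sum in Lemma~\ref{lem:ThetaAlgebra} intersected with $\Mat(V)$ has dimension
\[
d_\psi^2 - (a_\psi+c_\psi)(d_\psi - a_\psi) + c_\psi^2,
\]
which, after expansion, equals $a_\psi^2 + a_\psi c_\psi + c_\psi^2 - a_\psi d_\psi - c_\psi d_\psi + d_\psi^2$. Summing over $\psi \in \Irr(G)$ gives the claimed formula.

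The only two points that merit care are (a) verifying that one may simultaneously diagonalise $e$ and $\eta_H$ on each Wedderburn block in the manner described—this follows from the nested containment $\C[G]\eta_H \subseteq \C[G]e$ (consequence of $\eta_H e = \eta_H$) together with the short exact sequence $0 \to L^\circ \to L \xrightarrow{\cdot\eta_H} \C[G]\eta_H \to 0$, which gives $\chi_L = \chi_{L^\circ} + \chi_{\C[G]\eta_H}$ and so the block sizes $a_\psi$ and $c_\psi$ stack consecutively; and (b) checking that the intersection $\eta_H\C[G]\eta_H \cap (\C[G](e-\eta_H) + (1-e)\C[G])$ is trivial, which is immediate from the support considerations above. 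Neither step presents a real obstacle; the proof is essentially a bookkeeping exercise once the diagrammatic picture of Lemma~\ref{lem:ThetaAlgebra} is in place.
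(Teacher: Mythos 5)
Your proof is correct and takes essentially the same route as the paper: both fix the Wedderburn isomorphism from the proof of Lemma~\ref{lem:ThetaAlgebra} in which $e-\eta_H$, $\eta_H$, $1-e$ become diagonal idempotents with consecutive supports of sizes $a_\psi$, $c_\psi$, $d_\psi-a_\psi-c_\psi$, and then count dimensions block by block; your count via the decomposition $\bigl(\C[G](e-\eta_H)+(1-e)\C[G]\bigr)\oplus\eta_H\C[G]\eta_H$ yields $d_\psi^2-(a_\psi+c_\psi)(d_\psi-a_\psi)+c_\psi^2$, which is the same parabolic-shape count $a_\psi^2+(a_\psi+c_\psi)c_\psi+d_\psi(d_\psi-a_\psi-c_\psi)$ the paper reads off directly.
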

\begin{proof}
    Choose a Wedderburn isomorphism as in the proof of Theorem \ref{lem:ThetaAlgebra}.
    On each block $\isoblock{V}$ corresponding to the character $\psi \in \IrrC{G}$, the intersection $\Theta(e)\cap \isoblock{V}$ is mapped to the set of $d_\psi \times d_\psi$ matrices of the form
    \[
    \raisebox{-\mbaseline}{
    \includegraphics[page=17]{AllPictures.pdf}}.
    \]
    The dimension as a vector space is therefore
    \[
    a_\psi^2 + (a_\psi + c_\psi)c_\psi + d_\psi(d_\psi - a_\psi - c_\psi).\qedhere
    \]
\end{proof}

\begin{example}[Exact lumping]
\label{ex:exactThetaDim}
    By Proposition~\ref{prop:exactLwIsCGeta}, the Markov chain driven by $w$ lumps exactly if and only if $\C[G]\eta_H$ is a weakly lumping Gurvits--Ledoux ideal for $w$. 
    Equivalently, if and only if $w\in\Theta(\eta_H)$. Then, $\chi_{L} = \triv\ind_H^G$ is the induced trivial character of $H$. We have $a_\psi = 0$ for all $\psi\in\IrrC{G}$. Therefore,
\begin{align*}
\dim\Theta(\eta_H) &= \sum_{\psi\in\IrrC{G}}
\big(c_\psi^2 + d_\psi(d_\psi-c_\psi)).
\end{align*}
In terms of characters, letting $\phi_G$ denote the regular character of $G$, we can rewrite the above as
\[
\dim \Theta(\eta_H) = \langle \triv\Ind_H^G, \triv\Ind_H^G\rangle + \langle \phi_G, \phi_G-\triv\Ind_H^G\rangle
= |H\backslash G/ H| + |G| - \frac{|G|}{|H|},
\]  
where we used Lemma~\ref{lemma:MackeyTrivial} (Mackey's rule for the trivial character) 
to compute that $\langle \triv\Ind_H^G, \triv\Ind_H^G\rangle
= |H\backslash G/ H|$.
Corollary \ref{cor:strongExact} (proved in \S\ref{sec: time reversal} below)
gives another method for computing this dimension.
\end{example}
\begin{example}\label{eg: exact lumping algebra Sym3 Sym4}
    Let $H = \Sym_3$ and $G = \Sym_4$. Refer to Examples \ref{eg: Wedderburn iso Sym3} and \ref{eg: Wedderburn iso Sym4} for description of their respective irreducible representations and Wedderburn decompositions. We choose the Wedderburn isomorphism of $\C[G]$ as in Example \ref{eg: induction Sym3 to Sym4}
    so, in particular, the partitions labelling the blocks from top-left to bottom-right
    are $4$, $31$, $22$, $211$, $1111$.    
    We saw that in this chosen isomorphism,
    $L = S^{3} \ind_H^G$ corresponds to the submodule shown left
    below, and so $\RId(L)$ is as drawn right below:
   \[
     \raisebox{-\mbaselinemed}{\includegraphics[page=18]{AllPictures.pdf}},
     \hspace{6em}
     \raisebox{-\mbaselinemed}{\includegraphics[page=19]{AllPictures.pdf}}.
     \] 
    Since in this case $L^\circ = \varnothing$, we have $\RId(L) = \Theta(\eta_H)$.
    The weak lumping algebra $\Theta(\eta_H)$ is therefore
    of dimension $22 = 2 + 24 - 4$, as given by the formula of Example \ref{ex:exactThetaDim}.
    Note that this weak lumping algebra is a product of parabolic subalgebras,
    but, under this Wedderburn isomorphism, 
    the parabolic subalgebra for the block corresponding to the irreducible $S^{31}$ is 
    \emph{not} a standard parabolic. 
\end{example}

\subsection{Containment of weak lumping algebras}
\label{subsec: union of weak lumping algebras}

Whenever $H$ is non-abelian, the union \eqref{eq: union Theta} defining~$\Theta$ is over an uncountable set. We want to understand to what degree 
this expression is redundant. That is, for which idempotents $e$, $\tilde{e} \in \Eb{H}$ 
do we have $\Theta(e) \subseteq \Theta(\tilde{e})$?

The following definition defines Borel and parabolic subalgebras of the group algebra $\C[G]$.

\begin{definition}\label{defn:Borels}
Let $\C[G] \cong \bigoplus_{V \in \Irr(G)} \Mat(V)$ be a fixed Wedderburn isomorphism
of $\C[G]$. Fix an isomorphism of each $\Mat(V)$ with the matrix algebra $\Mat_{\dim V}(\C)$.
Under these isomorphisms:
\begin{defnlist}
\item The \emph{standard Borel} subalgebra of $\C[G]$ is the product of the standard Borel
subalgebras of lower triangular matrices in each factor $\Mat(V)$;
\item A subalgebra of $\C[G]$ is \emph{Borel} if it is conjugate by an element of $\C[G]^\times$ to the
standard Borel;
\item A subalgebra of $\C[G]$ is \emph{parabolic} if it is conjugate by an element of $\C[G]^\times$
to a subalgebra containing the standard Borel.
\end{defnlist}
\end{definition}

Recall that $\chi_V$ denotes the character of a representation $V$.

\begin{theorem}\label{thm: containment Thetas}
    Let $e$, $\tilde{e}\in\Eb{H}$ be idempotents. Set $L = \C[G]e$ and $\tilde{L} = \C[G]\tilde{e}$. %
    We have $\Theta(e) \subseteq \Theta(\tilde{e})$ if and only if for each irreducible 
    representation $V\in\Irr(G)$
    the following two conditions hold:

    \begin{thmlist}
        \item if $\langle\triv\Ind_H^G, \chi_V\rangle \ne 0$ then
        $\tilde{L}^\circ\cap \isoblock{V} = L^\circ\cap \isoblock{V} $;
        \item $\tilde{L}^\circ\cap \isoblock{V}$ is either $0$, or $L^\circ\cap \isoblock{V}$, 
        or $\isoblock{V}$.
    \end{thmlist}
\end{theorem}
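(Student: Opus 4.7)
The plan is to reduce $\Theta(e) \subseteq \Theta(\tilde e)$ to a block-by-block comparison of parabolic subalgebras via the Wedderburn decomposition. First, fix a Wedderburn isomorphism $\C[G] \cong \bigoplus_{V \in \Irr(G)} \Mat(V)$. Lemma~\ref{lem:ThetaAlgebra} exhibits each $\Theta(e)$ as a parabolic subalgebra that respects the Wedderburn blocks, so $\Theta(e) = \bigoplus_V (\Theta(e) \cap \Mat(V))$ and likewise for $\tilde e$. Hence the global containment holds if and only if $\Theta(e) \cap \Mat(V) \subseteq \Theta(\tilde e) \cap \Mat(V)$ for every irreducible $V$.

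In each block I would give a basis-free description of the parabolic. Centrality of $\eta_H$ in $\C[H]$ combined with $\eta_H e = \eta_H$ yields $e\eta_H = \eta_H$, so $e - \eta_H$ and $\eta_H$ are orthogonal idempotents summing to $e$. Let $B_V = \ker(e_V)$ and $C_V = \im(\eta_{H,V})$ inside the column-vector space $\C^{d_V}$. The standard identification of left ideals of $\Mat(V)$ with annihilators of subspaces gives
\[L \cap \Mat(V) = \{M : M B_V = 0\}, \qquad L^\circ \cap \Mat(V) = \{M : M(B_V \oplus C_V) = 0\},\]
so the identity $\Theta(e) = \RId_{\C[G]}(L) \cap \RId_{\C[G]}(L^\circ)$ from Lemma~\ref{lem:ThetaAlgebra} shows that $\Theta(e) \cap \Mat(V)$ is the parabolic subalgebra stabilising the flag $0 \subseteq B_V \subseteq B_V \oplus C_V \subseteq \C^{d_V}$. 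The analogous description holds for $\tilde e$, with the same $C_V$ since it depends only on $\eta_H$.

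The crux is then the basis-free analogue of Lemma~\ref{lemma:standardParabolic}: for the parabolic stabilising a flag $0 \subseteq F_1 \subseteq F_2 \subseteq W$, the only subspaces of $W$ it preserves are $0, F_1, F_2, W$ themselves (this follows by the same Borel orbit analysis used in the proof of Lemma~\ref{lemma:standardParabolic}, after conjugating to a standard form). Containment of parabolics is then characterised by the flag defining the larger lying inside the invariants of the smaller, so $\Theta(e) \cap \Mat(V) \subseteq \Theta(\tilde e) \cap \Mat(V)$ is equivalent to
\[\tilde B_V,\ \tilde B_V \oplus C_V \in \{0,\ B_V,\ B_V \oplus C_V,\ \C^{d_V}\}.\]

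Finally, I would translate this combinatorial condition into (i) and (ii) by case analysis on whether $c_V = \langle \triv\Ind_H^G, \chi_V \rangle$ vanishes. The hypothesis $\tilde e \in \Eb{H}$ forces $\tilde B_V \cap C_V = 0$, which I would derive from $\eta_{H,V}\tilde e_V = \tilde e_V \eta_{H,V} = \eta_{H,V}$. When $c_V \ne 0$ this intersection constraint rules out every option but $\tilde B_V = B_V$ (the remaining options either force $C_V = 0$ or collapse via $B_V = 0$ to $\tilde B_V = B_V$), yielding $\tilde L^\circ \cap \Mat(V) = L^\circ \cap \Mat(V)$ and so (i), with (ii) automatic. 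When $c_V = 0$ the flag collapses to $0 \subseteq B_V \subseteq \C^{d_V}$ with invariants $\{0, B_V, \C^{d_V}\}$, and $1 - \eta_H$ acts as the identity on $\Mat(V)$, so the three choices $\tilde B_V \in \{0, B_V, \C^{d_V}\}$ correspond respectively to $\tilde L^\circ \cap \Mat(V) \in \{\Mat(V), L^\circ \cap \Mat(V), 0\}$, matching (ii) exactly. The main obstacle is this last step: the interplay between the invariants of the smaller parabolic, the enforced disjointness $\tilde B_V \cap C_V = 0$, and the degenerate sub-cases (where $B_V$ or $A_V = \im(e_V - \eta_{H,V})$ vanishes) must be tracked carefully to confirm that the combinatorial condition distils precisely to the stated (i) and (ii).
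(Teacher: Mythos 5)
Your block-by-block strategy is sound and is genuinely different from the paper's route: the paper first shows that a containment forces $\Theta(e)$ and $\Theta(\tilde e)$ to share a Borel subalgebra, conjugates everything to standard form so that $L$, $L^\circ$, $\tilde L$, $\tilde L^\circ$ become initial column spans, and then applies a combinatorial lemma on containments of standard parabolics; you instead work basis-free inside each block with the flag $0\subseteq B_V\subseteq B_V\oplus C_V\subseteq \C^{d_V}$ and the fact that the invariant subspaces of a flag stabiliser are exactly the flag members. Your identifications $L\cap\isoblock{V}=\{M: MB_V=0\}$, $L^\circ\cap\isoblock{V}=\{M: M(B_V\oplus C_V)=0\}$ and $\Theta(e)\cap\isoblock{V}=\Stab\bigl(0\subseteq B_V\subseteq B_V\oplus C_V\subseteq \C^{d_V}\bigr)$ are correct, as is the criterion that containment on a block is equivalent to $\tilde B_V$ and $\tilde B_V\oplus C_V$ both lying in $\{0,\,B_V,\,B_V\oplus C_V,\,\C^{d_V}\}$; your case analysis in the direction ``containment implies (i) and (ii)'' also goes through.

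The gap is in the converse direction, on blocks with $C_V\neq 0$, exactly at the step you flag as the main obstacle. Condition (i) only gives $\tilde B_V\oplus C_V=B_V\oplus C_V$, and the disjointness $\tilde B_V\cap C_V=0$ that you extract from $\tilde e\in\Eb{H}$ does not determine $\tilde B_V$: every complement of $C_V$ inside $B_V\oplus C_V$ is disjoint from $C_V$, and a complement other than $B_V$ (when $B_V\neq 0$) is not invariant under $\Stab\bigl(0\subseteq B_V\subseteq B_V\oplus C_V\subseteq \C^{d_V}\bigr)$, so from (i), (ii) and disjointness alone you cannot conclude $\Theta(e)\cap\isoblock{V}\subseteq\Theta(\tilde e)\cap\isoblock{V}$. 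What is needed, and is available from the very relations you quote, is the stronger fact $\tilde B_V\subseteq\ker\eta_{H,V}$: if $\tilde e_V v=0$ then $\eta_{H,V}v=\eta_{H,V}\tilde e_V v=0$, since $\eta_H\tilde e=\eta_H$. Because $C_V=\im\eta_{H,V}$ and likewise $B_V\subseteq\ker\eta_{H,V}$, one has $(B_V\oplus C_V)\cap\ker\eta_{H,V}=B_V$, hence $\tilde B_V\subseteq B_V$, and a dimension count gives $\tilde B_V=B_V$. With that one observation inserted your case analysis closes both directions; the paper secures the same point inside the proof of Lemma~\ref{lemma:commonBorel}, where $\C[G]\eta_H\cap\isoblock{V}$ is recovered from the columns of $L\cap\isoblock{V}$ not lying in $L^\circ\cap\isoblock{V}$ before the standard-parabolic comparison is made.
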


The proof of this theorem will follow from four lemmas.
The first two are elementary results from geometric representation theory; we include proofs for completeness' sake.

We say a left ideal $L$ of $\Mat_d(\C)$ is an \emph{initial column span} if it is of the form $\Mat_d(\C)e$ for $e = \diag(1,...,1,0,...,0)$. That is, if we can write
\[
L = 
\raisebox{-\mbaseline}{\includegraphics[page=11]{AllPictures.pdf}}
.
\]
Let $B_d$ be the standard Borel subalgebra of lower triangular matrices in $\Mat_d(\C)$.

\begin{lemma}\label{lem: standard borel and column spans}
    If the right idealizer $\RId_{\Mat_d(\C)}(L)$ of a left ideal $L$ of $\Mat_d(\C)$ contains 
    $B_d$ then~$L$ is an initial column span. 
\end{lemma}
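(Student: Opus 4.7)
The plan is to encode $L$ via the kernel of a generating idempotent, translate the hypothesis $B_d \subseteq \RId_{\Mat_d(\C)}(L)$ into a stability condition on that kernel, and then classify subspaces of $\C^d$ stable under $B_d$.

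First, since $\Mat_d(\C)$ is semisimple (a special case of the argument recalled at the start of \S\ref{subsec:idempotents}), write $L = \Mat_d(\C)\,e$ for some idempotent $e$ and set $K = \ker e \subseteq \C^d$. A routine check shows $L = \{M \in \Mat_d(\C) : MK = 0\}$: one inclusion is immediate from $e|_K = 0$, and the reverse holds because $e$ acts as the identity on $\im e$ and as zero on $K$, so $N = Ne \in L$ for any $N$ vanishing on $K$. From this it follows that
\[
\RId_{\Mat_d(\C)}(L) = \{X \in \Mat_d(\C) : XK \subseteq K\},
\]
since $LX \subseteq L$ is equivalent to asking that $MXK = 0$ whenever $MK = 0$, and taking $M$ to be a projection with kernel $K$ reduces this to $XK \subseteq K$.

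Under this dictionary, the hypothesis becomes the statement that $K$ is stable under every lower triangular matrix. I plan to show directly that the only such subspaces of $\C^d$ are $K_r := \langle v_{r+1}, \ldots, v_d \rangle$ for $0 \le r \le d$, where $v_1, \ldots, v_d$ denote the standard basis vectors of $\C^d$. The key point is that $B_d$ contains every matrix unit $E_{ij}$ with $i \ge j$, and $E_{ij}$ acts by $u \mapsto u_j v_i$; so if $u \in K$ and $u_j \ne 0$, then $v_i \in K$ for every $i \ge j$. It follows that $S := \{j : v_j \in K\}$ is upward closed, hence $S = \{r+1, \ldots, d\}$ for a unique $r$; and a short induction on the smallest index of a nonzero coordinate of $u \in K$ then yields $K = \langle v_j : j \in S \rangle = K_r$.

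Finally, $K_r$ is the kernel of the diagonal idempotent $\varepsilon_0 := \diag(1, \ldots, 1, 0, \ldots, 0)$ having exactly $r$ ones, and since $L$ depends on $e$ only through $\ker e$, we conclude $L = \Mat_d(\C)\,\varepsilon_0$, which is by definition an initial column span. The only mild obstacle in this plan is the classification of $B_d$-stable subspaces in the third paragraph; but this is really just the elementary observation that lower triangular matrices stabilise the descending flag $K_0 \supset K_1 \supset \cdots \supset K_d = 0$, and conversely that the matrix units $E_{ij}$ with $i \ge j$ force any stable subspace to be one of its terms.
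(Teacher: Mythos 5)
Your proof is correct, and it takes a genuinely different route from the paper. You replace the idempotent $e$ by its kernel $K=\ker e$, observe that $L=\Mat_d(\C)e=\{M: M|_K=0\}$ and hence $\RId_{\Mat_d(\C)}(L)=\{X: XK\subseteq K\}$, and then classify the subspaces of $\C^d$ stable under all lower triangular matrices directly, using the matrix units $E_{ij}$ with $i\ge j$ to show that any such subspace is a term of the standard flag; this pins down $K$, and since $L$ depends on $e$ only through $\ker e$, it pins down $L$ as $\Mat_d(\C)\,\diag(1,\dots,1,0,\dots,0)$. (As you note, the final step needs no induction: if $u\in K$ has leading nonzero coordinate $j$ then $v_i\in K$ for all $i\ge j$, and $u$ is supported on those indices.) The paper instead conjugates $e$ to diagonal form by an invertible $y$, notes that $\RId_{\Mat_d(\C)}(L)$ and its conjugate are standard parabolics, and then invokes Proposition~\ref{prop:parabolicConjugacy} (conjugate standard parabolics are equal), Remark~\ref{remark: RId is normaliser} and Lemma~\ref{lemma:parabolicSelfNormalizing} to conclude that $y$ normalises $L$, so $L=yLy^{-1}$ is an initial column span. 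Your argument is more elementary and self-contained — it essentially re-derives, via matrix units rather than the orbit argument of Lemma~\ref{lemma:standardParabolic}, the classification of $B_d$-stable subspaces, and avoids the parabolic self-normalising machinery altogether — while the paper's proof is shorter given that this machinery has already been set up and is used elsewhere. Your appeal to semisimplicity for $L=\Mat_d(\C)e$ is harmless: the paper's averaging argument in \S\ref{subsec:idempotents} is stated for group algebras, but the corresponding fact for $\Mat_d(\C)$ is standard and the paper's own proof assumes it in exactly the same way.
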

\begin{proof}
Let $L = \Mat_d(\C)e$ for an idempotent $e$. By hypothesis, we have $B_d\subseteq\RId_{\Mat_d(\C)}(L)$.
Since $e$ is an idempotent, we can write 
\[
e = y^{-1}
\left(
    \begin{tikzpicture}[x = 0.9em, y = 0.9em, baseline = 2em]
    \tiny
    \node (1) at (1.25, 3.78) {${}_1$};
    \node (1) at (1.6, 3.43) {$.$};
    \node (1) at (1.75, 3.28) {$.$};
    \node (1) at (1.9, 3.13) {$.$};
    \node (1) at (2.25, 2.78) {${}_1$};
    \node (1) at (2.75, 2.28) {${}_0$};
    \node (1) at (3.05, 1.98) {$.$};
    \node (1) at (3.25, 1.78) {$.$};
    \node (1) at (3.4, 1.63) {$.$};
    \node (1) at (3.75, 1.28) {${}_0$};
    \end{tikzpicture}
\right)
y
\]
for some invertible $y\in\Mat_d(\C)$. We deduce $yLy^{-1}$ is an initial column span and that $\RId_{\Mat_d(\C)}(yLy^{-1}) = y\RId_{\Mat_d(\C)}(L) y^{-1}$ is a standard parabolic as in the proof of Lemma \ref{lemma:RId}. But
by Proposition~\ref{prop:parabolicConjugacy}, two standard parabolics are conjugate if and only if they are equal. Thus $y$ normalises $\RId_{\Mat_d(\C)}(L)$. Reasoning as in Remark \ref{remark: RId is normaliser} and using Lemma \ref{lemma:parabolicSelfNormalizing} we have
\begin{align*}
y &\in N_{\Mat_d(\C)^\times} \big(\RId_{\Mat_d(\C)}(L)\big) \\
&= \RId_{\Mat_d(\C)}(L) \cap \Mat_d(\C)^\times \\
&= N_{\Mat_d(\C)^\times}(L).
\end{align*}
Hence  $yLy^{-1} = L$ is an initial column span. 
\end{proof}

The second lemma is a combinatorial exercise. It is most natural after the following description of standard parabolics. 
The symmetric group $\Sym_d$ is generated by the
 simple transpositions $s_i = (i, i+1)$ for $i \in \{1,\ldots, d-1\}$.
We define the \emph{parabolic subgroup} of $\Sym_d$ indexed by a tuple 
$\mbf{a} = (a_1, \ldots, a_k)$, where $0 \le a_1 < \ldots \le a_k \le d$,
to be the subgroup of $\Sym_n$ generated by the~$s_i$ for 
$i \not\in \{a_1,\ldots,a_k\}$, and denote it by $\Sym_d(\mbf{a})$. 
Identifying $\Sym_d$ with the group of permutation matrices, we can write the standard parabolic
subalgebras of $\Mat_d(\C)$ as $P_d(\mbf{a}) = B_d\Sym_d(\mbf{a})B_d$.

\vspace{12pt} 
\begin{lemma}\label{lem: containment std parabolics}
    Fix $0\le c\le d$ and two parameters $0\le a, \tilde{a}\le d-c$. Let $b = a + c$ and $\tilde{b} = \tilde{a}+c$. Then, $P_d(a,b)\subseteq P_d(\tilde{a},\tilde{b})$ if and only if
\begin{thmlist}
    \item $\tilde{a} = 0$ and $c\in\{0,a,d\}$,
    \item $\tilde{a} = a$,
    \item $\tilde{a} = a+c$ and $2c = d-a$, or
    \item $\tilde{a} = d$ and $c=0$.
\end{thmlist}
\end{lemma}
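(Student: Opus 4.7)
The plan is to recast the lemma as an inclusion problem between two small finite sets. I will first identify each $P_d(a, a+c)$ as the stabiliser of a short flag of standard coordinate subspaces. A direct calculation shows
\[
    P_d(a, a+c)
    =
    \Stab\bigl(\C^d \supset C_{d-a} \supset C_{d-a-c} \supset 0\bigr),
\]
with the convention that any subspace equal to $0$ or $\C^d$ is dropped from the flag. Introducing
\[
    S(a,c) = \{d-a-c,\, d-a\} \cap \{1, 2, \ldots, d-1\},
\]
I would then verify (exactly as in the proof of Lemma~\ref{lemma:standardParabolic}) that the subspaces of $\C^d$ preserved by every element of $P_d(a,a+c)$ are $\{0, \C^d\} \cup \{C_k : k \in S(a,c)\}$: since $B_d \subseteq P_d(a,a+c)$, any such subspace is some $C_k$, and the block structure of the Young subgroup $\Sym_a \times \Sym_c \times \Sym_{d-a-c} \subseteq \Sym_d(a, a+c)$ restricts $k$ to $\{0, d-a-c, d-a, d\}$.

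The second step will reduce the lemma to a set inclusion. Since $B_d$ is contained in $P_d(a,a+c)$, the containment $P_d(a, a+c) \subseteq P_d(\tilde a, \tilde a + c)$ is equivalent to the requirement that every subspace in the defining flag of $P_d(\tilde a, \tilde a + c)$ be preserved by $P_d(a, a+c)$, and by the previous step this is in turn equivalent to $S(\tilde a, c) \subseteq S(a, c)$.

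The remainder of the argument is a finite case check organised by $|S(\tilde a, c)|$. When $|S(\tilde a, c)| = 0$, inclusion holds automatically, and this occurs precisely for the three triples $(\tilde a, c) \in \{(0,0),\, (0,d),\, (d,0)\}$, accounting for the subcases $c \in \{0, d\}$ of (i) and for (iv). When $|S(\tilde a, c)| = 2$, necessarily $0 < \tilde a$, $\tilde a + c < d$ and $c > 0$, and inclusion forces equality of the two-element sets; since $c > 0$ determines their ordering, this collapses to $\tilde a = a$, which is (ii). When $|S(\tilde a, c)| = 1$ there are three sub-cases: $\tilde a = 0$ with $0 < c < d$ yields $S(\tilde a, c) = \{d-c\}$, and a short calculation shows inclusion forces either $a = 0$, i.e.~(ii), or $a = c$, i.e.~(i) with $c = a$; $\tilde a = d - c$ with $0 < c < d$ yields $S(\tilde a, c) = \{c\}$, and inclusion forces either $a = d - c = \tilde a$ (which is (ii)) or $a = d - 2c$ (which is (iii)); the final sub-case $c = 0$ with $0 < \tilde a < d$ yields $S(\tilde a, c) = \{d - \tilde a\}$, whose only solution is $a = \tilde a$, again (ii).

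The hard part will be purely organisational: there are several degenerate tuples $(a, c)$ for which one or both elements of $\{d-a-c, d-a\}$ coincides with $0$ or $d$, and careful bookkeeping is needed to match each valid inclusion with exactly one of the (overlapping) conditions (i)--(iv) and verify that no case is missed. Once the translation to the set $S(a, c)$ is in place, however, no further conceptual input is required.
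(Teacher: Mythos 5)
Your proposal is correct and is essentially the paper's argument: the paper also reduces the containment $P_d(a,b)\subseteq P_d(\tilde a,\tilde b)$ to a containment of breakpoint sets, $\{0,a,a+c,d\}\supseteq\{0,\tilde a,\tilde a+c,d\}$ (your $S(a,c)$ is just the reflected version $k\mapsto d-k$ with the trivial endpoints removed), followed by a finite case check. The only differences are cosmetic: you spell out the stabiliser-of-flags justification that the paper asserts in one line via Lemma~\ref{lemma:standardParabolic}, and you organise the cases by $|S(\tilde a,c)|$ rather than by the value of $\tilde a$.
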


\begin{proof}
    We have the containment $P_{d}(a,b) \subseteq P_{d}(\tilde{a},\tilde{b})$ if and only if $\{0, a, b, d\} \supseteq \{0, \tilde{a}, \tilde{b}, d\}$, and so if and only if $\{0, a, a+c, d\} \supseteq \{0, \tilde{a}, \tilde{a} + c, d\}$.
\begin{itemize}
    \item If $\tilde{a} = 0$, then $\{0, a, a+c, d\} \supseteq \{0, c, d\}$ gives $c\in\{0,a,d\}$.
    \item If $\tilde{a} = a$, then the containment evidently holds.
    \item If $\tilde{a} = a+c$, then $\{0,a,a+c,d\} \supseteq \{0,a+c,a+2c,d\}$ gives either $c = 0$ (which gives $\tilde{a}=a$, as above) or $a+2c = d$. 
    \item If $\tilde{a} = d$, then $c = 0$.
    \qedhere
\end{itemize}
\end{proof}

\begin{lemma}\label{lemma:commonBorel}
    Let $e$, $\tilde{e}\in\Eb{H}$ be idempotents. Set $L = \C[G]e$ and $\tilde{L} = \C[G]\tilde{e}$. The
    parabolic subalgebras
    $\Theta(e)$ and $\Theta(\tilde{e})$ of $\C[G]$ share a common Borel if and only if for each irreducible 
    representation $V\in\Irr(G)$
    the following two conditions hold:
        \begin{itemize}
            \item if $\langle\triv_H\Ind_H^G, \chi_V\rangle \ne 0$ then
            $L^\circ\cap\isoblock{V} = \tilde{L}^\circ\cap\isoblock{V}$;
            \item either $L^\circ\cap\isoblock{V} \subseteq \tilde{L}^\circ\cap\isoblock{V}$ or $\tilde{L}^\circ\cap\isoblock{V} \subseteq L^\circ\cap\isoblock{V}$.
        \end{itemize}
\end{lemma}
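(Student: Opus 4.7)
The plan is to reduce to a block-by-block analysis in the Wedderburn decomposition. Since by Definition \ref{defn:Borels} every Borel subalgebra of $\C[G]$ is a product of Borel subalgebras on each block $\isoblock{V}$, the parabolic subalgebras $\Theta(e)$ and $\Theta(\tilde{e})$ share a common Borel in $\C[G]$ if and only if $\Theta(e) \cap \isoblock{V}$ and $\Theta(\tilde{e}) \cap \isoblock{V}$ share a common Borel in $\isoblock{V}$ for every $V \in \Irr(G)$. Fix such $V$ and let $\pi_V : \C[G] \to \isoblock{V}$ denote the Wedderburn projection. I will use the standard bijection between left ideals $I$ of $\isoblock{V}$ and subspaces $K \subseteq V$ via $I \mapsto \bigcap_{T \in I} \ker T$, with inverse $K \mapsto \{T : T|_K = 0\}$; under this bijection $\RId_{\isoblock{V}}(I) = \Stab(K)$, the subalgebra of operators preserving $K$.

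Applied to Lemma \ref{lem:ThetaAlgebra}, the bijection identifies $\Theta(e) \cap \isoblock{V}$ with $\Stab(K_V \subseteq K_V^\circ)$, where $K_V := \ker \pi_V(e)$ and $K_V^\circ := \ker \pi_V(e - \eta_H)$ correspond respectively to $L \cap \isoblock{V}$ and $L^\circ \cap \isoblock{V}$. Since $e - \eta_H$ and $\eta_H$ are orthogonal idempotents (using $e\eta_H = \eta_H e = \eta_H$), a short computation gives two key relations:
\[ K_V^\circ = K_V \oplus U_V \quad \text{and} \quad K_V = K_V^\circ \cap N_V, \]
where $U_V := \im \pi_V(\eta_H)$ and $N_V := \ker \pi_V(\eta_H)$. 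Crucially, both $U_V$ and $N_V$ depend only on $\eta_H$, not on the choice of $e \in \Eb{H}$, and $\dim U_V = c_V := \langle \triv_H \Ind_H^G, \chi_V \rangle$. The same relations hold for $\tilde{e}$ with the same $U_V$ and $N_V$.

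It is a standard fact that two parabolic subalgebras $\Stab(F_1), \Stab(F_2)$ of $\isoblock{V}$ share a common Borel if and only if $F_1 \cup F_2$ is totally ordered by inclusion (so that it extends to a complete flag); on each block the question thus reduces to whether $\{K_V, K_V^\circ, \tilde{K}_V, \tilde{K}_V^\circ\}$ forms a chain. If $c_V = 0$ then $U_V = 0$, so $K_V = K_V^\circ$ and $\tilde{K}_V = \tilde{K}_V^\circ$, and the chain condition becomes comparability of $K_V^\circ$ and $\tilde{K}_V^\circ$, matching the second bullet of the lemma (the first being vacuous).

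The main obstacle is the case $c_V > 0$, where I claim the chain condition is equivalent to $K_V^\circ = \tilde{K}_V^\circ$, matching the first bullet. For the forward direction, assume without loss of generality $K_V^\circ \subseteq \tilde{K}_V^\circ$ and consider the position of $\tilde{K}_V$: the subcase $K_V^\circ \subseteq \tilde{K}_V$ gives $U_V \subseteq K_V^\circ \subseteq \tilde{K}_V$, forcing $U_V \subseteq \tilde{K}_V \cap U_V = 0$ and contradicting $c_V > 0$; so $\tilde{K}_V \subseteq K_V^\circ = K_V \oplus U_V$, and by the chain $K_V$ and $\tilde{K}_V$ are also comparable. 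In either direction of comparability a direct-sum argument applies: writing any element of the larger subspace uniquely as $k + u$ with $k$ in the smaller subspace and $u \in U_V$, the element $u$ also lies in the larger subspace, hence in its trivial intersection with $U_V$, so $u = 0$. This yields $K_V = \tilde{K}_V$ and then $K_V^\circ = \tilde{K}_V^\circ$ by comparing dimensions. Conversely, if $K_V^\circ = \tilde{K}_V^\circ$, then the second key relation immediately gives $K_V = K_V^\circ \cap N_V = \tilde{K}_V^\circ \cap N_V = \tilde{K}_V$, so the full chain $K_V = \tilde{K}_V \subseteq K_V^\circ = \tilde{K}_V^\circ$ holds trivially. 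Translating back through the ideal-subspace bijection gives the lemma.
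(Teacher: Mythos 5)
Your proof is correct, and it reaches the paper's statement by a dual, basis-free route rather than the paper's coordinate one. The paper fixes a Wedderburn isomorphism adapted to the hypothesised common Borel and works with \emph{initial column spans} (Lemma \ref{lem: standard borel and column spans}), forcing $L^\circ\cap\isoblock{V} = \tilde{L}^\circ\cap\isoblock{V}$ when $\langle\triv_H\Ind_H^G,\chi_V\rangle\neq 0$ by ``recovering'' the $\C[G]\eta_H$-columns inside $L\cap\isoblock{V}$ and $\tilde{L}\cap\isoblock{V}$; you instead use the inclusion-reversing bijection between left ideals of $\isoblock{V}$ and subspaces of $V$, so that $\Theta(e)\cap\isoblock{V}$ becomes the stabiliser of the flag $K_V\subseteq K_V^\circ$ of kernels, and you invoke the flag-theoretic criterion that two parabolics share a Borel exactly when the union of their flags is totally ordered. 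The rigidity when $c_V>0$ is then an explicit direct-sum computation with the $e$-independent spaces $U_V=\im\pi_V(\eta_H)$ and $N_V=\ker\pi_V(\eta_H)$, which if anything makes the crux step (and the converse, where the paper asserts the simultaneous column-span form rather briskly) more transparent; what the paper's formulation buys is the column-span machinery that it reuses immediately in Lemma \ref{lemma:thetaContainment} and Theorem \ref{thm: containment Thetas}. Two small tidying points: the ``standard fact'' you quote needs the observation that the invariant subspaces of a Borel are exactly the members of its complete flag, which is the orbit argument of Lemma \ref{lemma:standardParabolic} transported by conjugation (together with \eqref{eq:parabolicConjugacy}), so a one-line justification or citation would make it self-contained within the paper's framework; and the identity you attribute to Lemma \ref{lem:ThetaAlgebra} is really \eqref{eq: Theta as intersection of idealizers} combined with Lemma \ref{lemma:RId}, though nothing in your argument depends on that distinction.
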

\begin{proof}
    Given an irreducible module $V\in\Irr(G)$ and its character $\psi\in\IrrC{G}$, define $a_\psi := \langle \chi_{L^\circ}, \psi\rangle$ as in Corollary \ref{cor:dimTheta}, and similarly let $\tilde{a}_\psi = \langle \chi_{\tilde{L}^\circ}, \psi\rangle$.

    Suppose $\Theta(e)$ and $\Theta(\tilde{e})$ share a common Borel subalgebra. Fix a Wedderburn 
    isomorphism sending this Borel to the standard Borel of $\bigoplus_V \Mat(V)$,
    in the sense of Definition~\ref{defn:Borels}. Then,  by Lemma \ref{lem: standard borel and column spans},
    each of $L$, $L^\circ$, $\tilde{L}$, and $\tilde{L}^\circ$ are sent to initial column spans on each Wedderburn block.
    In each Wedderburn block $\isoblock{V}$, we have
    \[
    \begin{cases}
        L^\circ\cap\isoblock{V} \subseteq \tilde{L}^\circ\cap\isoblock{V} & \text{if } a_\psi \le \tilde{a}_\psi,\\
        \tilde{L}^\circ\cap\isoblock{V} \subseteq L^\circ\cap\isoblock{V} & \text{if } a_\psi \ge \tilde{a}_\psi\\
    \end{cases}
    \]
    satisfying the second condition.

    Fix $V \in \Irr(G)$ with irreducible character $\psi$. 
    Suppose that $\langle\triv_H\Ind_H^G, \psi_V\rangle \ne 0$. Since $L = L^\circ \oplus \C[G]\eta_H$, we can recover $\C[G]\eta_H \cap\isoblock{V}$ as the span of the columns of $L\cap\isoblock{V}$ which are not in $L^\circ\cap\isoblock{V}$. (The relevant columns are the first $a_\psi$ columns
    in the diagram below, repeated from Corollary~\ref{cor:dimTheta}.)
    \[
    \raisebox{-\mbaseline}{
    \includegraphics[page=17]{AllPictures.pdf}}
    \]
    Similarly, we can recover $\C[G]\eta_H\cap\isoblock{V}$ as the span of those columns of $\tilde{L}\cap\isoblock{V}$ which are not in $\tilde{L}^\circ\cap\isoblock{V}$. We deduce 
    $a_\psi = \tilde{a}_\psi$ and therefore
    $L^\circ\cap\isoblock{V} = \tilde{L}^\circ\cap\isoblock{V}$ as required by the first condition.

    Conversely, suppose both conditions are satisfied. Then there is a Wedderburn decomposition sending $L$, $L^\circ$, $\tilde{L}$, and $\tilde{L}^\circ$ to initial column spans on each Wedderburn component. Under this Wedderburn isomorphism $\Theta(e)$ and $\Theta(\tilde{e})$ are standard, and they share the standard Borel
    subalgebra $B_d$. 
\end{proof}
\begin{lemma}\label{lemma:thetaContainment}
    Let $e$, $\tilde{e}\in\Eb{H}$ be idempotents. Set $L = \C[G]e$ and $\tilde{L} = \C[G]\tilde{e}$. 
    We have
    $\Theta(e)\subseteq\Theta(\tilde{e})$ if and only if
        \begin{itemize}
            \item $\Theta(e)$ and $\Theta(\tilde{e})$ share a common Borel, and
            \item $\tilde{L}^\circ\cap \isoblock{V}$ is either $0$, or $L^\circ\cap \isoblock{V}$, 
        or $\isoblock{V}$.
        \end{itemize}
\end{lemma}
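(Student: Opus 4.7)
The plan is to reduce the containment to a block-by-block analysis in the Wedderburn decomposition of $\C[G]$, using that both $\Theta(e)$ and $\Theta(\tilde e)$ are parabolic subalgebras by Lemma~\ref{lem:ThetaAlgebra}. Both algebras split as direct sums $\bigoplus_{V\in\Irr(G)} \Theta(e)\cap\isoblock{V}$ and $\bigoplus_{V\in\Irr(G)} \Theta(\tilde e)\cap\isoblock{V}$, so the containment $\Theta(e)\subseteq\Theta(\tilde e)$ holds if and only if it holds on each block. The two key tools are Lemma~\ref{lemma:commonBorel}, which characterises shared Borels, and Lemma~\ref{lem: containment std parabolics}, which characterises containment of standard parabolics of $\Mat_d(\C)$.

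For the forward direction, suppose $\Theta(e)\subseteq\Theta(\tilde e)$. Since $\Theta(e)$ is parabolic, it contains a Borel $B$ of $\C[G]$, and the chain $B\subseteq\Theta(e)\subseteq\Theta(\tilde e)$ combined with the parabolicity of $\Theta(\tilde e)$ gives that $B$ is a common Borel, verifying the first condition. Fix a Wedderburn isomorphism sending $B$ to the standard Borel, so that on each block $\isoblock{V}\cong\Mat_{d_\psi}(\C)$ both $\Theta(e)\cap\isoblock{V}$ and $\Theta(\tilde e)\cap\isoblock{V}$ are standard parabolics. On blocks with $\langle\triv_H\Ind_H^G,\chi_V\rangle\neq 0$, Lemma~\ref{lemma:commonBorel} applied to the shared Borel delivers $L^\circ\cap\isoblock{V}=\tilde L^\circ\cap\isoblock{V}$, which is the middle of the three options. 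On blocks with $\langle\triv_H\Ind_H^G,\chi_V\rangle=0$ we have $L\cap\isoblock{V}=L^\circ\cap\isoblock{V}$ and $\tilde L\cap\isoblock{V}=\tilde L^\circ\cap\isoblock{V}$; the block-wise containment is then a containment of standard parabolics treated by Lemma~\ref{lem: containment std parabolics} with $c=0$, which forces $\tilde a_\psi\in\{0,a_\psi,d_\psi\}$, i.e.\ $\tilde L^\circ\cap\isoblock{V}\in\{0,L^\circ\cap\isoblock{V},\isoblock{V}\}$.

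For the backward direction, assume both conditions. Work in the Wedderburn isomorphism sending the shared Borel to the standard one and verify the containment block-by-block. When $c_\psi\neq 0$, Lemma~\ref{lemma:commonBorel} gives $L^\circ\cap\isoblock{V}=\tilde L^\circ\cap\isoblock{V}$, and adding the common complement $\C[G]\eta_H\cap\isoblock{V}$ yields $L\cap\isoblock{V}=\tilde L\cap\isoblock{V}$; hence $\Theta(e)\cap\isoblock{V}=\Theta(\tilde e)\cap\isoblock{V}$ by the formula of Lemma~\ref{lem:ThetaAlgebra}. When $c_\psi=0$ the second condition handles the three cases directly: if $\tilde L^\circ\cap\isoblock{V}=0$ or $\isoblock{V}$ then $\tilde L\cap\isoblock{V}$ equals the same space (using $\tilde L=\tilde L^\circ$ on this block), and Lemma~\ref{lem:ThetaAlgebra} gives $\Theta(\tilde e)\cap\isoblock{V}=\isoblock{V}$, which trivially contains $\Theta(e)\cap\isoblock{V}$; if instead $\tilde L^\circ\cap\isoblock{V}=L^\circ\cap\isoblock{V}$ then $\tilde L\cap\isoblock{V}=L\cap\isoblock{V}$ and the two block algebras coincide. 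Summing the inclusions over blocks yields the global containment.

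The technical heart of the argument is the interplay between the shared Borel and the fixed placement of $\C[G]\eta_H$ in both decompositions $L=L^\circ\oplus \C[G]\eta_H$ and $\tilde L=\tilde L^\circ\oplus\C[G]\eta_H$: this rigidity is what rules out the skew alternative (case (iii) of Lemma~\ref{lem: containment std parabolics}) on blocks with $c_\psi>0$, forcing the two initial column spans $L^\circ\cap\isoblock{V}$ and $\tilde L^\circ\cap\isoblock{V}$ to have the same length rather than to differ by $c_\psi$. Fortunately this rigidity is already packaged into the strong equality conclusion of Lemma~\ref{lemma:commonBorel}, and once invoked the remaining bookkeeping is straightforward.
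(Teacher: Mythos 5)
Your proof is correct and follows essentially the same route as the paper: pass to a common Borel, use the Wedderburn blocks and Lemma~\ref{lem: standard borel and column spans} to make $L$, $L^\circ$, $\tilde L$, $\tilde L^\circ$ initial column spans, and then combine Lemma~\ref{lem: containment std parabolics} with Lemma~\ref{lemma:commonBorel}. The only (harmless) difference is bookkeeping: where the paper rules out the skew case $\tilde a_\psi = a_\psi + c_\psi$ by the direct contradiction $0 \neq \C[G]\eta_H \cap \Mat(V) \subseteq \tilde L^\circ$, you instead invoke the equality conclusion of Lemma~\ref{lemma:commonBorel} on blocks with $c_\psi \neq 0$ and apply Lemma~\ref{lem: containment std parabolics} only where $c_\psi = 0$.
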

\begin{proof}
    We use the notation $a_\psi, c_\psi, d_\psi$ as in Corollary \ref{cor:dimTheta}, and $\tilde{a}_\psi$ as above.
    Suppose that $\Theta(e)\subseteq\Theta(\tilde{e})$. In particular, both parabolic algebras share a common Borel subalgebra. Fix a Wedderburn 
    isomorphism sending this Borel to the standard Borel of $\bigoplus_V \Mat(V)$. Then $L$, $L^\circ$, $\tilde{L}$, and $\tilde{L}^\circ$ are sent to initial column spans on each Wedderburn component by Lemma \ref{lem: standard borel and column spans}.
    By Lemma \ref{lem: containment std parabolics}, we have $\Theta(e)\subseteq\Theta(\tilde{e})$ only if $\tilde{a}_\psi \in \{0, a_\psi, a_\psi+c_\psi, d_\psi\}$ for each $V\in\Irr(G)$ with character $\psi$. Note that if $c_\psi\ne0$ then the equality $\tilde{a}_\psi = a_\psi+c_\psi$ would imply
    \[
    0\ne \C[G]\eta_H\cap\isoblock{V} \subseteq L\cap\isoblock{V} = \tilde{L}^\circ\cap\isoblock{V},
    \]
    which is a contradiction. Thus $\tilde{L}^\circ\cap\isoblock{V}\in\{0, L^\circ\cap\isoblock{V}, \isoblock{V}\}$ for all $V\in\Irr(G)$.

    The converse is Lemma \ref{lem: containment std parabolics}.
\end{proof}

The proof of Theorem~\ref{thm: containment Thetas} is now almost immediate.

\begin{proof}[Proof of Theorem~\ref{thm: containment Thetas}]
Suppose the two hypotheses of this theorem hold. By 
the `if' direction of Lemma~\ref{lemma:commonBorel}, the hypotheses imply
that $\Theta(e)$ and $\Theta(\tilde{e})$ share a common Borel subalgebra.
This gives the first hypothesis needed for the `if' direction of Lemma~\ref{lemma:thetaContainment},
and the second is hypothesis (ii) in the theorem. Therefore
$\Theta(e) \subseteq \Theta(\tilde{e})$.
Conversely, if $\Theta(e) \subseteq \Theta(\tilde{e})$
then the `only if' direction of Lemma~\ref{lemma:thetaContainment}
implies that $\Theta(e) \subseteq \Theta(\tilde{e})$
share a common Borel subalgebra and so the `only if' direction
of Lemma~\ref{lemma:commonBorel} may be applied.
\end{proof}

\begin{example}\label{eg: weak lumping algebras Sym3 Sym4}
    Continuing Examples \ref{eg: Wedderburn iso Sym3}, \ref{eg: Wedderburn iso Sym4}, \ref{eg: induction Sym3 to Sym4}, and \ref{eg: exact lumping algebra Sym3 Sym4}
    we set $H = \Sym_3$ and $G = \Sym_4$. 
Recall that 
  $e_{111} = \frac{1}{6}\big(1 - (12) - (13) - (23) + (123) + (132)\big)$ 
    is the centrally primitive idempotent of $\C[H]$
    such that $\C[H]e$ is the sign representation $S^{111}$ of $H$.
    Set $e = \eta_H + e_{111}$ and $\tilde{e} = \eta_H$. Then
   $L = \C[G](\eta_H + e_{111})$ and $\tilde{L} = \C[G]\eta_H$. 
    Noting that $\triv_H = \chi^3$, Example~\ref{eg: induction Sym3 to Sym4}(1)
    gives that
\[ \triv_H \Ind_H^G = \triv_G  + \chi^{31}. \]
Thus the two representations relevant to hypothesis 
(i) of Theorem~\ref{thm: containment Thetas} are
the trivial representation and $S^{31}$,
and since 
  \[ L^\circ = \C[G]e_{111} \cong S^{111}\Ind_H^G \cong S^{211} \oplus S^{1111}\] 
 and $\tilde{L}^\circ = 0$, this hypothesis holds.
 Since $\tilde{L}^\circ = 0$, hypothesis (ii) of Theorem~\ref{thm: containment Thetas} is immediate.
Therefore
$\Theta(e)\subseteq\Theta(\tilde{e})$. Using the Wedderburn isomorphism
introduced in Example~\ref{eg: induction Sym3 to Sym4},
the algebra $\Theta(\tilde{e})$ was found in Example~\ref{eg: exact lumping algebra Sym3 Sym4};
the relevant diagram is repeated below.
\[
    \raisebox{-\mbaseline}{
    \includegraphics[page=19]{AllPictures.pdf}}
\]
By~\eqref{eq: Theta as intersection of idealizers}, we have
\[ \Theta(e) = \Theta(\eta_H) \cap \RId(S^{111}\Ind_H^G). \]
Since, as remarked in  Example~\ref{eg: induction Sym3 to Sym4}, our Wedderburn isomorphism
is chosen so that multiplication by the sign representation of $H = \Sym_3$ rotates diagrams
by a half-turn, the
containment $\Theta(e) \subset \Theta(\tilde{e})$ is represented diagrammatically as shown below:
    \[
    \raisebox{-\mbaselinemed}{\includegraphics[page=20]{AllPictures.pdf}}
    \, \subseteq\, 
    \raisebox{-\mbaselinemed}{\includegraphics[page=19]{AllPictures.pdf}}.
    \]
\end{example}

\begin{corollary}\label{cor:containmentImpliesEqual}
Let $e$, $\tilde{e} \in \Eb{H}$ be idempotents. Set $L = \C[G]e$ and $\tilde{L} = \C[G]\tilde{e}$.
Suppose that $L$  and $\tilde{L}$ 
are isomorphic as $\C[G]$-modules. Then, $\Theta(e) \subseteq \Theta(\tilde{e})$ if and only if $L = \tilde{L}$.
\end{corollary}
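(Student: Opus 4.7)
The ``if'' direction is trivial: by formula~\eqref{eq: Theta as intersection of idealizers}, $\Theta(e) = \RId_{\C[G]}(L) \cap \RId_{\C[G]}(L^\circ)$ depends only on $L$ (and hence on $\tilde L$ if $L = \tilde L$), not on the particular choice of idempotent generator. So if $L = \tilde L$ then $\Theta(e) = \Theta(\tilde e)$.

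For the converse, the plan is to decompose the ideals compatibly with the Wedderburn blocks and then combine the hypothesis $L \cong \tilde L$ with the two conditions provided by Theorem~\ref{thm: containment Thetas}. First I would note that since $e \in \Eb{H}$, we have $\eta_H e = \eta_H$, and a short computation (using that $e \in \C[H]$ has coefficient sum $1$) gives $e\eta_H = \eta_H$ as well. Hence $L\eta_H = \C[G]\eta_H$, and using the idempotent decomposition $1 = \eta_H + (1 - \eta_H)$ together with Lemma~\ref{lemma: ker Lambda is Ann etaH}, we obtain the internal direct sum
\[
L = \C[G]\eta_H \oplus L^\circ, \qquad \tilde L = \C[G]\eta_H \oplus \tilde L^\circ.
\]
Since $L \cong \tilde L$ as left $\C[G]$-modules, Krull--Schmidt gives $L^\circ \cong \tilde L^\circ$. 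In particular, $\dim (L^\circ \cap \Mat(V)) = \dim(\tilde L^\circ \cap \Mat(V))$ for every $V \in \Irr(G)$.

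Now fix $V \in \Irr(G)$ with character $\psi$. I would apply Theorem~\ref{thm: containment Thetas}, which gives that $\tilde L^\circ \cap \Mat(V) \in \{0,\, L^\circ \cap \Mat(V),\, \Mat(V)\}$, and moreover that $\tilde L^\circ \cap \Mat(V) = L^\circ \cap \Mat(V)$ whenever $\langle \triv_H \Ind_H^G, \psi \rangle \ne 0$. The goal is to rule out the two extreme possibilities. The case $\tilde L^\circ \cap \Mat(V) = 0$ forces $L^\circ \cap \Mat(V) = 0$ by equality of dimensions, so this is harmless. For the case $\tilde L^\circ \cap \Mat(V) = \Mat(V)$, observe that if $\langle \triv_H \Ind_H^G, \psi\rangle \ne 0$ then $\C[G]\eta_H \cap \Mat(V) \ne 0$, and since this subspace sits inside $\tilde L \cap \Mat(V)$ and is complementary to $\tilde L^\circ \cap \Mat(V)$ in $\tilde L \cap \Mat(V)$, it cannot coexist with $\tilde L^\circ \cap \Mat(V) = \Mat(V)$; so this case is impossible here. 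If instead $\langle \triv_H \Ind_H^G, \psi\rangle = 0$, then $\C[G]\eta_H \cap \Mat(V) = 0$, so $L \cap \Mat(V) = L^\circ \cap \Mat(V)$ and similarly for $\tilde L$; equality of dimensions then forces $L^\circ \cap \Mat(V) = \Mat(V)$ as well, returning us to the middle case.

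In every case therefore $\tilde L^\circ \cap \Mat(V) = L^\circ \cap \Mat(V)$, so summing over $V$ yields $\tilde L^\circ = L^\circ$, and then $\tilde L = \C[G]\eta_H \oplus \tilde L^\circ = \C[G]\eta_H \oplus L^\circ = L$, as required. I do not expect any real obstacle; the main thing to take care of is the bookkeeping between the two possible reasons ($\C[G]\eta_H$ being present, or dimensional equality from $L^\circ \cong \tilde L^\circ$) that exclude the ``$\Mat(V)$'' option in condition (ii) of Theorem~\ref{thm: containment Thetas}.
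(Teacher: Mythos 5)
Your proof is correct and follows essentially the same route as the paper: both arguments deduce from $L \cong \tilde{L}$ that $L^\circ$ and $\tilde{L}^\circ$ have equal dimensions on each Wedderburn block (the paper via equality of characters, you via the decomposition $L = \C[G]\eta_H \oplus L^\circ$ and Krull--Schmidt) and then apply Theorem~\ref{thm: containment Thetas} to force $L^\circ \cap \Mat(V) = \tilde{L}^\circ \cap \Mat(V)$ for every $V$, hence $L = \tilde{L}$. Your explicit case analysis merely spells out the step the paper compresses into ``Now Theorem~\ref{thm: containment Thetas} gives the result'' (and in the $\tilde{L}^\circ \cap \Mat(V) = \Mat(V)$ case the dimension count alone already suffices), so there is no substantive difference.
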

\begin{proof}
    If $L$ and $\tilde{L}$ are isomorphic $\C[G]$-modules, then so are $L^\circ$ and $\tilde{L}^\circ$. In particular, they have the same character $\chi_{L^\circ} = \chi_{\tilde{L}^\circ}$. We deduce
    \[
    \dim \bigl( L^\circ \cap \isoblock{V}\bigr) =
    \langle \chi_{L^\circ}, \chi_V\rangle =
    \langle \chi_{\tilde{L}^\circ}, \chi_V\rangle =
    \dim \bigl( \tilde{L}^\circ \cap \isoblock{V} \bigr)
    \]
    for every irreducible representation $V \in \Irr(G)$. Now Theorem \ref{thm: containment Thetas} gives the result.
\end{proof}

We remark that when $H$ is non-abelian, by the remark at the 
end of the first paragraph of \S\ref{subsec:idempotents}, there are infinitely many distinct idempotents
affording each representation of $\C[H]$, and so Corollary~\ref{cor:containmentImpliesEqual}
is a non-trivial result.

\subsection{Subgroups with full induction restriction}
\label{subsec: full induction restriction}
We say the union defining $\Theta$ is \textit{completely redundant} if $\Theta = \Theta(\eta_H)$,
or equivalently, by Theorem~\ref{thm:mainGL},
if for all  $e\in\Eb{H}$, we have $\Theta(e) \subseteq \Theta(\eta_H)$.
We say that the union is \emph{irredundant} if a containment 
$\Theta(e)\subseteq\Theta(\tilde{e})$
between weak lumping algebras implies $\C[G]e = \C[G]\tilde{e}$. 
(Compare Corollary~\ref{cor:containmentImpliesEqual}, that if $\C[G]e \cong \C[G]\tilde{e}$
then $\Theta(e) = \Theta(\tilde{e})$.)
These two probability-theoretic properties are 
at opposite ends of a spectrum. In this subsection, we show they are determined by group-theoretic
properties of $H$ and $G$.

We begin with complete redundancy,
for which we need a representation-theoretic characterisation of normality.

\begin{lemma}\label{lem: H is normal iff characters}
    The subgroup $H$ of $G$ is normal
    if and only if
    $\langle\chi,\triv\Ind_H^G\Res_H\rangle=0$ for all irreducible characters $\chi\in \Irr(H)$
    such that $\chi\ne \triv_H$.
\end{lemma}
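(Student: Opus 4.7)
The plan is to apply Mackey's rule for the trivial representation (Lemma~\ref{lemma:MackeyTrivial}) to decompose
\[
\triv_H\Ind_H^G\Res_H \cong \bigoplus_{x \in H\backslash G/H} \triv\Ind_{H \cap xHx^{-1}}^H,
\]
and then to observe that the vanishing of $\langle \chi, \triv\Ind_H^G\Res_H\rangle$ for every non-trivial irreducible $\chi \in \Irr(H)$ is equivalent to the assertion that $\triv\Ind_H^G\Res_H$ is a sum of copies of $\triv_H$. Since every Mackey summand is an honest character whose trivial-multiplicity is positive (by Frobenius reciprocity), this in turn is equivalent to each summand $\triv\Ind_{H \cap xHx^{-1}}^H$ being individually a multiple of $\triv_H$.

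The one auxiliary fact I would establish is that, for a subgroup $K \le H$, the induced character $\triv\Ind_K^H$ is a multiple of $\triv_H$ if and only if $K = H$. This is a two-line argument: the dimension of $\triv\Ind_K^H$ is $[H:K]$, while Frobenius reciprocity (Proposition~\ref{prop:FrobeniusReciprocity}) gives $\langle \triv_H, \triv\Ind_K^H\rangle_H = \langle \triv_H\Res_K, \triv_K\rangle_K = 1$. So if $\triv\Ind_K^H = m\triv_H$ then comparing dimensions forces $m = [H:K]$ and comparing trivial multiplicities forces $m = 1$, so $[H:K] = 1$.

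Applying this auxiliary fact with $K = H \cap xHx^{-1}$, the vanishing hypothesis yields $H \cap xHx^{-1} = H$ for every double coset representative $x$, hence $H \subseteq xHx^{-1}$, and since $|H| = |xHx^{-1}|$ we obtain $xHx^{-1} = H$ for every $x \in G$; this is precisely normality of $H$ in $G$. The reverse implication is immediate from the same Mackey decomposition: if $H$ is normal then each intersection $H \cap xHx^{-1}$ is $H$, so each summand is $\triv\Ind_H^H = \triv_H$. There is no serious obstacle; the proof is essentially an unwinding of Mackey's formula combined with the standard property of induced trivial characters noted above.
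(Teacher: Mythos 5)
Your proof is correct and follows essentially the same route as the paper: Mackey's rule for the trivial character (Lemma~\ref{lemma:MackeyTrivial}) combined with Frobenius reciprocity applied to each summand $\triv\Ind_{H\cap xHx^{-1}}^H$, where the paper phrases the converse contrapositively (if $H$ is not normal, some summand is induced from a proper subgroup and so contains a non-trivial constituent), which is exactly your auxiliary fact that $\triv\Ind_K^H$ is a multiple of $\triv_H$ only when $K=H$. The only hair-thin step worth spelling out is the passage from $H\cap xHx^{-1}=H$ for double coset \emph{representatives} to $xHx^{-1}=H$ for \emph{all} $x\in G$, which follows since $H\cap x'Hx'^{-1}$ is $H$-conjugate to $H\cap xHx^{-1}$ for $x'\in HxH$.
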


\begin{proof}
    The orbits of $H$ acting on the left on the set of left cosets $G / H$ correspond to double cosets $HxH$. The stabiliser in $H$ of $xH$ is $H \cap xHx^{-1}$. Thinking of $\triv\ind_H^G$ as the character of the permutation module (see Example~\ref{ex:permutationRepresentations}) 
    of $G$ acting on the left cosets $G/H$, the character-theoretic
    statement of Lemma~\ref{lemma:MackeyTrivial} (Mackey's Rule for the trivial character) is
\[ \triv\Ind_H^G \res_H = \sum_{x} \triv_{H \cap xHx^{-1}} \Ind^H \]
where the sum is over a set of representatives for the double cosets $H \backslash G / H$.
If $H$ is normal in $G$ then the right hand side is $[G:H] \triv_H$, as required. Otherwise there exists $x$ such that $H \cap xHx^{-1} \not =H$, and since $\langle \triv \Ind_{H \cap xHx^{-1}}^H, \triv_H \rangle = 1$, there exists a non-trivial irreducible character~$\chi$ of $H$ in the right-hand side.
\end{proof}

\begin{proposition}\label{prop: H is normal iff redundant}
    The subgroup $H$ of 
    $G$ is normal if and only if the union defining $\Theta$ is completely redundant.
\end{proposition}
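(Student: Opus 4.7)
The plan is to apply Theorem~\ref{thm: containment Thetas} with $\tilde{e} = \eta_H$, translate the resulting condition into character theory, and match it to the criterion of Lemma~\ref{lem: H is normal iff characters}. Since $\tilde{L} = \C[G]\eta_H$ has $\tilde{L}^\circ = \C[G]\eta_H(1-\eta_H) = 0$, condition (ii) of Theorem~\ref{thm: containment Thetas} is automatic and condition (i) reads: for every $V \in \Irr(G)$ with $\langle \triv_H\Ind_H^G, \chi_V\rangle \neq 0$, we have $L^\circ \cap \Mat(V) = 0$, where $L = \C[G]e$. Hence the union defining $\Theta$ is completely redundant if and only if this holds for every idempotent $e \in \Eb{H}$.

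The next step is to compute $\chi_{L^\circ}$ in terms of $e$. Since $\eta_H$ is central in $\C[H]$ and $e\eta_H = \eta_H e = \eta_H$ (using $e \in \Eb{H}$), we get the $\C[H]$-module decomposition
\[
\C[H]e = \C[H]\eta_H \oplus \C[H]e(1-\eta_H),
\]
so $\chi_{\C[H]e(1-\eta_H)} = \chi_{\C[H]e} - \triv_H$. Because $L \cong \C[H]e\Ind_H^G$ is an induced ideal and $L^\circ = L(1-\eta_H)$, we obtain $\chi_{L^\circ} = (\chi_{\C[H]e} - \triv_H)\Ind_H^G$. By Frobenius reciprocity, $L^\circ \cap \Mat(V) = 0$ is equivalent to
\[
\bigl\langle \chi_V \Res_H, \chi_{\C[H]e} - \triv_H \bigr\rangle_H = 0.
\]

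To prove the forward direction (normal $\implies$ redundant), assume $H$ is normal and let $V$ be an irreducible constituent of $\triv_H\Ind_H^G$. Then $\chi_V\Res_H$ is a multiple of $\triv_H$ by Lemma~\ref{lem: H is normal iff characters} applied after expanding $\triv_H\Ind_H^G\Res_H$ as a sum of $\chi_V\Res_H$ weighted by $\langle \triv_H, \chi_V\Res_H\rangle$, so the inner product above vanishes as $\chi_{\C[H]e} - \triv_H$ has no $\triv_H$-component. For the reverse direction, given any non-trivial irreducible character $\chi$ of $H$, take $e = \eta_H + e_\chi$ where $e_\chi$ is the central primitive idempotent of $\C[H]$ corresponding to $\chi$; orthogonality of central idempotents gives $\eta_H e_\chi = 0$, so $e$ is an idempotent in $\Eb{H}$ with $\chi_{\C[H]e} - \triv_H = \chi$. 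Complete redundancy then forces $\langle \chi_V\Res_H, \chi\rangle_H = 0$ for every $V$ with $\langle \triv_H, \chi_V\Res_H\rangle \neq 0$ and every non-trivial $\chi \in \Irr(H)$. Summing this vanishing with weights $\langle \triv_H, \chi_V\Res_H\rangle$ over all $V$ yields $\langle \chi, \triv_H\Ind_H^G \Res_H\rangle_H = 0$ for every non-trivial $\chi \in \Irr(H)$, which by Lemma~\ref{lem: H is normal iff characters} means $H \trianglelefteq G$.

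The main subtlety to handle carefully is step~5: verifying that by ranging $e$ over $\Eb{H}$ one can recover \emph{every} non-trivial irreducible character of $H$ in $\chi_{\C[H]e} - \triv_H$, so that redundancy really forces all the relevant inner products to vanish. The choice $e = \eta_H + e_\chi$ resolves this cleanly since $e_\chi \in \C[H](1-\eta_H)$ for non-trivial $\chi$; no further subtlety is expected.
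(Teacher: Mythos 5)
Your proof is correct, and your reverse direction is essentially the paper's argument in contrapositive form: both invoke Theorem~\ref{thm: containment Thetas} with $\tilde e = \eta_H$ (so that $\tilde L^\circ = 0$ kills condition (ii)) together with Lemma~\ref{lem: H is normal iff characters}, the only difference being that the paper takes a left ideal $U$ of $\C[H]$ with character $\triv_H + \chi$, whereas you take $e = \eta_H + e_\chi$ with $e_\chi$ centrally primitive. Note that with your choice $\chi_{\C[H]e}-\triv_H = \chi(1)\chi$ rather than $\chi$ when $\chi(1)>1$; this is harmless, since $\langle \chi_V\Res_H, \chi(1)\chi\rangle = 0$ if and only if $\langle \chi_V\Res_H, \chi\rangle = 0$, but the equality as you stated it is not literally true for non-abelian $H$. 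Where you genuinely diverge is the forward direction. The paper observes that normality makes $\eta_H$ central in $\C[G]$, so $\Theta(\eta_H) = \RId_{\C[G]}(\C[G]\eta_H) = \C[G]$, which contains every $\Theta(e)$ with no further work. You instead run the containment criterion: you reduce to showing $\langle \chi_V\Res_H, \chi_{\C[H]e}-\triv_H\rangle = 0$ for every constituent $V$ of $\triv_H\Ind_H^G$, and you get this because for normal $H$ each such $\chi_V\Res_H$ is a multiple of $\triv_H$ (your expansion argument works since all multiplicities are non-negative), while $\chi_{\C[H]e}-\triv_H$ is the character of $\C[H]e(1-\eta_H) \subseteq \C[H](1-\eta_H)$ and so has no trivial constituent. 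Both are valid; the paper's centrality argument is shorter and avoids character theory entirely in that direction, while yours is more uniform, treating both implications through the same character-theoretic translation of Theorem~\ref{thm: containment Thetas} and thereby making the match with Lemma~\ref{lem: H is normal iff characters} completely explicit.
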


\begin{proof}
 By~\eqref{eq: Theta as intersection of idealizers} we have
    \[
    \Theta(\eta_H) =
        \RId_{\C[G]}(\C[G]\eta_H) \cap \RId_{\C[G]}(0)\\
        = \RId_{\C[G]}(\C[G]\eta_H).
    \]
    Suppose that $H$ is normal in $G$. Then $\eta_H$ is central in $\C[G]$ (i.e.~$\eta_H x = x\eta_H$
    for all $x \in \C[G]$) and so 
    $\RId_{\C[G]}(\C[G]\eta_H) = \C[G]$. Clearly $\C[G]$ contains every weak lumping algebra $\Theta(e)$.
    
    Conversely, suppose that $H$ is not normal in $G$. There exists $\chi\in \Irr(H)$
    with $\chi \ne \triv_H$ such that $\langle\chi,\triv_H\Ind_H^G\Res_H\rangle \ne 0$. Let $U$ be a left ideal of $\C[H]$ whose character (as a $\C[H]$-module) is $\chi+\triv_H$, and let $L = \C[G]U$. By Theorem \ref{thm: containment Thetas}, we deduce
    that
$\Theta(e) \not\subseteq \Theta(\eta_H)$.
\end{proof}

We now consider the other end of the spectrum where the union defining~$\Theta$ is irredundant,
proving Proposition~\ref{prop: irredundant}.
The following definition is required.

\begin{definition}\label{def: full induction restriction}
   We say that the subgroup $H$ of $G$ has \emph{full induction restriction in $G$} if the restriction to $H$ of the permutation character of $G$ acting on the cosets of $H$ contains every irreducible character of $H$. That is, if $\langle\chi,\triv_H\Ind_H^G\Res_H\rangle \ne 0$ for all $\chi \in \Irr(H)$.
\end{definition}

When the group $G$ is clear from context we abbreviate this to `$H$ has full induction restriction'.
We use the following lemma, which may be regarded
as a form of Frobenius reciprocity (see Proposition~\ref{prop:FrobeniusReciprocity})
for left ideals of group algebras.

\begin{lemma}\label{lem: frobenius for isotypic blocks}
    Let $U$ and $\tilde{U}$ be left ideals of $\C[H]$. Let $V$ be an irreducible representation of $G$ and let $\isoblock{V}$ denote its Wedderburn block.
    Then, 
   \[ \C[G]U\cap \isoblock{V} = \C[G]\tilde{U}\cap \isoblock{V}\]
    if and only if 
    \[
    U\cap \pi_H\bigl( \isoblock{V}\bigr) = \tilde{U}\cap \pi_H\bigl(\isoblock{V}\bigr).\]
\end{lemma}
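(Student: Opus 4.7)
The plan is to prove the equivalence by first identifying $\pi_H(\isoblock{V})$ explicitly as $\C[H]f_V$ for a specific central idempotent $f_V \in \C[H]$, and then reducing both sides of the iff to the identity $Uf_V = \tilde{U}f_V$.

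First I would observe that $\pi_H \colon \C[G] \to \C[H]$ is a $(\C[H],\C[H])$-bimodule map, since both left and right multiplication by any $h \in H$ preserves the coset $H$ setwise. Consequently $\pi_H(\isoblock{V})$ is a two-sided ideal of $\C[H]$, and so has the form $\C[H]f_V$ for a unique central idempotent $f_V$. I would then identify $f_V = \sum_{\psi \in S_V} e_\psi$, where $S_V = \{\psi \in \Irr(H) : \langle \chi_V \Res_H, \psi\rangle \ne 0\}$ and $e_\psi$ is the centrally primitive idempotent for $\psi$. For one containment, $\isoblock{V} \Res_H \cong (V \Res_H)^{\oplus \dim V}$ as a left $\C[H]$-module, so the image of $\pi_H$ cannot contain any $\psi$-isotypic component with $\psi \notin S_V$. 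For the reverse, I would compute $\zeta_V := \pi_H(e_V)$ from the character formula \eqref{eq:centralPrimitiveIdempotent} for $e_V$: this element lies in the centre of $\C[H]$ with strictly positive coefficient at each $e_\psi$ for $\psi \in S_V$ and zero coefficient elsewhere. Hence $\zeta_V$ generates the two-sided ideal $\C[H] f_V$ and is invertible in the subalgebra $\C[H] f_V$.

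Since $f_V$ is central in $\C[H]$, $U \cap \pi_H(\isoblock{V}) = Uf_V$ for any left ideal $U$ of $\C[H]$, so the right-hand side of the equivalence becomes $U f_V = \tilde{U} f_V$. The key intermediate identity to establish is
\[
\pi_H\bigl(\C[G]U \cap \isoblock{V}\bigr) = U f_V.
\]
The containment $\subseteq$ is immediate from $\pi_H(\C[G]U) = U$ — because $\C[G]U = \bigoplus_{b \in G/H} bU$ and $\pi_H$ projects onto the $H$-summand — together with $\pi_H(\isoblock{V}) = \C[H] f_V$. For the reverse containment, which is the main technical step, I would argue constructively: given $u \in U f_V$, the element $x := (u \zeta_V^{-1}) e_V$ lies in $\C[G]U \cap \isoblock{V}$, since $\zeta_V^{-1}$ is central in $\C[H]$ so that $u \zeta_V^{-1}$ still belongs to $U$, and $\isoblock{V}$ is a two-sided ideal containing $e_V$. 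A direct calculation using the central-idempotent formula then yields $\pi_H(x) = u \zeta_V^{-1} \zeta_V = u f_V = u$.

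With this identity in hand, the $(\Rightarrow)$ direction of the lemma is immediate upon applying $\pi_H$ to the hypothesis. For $(\Leftarrow)$, I would observe that $\C[G](U f_V) \subseteq \C[G] U$ gives $\C[G](U f_V) \cap \isoblock{V} \subseteq \C[G] U \cap \isoblock{V}$, and Frobenius reciprocity together with the definition of $f_V$ shows that both subspaces have the same dimension $\dim V \cdot \langle \chi_U, \chi_V \Res_H\rangle$, since $U(1 - f_V)$ has no constituent in $S_V$ and hence contributes nothing to the inner product. Consequently the inclusion is an equality, and the same applies to $\tilde{U}$, so $U f_V = \tilde{U} f_V$ implies $\C[G]U \cap \isoblock{V} = \C[G]\tilde{U} \cap \isoblock{V}$. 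The main obstacle will be the explicit identification of $\zeta_V$ and the verification of its invertibility on the blocks indexed by $S_V$; once this is in place, the remainder of the argument is short.
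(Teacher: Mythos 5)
Your proof is correct, but it takes a genuinely different route from the paper. The paper's proof is a short semisimplicity argument: it splits $U = (U \cap \tilde{U}) \oplus X$ and $\tilde{U} = (U\cap\tilde{U})\oplus \tilde{X}$ by complete reducibility, deduces from the hypothesis that $\C[G]X \cap \isoblock{V} = \C[G]\tilde{X}\cap\isoblock{V} = 0$, and then uses Frobenius reciprocity to conclude $X \cap \pi_H(\isoblock{V}) = \tilde{X}\cap\pi_H(\isoblock{V}) = 0$; the converse is argued symmetrically. You instead compute the object $\pi_H(\isoblock{V})$ explicitly as $\C[H]f_V$ via the element $\zeta_V = \pi_H(e_V)$, whose coefficients on the blocks $e_\psi$, $\psi \in S_V$, are strictly positive, and then prove the stronger intermediate identity $\pi_H\bigl(\C[G]U\cap\isoblock{V}\bigr) = Uf_V$ together with $\C[G](Uf_V)\cap\isoblock{V} = \C[G]U\cap\isoblock{V}$ (by the inclusion plus a dimension count with Frobenius reciprocity). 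This buys more than the lemma: it shows that each side of the equivalence is \emph{determined} by $Uf_V$, giving an explicit two-way correspondence rather than just the stated iff, at the cost of the extra computation of $\zeta_V$ and its invertibility on $\C[H]f_V$. The paper's argument is shorter and needs no explicit idempotents, but yields only the equivalence. One small point to make explicit in your write-up: the membership $x = (u\zeta_V^{-1})e_V \in \C[G]U$ uses that $e_V$ is \emph{central} in $\C[G]$ (so $x = e_V\,u\zeta_V^{-1} \in \C[G]U$); as stated, your justification only addresses $u\zeta_V^{-1}\in U$ and $x \in \isoblock{V}$. Likewise the computation $\pi_H(x) = u\zeta_V^{-1}\zeta_V$ relies on the left $\C[H]$-linearity of $\pi_H$, which you did establish at the outset, so that step is fine.
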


\begin{proof}
The intersection $U \,\cap\, \tilde{U}$ is a left ideal of $\C[H]$. 
Since $\C[H]$ is completely reducible, it has a complement in $U$, call it $X$, 
and also a complement in $\tilde{U}$, call it $\tilde{X}$.
Thus
\[  U = (U \cap \tilde{U}) \oplus X
\quad\text{ and }\quad
   \tilde{U} = (U \cap \tilde{U}) \oplus \tilde{X},\]
where all the summands are left ideals of $\C[H]$.
It follows that
$ U + \tilde{U} = (U\cap \tilde{U}) \oplus X \oplus \tilde{X}$. 
We have
\begin{align*} \C[G]U \cap\isoblock{V}  & =   \big(\C[G](U \cap \tilde{U})\cap\isoblock{V}\big) \oplus \big(\C[G]X\cap\isoblock{V}\big)\\
=\,  \C[G]\tilde{U}\cap\isoblock{V}  & =   \big(\C[G](U \cap \tilde{U})\cap\isoblock{V}\big) \oplus \big(\C[G]\tilde{X}\cap\isoblock{V}\big)
\end{align*}
and
\[ \begin{split} \C[G](U + \tilde{U})\cap\isoblock{V} &= \big(\C[G](U \cap \tilde{U})\cap\isoblock{V}\big)\\[-1pt]  &\qquad\oplus \big(\C[G]X\cap\isoblock{V}\big) \oplus \big(\C[G]\tilde{X}\cap\isoblock{V}\big).
\end{split} \]
But for any vector subspaces $A, B, C$ of a common ambient vector space, if $A \oplus B = A \oplus C$ and $A \oplus B \oplus C$ is a direct sum then $B = C = \{0\}$. We deduce that 
$$\C[G]X\cap\isoblock{V} = 0 = \C[G]\tilde{X}\cap\isoblock{V}.$$
Therefore, $X\cap \pi_H\bigl( \isoblock{V}\bigr) = X'\cap\pi_H\bigl(\isoblock{V}\bigr) = 0$ by Frobenius reciprocity.
Since \smash{$\pi_H\bigl(\isoblock{V}\bigr)$} is a direct sum of full Wedderburn components of $\C[H]$, and since projection to full Wedderburn components respects direct sums of left-modules,
we conclude that
\[
U\cap\pi_H\bigl(\isoblock{V}\bigr) = \tilde{U}\cap\pi_H\bigl(\isoblock{V}\bigr) = (U\cap \tilde{U})\cap\pi_H\bigl( \isoblock{V}\bigr). 
\]
The converse is shown similarly. Indeed, from $U\cap \pi_H(\isoblock{V}) = \tilde{U}\cap \pi_H(\isoblock{V})$ we deduce $X\cap \pi_H(\isoblock{V}) = 0 = \tilde{X}\cap \pi_H(\isoblock{V})$ and
the conclusion now follows from Frobenius reciprocity.
\end{proof}

We are now ready to prove Proposition \ref{prop: irredundant}, whose statement we recall below.
\setcounter{tmp}{\value{theorem}}
\setcounter{section}{1}
\setcounter{theorem}{\value{irredundant}}
\begin{proposition}
The subgroup $H$ of $G$ has full induction restriction if and only if the union defining $\Theta$ is irredundant, in the sense
that no set $\Theta(e)$ is contained in another.
\end{proposition}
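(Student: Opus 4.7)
I will prove the two implications separately using Theorem~\ref{thm: containment Thetas}. Throughout, write $L = \C[G]e$ and $\tilde{L} = \C[G]\tilde{e}$ for $e, \tilde{e} \in \Eb{H}$, and set $\Omega = \{V \in \Irr(G) : \langle \triv_H\Ind_H^G, \chi_V\rangle \ne 0\}$; I take irredundancy to mean that $\Theta(e) \subseteq \Theta(\tilde{e})$ forces $\C[G]e = \C[G]\tilde{e}$ (equivalently $\Theta(e) = \Theta(\tilde{e})$).

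Assume first that $H$ has full induction restriction and $\Theta(e) \subseteq \Theta(\tilde{e})$; the goal is $L = \tilde{L}$. Since $e \in \Eb{H}$ makes $\eta_H$ and $e-\eta_H$ orthogonal idempotents of $\C[H]$ summing to $e$, one has $L = L^\circ \oplus \C[G]\eta_H$ and similarly for $\tilde{L}$, reducing the problem to $L^\circ = \tilde{L}^\circ$. Set $U^\circ = \C[H](e-\eta_H)$ and $\tilde{U}^\circ = \C[H](\tilde{e}-\eta_H)$, which are left ideals of $\C[H]$ with $L^\circ = \C[G]U^\circ$ and $\tilde{L}^\circ = \C[G]\tilde{U}^\circ$ by Proposition~\ref{prop:inducedIdealCharacterisation}. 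Theorem~\ref{thm: containment Thetas}(i) gives $L^\circ \cap \Mat(V) = \tilde{L}^\circ \cap \Mat(V)$ for every $V \in \Omega$, and Lemma~\ref{lem: frobenius for isotypic blocks} transports this into the equality $U^\circ \cap \pi_H(\Mat(V)) = \tilde{U}^\circ \cap \pi_H(\Mat(V))$ inside $\C[H]$. Full induction restriction enters decisively: for every $W \in \Irr(H)$ there exists $V_W \in \Omega$ such that $\chi_W$ appears in $\chi_{V_W}\Res_H$, equivalently $\Mat(W) \subseteq \pi_H(\Mat(V_W))$. Since left ideals of $\C[H]$ split as direct sums over the Wedderburn blocks $\Mat(W)$, intersecting the equality over $V_W$ with $\Mat(W)$ yields $U^\circ \cap \Mat(W) = \tilde{U}^\circ \cap \Mat(W)$. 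Summing over $W \in \Irr(H)$ gives $U^\circ = \tilde{U}^\circ$, hence $L^\circ = \tilde{L}^\circ$ and $L = \tilde{L}$.

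For the backward direction I argue by contrapositive, producing explicit $e, \tilde{e}$ witnessing redundancy. Pick $W \in \Irr(H)$ with $\langle \chi_W, \triv_H\Ind_H^G\Res_H\rangle = 0$; necessarily $W \ne \triv_H$. Let $f_W$ be the centrally primitive idempotent of $\C[H]$ for $W$; orthogonality of distinct centrally primitive idempotents yields $\eta_H f_W = 0$, so $e := \eta_H + f_W \in \Eb{H}$. Take $\tilde{e} := \eta_H$, giving $L^\circ = \C[G]f_W$ (of character $\chi_W\Ind_H^G$) and $\tilde{L}^\circ = 0$. By Frobenius reciprocity the multiplicity of $V \in \Omega$ in $\chi_W\Ind_H^G$ equals $\langle \chi_W, \chi_V\Res_H\rangle$, and expanding $\triv_H\Ind_H^G\Res_H$ as a sum of the characters $\chi_V\Res_H$ with $V \in \Omega$ shows that our choice of $W$ forces this inner product to vanish for each such $V$. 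Hence $L^\circ \cap \Mat(V) = 0 = \tilde{L}^\circ \cap \Mat(V)$ for every $V \in \Omega$, verifying condition~(i) of Theorem~\ref{thm: containment Thetas}; condition~(ii) is automatic because $\tilde{L}^\circ = 0$. Therefore $\Theta(e) \subseteq \Theta(\tilde{e})$, yet $\C[G]e = \C[G]\eta_H \oplus \C[G]f_W \ne \C[G]\eta_H = \C[G]\tilde{e}$, exhibiting the required redundancy. The main obstacle lies in the forward direction, where one must package condition~(i) of Theorem~\ref{thm: containment Thetas}, an equality of Wedderburn pieces inside $\C[G]$, into a statement on left ideals of $\C[H]$ amenable to full induction restriction; Lemma~\ref{lem: frobenius for isotypic blocks} performs exactly this translation, and the covering $\sum_{V \in \Omega} \pi_H(\Mat(V)) = \C[H]$ afforded by full induction restriction is what then closes the argument.
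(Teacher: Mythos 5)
Your proposal is correct and takes essentially the same route as the paper's own proof: both directions rest on Theorem~\ref{thm: containment Thetas}, Lemma~\ref{lem: frobenius for isotypic blocks} and Frobenius reciprocity, and your converse witness ($e = \eta_H + f_W$, $\tilde{e} = \eta_H$) is the paper's construction with the centrally primitive idempotent in place of a left ideal of character $\chi_W + \triv_H$. The only cosmetic slip is that $\C[G]f_W$ has character $(\dim W)\,\chi_W\Ind_H^G$ rather than $\chi_W\Ind_H^G$, which is immaterial since only the vanishing of multiplicities enters the argument.
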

\setcounter{theorem}{\value{tmp}}
\setcounter{section}{7}

\begin{proof}[Proof of Proposition \ref{prop: irredundant}]
    Suppose that $H$ has full induction restriction in~$G$. Suppose that there is a containment $\Theta(e)\subseteq\Theta(\tilde{e})$ between weak lumping algebras. Let $U = \C[H]e$ and $\tilde{U} = \C[H]\tilde{e}$.
    Let $V$ be an irreducible $\C[H]$-submodule of $U^\circ+\tilde{U}^\circ$. 
    Recall that $\chi_V$ denotes the character of $V$. Since $H$ has full induction restriction,
    Frobenius reciprocity implies that $\langle\chi_V\Ind_H^G, \triv_H\Ind_H^G\rangle\ne0$.
    Thus Theorem \ref{thm: containment Thetas} gives
    \[
    \C[G]U^\circ\cap\isoblock{V} = \C[G]\tilde{U}^\circ\cap\isoblock{V}.
    \]
    We can now apply Lemma \ref{lem: frobenius for isotypic blocks} to get
    \[
    U^\circ\cap\pi_H(\isoblock{V}) = \tilde{U}^\circ\cap\pi_H\bigl(\isoblock{V}\bigr).
    \]
    In particular, since $V\subseteq\pi_H(\isoblock{V})$, we have $U^\circ\,\cap\, V 
    = \tilde{U}^\circ\,\cap\, V$. This holds for all irreducible $\C[H]$-submodules $V$ of $U^\circ+\tilde{U}^\circ$, and therefore $U^\circ=\tilde{U}^\circ$. Hence, $\Theta(e) = \Theta(\tilde{e})$.

    For the converse, we show the contrapositive statement. Suppose that~$H$ does not have full induction restriction in $G$. Then there exists $\chi\in\Irr(H)$ such that $\langle\chi,\triv_H\Ind_H^G\Res_H\rangle = 0$. Let $U$ be a left ideal of $\C[H]$ whose character (as a $\C[H]$-module) is $\chi+\triv_H$, let $L = \C[G]U$. 
    Note that the character of $L^\circ$ is $\chi\Ind_H^G$ and that $\langle\chi\Ind_H^G,\triv_H\Ind_H^G\rangle = 0$ by Frobenius reciprocity.
    Then, Theorem~\ref{thm: containment Thetas} gives
    \(
    \Theta(e) \subseteq \Theta(\eta_H)
    \)
    and the union defining $\Theta$ is not irredundant.
\end{proof}

Some of the natural applications of our work are to subgroups with full induction restriction.

\begin{samepage}
\begin{example}\label{eg: full induction restriction}\,

    \begin{enumerate}
        \item[(1)] Let $H = \Sym_{\{1, ..., k\}}$ and let $G = \Sym_n$.
        As in Examples~\ref{eg: regular rep of S3} and~\ref{eg: induction Sym3 to Sym4},
        we denote by $\chi^\lambda$ the irreducible character 
        of the symmetric group canonically \smash{$S^\lambda$}. Then, \smash{$\langle\chi^\lambda\Ind_H^G,\triv_H\Ind_H^G\rangle$} is the number of partitions of $n$ that contain both $\lambda$ and $(k)$ as subpartitions.
         This constraint is strongest for $\lambda = (1^k)$; we need partitions containing $(k,1^{k-1})$, 
         and such partitions exist if and only if $n\ge 2k-1.$
               Thus $H$ has full induction restriction if and only if $2k \le n+1$.

        \item[(2)] Let $D_{2n}$ be the dihedral group of order $2n$, generated by $\sigma$ and $\tau$ as in Example \ref{periodic example}. Then $\langle\tau\rangle$ has full induction restriction and $\langle\sigma\rangle$ does not. 
    \end{enumerate}
\end{example}
\end{samepage}

    Both examples of full induction restriction from Example \ref{eg: full induction restriction} 
    are explained by the following lemma. For instance, in (1), if $2k \le n-1$ then the permutation
    $(1,k+1)(2,k+2) \ldots (k-1,2k-1) \in \Sym_n$ satisfies
    $g^{-1}Hg \cap H = \{ 1\}$, and so by~\eqref{eq:doubleCosetCountTxH}, $|HgH| = |H|^2$.

\begin{lemma}\label{lem: maximal double coset implies full ind res}
     If there is a double coset $HxH\in H\backslash G/ H$ of the maximum possible size $|H|^2$, then
     the subgroup~$H$ of $G$ has full induction restriction.
\end{lemma}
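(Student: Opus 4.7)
The plan is to combine the double-coset counting formula \eqref{eq:doubleCosetCountTxH} with Mackey's rule for the trivial character (Lemma~\ref{lemma:MackeyTrivial}).

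First, I would translate the hypothesis about the size of the double coset into a statement about a trivial intersection. Taking $T = H$ in \eqref{eq:doubleCosetCountTxH} gives $|HxH| \cdot |x^{-1}Hx \cap H| = |H|^2$, so the assumption $|HxH| = |H|^2$ is equivalent to $x^{-1}Hx \cap H = \{1\}$, and hence (by conjugation) also to $H \cap xHx^{-1} = \{1\}$.

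Next, I would apply Lemma~\ref{lemma:MackeyTrivial}, which gives the $\C[H]$-module decomposition
\[ \triv_H \Ind_H^G \Res_H \cong \bigoplus_{y \in H\backslash G/H} \triv_{H \cap yHy^{-1}} \Ind_{H \cap yHy^{-1}}^H. \]
The summand corresponding to the distinguished double coset representative $x$ satisfies $H \cap xHx^{-1} = \{1\}$ by the previous paragraph, so this summand is $\triv_{\{1\}} \Ind_{\{1\}}^H$, which is precisely the regular representation of $H$ (see Example~\ref{ex:basicRepresentations}).

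Finally, I would invoke the standard fact that the regular representation of $H$ contains every irreducible $\C[H]$-module as a summand (indeed, with multiplicity equal to its dimension; this is immediate from the Wedderburn decomposition in Proposition~\ref{prop:Wedderburn} applied to $H$). Since $\triv_H \Ind_H^G \Res_H$ contains the regular representation as a direct summand, it follows that $\langle \chi, \triv_H \Ind_H^G \Res_H \rangle \geq \chi(1) > 0$ for every $\chi \in \Irr(H)$. This is exactly the full induction restriction property of Definition~\ref{def: full induction restriction}. There is no real obstacle here; the only step requiring care is the initial cardinality calculation, which is already packaged in \eqref{eq:doubleCosetCountTxH}.
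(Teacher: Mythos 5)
Your proposal is correct and follows essentially the same route as the paper: use \eqref{eq:doubleCosetCountTxH} to deduce $H \cap xHx^{-1} = \{1\}$ from $|HxH| = |H|^2$, then apply Lemma~\ref{lemma:MackeyTrivial} to see that the regular representation of $H$ appears as a summand of $\triv_H\Ind_H^G\Res_H$, which contains every irreducible character of $H$.
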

\begin{proof}
  By~\eqref{eq:doubleCosetCountTxH} we have
    \(
    |HxH| \cdot |xHx^{-1}\cap H| = |H|^2
    \)
    for every double coset $HxH\in H\backslash G / H$. If $|HxH| = |H|^2$, we deduce $|xHx^{-1}\cap H| = 1$. Now, Mackey's rule (Lemma~\ref{lemma:MackeyTrivial}) gives
    \[
    \triv_H\Ind_H^G\Res_H = 
    \triv_H\Ind_{xHx^{-1}\cap H}^H + \cdots,
    \]
    which by the previous computation shows that the regular representation of~$H$ appears as a summand in $\triv_H\Ind_H^G\Res_H$.
\end{proof}

We remark that there is a double coset $HgH$ of the 
maximum possible size $|H|^2$ if and only if $H$ has a regular orbit on the set $G/H$ of left cosets,
and so if and only if $G$ has a base (see \cite[\S 4.13]{CameronPermutationGroups}) of
size $2$ when regarded as a permutation group on $G/H$.
The example $G = \Sym_6$ and  $H = \langle(12)(34), (12)(3456)\rangle$ 
shows that full induction restriction may hold 
even when there is no double coset of maximum size. One can check this computationally.

\section{Real idempotents and a refinement of Theorem~\ref{thm:mainGL}}
\label{sec:real}

\subsection{Weak lumping algebras of real idempotents suffice}
Since a random walk on a group is driven by a weight whose coefficients are non-negative
real numbers, 
to understand the set of weakly lumping weights we must consider the real parts of the algebras $\Theta(e)$ for $e \in \Eb{H}$. 
Notice that distinct 
weak lumping algebras may have equal real parts. 
For instance, take a subgroup $H$ of order $3$, with notation
as in the proof of Corollary~\ref{cor:HOrder2Or3}. Since complex conjugate is a multiplicative map, an elementary computation gives
\begin{align*}
\Theta(\xi_H+\eta_H) \cap \R[G] 
&= \Theta(\overline\xi_H+\eta_H) \cap \R[G].
\end{align*}
Since we are really interested in $\Delta \cap \Theta$, where $\Delta$ is the simplex of probability vectors in $\C[G]$, the displayed equation above shows 
a way in which our 
expression $\Delta \cap \bigcup_{e \in \Eb{H}} \Theta(e)$ for the set of weakly lumping weights
may be redundant. (This is different to the redundancy due to containment studied in the previous section.)
In this section we remedy this form of redundancy.
We say that a left ideal of a complex group algebra is \emph{self-conjugate} if it is equal to its own complex conjugate. 

\begin{lemma}\label{lem:realIdempotents}
Every self-conjugate left ideal of $\C[H]$ is generated as a left ideal by a real idempotent of $H$. 
\end{lemma}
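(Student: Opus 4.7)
The plan is to reduce the problem to the semisimplicity of the real group algebra $\R[H]$. The key intermediate object is $J := I \cap \R[H]$, where $I$ is the given self-conjugate left ideal of $\C[H]$. I would first show that $J$ is a left ideal of $\R[H]$ and that $I = J \oplus iJ$ as real vector spaces. The latter follows from self-conjugacy: for any $x \in I$, both $\overline{x} \in I$ (by hypothesis) and therefore the real and imaginary parts $\tfrac{1}{2}(x+\overline{x})$, $\tfrac{1}{2i}(x-\overline{x})$ lie in $I \cap \R[H] = J$. That $J$ is closed under left multiplication by elements of $\R[H]$ is immediate, since $\R[H] \cdot J \subseteq I \cap \R[H]$.

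Next, by Maschke's theorem applied over $\R$ (valid because $|H|$ is invertible in $\R$), the group algebra $\R[H]$ is semisimple, so every left ideal is a direct summand and hence is generated by an idempotent. I would pick an idempotent $e \in J$ with $J = \R[H]e$; this is the desired real idempotent. To conclude $I = \C[H]e$, the containment $\C[H]e \subseteq I$ is immediate from $e \in I$. For the reverse, any $x \in I$ decomposes as $x = a + ib$ with $a, b \in J$ by the first paragraph, so writing $a = \alpha e$ and $b = \beta e$ with $\alpha, \beta \in \R[H]$ gives $x = (\alpha + i\beta)e \in \C[H]e$.

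There is no real obstacle here, but if one prefers to avoid invoking Maschke's theorem over $\R$, one can instead mimic the averaging argument from the start of \S\ref{subsec:idempotents}: choose any $\R$-linear projection $\pi : \R[H] \to J$, average over $H$ to produce an $H$-equivariant projection $\overline{\pi}$, and take $e = \overline{\pi}(1) \in J$, which is automatically real and idempotent. Either route delivers the real idempotent generator.
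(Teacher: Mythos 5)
Your proof is correct, but it takes a genuinely different route from the paper's. You pass to the real form $J = I \cap \R[H]$, derive $I = J \oplus iJ$ from self-conjugacy, obtain a real idempotent generator of $J$ either from semisimplicity of $\R[H]$ (Maschke over $\R$) or by averaging a real projection onto $J$, and then complexify to conclude $I = \C[H]e$; all of these steps check out, including the closure of $J$ under left multiplication and the two containments at the end. The paper instead never leaves $\C[H]$: it takes the orthogonal projection $\pi \colon \C[H] \to I$ with respect to the Hermitian inner product, observes that self-conjugacy of $I$ makes $\pi$ commute with complex conjugation, so that $\pi(h) \in \R[H]$ for every $h \in H$, and then the averaged idempotent $f = \frac{1}{|H|}\sum_{h \in H} h^{-1}\pi(h)$ from the construction in \S\ref{subsec:idempotents} is automatically real and generates $I$. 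What each approach buys: yours is self-contained at the level of real representation theory and makes the reality of the generator structurally transparent, since the generator is manufactured inside $\R[H]$ from the start, at the cost of the extra decomposition $I = J \oplus iJ$ and an appeal to real semisimplicity; the paper's is shorter because it reuses the averaging construction already in place, the only new input being that the \emph{orthogonal} (as opposed to arbitrary) projection onto a self-conjugate subspace is conjugation-equivariant --- that specific choice of projection plays exactly the role that your intersection with $\R[H]$ plays. Your second, averaging-based variant is the closest in spirit to the paper, differing only in that you average a projection defined on $\R[H]$ rather than a conjugation-equivariant one on $\C[H]$.
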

\begin{proof}
Let $L$ be a self-conjugate left ideal of $\C[H]$. Consider the orthogonal projection $\pi: \C[H] \to L$ with respect to the Hermitian inner product on $\C[H]$. Because $L$ is invariant under complex conjugation, $\pi$ is equivariant with respect to complex conjugation. In particular, $\pi(h) \in \R[H]$ for every $h \in H$. Hence the idempotent 
\smash{$f = \frac{1}{|H|} \sum_{h \in H} h^{-1}\pi(h)$} is real, and $L = \C[H]f$, as explained in \S\ref{subsec:idempotents}.
\end{proof}
\begin{lemma}\label{lemma:real}
Let $e \in \Eb{H}$ and let $\overline{e}$ be the complex conjugate of $e$. Let $w$ be a weight on $G$. 
Then $w$ lumps weakly with stable ideal $\C[G]e$ if and only if $w$ lumps stably for the ideal $\C[G]\overline{e}$, and in this case $w$ also lumps stably for the ideals $\C[G]e \cap \C[G]\overline{e}$ and $\C[G]e + \C[G]\overline{e}$.  Moreover, there exist real idempotents $e^{\wedge}, e^{\vee} \in \Eb{H}\cap \R[H]$ such that $\C[G]e \cap \C[G]\overline{e}  = \C[G]e^{\wedge}$ and $\C[G]e + \C[G]\overline{e} = \C[G]e^{\vee}$. 
 \end{lemma}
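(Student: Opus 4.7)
The strategy is to unwind all three claims using Lemma~\ref{lemma:circ}(iii), which characterises stable lumping for $\C[G]e$ by the two identities $ew(1-e)=0$ and $(e-\eta_H)w\eta_H=0$, combined with the fact that $w$ and $\eta_H$ are fixed by complex conjugation on $\C[G]$. For the first claim, complex conjugation is an involutive algebra automorphism of $\C[G]$ fixing $G$ pointwise and hence fixing $w$ and $\eta_H$. Applied to the two identities above, it produces $\overline{e}w(1-\overline{e})=0$ and $(\overline{e}-\eta_H)w\eta_H=0$. Since $\overline{e}$ is an idempotent and $\eta_H\overline{e}=\overline{\eta_H e}=\eta_H$, we have $\overline{e}\in\Eb{H}$, and a second application of Lemma~\ref{lemma:circ}(iii) gives that $w$ lumps stably for $\C[G]\overline{e}$. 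The reverse direction is symmetric.

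For the second claim, set $L_1=\C[G]e$ and $L_2=\C[G]\overline{e}$ and write $U_i=\pi_H(L_i)$. By Proposition~\ref{prop:inducedIdealCharacterisation}(iii) we have $L_i=\bigoplus_{b\in G/H}bU_i$, hence
\[ L_1\cap L_2=\C[G](U_1\cap U_2), \qquad L_1+L_2=\C[G](U_1+U_2),\]
so both are induced ideals containing $\eta_G$. Right multiplication by $w$ preserves $L_1$ and $L_2$ by part~(1), and this property evidently passes to sums and intersections, giving \ref{axiom L1}. For \ref{axiom L3}, since $\ker\Lambda$ is itself a left ideal we have $(L_1\cap L_2)^\circ=L_1^\circ\cap L_2^\circ$ by elementary set theory, while Lemma~\ref{lemma:sumCirc} yields $(L_1+L_2)^\circ=L_1^\circ+L_2^\circ$; both are closed under right multiplication by $w$ because, by part~(1), $L_1^\circ$ and $L_2^\circ$ are. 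Proposition~\ref{prop:lumpsStably} now delivers stable lumping for both $L_1\cap L_2$ and $L_1+L_2$.

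For the third claim, observe that $\overline{U_1}=U_2$, so $U_1\cap U_2$ and $U_1+U_2$ are self-conjugate left ideals of $\C[H]$. Lemma~\ref{lem:realIdempotents} produces real idempotents $e^\wedge,e^\vee\in\R[H]$ generating them respectively, and inducing up gives $\C[G]e\cap\C[G]\overline{e}=\C[G]e^\wedge$ and $\C[G]e+\C[G]\overline{e}=\C[G]e^\vee$. The only subtlety, and the main place one must be careful, is verifying $e^\wedge,e^\vee\in\Eb{H}$: since $e,\overline{e}\in\Eb{H}$ we have $\eta_H\in\C[H]e\cap\C[H]\overline{e}=U_1\cap U_2$, so writing $\eta_H=xe^\wedge$ and applying $(e^\wedge)^2=e^\wedge$ gives $\eta_He^\wedge=\eta_H$; the same argument works for $e^\vee$.
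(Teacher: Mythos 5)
Your proof is correct and follows essentially the same route as the paper: the conjugation symmetry of the defining identities for the first claim, closure of the Gurvits--Ledoux/stable-lumping conditions under intersection and sum for the second, and Lemma~\ref{lem:realIdempotents} together with the coset decomposition of induced ideals for the third. The only differences are cosmetic: you verify the closure properties and the identity $\C[G]e\cap\C[G]\overline{e}=\C[G]\bigl(\C[H]e\cap\C[H]\overline{e}\bigr)$ directly via Proposition~\ref{prop:inducedIdealCharacterisation}(iii) and Lemma~\ref{lemma:sumCirc}, where the paper cites its lattice result and gives an explicit coset-representative computation, and you spell out the small check that $\eta_H e^{\wedge}=\eta_H$, which the paper leaves implicit.
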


\begin{proof}
 Since complex conjugation is multiplicative,
 it is clear from~\eqref{eq: Theta as intersection of idealizers}
 that
  $\Theta(\overline{e}) = \overline{\Theta(e)}$. We have seen that $w$ lumps stably for $\C[G]e$ if and only if $w \in \Theta(e)$, and since $w$ takes real values 
  this holds if and only if $w \in \Theta(\overline{e})$, so if and only if $w$ lumps stably for $\C[G]\overline{e}$. 

We have seen in \S\ref{subsec:minimalGLideal} and \S\ref{subsec:Jw} that the set of weakly lumping Gurvits--Ledoux ideals for a given weight $w$ that contain $\eta_G$ forms a lattice under intersection and addition.  So if $w$ lumps stably for $\C[G]e$ then $w$ also lumps stably for $\C[G]e \cap \C[G]\overline{e}$ and for $\C[G]e + \C[G]\overline{e}$. It remains to show that these two ideals (which are invariant under complex conjugation) are generated by idempotents in $\Eb{H} \cap \R[H]$.

Apply Lemma~\ref{lem:realIdempotents} to express $\C[H]e \cap \C[H]\overline{e} = \C[H]e^{\wedge}$ and $\C[H]e + \C[H]\overline{e} = \C[H]e^{\vee}$ for real idempotents $e^{\wedge}$ and $e^{\vee}$ in $\C[H]$. We have $\eta_H \in \C[H]e^{\vee}$ and $\eta_H \in \C[H]e^{\wedge}$ since $\eta_H \in \C[H]e$ and $\eta_H \in \C[H]\overline{e}$. Hence $e^\wedge, e^{\vee} \in \Eb{H} \cap \R[H]$. Now $$\C[G]e \cap \C[G]\overline{e} = \C[G](\C[H]e \cap \C[H]\overline{e}) = \C[G]\C[H]e^\wedge = \C[G]e^\wedge.$$ To see the first equality, suppose that $x \in \C[G]e \cap \C[G]\overline{e}$, and pick a set $g_1, \dots, g_m$ of coset representatives for $G/H$; then $x$ may be written uniquely as $\sum g_i y_i e$ with each $y_i \in \C[H]$, and also uniquely as $\sum g_i z_i \overline{e}$ with each $z_i \in \C[H]$, and for each $i$ we obtain $z_i \overline{e} = y_i e \in \C[H]e \cap \C[H]\overline{e}$.
 A similar argument shows that $\C[G]e + \C[G]\overline{e} = \C[G]e^\vee$.
\end{proof}
\begin{corollary}\label{cor:real} We have
  \[  \Theta \cap \R[G] = \bigcup_{e \in \Eb{H} \,\cap\, \R[H]} \Theta(e)\cap \R[G].\]
\end{corollary}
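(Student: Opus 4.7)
The containment $\supseteq$ is immediate because $\Eb{H}\cap\R[H]\subseteq\Eb{H}$, so the work is in the reverse containment $\subseteq$. The plan is to start with any $w\in\Theta\cap\R[G]$, produce a real idempotent $e^\wedge\in\Eb{H}\cap\R[H]$ with $w\in\Theta(e^\wedge)$, and conclude membership in the right-hand union. The key tool is Lemma~\ref{lemma:real}, applied essentially verbatim.

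Concretely, I would pick any $e\in\Eb{H}$ such that $w\in\Theta(e)$; this exists by definition of $\Theta$. The first step is to observe that applying complex conjugation termwise to the defining equations
\[ ew(1-e)=0, \qquad (e-\eta_H)w\eta_H=0 \]
of~\eqref{eq:Theta}, and using both that $\overline{w}=w$ (since $w\in\R[G]$) and that $\overline{\eta_H}=\eta_H$, yields the same two equations with $e$ replaced by $\overline{e}$. Hence $w\in\Theta(\overline{e})$ as well, and so $w$ lumps stably for both $\C[G]e$ and $\C[G]\overline{e}$ simultaneously. The second step is then to invoke Lemma~\ref{lemma:real}, which guarantees the existence of a real idempotent $e^\wedge\in\Eb{H}\cap\R[H]$ satisfying $\C[G]e\cap\C[G]\overline{e}=\C[G]e^\wedge$, and which asserts that $w$ also lumps stably for this intersection ideal. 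Translating back through the algebraic reformulation in Lemma~\ref{lemma:circ}, this says precisely that $w\in\Theta(e^\wedge)$, as required.

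The main content is not in the corollary itself but already packaged inside Lemma~\ref{lemma:real}, whose essential ingredient is Lemma~\ref{lem:realIdempotents}: every self-conjugate left ideal of $\C[H]$ — in particular the self-conjugate intersection $\C[H]e\cap\C[H]\overline{e}$ produced above — admits a real idempotent generator, obtained by the averaging trick applied to the orthogonal projection with respect to the Hermitian inner product on $\C[H]$. The only mildly non-obvious point, which I would flag as the real obstacle had it not been settled in that earlier lemma, is that the meet of two weakly lumping Gurvits--Ledoux ideals is still weakly lumping and is still an \emph{induced} ideal; once both facts are in hand (they follow from the lattice structure noted in Lemma~\ref{L: lattice of stable spaces} together with Proposition~\ref{prop:inducedIdealCharacterisation}), nothing further needs to be checked to complete the proof of the corollary.
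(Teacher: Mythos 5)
Your argument is correct and is essentially the paper's own proof: the corollary is obtained there by combining Theorem~\ref{thm:mainGL} with Lemma~\ref{lemma:real}, and your conjugation step together with the passage to the real idempotent $e^\wedge$ generating $\C[G]e\cap\C[G]\overline{e}$ is exactly the content of that lemma. The only cosmetic difference is that the closure of the $\Theta(e)$-conditions under intersection of the ideals is immediate algebraically (since $(L_1\cap L_2)w\subseteq L_1\cap L_2$ and $(L_1\cap L_2)^\circ=L_1^\circ\cap L_2^\circ$), so your appeal to the probabilistic lattice result of Lemma~\ref{L: lattice of stable spaces}, which assumes irreducibility, is not strictly needed and is anyway subsumed by citing Lemma~\ref{lemma:real} itself.
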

\begin{proof}
Combine Theorem~\ref{thm:mainGL} with Lemma~\ref{lemma:real}.
\end{proof}
  It follows that
 the  set of weakly lumping irreducible weights may be expressed as 
\begin{equation}\label{eq:realSimplex} 
\Gamma \cap \Delta \cap \Theta = \Gamma \cap \Delta \cap \bigcup_{e \in \Eb{H} \,\cap\, \R[H]} \Theta(e)
\end{equation}
where, as in the introduction, $\Gamma = \C[G] \setminus \bigcup_{K \lneq G} \C[K]$ and $\Delta$ is the simplex of probability vectors in $\C[G]$. 

\subsection{A probabilistic characterisation of stable lumping for self-conjugate left ideals}
Definition~\ref{defn:lumpsStably} defines in algebraic terms what it means for an irreducible weight $w$ to lump weakly with stable ideal $L = \C[G]e$ for $e \in \Eb{H}$. As remarked after that definition, in this case it follows from Theorem~\ref{thm:mainGL} that for every weight $\alpha \in L$ we have
\begin{defnlistE}
\item $X = \MC(\alpha, w)$ lumps weakly to $G/H$ for all $\alpha \in L$, \emph{and}
\item
for any initial distribution $\alpha \in L$ and all 
$t$,
$L$ always contains the conditional distribution of $X_t$ given 
$X_0H, \ldots, X_tH$.
\end{defnlistE}
In this section we show a converse: for a self-conjugate left ideal $L \subseteq \C[G]$ containing $\eta_G$, if conditions~(a) and (b) hold for all $\alpha \in \Delta \cap L$ then $L$ is induced from $\C[H]$ and $w$ lumps weakly to $G/H$ with stable ideal $L$.  
\begin{lemma}\label{lem:realBasis}
Let $L$ be a self-conjugate left ideal of $\C[G]$ containing $\eta_G$. Then as a $\C$-vector space, $L$ has a basis whose elements are probability vectors.
\end{lemma}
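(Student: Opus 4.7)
The plan is to first produce a real basis of $L$, then perturb each basis element by a large multiple of $\eta_G$ to obtain probability vectors while preserving the spanning property.

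First I would note that since $L$ is self-conjugate, the componentwise complex conjugation $v \mapsto \overline{v}$ on $\C[G]$ preserves $L$. Consequently, for any $v \in L$ both $\tfrac{1}{2}(v + \overline{v})$ and $\tfrac{1}{2i}(v - \overline{v})$ lie in $L \cap \R[G]$, and these two real vectors span $v$ over $\C$. Therefore $L \cap \R[G]$ spans $L$ as a complex vector space, and we may pick a real basis of $L$. Since $\eta_G \in L \cap \R[G]$, we may arrange for $\eta_G$ to be the first element of this basis; call the basis $\eta_G, v_2, \dots, v_n$.

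Next, for each $i \ge 2$, I would use the fact that every coefficient of $\eta_G$ is the strictly positive number $1/|G|$. Hence for $c_i > 0$ sufficiently large, the real vector $c_i \eta_G + v_i$ has all strictly positive coefficients; in particular its coefficient sum $s_i = c_i + v_i(G)$ is strictly positive. Define
\[
p_i = \frac{c_i \eta_G + v_i}{s_i} \in L.
\]
By construction $p_i$ has non-negative real coefficients summing to $1$, so $p_i$ is a probability vector. Rearranging gives $v_i = s_i p_i - c_i \eta_G$, so the set $\{\eta_G, p_2, \dots, p_n\}$ spans the same subspace as $\{\eta_G, v_2, \dots, v_n\}$, namely $L$. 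Having the right cardinality, it is a basis of $L$ consisting of probability vectors, completing the proof.

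There is no real obstacle here; the only subtlety is making sure one records the choice $v_1 = \eta_G$ explicitly before perturbing, so that the perturbation of each $v_i$ can be inverted using $\eta_G$ to recover the original basis and hence justify the spanning claim.
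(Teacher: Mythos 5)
Your proof is correct, but it follows a different route from the paper. The paper first invokes its Lemma~\ref{lem:realIdempotents} (applied with $G$ in place of $H$) to write $L = \C[G]e$ for a \emph{real} idempotent $e$ with $\eta_G e = \eta_G$, and then asserts that the translates $ge$, $g \in G$, are weights spanning $L$, a subset of which is a basis. You instead bypass the idempotent machinery entirely: you symmetrize under complex conjugation to see that $L \cap \R[G]$ spans $L$ over $\C$, choose a real basis containing $\eta_G$, and then shift each remaining basis vector by a large multiple of $\eta_G$ and normalize. Your argument is more elementary and, notably, it deals head-on with the positivity issue: a real idempotent $e \in \Eb{G}$ (and hence its translates $ge$) can perfectly well have negative coefficients -- e.g.\ $e = \eta_{\Sym_3} + e_{21}$ in $\C[\Sym_3]$ -- so the paper's one-line claim that the $ge$ are weights is not automatic and really needs something like your perturbation-by-$\eta_G$ step, which is exactly where the hypothesis $\eta_G \in L$ (and the strict positivity of its coefficients) does its work. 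What the paper's route buys is brevity and consistency with the idempotent language used throughout; what yours buys is a self-contained and fully watertight argument. Your closing remark about recording $v_1 = \eta_G$ before perturbing is the right bookkeeping point, since inverting $v_i = s_i p_i - c_i \eta_G$ is what justifies the spanning claim.
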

\begin{proof}
By Lemma~\ref{lem:realIdempotents} (applied to $G$ in place of $H$) we have $L = \C[G]e$ for some real idempotent $e \in E(G)$, and since $\eta_G \in L$, we have $e \in \Eb{G}$. Therefore $L$ is spanned by the elements $ge$ for $g \in G$, which are weights; some subset of these forms a basis of $L$.   
\end{proof}

\begin{proposition}\label{prop:probcharrealIdeals}
Let $L$ be a self-conjugate left ideal of $\C[G]$ containing at least one non-zero probability vector 
and let $w$ be an irreducible weight on~$G$. If for every probability vector $\alpha \in L$ we have
\begin{thmlistE}
\item $X = \MC(\alpha, w)$ lumps weakly to $G/H$ for all $\alpha \in L$, \emph{and}
\item
for any initial distribution $\alpha \in L$ and all $t$,
$L$ always contains the conditional distribution of $X_t$ given 
$X_0H, \ldots, X_tH$,
\end{thmlistE} then $L=\C[G]e$ for some $e \in \Eb{H}$, and $w$ lumps weakly to $G/H$ with stable ideal $L$. 
\end{proposition}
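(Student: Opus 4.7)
The strategy is to reduce the proposition to the general Markov-chain result Lemma~\ref{lem:probcharStable}, applied to the real form $V := L \cap \R[G]$ of $L$. Having produced the structural conditions (a)--(d) of Definition~\ref{def: stable}, I then intend to combine the $\pi_{bH}$-invariance with the hypothesis that $L$ is a left ideal to show that $L$ is automatically induced from $\C[H]$. The final step is a translation via Lemma~\ref{lemma:circ} to match Definition~\ref{defn:lumpsStably}.

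First, because $L$ is self-conjugate we have $L = V \otimes_\R \C$, and any probability vector in $L$ is real and so already lies in $V$; in particular the probability vector promised by the hypothesis lies in $V$. The conditional distributions appearing in hypothesis~(ii) are also probability vectors, hence real, so hypotheses~(i) and~(ii) of the proposition translate verbatim into the hypotheses (i) and (ii) of Lemma~\ref{lem:probcharStable} for $P = P_w$, $f: G \to G/H$ and the real subspace $V$. That lemma delivers conditions (a)--(d) of Definition~\ref{def: stable} for $V$; complexifying, $L$ satisfies $Lw \subseteq L$, $\pi_{bH}(L) \subseteq L$ for every $bH \in G/H$, and $L^\circ w \subseteq L^\circ$ where $L^\circ = L \cap \ker\Lambda$. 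Moreover, $V$ contains a probability vector and so, by Lemma~\ref{lem:useErgodicThm} applied to $V$ with the irreducible stochastic matrix $P_w$, we get $\eta_G \in V \subseteq L$.

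The key algebraic observation is that, since $L$ is a left ideal, multiplication by $b \in G$ satisfies $bL = L$ (as $bL \subseteq L$ and $b^{-1}L \subseteq L$), and combined with $\pi_{bH}(L) \subseteq L$ this gives the identity $\pi_{bH}(L) = b\cdot \pi_H(L)$. Consequently
\[ L = \bigoplus_{bH \in G/H} \pi_{bH}(L) = \bigoplus_{bH \in G/H} b\cdot \pi_H(L), \]
with $\pi_H(L)$ a left ideal of $\C[H]$. This is precisely the defining property of an induced ideal, so by Proposition~\ref{prop:inducedIdealCharacterisation}(v) there exists an idempotent $e \in \C[H]$ with $L = \C[G]e$. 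Applying $\pi_H$ to the equality $\eta_G = \eta_G e$ yields $\eta_H = \eta_H e$, so $e \in \Eb{H}$.

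Finally, the conditions $Lw \subseteq L$ and $L^\circ w \subseteq L^\circ$ obtained in the first paragraph are, by Lemma~\ref{lemma:circ}(i),(ii),(iii), equivalent to $ew(1-e) = 0$ and $(e-\eta_H)w\eta_H = 0$, which by Definition~\ref{defn:lumpsStably} is precisely the statement that $w$ lumps weakly to $G/H$ with stable ideal $L$. The bulk of the technical content is bundled inside Lemma~\ref{lem:probcharStable}, which is already proved; the principal remaining obstacle is the translation between the complex ideal $L$ and its real form, together with the recognition that the combination of a left ideal with $\pi_{bH}$-invariance automatically forces an induced ideal structure.
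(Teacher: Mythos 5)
Your proof is correct and follows essentially the same route as the paper's: apply Lemma~\ref{lem:probcharStable} to the real form $V = L \cap \R[G]$, transfer the stability conditions to $L$ by complexification, recognise $L$ as an induced ideal generated by an idempotent $e \in \Eb{H}$, and translate the conditions $Lw \subseteq L$ and $L^\circ w \subseteq L^\circ$ via Lemma~\ref{lemma:circ}. The only cosmetic differences are that you obtain $L = V \otimes_{\R} \C$ directly from self-conjugacy rather than via Lemma~\ref{lem:realBasis}, and you do not observe (as the paper does, though the statement does not require it) that the idempotent $e$ may be taken to be real.
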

\begin{proof}
Since $w$ is irreducible and $L$ contains at least one non-zero weight, by the usual ergodic averaging argument we have $\eta_G \in L$. By Lemma~\ref{lem:realBasis} we may choose a basis $v_1, \dots, v_k$ of $L$ consisting of probability vectors. 
Let $V$ denote the real vector space spanned by $v_1, \dots, v_k$. Identifying $\C[G]$ with $\C^G$, we have $V = L \cap \R^G$. Let $P_w$ denote the transition matrix of the left-invariant random walk on $G$ driven by $w$. By  conditions (a) and (b), for all 
probability vectors $\alpha \in L$, applying Lemma~\ref{lem:probcharStable}, $P_w$ lumps weakly under the map $\lambda: G \to G/H$ with stable space $V$. Thus we have $v_i P \in V$ for $i=1, \dots, k$, so $Lw \subseteq L$. Similarly, $v_i\Pi_{bH} \in V$  for all $b \in G$ and $i=1, \dots, k$. Extending scalars (and writing the projections to cosets on the left), this means $\pi_{bH}(L) \subseteq L$ for all $b \in G$, hence $L = \bigoplus_{b \in G/H} \pi_{bH}(L)$. Since $L$ is also a left ideal, it is an induced ideal from the
self-conjugate left ideal $\pi_H(L)$ of $\C[H]$, which contains $\eta_H$. Hence $L = \C[G]e$ for some real 
idempotent $e \in \Eb{H}$. We have $V^\circ P_w \Lambda = 0$, where $\Lambda$ is the matrix
for the canonical map $G \rightarrow G/H$ defined at the start of \S\ref{subsec:applicationGL}.
Because $\Lambda$ is real, we have $L^\circ = V^\circ \otimes_{\R} \C$, and so $L^\circ P_w \Lambda = 0$, i.e.~$L^\circ w \subseteq L^\circ$.   
\end{proof}

\noindent It follows from Proposition~\ref{prop:lumpsStably} that for a real idempotent $e \in \Eb{H} \cap \R[H]$, and any weight~$w$ on $G$,  $w$ lumps stably for $\C[G]e$ in the sense of Definition~\ref{defn:lumpsStably} if and only if $P_w$ lumps stably for $\R[G]e$ in the sense of Definition~\ref{def: stable}.  In general, not every stable space $V \subseteq \R[G]$ for the transition matrix $P_w$ and the lumping map $f : G \rightarrow G/H$ need be a left ideal. 
However, if $w$ is irreducible and $V \subseteq \R[G]$ is any subspace such that $P_w$ lumps weakly under~$f$ with stable space $V$, then the space $\sum_{g \in G} gV$ is a left ideal of $\R[G]$ and by Lemma~\ref{L: lattice of stable spaces} it is also a stable space for $P_w$ and $f$. Then \smash{$L = (\sum_{g \in G} gV) \otimes_\R \C$} satisfies the hypotheses of 
Proposition~\ref{prop:probcharrealIdeals}. In summary, we have proved that every 
stable subspace of $\mathbb{R}^G$ for weak lumping of $P_w$ under $f$ is a subspace of a stable ideal generated by a real idempotent in $\Eb{H}$.

\section{Hecke algebras and the proof of Theorem~\ref{thm:mainTransitionMatrices}}
\label{sec: proof of mainTransitionMatrices} 

To make the proof of Theorem~\ref{thm:mainTransitionMatrices} self-contained, we begin by briefly reviewing the essential theory of orbital matrices
and Hecke algebras. We give an example in \S\ref{subsec:weakLumpingAlgebra} in the context of
 the extended dice rolling example in \S\ref{subsec: abelian example}.
For further background on Hecke algebras see \cite[\S 1.11, \S 2.2, \S 3.1]{CameronPermutationGroups}
or \cite[Chapter 4]{CeccheriniSilbersteinScarabottiTolliHarmonicAnalysis}.
The permutation representation of $G$ acting on the left cosets $G/H$
was characterised as an induced representation 
in Example~\ref{ex:inductionCharacterisation}.

\subsection{Orbital matrices}\label{subsec:orbitalMatrices}
The left action of $G$ on the set of left cosets $G/H$
induces an action of $G$ on $G/H \times G/H$ by $x(gH, g'H) = (xgH, xg'H)$. 
Throughout this section, we set $m = |G/H|$.

\begin{definition}\label{defn:orbitalMatrix} Let $x \in G$.
The \emph{orbital matrix} corresponding to the double coset $HxH$ is the $m \times m$ matrix
$M(HxH)$ with zero/one entries defined by 
$M(HxH)_{(gH, g'H)} = 1$ if and only if 
$(gH, g'H)$ is in the orbit of $G$ on $G/H \times G/H$ containing $(H, xH)$.
\end{definition}

Thus there is one orbital matrix for each 
double coset $HxH$ and the linear span of the orbital matrices has dimension equal to the number
of double cosets, namely $|H \backslash G / H|$. By the $G$-invariance property in the definition,
each orbital matrix defines a $\C[G]$-endomorphism of the $m$-dimensional permutation representation
\smash{$\C \ind_H^G$}. Observe that the row of the orbital matrix $M(HxH)$ labelled by $H$ has its non-zero entries
precisely in the columns $hxH$ for $h\in H$, corresponding to the orbit of $H$ on $G/H$ containing $xH$.

\subsection{Hecke algebras}
\label{subsec:HeckeAlgebras}
The Hecke algebra of functions on $\C[G]$ invariant under left- and right-multiplication by $H$
is isomorphic to the subalgebra $\eta_H \C[G] \eta_H$ by the map sending the function $f : G \rightarrow \C$ to
$\sum_{g \in G} f(g) g$. (Note this 
is the same way that we identify weights and distributions with elements of $\C[G]$.)
It is clear that $\eta_H \C[G]\eta_H$ has as a basis all $\eta_H x \eta_H$ for $x$
in a set of representatives for the double cosets $H \backslash G / H$. In particular
$\dim \eta_H \C[G]\eta_H = |H \backslash G / H|$.
We may therefore refer to $\eta_H \C[G] \eta_H$ as the \emph{Hecke algebra of double cosets}.

In the following proposition,
recall that the \emph{opposite algebra} of an algebra~$A$ is the algebra with the same
underlying vector space, but with multiplication defined by $a \cdot b = ba$ for $a$, $b\in A$.

\begin{proposition}\label{prop:Hecke}
For $x \in G$, let $m_x = |H / H \cap xHx^{-1}|$.
The subspace of the algebra $\Mat_m(\mathbb{C})$ spanned by the orbital matrices is closed under multiplication and is isomorphic to the opposite algebra of 
$\eta_H \C[G] \eta_H$ by the map $\eta_H x \eta_H \mapsto M(HxH)/m_x$.
\end{proposition}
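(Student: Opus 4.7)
The plan is to realize both algebras concretely as the $G$-equivariant endomorphisms of the permutation module $\C[G/H]$. Using Example~\ref{ex:permutationIdeal}, identify $\C[G]\eta_H$ with $\C[G/H]$ via $g\eta_H \leftrightarrow v_{gH}$. Right multiplication by any $z\in\eta_H\C[G]\eta_H$ on $\C[G]\eta_H$ commutes with the left $\C[G]$-action and so represents a $\C[G]$-endomorphism $T_z$ of $\C[G/H]$; in the paper's row-vector convention, $T_z$ corresponds to a $G$-equivariant matrix $M_z \in \Mat_m(\C)$.

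The central calculation is to write $M_z$ down explicitly for $z=\eta_H x \eta_H$. Expanding,
\[
\eta_H x \eta_H \;=\; \frac{1}{|H|}\sum_{h\in H} h x \eta_H \;=\; \frac{1}{|H|} \sum_{h\in H} v_{hxH}.
\]
As $h$ ranges over $H$, the coset $hxH$ traces the $H$-orbit of $xH$, which is precisely the set of left cosets contained in $HxH$. By the orbit-stabilizer theorem this orbit has size $m_x$, and each of its elements is hit exactly $|H|/m_x = |H \cap xHx^{-1}|$ times. Hence $\eta_H x \eta_H = \frac{1}{m_x}\sum_{g'H\subseteq HxH} v_{g'H}$, and for arbitrary $g\in G$,
\[
v_{gH}\cdot \eta_H x \eta_H \;=\; g\eta_H x \eta_H \;=\; \frac{1}{m_x}\sum_{g'H\subseteq gHxH} v_{g'H}.
\]
By Definition~\ref{defn:orbitalMatrix}, the condition $g^{-1}g'\in HxH$ appearing in the entries of $M(HxH)$ is equivalent to $g'H\subseteq gHxH$, so $v_{gH}\cdot M(HxH) = \sum_{g'H\subseteq gHxH} v_{g'H}$. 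Comparing the two expressions gives $M_{\eta_H x \eta_H} = M(HxH)/m_x$, establishing the claimed formula.

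For the algebra structure, the key observation is that right multiplication satisfies $T_a\circ T_b = T_{ba}$, so $z\mapsto T_z$ is an algebra anti-homomorphism from $\eta_H\C[G]\eta_H$ into $\End_{\C[G]}(\C[G]\eta_H)$. Combined with the standard identification of this endomorphism algebra with the $G$-equivariant subalgebra of $\Mat_m(\C)$, whose basis is the orbital matrices, this yields the isomorphism with the opposite algebra. For surjectivity and linear independence, use the fact that both $\eta_H\C[G]\eta_H$ and the span of the orbital matrices have dimension $|H\backslash G/H|$ (the number of $G$-orbits on $(G/H)\times(G/H)$), together with the observation that distinct orbital matrices are supported on disjoint sets of entries and so are automatically linearly independent. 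Since $\Phi$ maps the Hecke basis bijectively to the orbital basis, it is a bijection.

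The main obstacle is keeping the conventions straight so that the ``opposite algebra'' direction comes out correctly: one must carefully track the combined effect of (i) right multiplication being anti-multiplicative with respect to operator composition and (ii) the row-vector convention for matrices. Apart from this bookkeeping, the remaining work is routine counting and the verification of the formula for $M_{\eta_H x \eta_H}$ above.
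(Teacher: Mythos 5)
Your route is the same as the paper's: identify $\C[G]\eta_H$ with $\C[G/H]$, let $\eta_H\C[G]\eta_H$ act by right multiplication, compute the matrix of $\eta_H x\eta_H$ explicitly (your count of $m_x$ entries equal to $1/m_x$ in each row is exactly the paper's computation of $F_x$), and finish with a dimension count over $H\backslash G/H$; where the paper gets $\dim\End_{\C[G]}(\C[G]\eta_H)=|H\backslash G/H|$ from Mackey's rule, you send a basis to a basis, which is an equivalent argument. That part of your write-up is correct.

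The step you defer as ``bookkeeping'' is, however, exactly the step that does not close as you describe it. The two effects you cite are each anti-multiplicative: (i) $T_a\circ T_b=T_{ba}$ for right multiplication under ordinary composition, and (ii) in the row-vector convention the matrix of $S\circ T$ is $M_TM_S$. Their combined effect therefore cancels: in your own conventions the assignment $z\mapsto M_z$ is a plain algebra homomorphism. Indeed, for $H$-bi-invariant $z,w$ one has $(M_z)_{gH,g'H}=|H|\,z(g^{-1}g')$, and then $(M_zM_w)_{gH,g''H}=|H|^2\sum_{g'H}z(g^{-1}g')w(g'^{-1}g'')=|H|\,(zw)(g^{-1}g'')=(M_{zw})_{gH,g''H}$; the case $H=1$, where $M(\{x\})M(\{y\})=M(\{xy\})$, already shows that no anti-homomorphism can arise this way. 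So your two ingredients produce a multiplicative map sending $\eta_Hx\eta_H\mapsto M(HxH)/m_x$, and the ``opposite algebra'' direction asserted in the statement is not obtained by combining (i) and (ii); it has to enter through a different identification, as in the paper's sketch where the orbital matrices are regarded as acting on the module by multiplication on the other side (equivalently, the anti-homomorphism coming from $\End_{\C[G]}(\C[G]\eta_H)\cong(\eta_H\C[G]\eta_H)^{\mathrm{op}}$ sends $\eta_Hx\eta_H$ to $M(Hx^{-1}H)/m_x$, the transpose of your matrix). For the way the proposition is used later (Theorem~\ref{thm:mainTransitionMatrices} needs only that the span of the orbital matrices is the image of the Hecke algebra and is closed under multiplication) this directional issue is immaterial, but as a proof of the statement as written your final sentence asserts the anti-isomorphism without a derivation, and the derivation you indicate would in fact give the opposite conclusion; you need to chase the conventions through explicitly and state precisely which identification produces the ``opposite''.
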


\begin{proof}
See Proposition 4.2.1 and its proof
in \cite{CeccheriniSilbersteinScarabottiTolliHarmonicAnalysis}.
\end{proof}

Since the proof in \cite{CeccheriniSilbersteinScarabottiTolliHarmonicAnalysis} comes only
after a long development of theory not required in this paper, we 
outline a shorter proof: the action
of $\eta_H \C[G] \eta_H$ 
by right multiplication $\C[G] \eta_H$ defines an injective algebra homomorphism 
$\eta_H \C[G] \eta_H \rightarrow \End_{\C[G]} (\C[G] \eta_H)$. By Lemma~\ref{lemma:MackeyTrivial}, 
$\dim \End_{\C[G]} (\C[G] \eta_H) = 
|H \backslash G / H|$. Hence 
this map is an isomorphism. In the canonical basis of left cosets of $\C[G]\eta_H$, the matrix
of the endomorphism~$F_x$ determined by $F_x(\eta_H) = \eta_H x \eta_H$ 
is stochastic, having $m_x$ entries of $1/m_x$ in each row, where $m_x = |H / H \cap xHx^{-1}|$ 
is the size of the orbit of $H$ on $G/H$ containing~$xH$.
By the final paragraph of \S\ref{subsec:orbitalMatrices},
$\End_{\C[G]} (\C[G] \eta_H)$
has as a basis the orbital matrices, acting by \emph{left} multiplication;
the matrix $M(HxH)$ having $m_x$ ones in each row corresponds to the endomorphism $m_x F_x$.
Hence $\eta_H x \eta_H \mapsto M(HxH)/m_x$ is an explicit isomorphism between $\eta_H \C[G]\eta_H$ and the opposite
algebra of the algebra of orbital matrices.

\subsection{Proof of Theorem~\ref{thm:mainTransitionMatrices}}
We need one final preliminary: suppose that $Q$ 
is an $m \times m$
matrix satisfying the condition that $Q_{(gH, g'H)} = Q_{(kgH, kg'H)}$
for all $g, g', k \in G$. By the final sentence of \S\ref{subsec:HeckeAlgebras},
\begin{equation}\label{eq:orbital}
Q = \sum_{x} Q_{(H, xH)} M(HxH) \end{equation}
where the sum is over a set of representatives for the double cosets
$HxH$.

\begin{proof}[Proof of Theorem~\ref{thm:mainTransitionMatrices}]
Suppose that (i) holds so, by hypothesis $\MC(\eta_G,w)$ lumps weakly to $G/H$
and the transition matrix of the lumped chain is $Q$. We may
assume that $w$ is normalized, i.e.~$w(G) = 1$. By hypothesis
the initial distribution of $X_0$ is uniform. Therefore, conditioned
on the event $X_0 \in gH$, the distribution of $X_0$ is uniform on $gH$. Hence
for any $g' \in G$ we have 
\begin{align*} \P[X_1 \in g'H \mid X_0 \in gH] 
&= \frac{1}{|H|} \sum_{h \in H} \P[X_1 \in g'H \mid X_0 = gh] \\
&= \frac{1}{|H|} \sum_{h \in H} \sum_{h' \in H} w(h^{-1}g^{-1} g'h').\end{align*}
The right hand side is the sum of the coefficients
of $|H| \eta_H w \eta_H$ on the double coset $Hg^{-1}g'H$.
Therefore the probability just calculated is the same replacing
$w$ with $\eta_H w \eta_H$, and we have~(ii), that $Q$ is the transition
matrix of the induced random walk driven by a weight in the Hecke algebra
$\eta_H \C[G] \eta_H$.

Suppose that (ii) holds. Then by Proposition~\ref{prop:Hecke},
$Q$ is a linear combination of the orbital matrices $M(HxH)$. 
It is immediate from Definition~\ref{defn:orbitalMatrix}
that these matrices satisfy $M(HxH)_{(gH,g'H)} = 1$ if and only
if $M(HxH)_{(kgH,kg'H)} = 1$ for all $g, g', k \in G$.
Therefore (iii) holds. Conversely, if (iii) holds then 
by~\eqref{eq:orbital} and 
Proposition~\ref{prop:Hecke}, 
a suitable weight satisfying~(ii) is 
$\sum_{x} Q_{(H,xH)} \eta_H w \eta_H$.

To complete the proof it suffices to show that (ii) implies (iv)
and (iv) implies (i). Suppose that~(ii) holds, so that $w \in \eta_H \C[G] \eta_H$.
Then $w(hgh') = w(g)$ for all $h, h' \in H$ and so
$w$ is constant on left cosets $gH$ in the same double coset
$HxH$, and dually, $w(Hg)$ is constant on right cosets $Hg$ in the same double
coset $HxH$. Hence $w$ satisfies the conditions of Corollary~\ref{cor:strongExact}
to lump strongly and exactly to $G/H$, as required for (iv).
Finally (iv) implies~(i) because strong lumping implies weak lumping.
\end{proof}

\section{Duality and time-reversal}
\label{sec: time reversal}

In this section we state and prove a new theorem that relates time reversal and duality for weak lumping of general finite Markov chains. We then derive Theorem~\ref{thm:mainTimeReversal} as an application, and finally deduce Corollary~\ref{cor:strongExact}.

\subsection{Duality and time-reversal for weak lumpings of general finite Markov chains}\label{subsec:timeReversal}
Let $X = \MC(\alpha, P)$ be a DTHMC, and suppose that it is stationary,~i.e. $X_t$ is distributed
according to $\alpha$ for all times $t$.
 We may extend it to have time indexed by $\mathbb{Z}$. The time reversal of the extended chain is another stationary DTHMC which we shall denote $X^\star$. It also has time indexed by $\mathbb{Z}$ and stationary distribution $\alpha$. 
Let $A \subseteq G$ be the support of $\alpha$. Then,
in the notation from the start of \S\ref{sec: probabilityPreliminaries}, the time reversed chain 
defined on $A$
is $\MC(\alpha, P^\star)$, where $$P^\star(x,y) = \frac{\alpha(y)}{\alpha(x)}P(y,x).$$ 
In other words, if $D$ is the diagonal matrix indexed by $A$ whose $i^{th}$ diagonal entry is $\alpha(i)$, then $DP = (DP^\star)^T$. In particular, if $\alpha$ is the uniform distribution on $A$ then $P^\star = P^T$.

Now suppose $f: A \to B$ is a surjective function. Then $X^\star$ lumps weakly under $f$ if and only if $X$ does, since if the process $f(X)$ is a time-homogeneous Markov chain (necessarily stationary), then its time reversal is also a time-homogeneous Markov chain. We shall see in Corollary~\ref{cor:ExactStrongDuality} that exact lumping and strong lumping are exchanged by time reversal, under the mild condition that the stationary distribution $\alpha$ has full support.  This is a special case of the following duality statement.
See Definition~\ref{def: stable} for the definition of stable space.
\begin{theorem}[Duality]\label{thm:DualityForMarkovChainLumping}
  Let $\alpha$ be any stationary probability distribution for $P$ such that $\alpha(x) > 0$ for every $x \in A$. Let the time reversal of $\MC(\alpha, P)$ be $\MC(\alpha, P^\star)$. Suppose that $P$ lumps weakly under $f$ with stable space $V$, and $\alpha \in V$.  If $V$ is a real vector space, then define 
  \[ W = \left\{ w \in \mathbb{R}^A:\text{for all $v \in V^\circ$ we have}\sum_{x \in A} \frac{v(x) w(x)}{\alpha(x)} = 0 \right\},\] 
  and if $V$ is a complex vector space, then define  $$W = \left\{ w \in \mathbb{C}^A:\text{for all $v \in V^\circ$ we have}\sum_{x \in A} \frac{\overline{v(x)} w(x)}{\alpha(x)} = 0 \right\}.$$
   Then $\alpha \in W$ and the time reversal $P^\star$ lumps weakly under $f$ with stable space $W$.
   \end{theorem}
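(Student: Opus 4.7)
The plan is to treat $W$ as the perpendicular complement of $V^\circ$ with respect to the (Hermitian in the complex case) bilinear form
\[
B(v,w) = \sum_{x \in A} \frac{\overline{v(x)}\, w(x)}{\alpha(x)},
\]
which is well-defined because $\alpha$ has full support. We then verify the four axioms (a)--(d) of Definition~\ref{def: stable} for $W$ and $P^\star$ one at a time, translating each axiom for $V$ under $P$ into the corresponding axiom for $W$ under $P^\star$ via $B$.

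First, for axiom (a) and the claim $\alpha \in W$: if $v \in V^\circ \subseteq \ker F$ then $B(v,\alpha) = \sum_x \overline{v(x)} = 0$ since lump sums of $v$ vanish. For axiom (c), note that $\Pi_b$ is self-adjoint with respect to $B$ (it is diagonal and real), and $V^\circ \Pi_b \subseteq V^\circ$ because $V\Pi_b \subseteq V$ and $\ker F$ is closed under $\Pi_b$. Hence for $w \in W$ and $v \in V^\circ$, $B(v, w\Pi_b) = B(v\Pi_b, w) = 0$, giving $W\Pi_b \subseteq W$.

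For axiom (b), the key computation is that $P$ and $P^\star$ are adjoint with respect to $B$. A direct calculation using the definition $P^\star(y,x) = \alpha(x)P(x,y)/\alpha(y)$ yields $B(v, wP^\star) = B(vP, w)$ for all $v,w$ (the $\alpha(x)$ factors cancel and $P$ is real). Since $V^\circ P \subseteq V^\circ$ by (d) for $V$, we deduce $WP^\star \subseteq W$.

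For axiom (d), I would first identify $W^\circ$ explicitly as $V^\perp$, the full $B$-orthogonal complement of $V$. The inclusion $V^\perp \subseteq W^\circ$ follows because $V \supseteq V^\circ$ and because $\alpha \Pi_b \in V$ (using $\alpha \in V$ and $V\Pi_b \subseteq V$) forces every $w \in V^\perp$ to have all lump sums $\sum_{x \in f^{-1}(b)} w(x) = B(\alpha \Pi_b, w) = 0$, so $w \in \ker F$. Conversely, because $\alpha(f^{-1}(b)) > 0$ for every $b \in B$ (full support of $\alpha$), the decomposition $V = V^\circ \oplus \bigoplus_{b \in B} \langle \alpha\Pi_b\rangle$ holds, and any $w \in W^\circ$ is orthogonal to both summands, hence to all of $V$. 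With $W^\circ = V^\perp$ in hand, the adjointness computation $B(v, wP^\star) = B(vP,w)$ and the axiom $VP \subseteq V$ for $V$ immediately give $V^\perp P^\star \subseteq V^\perp$, which is $W^\circ P^\star \subseteq W^\circ$, i.e., axiom (d) for $W$.

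The main subtlety I anticipate is the real-versus-complex bookkeeping: one must check that when $V$ is defined over $\mathbb{R}$ the corresponding $W$ is also real, which is automatic because $B$ restricts to a real bilinear form and $V^\circ$ is spanned by real vectors. The identification $W^\circ = V^\perp$ is the only step where the full-support hypothesis on $\alpha$ is genuinely used; everything else is formal manipulation of the adjointness $B(v,wP^\star)=B(vP,w)$.
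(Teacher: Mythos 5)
Your proof is correct and takes essentially the same route as the paper: the same weighted form (the paper's $\langle -,-\rangle_\alpha$, your $B$), the same adjointness identity $B(v,wP^\star)=B(vP,w)$, and the same decomposition $V = V^\circ \oplus \bigoplus_{b}\langle \alpha\Pi_b\rangle$, which is exactly the projection the paper uses to show every $w\in W^\circ$ is orthogonal to all of $V$. Your only reorganization is in condition (d), where you make the identification $W^\circ = V^\perp$ explicit and deduce closure under $P^\star$ from $VP\subseteq V$, whereas the paper computes the lump sums of $wP^\star$ directly via $\alpha\Pi_b P\in V$; the paper records this same duality ($V=(W^\circ)^\perp$, $V^\circ=W^\perp$) in the remark immediately following its proof.
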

  \begin{proof}
First let us check that $\alpha \in W$. For every $v \in V^\circ$ we have $$\sum_{x \in A} v(x) = \sum_{b \in B} \sum_{x \in f^{-1}(b)} v(x) = \sum_{b \in B} 0 = 0,$$ as required.
We now check that $P^\star$ lumps weakly under $f$ with stable space~$W$, by checking the four conditions in Definition~\ref{def: stable}. 
\begin{defnlist}
\item[(a)] $W$ contains the probability vector $\alpha$.
\item[(b)] To show that $WP^\star \subseteq W$, let $w \in W$. We shall check that $wP^\star \in W$. For any $v \in V^\circ$ we have $vP \in V^\circ$ because $V^\circ P \subseteq V^\circ$, so
\begin{multline*} \quad\quad\quad \sum_{x \in A} \frac{\overline{v(x)}(wP^\star)(x)}{\alpha(x)} = \sum_{x,y \in A}\frac{ \overline{v(x)} P^\star(y,x) w(y)}{\alpha(x)} \\ = \sum_{x,y \in A} \frac{\overline{v(x)} P(x,y) w(y) }{\alpha(y)} = \sum_{y \in A} \frac{\overline{(vP)(y)} w(y)}{\alpha(y)} = 0.\end{multline*}
\item[(c)] To check that $W \Pi_b \subseteq W$ for all $b \in B$, note that $w \in W$ if and only if $\sum_{x \in A} \frac{\overline{v(x)}w(x)}{\alpha(x)} =  0$ for all $v$ in a basis of $V^\circ$. We may choose a basis of $V^\circ$ each of whose elements is supported on a single fiber of the map $f$. Hence this orthogonality condition holds for $w$ if and only if it holds for $w\Pi_b$ for each $b \in B$.
\item[(d)] To show that $W^{\circ}P^\star \subseteq W^\circ$, where $W^\circ = W \cap \ker F$, suppose that $w \in W^\circ$. We have already checked in (b) that $wP^\star \in W$, so we must show that $wP^\star \in \ker F$. First, let us show that for every $v \in V$ we have
\begin{equation} \label{eq: w perp to V}
  \sum_{x \in A} \frac{\overline{v(x)}w(x)}{\alpha(x)} = 0.
\end{equation}
Let $v'$ be the projection of $v$ into $V^\circ$ defined by
$$v' = v - \sum_{b \in B} \frac{v(f^{-1}(b))}{\alpha(f^{-1}(b)} (\alpha\Pi_b),$$
where $v(f^{-1}(b))$ denotes $\sum_{y \in f^{-1}(b)} v(y)$ and likewise for $\alpha(f^{-1}(b))$.  Then since $w \in W$ we have
$$\sum_{x \in A} \frac{\overline{v'(x)}w(x)}{\alpha(y)} = 0,$$
and so it suffices to check that for every $b \in B$ we have
$$ \frac{\overline{v(f^{-1}(b))}}{\alpha(f^{-1}(b))} \sum_{x \in A}\frac{\overline{(\alpha\Pi_b)(x)} w(x)}{\alpha(x)} = \frac{\overline{v(f^{-1}(b)}}{\alpha(f^{-1}(b))} \sum_{x \in f^{-1}(b)} w(x) = 0. $$ This holds because $w \in \ker F$. We have proved~\eqref{eq: w perp to V}.

We wish to show that $wP^\star F = 0$, in other words that for each $b \in B$ we have $$\sum_{x \in f^{-1}(b)} (wP^\star)(x) = 0.$$
Let $b \in B$. We have 
\begin{align*} \quad\quad\sum_{x \in f^{-1}(b)} (wP^\star)(x)  &= \sum_{y \in A}\sum_{x \in f^{-1}(b)} w(y)P^\star(y,x) \\ &= \sum_{y \in A}\sum_{x \in f^{-1}(b)} w(y)P(x,y) \frac{\alpha(x)}{\alpha(y)}
\\  &= \sum_{y \in A} \frac{((\alpha \Pi_b)P)(y) w(y)}{\alpha(y)}\\ &= \sum_{y \in A} \frac{(\alpha \Pi_bP)(y) w(y)}{\alpha(y)}.\end{align*} 
This is $0$ because $ \overline{\alpha} = \alpha \in V$ and hence $\alpha\Pi_b \in V$ and $\alpha \Pi_b P \in V$, and so we may take $v = \overline{\alpha\Pi_b P} = \alpha\Pi_b P$ in~\eqref{eq: w perp to V}. This concludes the check of condition (d), and the proof of Theorem~\ref{thm:DualityForMarkovChainLumping}.
\end{defnlist}
\end{proof}

The correspondence between $V$ and $W$ is an involution. To see this, let the inner product $\langle -,-
\rangle_\alpha$ on $\mathbb{R}^A$ or $\mathbb{C}^A$ be defined by 
$$\langle v,w\rangle_\alpha = \sum_{x \in A} \frac{\overline{v(x)}w(x)}{\alpha(x)}.$$
Since $V$ and $W$ both have $\langle \alpha \Pi_B: b \in B\rangle$ as a subspace, it follows that  $V = (W^\circ)^\perp$ and $V^\circ = W^\perp$, where the orthogonal complements are with respect to the inner product $\langle -, - \rangle_\alpha$.
The following special case is worth noting.
  
  \begin{corollary}\label{cor:ExactStrongDuality}
   Let $\alpha$ be a stationary distribution for $P$ with full support. Then $\MC(\alpha,P)$ lumps strongly under $f$ if and only if $\MC(\alpha, P^\star)$ lumps exactly under $f$. 
\end{corollary}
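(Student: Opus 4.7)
The plan is to apply Theorem~\ref{thm:DualityForMarkovChainLumping} with the two extreme choices of stable space identified in \S\ref{sec: probabilityPreliminaries}: $V=\mathbb{R}^A$ corresponds to strong lumping, while $V=\bigoplus_{b\in B}\langle\alpha\Pi_b\rangle$ (which has $V^\circ=0$) corresponds to exact lumping. The hope is that the duality exchanges these two extremes, and that full support of $\alpha$ is exactly what is needed to make the exchange clean.

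First I would handle the forward direction. Assume $\MC(\alpha,P)$ lumps strongly under $f$. As noted after Definition~\ref{defn:strongLumping}, this implies $P$ lumps stably for $V=\mathbb{R}^A$, so $V^\circ=\ker F$ and $\alpha\in V$. Applying Theorem~\ref{thm:DualityForMarkovChainLumping} produces a stable space $W$ for $P^\star$ given by
\[
W=\bigl\{w\in\mathbb{R}^A:\langle v,w\rangle_\alpha=0\text{ for all }v\in\ker F\bigr\},
\]
where $\langle v,w\rangle_\alpha=\sum_x v(x)w(x)/\alpha(x)$ (this inner product is meaningful precisely because $\alpha$ has full support). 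A standard orthogonality argument, using that $\ker F$ is spanned by vectors supported on a single fiber and summing to zero there, identifies $W=\bigoplus_{b\in B}\langle\alpha\Pi_b\rangle$. Again using full support, $W\cap\ker F=0$: if $w=\sum_b c_b\alpha\Pi_b$ lies in $\ker F$ then $c_b\,\alpha(f^{-1}(b))=0$ and hence $c_b=0$ for all $b$. Thus $V(f,P^\star,\alpha)\subseteq W$ forces $V(f,P^\star,\alpha)^\circ=0$, which is exact lumping of $\MC(\alpha,P^\star)$ by Definition~\ref{defn:exactLumping}.

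For the converse I would apply the duality in the reverse direction, using that $(P^\star)^\star=P$ and that $\alpha$ is stationary for $P^\star$ as well. Assume $\MC(\alpha,P^\star)$ lumps exactly. Then by Lemma~\ref{lem:EquivalentConditionsForExactLumping}, $P^\star$ lumps stably for $V'=\bigoplus_{b\in B}\langle\alpha\Pi_b\rangle$, and $(V')^\circ=0$. Applying Theorem~\ref{thm:DualityForMarkovChainLumping} to $P^\star$ with this $V'$ yields a stable space
\[
W'=\bigl\{w:\langle v,w\rangle_\alpha=0\text{ for all }v\in(V')^\circ\bigr\}=\mathbb{R}^A
\]
for $P$. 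Stability for $\mathbb{R}^A$ means that condition (d) of Definition~\ref{def: stable} holds for $V=\mathbb{R}^A$, i.e.\ $(\ker F)PF=0$, which is precisely Dynkin's condition and hence strong lumping of $\MC(\alpha,P)$.

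There is no substantial obstacle here: the heart of the work has already been done in Theorem~\ref{thm:DualityForMarkovChainLumping}. The only subtlety is bookkeeping around full support, which is needed in two places: to make the $\alpha$-weighted inner product well defined, and to conclude $W\cap\ker F=0$ in the forward direction. I would make a brief explicit note of the involutive nature of the $V\leftrightarrow W$ correspondence (mentioned in the paragraph after the proof of Theorem~\ref{thm:DualityForMarkovChainLumping}) to justify that applying the duality twice recovers the original stable space, so that the two directions of the corollary are genuinely symmetric.
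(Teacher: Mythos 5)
Your proof is correct and follows essentially the same route as the paper: apply Theorem~\ref{thm:DualityForMarkovChainLumping} with the two extreme stable spaces $V=\mathbb{R}^A$ and $V=\bigoplus_{b\in B}\langle\alpha\Pi_b\rangle$, using full support to identify the dual space and to see it meets $\ker F$ trivially. You simply spell out the details (the explicit identification of $W$, the converse via Lemma~\ref{lem:EquivalentConditionsForExactLumping}, and $(P^\star)^\star=P$) that the paper's two-line proof leaves implicit.
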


\begin{proof} 
  Taking $V = \mathbb{R}^A$ in Theorem~\ref{thm:DualityForMarkovChainLumping}, we obtain $W = \langle \alpha\Pi_b: b \in B \rangle$. Dually, if we take $V = \langle \alpha\Pi_b: b \in B \rangle$ then $V^\circ = 0$ and we obtain $W = \mathbb{R}^A$. 
  Now apply Theorem~\ref{thm:DualityForMarkovChainLumping}.
\end{proof}

\subsection{Proof of Theorem~\ref{thm:mainTimeReversal}}
Recall that the anti-involution $\star$ on $\C[G]$ is defined by
 $x^\star = \sum_{g \in G} \overline{x(g)} g^{-1}$ for any element $x \in \C[G]$. 
Let $\perp$ denote the orthogonal complement with respect to the Hermitian inner product on $\C[G]$ 
defined in~\eqref{eq:innerProduct}.
 
\begin{lemma}\label{lem:perpandstar}
 For any idempotent $e \in E[G]$, $(\C[G]e)^\perp = \C[G](1-e^\star)$.
\end{lemma}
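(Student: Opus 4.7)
The plan is to establish a single adjoint identity for the Hermitian inner product and then exploit it together with the fact that $1-e^\star$ is an idempotent. The whole argument should be purely algebraic and take only a few lines; no dimension count is needed.

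First I would verify that for every $a, b, e \in \C[G]$ one has the adjoint formula
\[ \langle ae, b \rangle \;=\; \langle a, b e^\star \rangle. \]
This is a direct computation: expanding $(ae)(g) = \sum_h a(h) e(h^{-1}g)$ and relabelling $k = h^{-1}g$, the left-hand side becomes
\[ \frac{1}{|G|}\sum_{h,k} \overline{a(h)}\;\overline{e(k)}\; b(hk), \]
and doing the same thing with $(b e^\star)(g) = \sum_h b(h) e^\star(h^{-1}g)$, using $e^\star(h^{-1}g) = \overline{e(g^{-1}h)}$ and relabelling $k = g^{-1}h$, one arrives at the same sum. Thus right multiplication by $e$ is the adjoint of right multiplication by $e^\star$ for $\langle -, - \rangle$.

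Next I would note that because $\star$ is an algebra anti-involution, $e^2 = e$ implies $(e^\star)^2 = (e^2)^\star = e^\star$, so $e^\star$ and $1-e^\star$ are idempotents; in particular $(1-e^\star)\,e^\star = 0$. For the inclusion $\C[G](1-e^\star) \subseteq (\C[G]e)^\perp$, for any $x, y \in \C[G]$ the adjoint formula gives
\[ \langle xe,\, y(1-e^\star) \rangle \;=\; \langle x,\, y(1-e^\star)e^\star \rangle \;=\; \langle x, 0 \rangle \;=\; 0. \]
For the reverse inclusion, take any $z \in (\C[G]e)^\perp$. Then for every $x \in \C[G]$ we have $0 = \langle xe, z\rangle = \langle x, z e^\star \rangle$, so by non-degeneracy of the Hermitian inner product $z e^\star = 0$, whence $z = z(1-e^\star) \in \C[G](1-e^\star)$.

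The main obstacle is purely bookkeeping: getting the adjoint identity right with the complex conjugation on the first slot and the inversion $g \mapsto g^{-1}$ built into $\star$. Once that identity is in place, both inclusions drop out in one line each, so no Wedderburn or dimension-counting argument is required.
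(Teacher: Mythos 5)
Your proof is correct and takes essentially the same route as the paper: both arguments come down to the compatibility of $\star$ with the inner product (your adjoint identity $\langle ae,b\rangle=\langle a,be^\star\rangle$ is the paper's observation that $\langle v,w\rangle$ is, up to a factor, the coefficient of the identity in $wv^\star$), reducing the claim to $(\C[G]e)^\perp=\{z: ze^\star=0\}=\C[G](1-e^\star)$ via idempotency of $e^\star$. No changes needed.
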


\begin{proof}
 Observe that $\langle v,w \rangle = 0$ if and only if the coefficient of the identity in $wv^\star$ is $0$. Hence $ev^\star  = 0$ if and only if $\langle v, ge \rangle = 0$ for all $g \in G$, if and only if $v \in (\C[G]e)^\perp$.  
 Hence \[\C[G]e^\perp = \{v \in \C[G] : ev^\star = 0\} = \{v \in \C[G]: v e^\star = 0\} = \C[G](1-e^\star),\]
 where we used that $e^\star$ is also an idempotent.
\end{proof}

  When $w$ is a weight, the stationary random walk $\MC(\eta_G, w)$ may be extended to a stationary random walk with time indexed by $\mathbb{Z}$. The time reversal of this extension is  $\MC(\eta_G,w^\star)$, similarly extended. Thus, recalling that $P_w$ denotes the transition matrix associated to $w$, we have $(P_w)^\star = P_{w^\star}$.
We are now ready to prove Theorem~\ref{thm:mainTimeReversal}, which we restate below.

\setcounter{tmp}{\value{theorem}}
\setcounter{section}{1}\setcounter{theorem}{\value{Revthm}}
\begin{theorem}[Time reversal]
Let $e \in \Eb{H}$ and let $w \in \C[G]$ be a weight. 
The left-invariant random walk on $G$ driven by $w$ weakly lumps to $G/H$ with
stable ideal $\C[G]e$ 
if and only if the left-invariant random walk on $G$ driven by $w^\star$ lumps 
stably for 
$\C[G](1-e^\star+\eta_H)$.
\end{theorem}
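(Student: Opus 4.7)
The plan is to derive the theorem as a direct corollary of the general duality result Theorem~\ref{thm:DualityForMarkovChainLumping}, applied to the stationary distribution $\alpha = \eta_G$. Note that $\eta_G w = w(G)\eta_G$ for any weight $w$, so $\eta_G$ is always a stationary probability distribution for $P_w$, with full support; and the time reversal of $\MC(\eta_G, P_w)$ is $\MC(\eta_G, P_{w^\star})$, because for uniform $\alpha$ the time-reversed transition matrix is the transpose, and transposing $P_w$ corresponds to the $\star$-operation on $\C[G]$ at the level of weights.

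First I would compute the stable space the duality theorem spits out. Start with $V = \C[G]e$. Because $e \in \Eb{H}$, and because $\eta_H$ is central in $\C[H]$, we have $\eta_H e = e\eta_H = \eta_H$, so $e - \eta_H$ is itself an idempotent. Lemma~\ref{lemma: ker Lambda is Ann etaH} then gives $V^\circ = \C[G]e(1-\eta_H) = \C[G](e - \eta_H)$. With $\alpha = \eta_G$ the weighted inner product $\langle v,w\rangle_\alpha$ of Theorem~\ref{thm:DualityForMarkovChainLumping} differs from the Hermitian product~\eqref{eq:innerProduct} only by the scalar $|G|^2$, so $W = (V^\circ)^\perp$ in the sense of~\eqref{eq:innerProduct}. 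Applying Lemma~\ref{lem:perpandstar} to the idempotent $e - \eta_H$ yields
\[
W \;=\; \bigl(\C[G](e-\eta_H)\bigr)^\perp \;=\; \C[G]\bigl(1 - (e-\eta_H)^\star\bigr) \;=\; \C[G](1 - e^\star + \eta_H),
\]
using $\eta_H^\star = \eta_H$ because $H$ is closed under inverses.

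Next I would verify that $f := 1 - e^\star + \eta_H$ lies in $\Eb{H}$. Clearly $f \in \C[H]$. Taking $\star$ of the identity $\eta_H e = \eta_H$ gives $e^\star \eta_H = \eta_H$, and by centrality of $\eta_H$ in $\C[H]$ we also have $\eta_H e^\star = \eta_H$. A direct expansion then yields $f^2 = f$ and $\eta_H f = \eta_H$. Consequently $\C[G]f$ is an induced ideal containing $\eta_G$, i.e.~a Gurvits--Ledoux ideal. By Theorem~\ref{thm:DualityForMarkovChainLumping}, $P_{w^\star}$ lumps weakly under the quotient $G \to G/H$ with stable space $\C[G]f$; by Proposition~\ref{prop:lumpsStably}, this is equivalent to $w^\star$ lumping stably for $\C[G]f$ in the sense of Definition~\ref{defn:lumpsStably}.

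For the converse implication I would simply observe that the construction is an involution: applying the same procedure to $w^\star$ and $f = 1 - e^\star + \eta_H$ returns $w^{\star\star} = w$ and
\[
1 - f^\star + \eta_H \;=\; 1 - (1 - e + \eta_H) + \eta_H \;=\; e,
\]
since $(e^\star)^\star = e$ and $\eta_H^\star = \eta_H$. Thus the two directions are symmetric. The only genuine obstacle is the bookkeeping of the $\star$-operation and the verification that $(e-\eta_H)^\star = e^\star - \eta_H$ is again an idempotent so that Lemma~\ref{lem:perpandstar} applies; both reduce to the single observation that $\eta_H$ absorbs $e$ and $e^\star$ on either side, which is immediate from $e \in \Eb{H}$ together with the centrality of $\eta_H$ in $\C[H]$.
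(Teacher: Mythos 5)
Your proposal is correct and follows essentially the same route as the paper's proof: apply Theorem~\ref{thm:DualityForMarkovChainLumping} to the stationary uniform distribution $\eta_G$ with $V = \C[G]e$, compute $V^\circ = \C[G](e-\eta_H)$ and then $W = \C[G](1-e^\star+\eta_H)$ via Lemma~\ref{lem:perpandstar} (using that $e-\eta_H$ is idempotent and $\eta_H^\star = \eta_H$), and pass between stable spaces and stable ideals via Proposition~\ref{prop:lumpsStably}. Your explicit verification that $1-e^\star+\eta_H \in \Eb{H}$ and your involution argument for the converse are details the paper leaves implicit, but they are accurate and add nothing contrary to its argument.
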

\setcounter{section}{10}
\setcounter{theorem}{\value{tmp}}
\begin{proof} We apply the complex case of Theorem~\ref{thm:DualityForMarkovChainLumping} to $\MC(\eta_G,w)$. The uniform distribution $\alpha = \eta_G$ is stationary and gives positive mass to every element of $A = G$. Take $V =  \C[G]e$. 
Then
$$ W =  (V^\circ)^\perp = (\C[G](e - \eta_H))^\perp = \C[G](1 - e^\star + \eta_H)$$
by Lemma~\ref{lem:perpandstar}, using that $e-\eta_H$ is an idempotent and $\eta_H^\star = \eta_H$. As mentioned in Remark~\ref{rem: stable spaces and stable ideals}, Proposition~\ref{prop:lumpsStably} shows that the left-invariant random walk driven by $w$ lumps weakly to $G/H$ with stable ideal $\C[G]e$ if and only if the corresponding transition matrix $P_w$ lumps weakly with stable space $\C[G]e$. 
\end{proof}

\begin{remark}
By the formula for centrally primitive idempotents~\eqref{eq:centralPrimitiveIdempotent},
if~$e$ is a centrally primitive idempotent then $e = e^\star$.
It follows that if $H$ is abelian then, since every idempotent is then a sum of centrally primitive
idempotents, we have $e = e^\star$ for all $e \in \Eb{H}$ and
so the stable ideal for the time reversed walk is $\C[G](1-e-\eta_H)$. 
\end{remark}

We are now ready to prove Corollary~\ref{cor:strongExact}. 

\setcounter{tmp}{\value{theorem}}
\setcounter{section}{1}\setcounter{theorem}{\value{Revcor}}
\begin{proof}[Proof of Corollary~\ref{cor:strongExact}]
Part (i) of Corollary~\ref{cor:strongExact} is the criterion for strong lumping that was proved in \cite{BW}. By the equivalence of (i) and (iv) in Proposition~\ref{prop:strongJwIsCG},
this is equivalent 
to weak lumping with stable ideal $\C[G]$; that is, the case $e=\id_H$ of Theorem~\ref{thm:mainTimeReversal}. 
Thus strong lumping occurs if and only if the left-invariant random walk on $G$ driven by $w^\star$ lumps weakly to $G/H$ with stable ideal $\C[G]\eta_H$. That is precisely the condition for the time-reversal of the stationary walk to lump exactly to $G/H$. Since $\star$ is an anti-involution, i.e.~$(xy)^\star = y^\star x^\star$ for 
all $x$, $y \in \C[G]$, we see that $w(gH)$ is constant for left cosets $gH$ in the same double coset if and only if $w^\star(Hg)$ is constant for right cosets $Hg$ in the same double coset. Part (ii) of the corollary follows, by replacing $w$ with $w^\star$.
\end{proof}
\setcounter{section}{10}
\setcounter{theorem}{\value{tmp}}

We have already seen an example of $\star$-duality and time reversal in the 
shuffles described in \S\ref{subsec:shuffles}. For a deck of $n$ cards, the random-to-top shuffle and the top-to-random shuffle are related by time reversal. Both shuffles are irreducible with the uniform distribution on $\Sym_n$ as stationary distribution. The random-to-top shuffle lumps strongly to the top card and the top-to-random shuffle lumps exactly to the top card, exemplifying Corollary~\ref{cor:ExactStrongDuality}.  For the weight~$w$ on $G = \Sym_4$ defined in equation~\eqref{eq:w123}, and $H =\Sym_{\{2,3,4\}}$, $T = \Sym_{\{2,3\}}$, we obtain that
\[w^\star = (1-\lambda) \Id + \mfrac{\lambda}{3} \bigl(
(1,4)(2,3) + (1,3,4) + (1,3,2,4) \bigr)\] lumps stably for the ideal $\C[G](1-\eta_T + \eta_H)$.
We shall see further examples of Theorem~\ref{thm:mainTimeReversal}  when we study random rotations of a six-sided die in \S\ref{subsec: abelian example}: see in particular Example~\ref{ex:popWeakDual}.

\section{Interpolating between strong and exact lumping}\label{sec:interpolating}

Our characterisation of weak lumping from \S\ref{sec: a characterisation of WL} is in terms of Gurvits--Ledoux ideals
(see Definition~\ref{defn:GLideal}), which are induced ideals in the sense of Definition~\ref{defn:inducedIdeal}. 
A natural way of constructing ideals of $\C[H]$ is by taking ideals of $\C[T]$ where $T$ is a fixed subgroup of $H$.
Let $W = \C[T]\eta_T$ be the trivial representation of $T$, and let 
\begin{align*}
    U &= \C[H]\eta_T \cong W\Ind_T^H, \\
    L &= \C[G]\eta_T \cong U\Ind_H^G \cong W\Ind_T^G.
\end{align*}

\begin{proposition}\label{prop:interpolating}
    The left ideal $L = \C[G]\eta_T$ is a weak lumping Gurvits--Ledoux ideal for $w$ with respect to left cosets of $H$ if and only if
    \begin{enumerate}[label=\textup{(\alph*)},leftmargin=32pt, topsep=3pt, ref=\textup{(\alph*)}]
		\setlength{\itemindent}{-2pt}%
        \item\label{axiom a} the random walk driven by $w$ lumps exactly to left cosets of $T$, and
        \item \label{axiom b} $w(TgH) = \displaystyle \frac{|TgH|}{|HgH|}w(HgH)$ for all $TgH\in T\backslash G/ H$.
\end{enumerate}
\end{proposition}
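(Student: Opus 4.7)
The plan is to identify $L = \C[G]\eta_T$ with an induced ideal in the sense of Definition~\ref{defn:inducedIdeal} and then translate the weak-lumping conditions \ref{axiom L0}--\ref{axiom L3} of Definition~\ref{defn:GLideal} into explicit algebraic identities via Lemma~\ref{lemma:circ}, whose real content is then the averaging lemma and the criterion for exact lumping to $G/T$ already established.

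First I would check that $L$ is a Gurvits--Ledoux ideal in the sense of \ref{axiom L0} and \ref{axiom L2}. A direct calculation gives $\eta_H\eta_T = \eta_H = \eta_T\eta_H$, so $\eta_T \in \Eb{H}$ (regarded as an idempotent of $\C[H]$) and $\eta_G = \eta_G\eta_T \in L$, which is \ref{axiom L0}. Since $\eta_T \in \C[H]$, Proposition~\ref{prop:inducedIdealCharacterisation}(v) shows that $L = \C[G]\eta_T$ is an induced ideal from $H$ to $G$, giving \ref{axiom L2}. Thus $L$ is a Gurvits--Ledoux ideal for $w$ if and only if the extra conditions \ref{axiom L1} and \ref{axiom L3} hold, and by Lemma~\ref{lemma:circ}(iii) these two together are equivalent to
\[
\eta_T w(1-\eta_T) = 0 \quad\text{and}\quad (\eta_T - \eta_H)w\eta_H = 0.
\]

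The first identity $\eta_T w (1-\eta_T) = 0$ is precisely condition (vi) of Proposition~\ref{prop:exactLwIsCGeta} applied with $T$ in place of $H$, and so is equivalent to the random walk driven by $w$ lumping exactly to the left cosets of $T$. This delivers the equivalence with \ref{axiom a}.

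The second identity $\eta_T w \eta_H = \eta_H w \eta_H$ is where the averaging lemma does the work. By Lemma~\ref{lemma:averaging}(iii) applied to the pair of subgroups $(T,H)$, the weight $\eta_T w \eta_H$ is constant on each double coset $TxH$ with value $w(TxH)/|TxH|$; applied to $(H,H)$, the weight $\eta_H w \eta_H$ is constant on each double coset $HxH$ with value $w(HxH)/|HxH|$. Since each $HxH$ is a disjoint union of double cosets $TgH$ with $TgH \subseteq HxH$, the equality $\eta_T w \eta_H = \eta_H w \eta_H$ therefore holds if and only if
\[
\frac{w(TgH)}{|TgH|} = \frac{w(HgH)}{|HgH|}
\]
for every $TgH \in T\backslash G/H$, which rearranges to \ref{axiom b}. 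The only thing to watch is the bookkeeping of cosets for the two averaging identities, but both are immediate applications of Lemma~\ref{lemma:averaging}.
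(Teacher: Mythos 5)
Your proposal is correct and follows essentially the same route as the paper: reduce, via Lemma~\ref{lemma:circ} (equivalently the description of the weak lumping algebra $\Theta(\eta_T)$), to the two identities $\eta_T w(1-\eta_T)=0$ and $(\eta_T-\eta_H)w\eta_H=0$, identify the first with exact lumping to $G/T$ via Proposition~\ref{prop:exactLwIsCGeta}(vi), and the second with condition (b) by Lemma~\ref{lemma:averaging}. The only quibble is that Proposition~\ref{prop:inducedIdealCharacterisation}(v) states the converse implication to the one you need for \textup{(L2)}; the fact you want, that $\C[G]e$ with $e\in\C[H]$ is an induced ideal, is immediate since $\pi_H(\C[G]\eta_T)=\C[H]\eta_T$.
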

\noindent

\begin{proof}
    The weak lumping algebra of $L = \C[G]\eta_T$ with respect to left cosets of $H$ is
    \begin{align*}
        \Theta(\eta_T) &= \RId_{\C[G]}(L)\cap\RId_{\C[G]}(L^\circ)
        \\ &=
        \{w\in\C[G]: \eta_T w (1-\eta_T) = 0,
        (\eta_T-\eta_H) w (1-\eta_T+\eta_H) = 0\}
        \\ &=
        \{w\in\C[G]: \eta_T w (1-\eta_T) = 0,  
        (\eta_T-\eta_H) w \eta_H = 0\}.
    \end{align*}
    The first equation defines the set of \emph{exactly} lumping weights on left cosets of $T$ given in 
    Example~\ref{ex:exactThetaDim}, that is
    \[
        \{w\in\C[G]: \eta_T w (1-\eta_T) = 0\} = \Theta(\eta_T). 
    \]
    The second equation can be rewritten as $\eta_T w \eta_H = \eta_H w \eta_H$ and broken up coefficient by coefficient into a system of $|G|$ equations. By Lemma \ref{lemma:averaging}, we have
    \[
    \eta_T w \eta_H =
    \sum_{x\in G}\left(
        \frac{w(TxH)}{|TxH|}
        \right) x
    \quad\text{and}\quad
    \eta_H w \eta_H =
    \sum_{x\in G}\left(
        \frac{w(HxH)}{|HxH|}
        \right) x.
    \]
    Therefore, $\eta_T w \eta_H = \eta_H w \eta_H$ if and only if 
    \[ w(TgH) = \frac{|TgH|}{|HgH|}w(HgH)\]
    for all $g\in G$.
\end{proof}

We remark that the conditions in Proposition~\ref{prop:interpolating} may be rewritten as:

\smallskip
    \begin{enumerate}[label=\textup{(\alph*${'}$)},leftmargin=32pt, topsep=3pt, ref=\textup{(\alph*${'}$)}]
		\setlength{\itemindent}{-2pt}%
    \item\label{axiom a'} $w(Tg)$ is constant on $Tg \subseteq TxT$ for all $TxT\in T\backslash G / T$, and
    \item \label{axiom b'} $w(TgH)$ is constant on $TgH \subseteq HxH$ for all $HxH\in H\backslash G / H$.
\end{enumerate}

\smallskip\noindent
We used this form of the conditions in the example in \S 1.2.3. 

\begin{corollary}\label{cor:exactandstrongasinterpolation}
    Let $L = \C[G]\eta_T$ be a Gurvits--Ledoux ideal for $w$ with respect to left cosets of $G/H$.
    \begin{thmlist}
        \item If $T = H$, then $L$ is weakly lumping for $w$ if and only if the left-invariant random walk driven by $w$ lumps exactly to $G/H$.
        \item If $T = 1$, then $L$ is weakly lumping for $w$ if and only if the left-invariant random walk driven by $w$ lumps strongly to $G/H$.
    \end{thmlist}
\end{corollary}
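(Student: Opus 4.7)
The plan is to derive both parts of Corollary~\ref{cor:exactandstrongasinterpolation} as direct specialisations of Proposition~\ref{prop:interpolating}, which gives that $L=\C[G]\eta_T$ is weakly lumping for $w$ if and only if
\begin{enumerate}
\item[(a)] the random walk driven by $w$ lumps exactly to $G/T$, and
\item[(b)] $w(TgH) = \dfrac{|TgH|}{|HgH|}\,w(HgH)$ for all $TgH \in T\backslash G/H$.
\end{enumerate}

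For part (i), I would set $T=H$ and observe that condition (b) reduces to the identity $w(HgH)=w(HgH)$, which is automatic; and condition (a) becomes exactly the statement that $w$ lumps exactly on $G/H$. Thus in this case (i) is literally the restatement of (a). This needs essentially no calculation beyond noting that the ratio $|TgH|/|HgH|$ equals $1$ when $T=H$.

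For part (ii), I would set $T=\{1\}$, so that $\eta_T = \id_G$ and $L = \C[G]$. Condition (a) becomes exact lumping on $G/\{1\}=G$, which is automatic since in that case each lump $f^{-1}(b)$ is a singleton and $V^\circ =0$ trivially (this can also be seen directly from $\eta_T w (1-\eta_T)= w\cdot 0=0$). Condition (b) says that for each $g\in G$,
\[ w(gH) \;=\; \frac{|gH|}{|HgH|}\,w(HgH) \;=\; \frac{|H|}{|HgH|}\,w(HgH), \]
so $w(gH)$ depends only on the double coset $HgH$. That is precisely the condition for $w(gH)$ to be constant as $gH$ ranges over left cosets inside a fixed double coset, which by Corollary~\ref{cor:strongExact}(i) is equivalent to strong lumping to $G/H$.

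There is no serious obstacle: the only point to check carefully is that the right-hand side of (b) produces the \emph{same} constant across all left cosets contained in $HgH$, which is immediate because the factor $|H|/|HgH|$ and the value $w(HgH)$ both depend only on the double coset. So the corollary falls out by setting $T=H$ and $T=\{1\}$ and reading off conditions (a) and (b) in each case, using Corollary~\ref{cor:strongExact}(i) to recognise the strong-lumping criterion in the second instance.
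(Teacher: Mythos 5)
Your proposal is correct and follows essentially the same route as the paper: both specialise Proposition~\ref{prop:interpolating} to $T=H$ (where condition (b) is vacuous and (a) is literally exact lumping to $G/H$) and to $T=1$ (where (a) is trivial and (b) reduces to the strong-lumping criterion of Corollary~\ref{cor:strongExact}(i)). The only cosmetic difference is that the paper invokes the restated conditions (a$'$),(b$'$) for the $T=1$ case, whereas you verify directly that (b) with $T=1$ is equivalent to $w(gH)$ being constant on left cosets within each double coset; this is the same argument in slightly different clothing.
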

\begin{proof}
When $T=H$, condition \ref{axiom a} in Proposition~\ref{prop:interpolating}
recovers the characterisation of exact lumping to $G/H$ from Corollary \ref{cor:strongExact}(ii) and condition~\ref{axiom b} becomes void.
When $T=1$, condition \ref{axiom b'} in the equivalent restatement above
becomes the characterisation of strong lumping from Corollary \ref{cor:strongExact}(i) and \ref{axiom a'} becomes 
trivial.
\end{proof}

The conclusion of this  corollary may hold even if $T$ is neither $1$ nor $H$. This is demonstrated by the following
proposition.

\begin{proposition}\label{prop:interpolatingShuffle} Let $G = \Sym_n$ acting naturally on $\{1,\ldots, n\}$.
Define subgroups
$H = \mathrm{Stab}(1) = \Sym_{\{2, \dots, n\}}$ and $T = \mathrm{Stab}(1) \cap \mathrm{Stab}(n)$ $= \Sym_{\{2, \dots, n-1\}}$. Consider the two-step
shuffle of a deck of $n$ cards:
\begin{quote}\emph{Remove the bottom card, insert it under a random card chosen uniformly 
from the remaining deck, then move the top card to the bottom.}
\end{quote}
This shuffle lumps weakly to $G/H$ with stable ideal $\C[G]\eta_T$.
\end{proposition}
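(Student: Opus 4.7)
The plan is to apply Proposition~\ref{prop:interpolating}, which with $H = \Sym_{\{2,\ldots,n\}}$ and $T = \Sym_{\{2,\ldots,n-1\}}$ reduces the claim to verifying (a) that $w$ lumps exactly to $G/T$ and (b) that $w(TgH)/|TgH| = w(HgH)/|HgH|$ for every $g \in G$.

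I will first express $w$ as an element of $\C[G]$. Writing the bottom-to-random insertion step (for insertion under position $i \in \{1,\ldots,n-1\}$) as the cycle $\pi_i = (i+1, i+2, \ldots, n)$ and the subsequent top-to-bottom step as $\tau = (1, n, n-1, \ldots, 2)$, the composite shuffle is
\[ w = \frac{1}{n-1}\sum_{i=1}^{n-1} \pi_i\tau = \frac{1}{n-1}\sum_{i=1}^{n-1} \sigma_i, \qquad \sigma_i := (1, n, i, i-1, \ldots, 2). \]
Since every $\sigma_i$ sends $1$ to $n$, the support of $w$ lies entirely in the non-trivial $HH$-double coset $H\tau H = \{g : 1g \ne 1\}$, of size $(n-1)(n-1)!$.

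To verify (a), observe that $\sigma_1 = (1,n)$ centralises $T$, so $T\sigma_1 T$ contains only a single right $T$-coset and (a) holds vacuously there. For $i \geq 2$, the identities $1\sigma_i^{-1} = 2$ and $2\sigma_j = 1$ give $\sigma_i^{-1}\sigma_j \in T$, placing $\sigma_2, \ldots, \sigma_{n-1}$ in a common $TT$-double coset, while $\sigma_i\sigma_j^{-1}$ fixes $1$ but sends $n$ to $n$ only when $i = j$, so the right cosets $T\sigma_2, \ldots, T\sigma_{n-1}$ are pairwise distinct. Since $T \cap \sigma_2 T\sigma_2^{-1}$ is the pointwise stabiliser of $\{1,2,n\}$ of order $(n-3)!$, this $TT$-double coset contains exactly $n-2$ right $T$-cosets, so these are exhausted by $T\sigma_2, \ldots, T\sigma_{n-1}$, each carrying $w$-weight $1/(n-1)$.

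To verify (b), I will analyse the left $T$-action on $G/H$. Parametrising left $H$-cosets by $gH \mapsto 1g^{-1}$ and writing $L'_k$ for the coset with $1g^{-1} = k$, the relation $t \cdot L'_k = L'_{kt^{-1}}$ yields two $T$-orbits inside $H\tau H = L'_2 \cup \cdots \cup L'_n$: the singleton $\{L'_n\}$ (as $T$ fixes $n$) and $\{L'_2, \ldots, L'_{n-1}\}$ (as $T$ acts transitively on $\{2,\ldots,n-1\}$). The corresponding two $TH$-double cosets have sizes $(n-1)!$ and $(n-2)(n-1)!$; and since $\sigma_1 \in L'_n$ while $\sigma_i \in L'_2$ for all $i \geq 2$ (as $n\sigma_1 = 1$ and $2\sigma_i = 1$), their $w$-weights are $1/(n-1)$ and $(n-2)/(n-1)$ respectively, each equal to $|TgH|/|H\tau H|$, which establishes (b) since $w(HgH) = 1$ on $H\tau H$. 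The main obstacle will be the careful bookkeeping of left versus right cosets under the paper's right-action convention---left cosets of $H$ are indexed by $1g^{-1}$ (``which position moves to $1$'') rather than $1g$---but once this is in place the computations reduce to elementary counting with the cycles $\sigma_i$.
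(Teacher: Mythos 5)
Your proof is correct and rests on the same backbone as the paper's: both reduce the statement to conditions (a) and (b) of Proposition~\ref{prop:interpolating}, and your treatment of (b) — identifying the two $T$-$H$ double cosets inside the non-trivial $H$-$H$ double coset as $L'_n$ and $L'_2\cup\cdots\cup L'_{n-1}$ of sizes $(n-1)!$ and $(n-2)(n-1)!$, and matching their $w$-masses $1/(n-1)$ and $(n-2)/(n-1)$ to the size ratios — is essentially the paper's argument. Where you diverge is condition (a): the paper avoids computing with the support of $w$ by passing to the time reversal $w^\star$, observing probabilistically that $w^\star$ lumps \emph{strongly} to $G/T$ (the new top card is the old bottom card, and the new bottom card is uniform on the other $n-1$ values), and invoking Corollary~\ref{cor:ExactStrongDuality}; you instead verify the exact-lumping criterion of Corollary~\ref{cor:strongExact}(ii) (condition (a$'$)) directly, by writing $w=\frac{1}{n-1}\sum_i\sigma_i$ with $\sigma_i=(1,n,i,\ldots,2)$ and showing the support meets two $T$-$T$ double cosets, one a single right coset and the other consisting of exactly $n-2$ right cosets each containing exactly one $\sigma_i$. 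Your route is more computational but entirely self-contained within the coset combinatorics of \S\ref{sec:doubleCosets}, whereas the paper's is shorter and more probabilistically transparent; both are valid. One small point to tighten: your justification that $\sigma_i^{-1}\sigma_j\in T$ only records that this element fixes $1$ (via $1\sigma_i^{-1}=2$, $2\sigma_j=1$); membership in $T=\Sym_{\{2,\ldots,n-1\}}$ also requires that it fixes $n$, which follows from $n\sigma_i^{-1}=1$ and $1\sigma_j=n$ and should be stated. Likewise you could add one sentence noting that condition (b) is trivial on the double coset $H$ itself (both sides vanish since $w(H)=0$); with these two lines added the argument is complete.
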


\begin{proof}
Let $w$ be the normalized weight describing the shuffle. We shall use Proposition~\ref{prop:interpolating} to show that $w$ lumps weakly to $G/H$. To check that $w$ satisfies condition~(a), that $\MC(w,\eta_G)$ lumps exactly to $G/T$, we may check that the time-reversal $w^\star$ lumps strongly to $G/T$ and apply Corollary~\ref{cor:ExactStrongDuality}. The shuffle described by $w^\star$ is performed as follows:

\begin{quote} Set aside the bottom card. Pick a card
uniformly at random in the remaining deck and move it to the bottom. Put the set-aside card on top of the deck.\end{quote}
Each element of $G/T$ corresponds to a particular ordered pair 
\vspace*{-3pt}\[\text{(value of the top card, value of the bottom card).}\]

\vspace*{-3pt}\noindent
The value of the top card after the next $w^\star$ shuffle is always the current value of the bottom card. Even conditional on the complete current order of the deck, the next value of the bottom card is uniform among the $n-1$ values not equal to the current value of the bottom card. Hence $w^\star$ lumps strongly to $G/T$, as required. 

To check that $w$ satisfies condition (b), note that there are just three double cosets in $T \backslash G / H$:
\begin{itemize}
\item $H$ itself, which is also a double coset in $H \backslash G / H$;
this is the set of permutations stabilising position $1$; 
\item $(1,n)H = T(1,n)H$; this is the set of $(n-1)!$ permutations that send position $n$ to~position~$1$, 
\item $T(1,2,n)H$; this is the set of $(n-2)(n-1)!$ permutations that send
some position in $\{2, \dots, n-1\}$ to position $1$.
\end{itemize}
Under the shuffle $w$, position $1$ is certainly sent to position $n$, so $w(H) = 0$. With probability $1/(n-1)$, we insert the bottom card immediately under the top card in the deck;
then after the top card is moved to the bottom, the effect is that position $n$ is sent to $1$.
 With the remaining probability $(n-2)/(n-1)$, the card in position
 $2$ is sent to position $1$. 
 The final two probabilities are in proportion to the sizes of the two double cosets $T(1,n)H$ and $T(1,2,n)H$ forming the double coset $H(1,2)H$. This establishes condition (b). 
 \end{proof}

 This is an instance of Proposition~\ref{prop:interpolating} that is not covered by Corollary~\ref{cor:exactandstrongasinterpolation}. Indeed 
it is easy to see that $w^\star$ does not lump strongly to $G/H$ provided 
$n \ge 3$, and that $w$ does not lump strongly to $G/H$ provided $n \ge 4$. 
By Theorem~\ref{thm:mainTimeReversal}, the shuffle $w^\star$ lumps weakly to $G/H$ with stable ideal $\C[G](1 - \eta_T + \eta_H)$. 
Note that $w^\star$ does not lump exactly to $G/T$ when $n \ge 4$, so it does not provide another direct application of Proposition~\ref{prop:interpolating}.

\section{Double coset decomposition of the weak lumping algebra}\label{sec:ThetaCharacterisation}

Theorem~\ref{thm:mainGL} and our further results suggest that it is natural to study the problem of weak lumping to $G/H$ 
`double coset by double coset'.
Corollary~\ref{cor:strongExact} and Proposition~\ref{prop:interpolating} are examples of this phenomenon.
In this section, we make this explicit by proving~\eqref{eq:ThetaSplit}
and then Proposition \ref{prop:ThetaDoubleCosetCondition}, which gives 
a necessary and sufficient condition for a weight $w$ to be in $\C[HxH] \cap \Theta(e)$, 
as well as a test for this condition.  
In~\S\ref{sec:abelian}, we further develop these results for~$H$ abelian.

\subsection{Preliminaries}
Given $K\subseteq G$, 
let $\C[K]$ denote the subspace of $\C[G]$ spanned linearly by the elements $x \in K$.
Given $w \in \C[G]$ we write $w_{HxH}$ for the component of $w$ in $\C[HxH]$;
thus $w = \sum_{x \in H \backslash G / H} w_{HxH}$ is the unique expression
of $w$ as an element of $\C[G] = \bigoplus_{x \in H \backslash G / H} \C[HxH]$.

\begin{lemma}\label{lemma:ThetaDoubleCosetDecomposition}
If $e, f \in \C[H]$ and $w \in \C[G]$ then we have $ewf = 0$
if and only if $ew_{HxH} f = 0$ for each double coset $HxH \in H \backslash G / H$.
\end{lemma}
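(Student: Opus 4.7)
The plan is to exploit the fact that each double coset subspace $\C[HxH]$ is a $(\C[H], \C[H])$-bisubmodule of $\C[G]$, so that the decomposition $\C[G] = \bigoplus_{HxH \in H \backslash G / H} \C[HxH]$ is preserved by left multiplication by $e$ and right multiplication by $f$. Once this is observed, the lemma follows immediately from the uniqueness of expansions with respect to a direct sum decomposition.

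More explicitly, the first step would be to verify that if $y \in HxH$, say $y = h_1 x h_2$ with $h_1, h_2 \in H$, then for any $h, h' \in H$ we have $h y h' = (h h_1) x (h_2 h') \in HxH$. Extending linearly, this shows $\C[H] \cdot \C[HxH] \cdot \C[H] \subseteq \C[HxH]$, and in particular $e \C[HxH] f \subseteq \C[HxH]$ for any $e, f \in \C[H]$.

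The second step is to apply this to the given $w$. Writing $w = \sum_{HxH} w_{HxH}$ with $w_{HxH} \in \C[HxH]$, we obtain
\[
    ewf = \sum_{HxH \in H \backslash G / H} e \, w_{HxH} \, f,
\]
where each summand lies in $\C[HxH]$ by the previous step. Since $\C[G] = \bigoplus_{HxH} \C[HxH]$ is a direct sum, the left-hand side vanishes if and only if each summand on the right vanishes, which is the desired equivalence.

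There is no real obstacle here; the only content is the bimodule stability of the double coset decomposition, which is immediate from the definition of a double coset. The lemma should fit in just a few lines.
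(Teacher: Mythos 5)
Your proof is correct and follows essentially the same route as the paper: both arguments decompose $w = \sum_{HxH} w_{HxH}$ and use the fact that each $\C[HxH]$ is stable under left and right multiplication by $\C[H]$, so that $ewf$ has the $e\,w_{HxH}\,f$ as its components in the direct sum $\C[G] = \bigoplus_{HxH} \C[HxH]$. Your write-up merely makes the bimodule verification a little more explicit than the paper does.
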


\begin{proof}
Write $w = \sum_{x \in H \backslash G / H} w_{HxH}$.
Since $\C[HxH]$ is a left $\C[H]$-module (by left multiplication) and a right $\C[H]$-module 
(by right multiplication), we have $e w f = 0$ if and only if $e w_{HxH} f = 0$
for each double coset~$HxH$.
\end{proof}

\subsection{\texorpdfstring{The module structure of $\C[HxH]$}{}}

In order to use representation theoretic tools to study $\C[HxH]$, we require a version of Mackey's rule 
(given earlier in Lemma~\ref{lemma:MackeyTrivial} in a special case) 
with an explicit isomorphism for each double coset. For this, we need to understand twisted actions. Induced
modules give the most important examples.

\begin{example}\label{example:kAction}
Observe that
in $U \ind_H^G = \bigoplus_{x \in G / H} \langle x \rangle \otimes U$,
the subspace $ \langle x \rangle \otimes U$ is a $xHx^{-1}$-module
on which $k = xhx^{-1} \in xHx^{-1}$ acts by 
\[k (x \otimes v) = x (x^{-1}x g) \otimes u = 
x \otimes x^{-1}k x u.\]
\end{example}

\begin{definition}\label{defn:characterTwist}
Let $H$ be a subgroup of $G$, and $y \in G$. 
Given a left $\C[H]$-module $M$ 
we denote by $\conj{x}{M}$ the left $\C[xHx^{-1}]$-module with the same underlying vector space as $M$
but with the action
$xhx^{-1} \cdot v = h v$
for all $h \in H$ and $v \in M$. 
\end{definition}

Note that in this definition $hv$ is defined using the original $\C[H]$-module
structure on $M$.  
Equivalently, as expected from Example~\ref{example:kAction},
\begin{equation} k \cdot v = x^{-1}kx v \label{eq:kAction}\end{equation}
for $k \in xHx^{-1}$ and $v \in M$.
Thus we have $\chi_{{}^x\!M}(xhx^{-1}) = \chi_M(h)$, or equivalently,
$\chi_{{}^x\!M}(k) = \chi_M(x^{-1}kx)$. This is, by definition, the conjugated
character $\chi^{x^{-1}}_M$. (The inverse is necessary to be consistent
with the usual notation, see for instance \cite[Ch.~6]{Isaacs}.)

In the following lemma we need the natural right action of $\C[H]$ on  $\C[HxH]$
defined by $hxh' \cdot k = hxh' k$,
and on $\conj{x}{\C[H]}$
defined by $h' \cdot k = h'k$. 
It is routine to check from~\eqref{eq:doubleCosetAllForms}
that these right actions on $\C[HxH]$ are well defined.

\begin{lemma}\label{lemma:MackeyOnHxH}
There is an isomorphism
of left $\C[H]$-modules 
\[
\C[HxH] \cong \C[H] \otimes_{\C[H\cap xHx^{-1}]} \conj{x}{\C[H]}  \]
defined by $hxh'  \mapsto h \otimes h'$.
Moreover this map commutes with the natural right actions of $\C[H]$ on $\C[HxH]$
and on $\conj{x}{\C[H]}$. 
\end{lemma}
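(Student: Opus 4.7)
The plan is to exhibit an explicit inverse $\phi$ of the stated map and check the module structures by direct computation. Writing $K := H \cap xHx^{-1}$ for brevity, I would define $\phi: \C[H] \otimes_{\C[K]} \conj{x}{\C[H]} \to \C[HxH]$ by bilinear extension of $h \otimes h' \mapsto hxh'$, and show it is a well-defined $(\C[H], \C[H])$-bimodule isomorphism whose inverse is the map in the statement.

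The first step, and the main point requiring care, is to verify that the bilinear map $(h, h') \mapsto hxh'$ descends to the balanced tensor product over $\C[K]$. For $k \in K$, I would compute $\phi(hk \otimes h') = hkxh'$, and on the other side, using~\eqref{eq:kAction} to unwind the twisted action, $\phi(h \otimes k \cdot h') = \phi(h \otimes x^{-1}kx \, h') = hx \cdot x^{-1}kx \, h' = hkxh'$. So $\phi$ descends to the tensor product.

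Next I would establish bijectivity via a dimension count. Surjectivity is immediate, since every element of $HxH$ has the form $hxh'$. For injectivity, by~\eqref{eq:doubleCosetCountTxH} the left-hand side has dimension $|HxH| = |H|^2/|K|$, while by Proposition~\ref{prop:inductionCharacterisation}(ii) applied to $\conj{x}{\C[H]}$ (which has dimension $|H|$ as a $\C[K]$-module), the right-hand side has dimension $[H:K] \cdot |H| = |H|^2/|K|$. Hence $\phi$ is a linear isomorphism, and its inverse is the map described in the statement.

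Finally, I would verify the bimodule structure by direct calculation. Left $\C[H]$-equivariance is immediate from $\phi(kh \otimes h') = khxh' = k\,\phi(h \otimes h')$ for $k \in H$. For the right action, I would first note that the right multiplication action of $\C[H]$ on $\conj{x}{\C[H]}$ commutes with the left $\C[K]$-action used to form the tensor product, so it descends to a well-defined right $\C[H]$-action on the tensor product by $(h \otimes h') \cdot l = h \otimes h'l$. Under $\phi$, this matches the right multiplication on $\C[HxH]$, since $\phi(h \otimes h'l) = hxh'l = \phi(h \otimes h') \cdot l$. With these structures verified, the inverse map $hxh' \mapsto h \otimes h'$ is the $(\C[H], \C[H])$-bimodule isomorphism claimed.
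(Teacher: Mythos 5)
Your proposal is correct. It takes a mildly different route from the paper's: you build the map out of the tensor product, $\phi(h \otimes h') = hxh'$, verify the balancing relation over $\C[H \cap xHx^{-1}]$ (your computation $hx(x^{-1}kx)h' = hkxh'$ is exactly the twisted-action identity the paper also uses), and then get bijectivity from surjectivity plus the dimension count $|HxH| = |H|^2/|H \cap xHx^{-1}| = [H : H\cap xHx^{-1}]\cdot |H|$ via \eqref{eq:doubleCosetCountTxH}. The paper instead works inside $\C[HxH]$: it identifies the subspace $\XY$ with canonical basis $\{xh : h \in H\}$, shows it is $\C[H \cap xHx^{-1}]$-isomorphic to $\conj{x}{\C[H]}$ restricted, exhibits the decomposition $\C[HxH] = \bigoplus_i h_i \XY$ over left coset representatives, and then invokes the characterisation of induced modules (Proposition~\ref{prop:inductionCharacterisation}), so no dimension count is needed and the unique expression $g = h_i x h$ comes out explicitly. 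Your direction has a genuine advantage that is worth noting: the map stated in the lemma, $hxh' \mapsto h \otimes h'$, is not obviously well defined, since an element of $HxH$ has $|H \cap xHx^{-1}|$ different expressions of the form $hxh'$; by constructing $\phi$ on the tensor product and inverting it, you get well-definedness for free, and you also make explicit the right-$\C[H]$-equivariance that the paper dismisses as routine (the key point being that right multiplication on $\conj{x}{\C[H]}$ commutes with the twisted left $\C[H\cap xHx^{-1}]$-action, so it descends to the balanced tensor product). The only cosmetic quibble is your appeal to Proposition~\ref{prop:inductionCharacterisation}(ii) for the dimension of the induced module: it does follow from the implication (i) $\Rightarrow$ (ii) applied to the induced module itself, but it is more directly seen from the freeness of $\C[H]$ as a right $\C[H\cap xHx^{-1}]$-module, or from part (iii) of that proposition.
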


\begin{proof}
Clearly $\C[HxH]$ contains the subspace $\XY$ spanned by the set $(H \cap xHx^{-1})x H$.
Since $xhx^{-1} x = xh$, the subspace $\XY$ has $\{ xh : h \in H\}$ as a canonical basis.
Observe that $\XY$ is a left $\C[H \cap xHx^{-1}]$-module on which $k \in H \cap xHx^{-1}$
acts by permuting the canonical basis:
\[ kxh' = x(x^{-1}kx)h'. \]
Thus, by~\eqref{eq:kAction}, $\XY$ is isomorphic to $(\conj{x}\C [H])\res_{H \cap xHx^{-1}}$ as a module for $H \cap xHx^{-1}$.
Let $m = [H : H \cap xHx^{-1}]$ and let $h_1, \ldots, h_m$ be representatives
for the left cosets $H / H \cap xHx^{-1}$. Each $g \in HxH$ may be expressed uniquely
as $g = h_ixh$ for some $h \in H$. Hence 
$\C[HxH] = \bigoplus_{i=1}^m h_i \XY$.
It now follows from the characterisation of induced modules of Proposition~\ref{prop:inductionCharacterisation}
and the definition of induction that
$\C[HxH] \cong \C[H] \otimes_{\C[H \cap xHx^{-1}]} \XY$.
An explicit isomorphism of left $\C[H]$-modules
is defined by $h_i v \mapsto h_i \otimes v$
for $1 \le i \le m$ and $v \in \XY$. Equivalently, $hxh' \mapsto h \otimes h'$ for $h, h' \in H$.
It is routine to check that this isomorphism commutes with the right action of $\C[H]$.
\end{proof}

\begin{proposition}\label{prop:MackeyOnHxHf}
Let $f \in \C[H]$. 
There is an isomorphism
of left $\C[H]$-modules 
\[
\C[HxH]f \cong \bigl( \conj{x}(\C[H]f) \Res_{H \cap xHx^{-1}} \bigr)\Ind^H \]
defined by $hxh' f \mapsto h \otimes h'f$.
\end{proposition}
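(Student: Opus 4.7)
The plan is to derive Proposition~\ref{prop:MackeyOnHxHf} as an essentially immediate corollary of Lemma~\ref{lemma:MackeyOnHxH}, by tracking what right multiplication by $f \in \C[H]$ does on both sides of the isomorphism in that lemma.

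First I would observe that, by the final sentence of Lemma~\ref{lemma:MackeyOnHxH}, the isomorphism $\varphi : \C[HxH] \to \C[H] \otimes_{\C[H \cap xHx^{-1}]} \conj{x}{\C[H]}$ defined by $hxh' \mapsto h \otimes h'$ intertwines the right action of $\C[H]$ on $\C[HxH]$ (by multiplication inside $\C[G]$) with the right action of $\C[H]$ on the tensor product given by $h \otimes h' \mapsto h \otimes h'f$. Since $\C[HxH]f$ is by definition the image of right multiplication by $f$ on $\C[HxH]$, its image under $\varphi$ is precisely $\C[H] \otimes_{\C[H \cap xHx^{-1}]} (\conj{x}{\C[H]})f$, where the notation means the subspace spanned by all $h \otimes h'f$.

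The key identification is then that, as a left $\C[H \cap xHx^{-1}]$-module, the subspace $(\conj{x}{\C[H]})f$ coincides with $\conj{x}(\C[H]f)\Res_{H \cap xHx^{-1}}$. Indeed, the underlying vector space of $\conj{x}{\C[H]}$ is $\C[H]$ and its right $\C[H]$-action is the usual one; so as a vector space $(\conj{x}{\C[H]})f$ is precisely $\C[H]f$, which is the underlying space of $\conj{x}(\C[H]f)$. The $\C[H \cap xHx^{-1}]$-action on $(\conj{x}{\C[H]})f$ is inherited from that on $\conj{x}{\C[H]}$, namely $k \cdot (yf) = (x^{-1}kx)(yf)$ for $k \in H \cap xHx^{-1}$, $y \in \C[H]$, by \eqref{eq:kAction}; and this is exactly the action defining $\conj{x}(\C[H]f)\Res_{H \cap xHx^{-1}}$.

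Combining the previous two steps, $\varphi$ restricts to an isomorphism of left $\C[H]$-modules
\[
\C[HxH]f \;\cong\; \C[H] \otimes_{\C[H \cap xHx^{-1}]} \bigl( \conj{x}(\C[H]f)\Res_{H \cap xHx^{-1}} \bigr),
\]
and the right-hand side is, by Definition~\ref{defn:restrictedAndInducedModules}(b), the induced module $\bigl(\conj{x}(\C[H]f)\Res_{H \cap xHx^{-1}}\bigr)\Ind^H$. Under $\varphi$ the element $hxh'f$ is sent to $h \otimes h'f$, giving the stated formula. No serious obstacle is expected; the only thing to be careful about is the book-keeping for the two compatible $\C[H]$-actions (left via the first factor, right via the second) and checking that the tensor relation over $H \cap xHx^{-1}$ is respected when $f$ is inserted, which is automatic because $f$ acts on the right only on the rightmost factor.
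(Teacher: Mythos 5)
Your argument is correct and is essentially the paper's own proof: both take the isomorphism of Lemma~\ref{lemma:MackeyOnHxH}, use that it commutes with the right $\C[H]$-action to multiply each side on the right by $f$, and then recognise the resulting tensor product $\C[H]\otimes_{\C[H\cap xHx^{-1}]}\conj{x}(\C[H]f)$ as the stated induced module by Definition~\ref{defn:restrictedAndInducedModules}(b). Your extra check that $(\conj{x}{\C[H]})f$ carries exactly the $H\cap xHx^{-1}$-action of $\conj{x}(\C[H]f)\Res_{H\cap xHx^{-1}}$ is sound book-keeping that the paper leaves implicit.
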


\begin{proof}
Take the isomorphism in Lemma~\ref{lemma:MackeyOnHxH} and multiply each side on the right by $f$,
using that the isomorphism commutes with the right action of $\C[H]$, to get
\[
\C[HxH]f \cong \C[H] \otimes_{\C[H\cap xHx^{-1}]} \conj{x}{(\C[H]f)}, \]
where the isomorphism is defined by $hxh'f \mapsto h \otimes h' f$.
By definition, the right-hand side is the induced module in the proposition.
\end{proof}

\subsection{Double coset decomposition of right idealizers}

Let $e$ be an idempotent and $L = \C[G]e$ be a left ideal of $\C[G]$. From Lemma \ref{lemma:ThetaDoubleCosetDecomposition}, the conditions $ew(1-e) = 0$ 
defining the right idealizer of $L$ reduce to conditions on each double coset. Denoting $\RId_{\C[G]}(L) \cap \C[HxH]$ by $\RId_{\C[HxH]}(L)$, we have
\[
\RId_{\C[G]}(L) = \bigoplus_{x \in H \backslash G / H} \RId_{\C[HxH]}(L).
\]
(To be very careful, we warn the reader
that the space $\RId_{\C[HxH]}(L)$ is not a right idealizer in the usual sense of \S\ref{subsec:rightIdealizers}, 
because $L\not\subseteq \C[HxH]$.)

\begin{proposition}
\label{prop:ThetaAlgebraDoubleCosetDimension}
Fix $x \in G$
and let $e \in \C[H]$ be an idempotent $e \in \C[H]$.
Then 
\[ \dim\left(\RId_{\C[HxH]}(L)\right) =
|HxH| - \bigl\langle \chi_{\C[H]e} \Res_{H \cap xHx^{-1}}, (\chi^{x^{\scriptscriptstyle -1}}_{\C[H](1-e)})\Res_{H\cap xHx^{-1}}\bigr\rangle
\]
\end{proposition}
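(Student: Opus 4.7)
The plan is to prove the formula by a rank–nullity argument combined with Frobenius reciprocity applied to the Mackey-type decomposition of Proposition~\ref{prop:MackeyOnHxHf}. Since $\C[HxH]$ is an $(H,H)$-bimodule under left- and right-multiplication by $\C[H]$, the map
\[
\phi: \C[HxH] \longrightarrow \C[HxH], \qquad w \longmapsto ew(1-e),
\]
is a well-defined $\C$-linear endomorphism, and by definition $\RId_{\C[HxH]}(L) = \ker \phi$. Hence rank–nullity gives
\[
\dim \RId_{\C[HxH]}(L) = |HxH| - \dim\bigl(e\,\C[HxH](1-e)\bigr).
\]
The remaining task is to show that the second term equals the inner product in the statement.

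Next I would regard $\C[HxH](1-e)$ as a left $\C[H]$-module under left multiplication, and apply Lemma~\ref{lem:IdempotentsVSCharacters} (with $G$ replaced by $H$) to obtain
\[
\dim\bigl(e\,\C[HxH](1-e)\bigr) = \bigl\langle \chi_{\C[H]e},\, \chi_{\C[HxH](1-e)} \bigr\rangle_H.
\]
Now Proposition~\ref{prop:MackeyOnHxHf}, applied with $f = 1-e$, identifies
\[
\C[HxH](1-e) \,\cong\, \bigl({}^x\!\bigl(\C[H](1-e)\bigr) \Res_{H \cap xHx^{-1}}\bigr)\Ind^H
\]
as left $\C[H]$-modules. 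Taking characters and recalling from Definition~\ref{defn:characterTwist} and the remark following it that the character of ${}^x\!(\C[H](1-e))$ is $\chi^{x^{-1}}_{\C[H](1-e)}$, the previous display becomes
\[
\dim\bigl(e\,\C[HxH](1-e)\bigr) = \bigl\langle \chi_{\C[H]e},\, \bigl(\chi^{x^{-1}}_{\C[H](1-e)}\Res_{H\cap xHx^{-1}}\bigr)\Ind^H \bigr\rangle_H.
\]

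The last step is a direct application of Frobenius reciprocity (Proposition~\ref{prop:FrobeniusReciprocity}) with the roles of $H$ and $G$ in that statement played here by $H \cap xHx^{-1}$ and $H$; this converts the above expression into
\[
\bigl\langle \chi_{\C[H]e}\Res_{H \cap xHx^{-1}},\, \chi^{x^{-1}}_{\C[H](1-e)}\Res_{H\cap xHx^{-1}} \bigr\rangle_{H\cap xHx^{-1}},
\]
and substituting into the rank–nullity identity yields the claimed formula.

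I do not anticipate any real obstacle: all the heavy lifting has been done in Proposition~\ref{prop:MackeyOnHxHf}, Lemma~\ref{lem:IdempotentsVSCharacters}, and Proposition~\ref{prop:FrobeniusReciprocity}. The only bookkeeping to be careful with is keeping track of the conjugation of characters — specifically confirming that the inverse on the superscript $x^{-1}$ is correct, which follows from the identity $\chi_{{}^x\!M}(k) = \chi_M(x^{-1}kx)$ recorded after Definition~\ref{defn:characterTwist}.
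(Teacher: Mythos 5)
Your proof is correct and follows essentially the same route as the paper: the paper also computes $\dim e\,\C[HxH](1-e)$ by combining Proposition~\ref{prop:MackeyOnHxHf}, Lemma~\ref{lem:IdempotentsVSCharacters} and Frobenius reciprocity. The only cosmetic difference is that you get the codimension statement by rank--nullity for the map $w \mapsto ew(1-e)$, whereas the paper cites Lemma~\ref{lemma:RId}; both amount to the same decomposition of $\C[HxH]$ with respect to the idempotents $e$ and $1-e$.
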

\begin{proof}
    We apply Proposition \ref{prop:MackeyOnHxHf} and Lemma \ref{lem:IdempotentsVSCharacters} to the left
    $\C[H]$-module $\C[HxH](1-e)$ to obtain
    \begin{align*}
        \dim e\C[HxH](1-e) &= \bigl\langle \chi_{\C[H]e}, (\chi^{x^{\scriptscriptstyle -1}}_{\C[H](1-e)})\Res_{H\cap xHx^{-1}}\Ind^H\bigr\rangle \\
        &= \bigl\langle \chi_{\C[H]e} \Res_{H \cap xHx^{-1}}, (\chi^{x^{\scriptscriptstyle -1}}_{\C[H](1-e)})\Res_{H\cap xHx^{-1}}\bigr\rangle 
  \end{align*}
    where the second line follows from Frobenius reciprocity. 
    Now apply Lemma~\ref{lemma:RId}.
\end{proof}

We remark that it is useful to interpret the equation in the proposition as
giving, in the quantity subtracted, the number of linear equations that
define the algebra $\{w : ew(1-e) = 0 \}$ on the double coset $HxH$; equivalently
this is the codimension of the direct summand of the algebra in $\C[HxH]$. 
We now explore consequences of this proposition.
A notable case is when no equations are required. 

\begin{corollary}\label{cor:noConstraintOnDoubleCoset}
There is no constraint on an element $w \in \C[G]$ satisfying $ew(1-e) = 0$ from the double coset $HxH$
if and only if either
\[ \bigl\langle \chi_{\C[H]e} \Res_{H \cap xHx^{-1}}, (\chi^{x^{-1}}_{\C[H](1-e)}) \Res_{H \cap xHx^{-1}}
\bigr\rangle  =0 \]
or the same condition holds swapping $e$ and $1-e$.
\end{corollary}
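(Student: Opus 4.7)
The plan is to deduce the corollary directly from Proposition~\ref{prop:ThetaAlgebraDoubleCosetDimension}, by showing that the inner product appearing there is invariant under the swap $e \leftrightarrow 1-e$, so that the two conditions in the statement are in fact the same. Once that identity is in hand, the corollary becomes a one-line consequence of Proposition~\ref{prop:ThetaAlgebraDoubleCosetDimension}.

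First, I would translate ``no constraint from $HxH$'' on the equation $ew(1-e) = 0$ into the assertion that $e\,\C[HxH]\,(1-e) = 0$, equivalently $\dim \RId_{\C[HxH]}(L) = |HxH|$. By Proposition~\ref{prop:ThetaAlgebraDoubleCosetDimension} the codimension $|HxH| - \dim \RId_{\C[HxH]}(L)$ equals the inner product stated in the corollary, which I denote by $A$. Hence ``no constraint'' is equivalent to $A = 0$, and what remains is the identity $A = B$, where $B$ is the inner product obtained from $A$ by swapping $e$ and $1-e$.

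To prove $A = B$, I would set $K = H \cap xHx^{-1}$, $V = \C[H]e$, and $W = \C[H](1-e)$, so that $\chi_V + \chi_W = \chi_{\C[H]}$. The key observation is the identity $\chi^{x^{-1}}_{\C[H]}\res_K = \chi_{\C[H]}\res_K$: both functions take the value $|H|$ at the identity of $K$ and vanish elsewhere, since $\chi_{\C[H]}(h) = |H|\delta_{h,1}$ and $x^{-1}kx = 1$ if and only if $k = 1$. Expanding $A$ and $B$ using $\chi_W = \chi_{\C[H]} - \chi_V$, the common cross-term $\langle \chi_V\res_K, \chi^{x^{-1}}_V\res_K\rangle_K$ cancels, leaving
\[
A - B \;=\; \langle \chi_V\res_K, \chi^{x^{-1}}_{\C[H]}\res_K\rangle_K \,-\, \langle \chi_{\C[H]}\res_K, \chi^{x^{-1}}_V\res_K\rangle_K.
\]
Each term on the right evaluates to $\tfrac{|H|}{|K|}\dim V$ by the identity above (both inner products are supported at the identity of $K$ and use $\chi^{x^{-1}}_V(1) = \chi_V(1) = \dim V$), so $A - B = 0$.

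The only real obstacle will be careful bookkeeping with the twisted characters $\chi^{x^{-1}}_M(k) = \chi_M(x^{-1}kx)$ and with the subgroup $K = H \cap xHx^{-1}$; once the regular-character identity above is established, the computation is mechanical and the corollary follows immediately.
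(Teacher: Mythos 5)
Your proposal is correct, and it starts from the same place as the paper's own (very terse) proof: by Proposition~\ref{prop:ThetaAlgebraDoubleCosetDimension} and the remark preceding the corollary, the condition $ew(1-e)=0$ cuts out a subspace of $\C[HxH]$ whose codimension is the displayed inner product, so ``no constraint from $HxH$'' is equivalent to the vanishing of that inner product. Where you go beyond the paper is in proving that this inner product is unchanged when $e$ and $1-e$ are swapped, so that the ``either\,\dots\,or'' in the statement is literally a single condition. The paper leaves this symmetry implicit (it resurfaces in Corollary~\ref{cor:cutThetaDimensionDoubleCosetOneCharacter}, where the same codimension is computed from either character), and it is not a purely formal fact: for a general $(\C[H],\C[H])$-bimodule $M$ one can have $\dim eM(1-e)\neq\dim(1-e)Me$, so some specific input about $\C[HxH]$ is required. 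Your input is exactly the right one: writing $K = H\cap xHx^{-1}$, the twisted restriction $\chi^{x^{-1}}_{\C[H]}\Res_K$ agrees with $\chi_{\C[H]}\Res_K$ because both are supported at the identity of $K$ (equivalently, the left and right actions of $H$ on $HxH$ are free). Your bookkeeping also checks out: the two cross-terms $\bigl\langle \chi_{\C[H]e}\Res_K,\,\chi^{x^{-1}}_{\C[H]e}\Res_K\bigr\rangle$ are identical, so they cancel despite the conjugate-linearity of the inner product, and the two remaining terms both evaluate to $\frac{|H|}{|K|}\dim\C[H]e$ since $\chi(1)$ is a (real) dimension. So your argument is a complete proof; it follows the paper's route through Proposition~\ref{prop:ThetaAlgebraDoubleCosetDimension} but supplies an explicit justification for the swapped alternative, which the paper's one-line proof glosses over.
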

\begin{proof}
This follows 
from Proposition~\ref{prop:ThetaAlgebraDoubleCosetDimension} and the interpretation made immediately above.
\end{proof}

In practice it is more convenient to have a condition using just one character.

\begin{corollary}\label{cor:cutThetaDimensionDoubleCosetOneCharacter}
Let $x \in G$ and let $w \in \C[HxH]$. Let $e \in \C[H]$ be an idempotent and define $L = \C[G]e$, $U = \C[H]e$ and $U^\mathrm{c} = \C[H](1-e)$.
Then
\[ \codim \left(\RId_{\C[HxH]}(L) \right)
= \frac{|HxH|}{|H|}  \phi(1)- \bigl\langle \phi  \Res_{H \cap xHx^{-1}}, \phi^{x^{-1}} 
\Res_{H \cap xHx^{-1}} \bigr\rangle \]
where $\phi$ is either $\chi_{U}$ or $\chi_{U^\mathrm{c}}$.
\end{corollary}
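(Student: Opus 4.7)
The plan is to start from Proposition~\ref{prop:ThetaAlgebraDoubleCosetDimension}, which gives
\[
\codim\bigl(\RId_{\C[HxH]}(L)\bigr) = \bigl\langle \chi_{U}\Res_D, \chi_{U^\mathrm{c}}^{x^{-1}}\Res_D\bigr\rangle,
\]
where $D := H \cap xHx^{-1}$. I will convert the right-hand side, which involves both $\chi_U$ and $\chi_{U^\mathrm{c}}$, into an expression involving only one of them by using the decomposition $\C[H] = U \oplus U^\mathrm{c}$, which gives the character identity $\chi_U + \chi_{U^\mathrm{c}} = \phi_H$, the regular character of $H$.

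The key computation is the value of $\langle \chi\Res_D, \phi_H^{x^{-1}}\Res_D\rangle$ for an arbitrary character $\chi$ of $H$. Recall that $\phi_H(h) = |H|\delta_{h,1}$. For $k \in D$, we have $\phi_H^{x^{-1}}(k) = \phi_H(x^{-1}kx)$, and $x^{-1}kx = 1$ if and only if $k = 1$, so $\phi_H^{x^{-1}}\Res_D$ also takes the value $|H|$ at the identity of $D$ and is zero elsewhere. Hence
\[
\bigl\langle \chi\Res_D, \phi_H^{x^{-1}}\Res_D\bigr\rangle = \frac{1}{|D|}\overline{\chi(1)}\cdot |H| = \frac{|H|}{|D|}\chi(1) = \frac{|HxH|}{|H|}\chi(1),
\]
where the last equality uses~\eqref{eq:doubleCosetCountTxH} with $T = H$, namely $|HxH|\cdot|D| = |H|^2$.

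Now I take $\phi = \chi_U$. Writing $\chi_{U^\mathrm{c}} = \phi_H - \chi_U$ and using bilinearity of the inner product in the second argument yields
\[
\bigl\langle \chi_U\Res_D, \chi_{U^\mathrm{c}}^{x^{-1}}\Res_D\bigr\rangle
= \bigl\langle \chi_U\Res_D, \phi_H^{x^{-1}}\Res_D\bigr\rangle - \bigl\langle \chi_U\Res_D, \chi_U^{x^{-1}}\Res_D\bigr\rangle,
\]
and the first term on the right equals $\tfrac{|HxH|}{|H|}\chi_U(1)$ by the computation above. This gives the claimed formula with $\phi = \chi_U$. For $\phi = \chi_{U^\mathrm{c}}$, I instead write $\chi_U = \phi_H - \chi_{U^\mathrm{c}}$ in the first argument and use conjugate-linearity there, together with the analogous identity $\bigl\langle \phi_H\Res_D, \chi_{U^\mathrm{c}}^{x^{-1}}\Res_D\bigr\rangle = \tfrac{|HxH|}{|H|}\chi_{U^\mathrm{c}}(1)$, to obtain the same formula with $\phi = \chi_{U^\mathrm{c}}$.

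There is no real obstacle here; the only point requiring care is the bookkeeping around the twisted character $\phi_H^{x^{-1}}$, where one must check that $\phi_H^{x^{-1}}\Res_D$ really does collapse to a delta at the identity, and the use of~\eqref{eq:doubleCosetCountTxH} to replace $|H|/|D|$ with $|HxH|/|H|$.
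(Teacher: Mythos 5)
Your proof is correct and takes essentially the same route as the paper's: apply Proposition~\ref{prop:ThetaAlgebraDoubleCosetDimension}, use $\chi_U + \chi_{U^{\mathrm{c}}} = $ the regular character of $H$, evaluate the inner product of $\phi$ against the twisted restricted regular character as $\frac{|H|}{|H\cap xHx^{-1}|}\phi(1)$, and convert $|H|/|H\cap xHx^{-1}|$ to $|HxH|/|H|$ via~\eqref{eq:doubleCosetCountTxH}. The only difference is cosmetic: you spell out explicitly both that the twisted regular character still collapses to a multiple of the delta at the identity on $H\cap xHx^{-1}$ and the bookkeeping for the case $\phi = \chi_{U^{\mathrm{c}}}$ (expanding in the first argument), details the paper compresses into ``replacing $U$ with $U^{\mathrm{c}}$ throughout''.
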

\begin{proof}
Recall from Example~\ref{ex:basicRepresentations} that the character of the regular representation of 
a group~$K$ is $\rho_K$, defined by $\rho_K(1) = |K|$ and $\rho_K(k) = 0$ if $k \not= 1$.
Since $\C[H] = U \oplus U^\mathrm{c}$, we have $\chi_{U} + \chi_{U^\mathrm{c}} = \rho_H$.
Setting $\phi = \chi_{U}$, 
Proposition \ref{prop:ThetaAlgebraDoubleCosetDimension}
implies that
\begin{align*}
\codim & \left(\RId_{\C[HxH]}(L) \right) \\ &=
\bigl\langle \phi \Res_{H \cap xHx^{-1}}, (\rho_H - \phi)^{x^{-1}}\Res_{H \cap xHx^{-1}} \bigr\rangle 
\\ &= \bigl\langle \phi \Res_{H \cap xHx^{-1}}, \rho_H\Res_{H \cap xHx^{-1}} \bigr\rangle
- \bigl\langle\phi \Res_{H \cap xHx^{-1}}, \phi^{x^{-1}} \Res_{H \cap xHx^{-1}}\bigr\rangle \\
&= \frac{\phi(1)|H|}{|H \cap xHx^{-1}|}
- \bigl\langle\phi \Res_{H \cap xHx^{-1}}, \phi^{x^{-1}} \Res_{H \cap xHx^{-1}} \bigr\rangle.
\end{align*}
Since $|H|/|H \cap xHx^{-1}| = |HxH|/|H|$ by~\eqref{eq:doubleCosetCountTxH} this gives
the first case when $\phi = \chi_U$, and the second case when $\phi = \chi_{U^c}$ is proved by replacing $U$ with~$U^\mathrm{c}$ throughout.
\end{proof}
In particular, the condition in Corollary~\ref{cor:noConstraintOnDoubleCoset}
 holds whenever \emph{both} 
 $xHx^{-1} = H$, 
\emph{and} $\chi_{U}$ 
is invariant under conjugation by~$x$.
On the other hand, \emph{it never} holds when $HxH$ is a double coset of maximum possible
size. 
Indeed in this case, since $|HxH| = |H|^2$, we have
\begin{align*}
    \dim \left(\RId_{\C[HxH]}(L) \right) &= \dim ~\{w\in\C[HxH]: ew(1-e) = 0\}\\
    &= |H|^2 - \dim U^\mathrm{c} \cdot \dim U.
\end{align*}

\begin{example}[Exact lumping] 
    By Proposition~\ref{prop:exactLwIsCGeta}(iv), the 
   left-invariant random walk driven by $w$ lumps exactly if and only if 
    $w\in\Theta(\eta_H)$.
      By counting the number of linear constraints imposed by Corollary \ref{cor:strongExact}
  we find that the codimension of $\Theta(\eta_H) \cap \C[HxH]$ in $\C[G]$ is 
    $|HxH|/|H|-1$. Note this is one less than the number of left cosets forming $HxH$.
    It is instructive to reprove this result using
     Corollary \ref{cor:cutThetaDimensionDoubleCosetOneCharacter}: setting $\phi = \triv_H$,  the codimension of $\Theta(\eta_H) \cap \C[HxH]$ is
    \[ \frac{|HxH|}{|H|} \triv_H(\id_H) - 
    \langle \triv_{H \cap xHx^{-1}}, \triv_{H \cap xHx^{-1}} \rangle = \frac{|HxH|}{|H|} - 1 \]
as just seen.
\end{example}

We leave it to the interested reader to formulate the analogous example for strong lumping;
the codimension in each double coset is the same, as should be expected from the $\star$-duality 
in Theorem~\ref{thm:mainTimeReversal}. Indeed, $\Theta(\triv_H) = \Theta(\eta_H)^\star$ and the $\star$ operation respects the direct sum decomposition of these algebras over double cosets in~\eqref{eq:ThetaSplit}, 
exchanging the summands for $\C[HxH]$ and $\C[Hx^{-1}H]$.


\subsection{Double coset decomposition of weak lumping algebras}
Let $e\in\Eb{H}$ be an idempotent, let $L = \C[G]e$. The double coset decompositions of $\RId_{\C[G]}(L)$ and $\RId_{\C[G]}(L^\circ)$ give in turn a decomposition of the weak lumping algebra of $e$,
\begin{equation} \Theta(e) = \bigoplus_{x \in H \backslash G / H} \Theta(e) \cap \C[HxH]\label{eq:ThetaSplit}
\end{equation} 
and so weak lumping of irreducible weights is decided double coset
by double coset.

\begin{proposition}\label{prop:ThetaDoubleCosetCondition}
Let $w \in \C[HxH]$. Let $e\in\Eb{H}$ be an idempotent, and set $L = \C[G]e$.
A necessary and sufficient condition for $w \in \Theta(e)\cap\C[HxH]$ is
that $w \in \RId_{\C[HxH]}(L^\circ)$ and there exists $v \in L^\circ$ such that $w(Hxh) + v(Hxh)$ is constant
for $h \in H$. Moreover if such a $v$ exists, then $\eta_H v$ has the same property.
\end{proposition}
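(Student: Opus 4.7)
The starting point is the identity $\Theta(e) = \RId_{\C[G]}(L) \cap \RId_{\C[G]}(L^\circ)$ from~\eqref{eq: Theta as intersection of idealizers}, which splits as a direct sum over double cosets to give $\Theta(e) \cap \C[HxH] = \RId_{\C[HxH]}(L) \cap \RId_{\C[HxH]}(L^\circ)$. Since the proposition already assumes $w \in \RId_{\C[HxH]}(L^\circ)$, the content is to show that, under this hypothesis, the remaining condition $Lw \subseteq L$ is equivalent to the existence of $v \in L^\circ$ with $(w+v)(Hxh)$ constant for $h \in H$.

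My first step is to reduce $Lw \subseteq L$ to the single condition $\eta_H w \in L$. Since $e \in \Eb{H}$ implies $\eta_H e = \eta_H$, the element $\eta_H$ lies in $L$; hence $L\eta_H = \C[G]\eta_H$, and combined with Lemma~\ref{lemma: ker Lambda is Ann etaH} this gives the vector space decomposition $L = \C[G]\eta_H \oplus L^\circ$. Thus $Lw = \C[G]\eta_H w + L^\circ w$, and because $L^\circ w \subseteq L^\circ \subseteq L$, the containment $Lw \subseteq L$ reduces to $\C[G]\eta_H w \subseteq L$, which holds if and only if $\eta_H w \in L$ (using that $L$ is a left ideal). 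In parallel, the identity $(\eta_H z)(g) = |H|^{-1} z(Hg)$ rewrites the combinatorial condition on $v$: $(w+v)(Hxh)$ is constant for $h \in H$ if and only if $\eta_H(w+v)$ is constant on $HxH$. Since $w \in \C[HxH]$, only the $\C[HxH]$-component of $v$ enters this condition, and this component still lies in $L^\circ$ because $L^\circ = \bigoplus_{g \in G/H} g\C[H](e-\eta_H)$ splits cleanly over double cosets; so I may assume $v \in L^\circ \cap \C[HxH]$.

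It then remains to prove: $\eta_H w \in L$ if and only if there exists $v \in L^\circ \cap \C[HxH]$ with $\eta_H(w+v) \in \C\,\eta_H x \eta_H$. The direction $(\Leftarrow)$ is immediate, since $\eta_H x \eta_H \in \eta_H\C[G]\eta_H \subseteq L$ and $\eta_H v \in L$ (left ideal). For $(\Rightarrow)$, write $\eta_H w = z e$ for some $z \in \C[G]$, apply $\eta_H$ on the left (using $\eta_H^2 = \eta_H$), and expand $e = \eta_H + (e-\eta_H)$ to obtain
\[ \eta_H w = \eta_H z \eta_H + \eta_H z (e - \eta_H). \]
The first summand lies in the Hecke algebra $\eta_H\C[G]\eta_H$ and so by Lemma~\ref{lemma:averaging}(iii) is a linear combination of the elements $\eta_H y \eta_H$ over double cosets $HyH$; the second lies in $\eta_H L^\circ$. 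Projecting both sides onto $\C[HxH]$ (an operation that commutes with left multiplication by $\eta_H$ because $H \cdot HxH = HxH$) extracts an identity $\eta_H w = c\,\eta_H x \eta_H + \eta_H v'$ with $c \in \C$ and $v' \in L^\circ \cap \C[HxH]$, and $v = -v'$ works. The ``moreover'' clause is even easier, using the row-sum identity $(\eta_H v)(Hxh) = v(Hxh)$ (averaging on the left by $H$ preserves the sum over each right coset): then $(w + \eta_H v)(Hxh) = w(Hxh) + v(Hxh) = (w+v)(Hxh)$ is constant in $h$, and $\eta_H v \in L^\circ$ because $L^\circ$ is a left ideal of $\C[G]$.

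The principal obstacle I anticipate is the bookkeeping between the two competing decompositions of $\C[G]$---by double cosets $HxH$ on one side, and by the left coset structure underlying $L = \bigoplus_{g \in G/H} g\C[H]e$ on the other---and verifying that projection onto $\C[HxH]$ interacts cleanly with left multiplication by elements of $\C[H]$. Beyond this, every step draws only on ingredients already developed in Section~\ref{sec: a characterisation of WL} together with the averaging computations in Lemma~\ref{lemma:averaging}.
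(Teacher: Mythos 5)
Your proof is correct and follows essentially the same route as the paper: both reduce, via the decomposition $L = \C[G]\eta_H \oplus L^\circ$ and the hypothesis $L^\circ w \subseteq L^\circ$, to the single condition $\eta_H w \in L$, then normalise by left multiplication by the idempotent $\eta_H$, project to $\C[HxH]$, and translate via Lemma~\ref{lemma:averaging}(i) and (iii) into constancy of the right-coset sums of $w$ modulo $L^\circ \cap \C[HxH]$, with the same final observation $(\eta_H v)(Hg) = v(Hg)$ for the ``moreover'' clause. The only difference is cosmetic: the paper runs a chain of equivalences through the decomposition of $L \cap \C[HxH]$ as $(L^\circ \cap \C[HxH]) \oplus \langle hx\eta_H : h \in H\rangle$, whereas you write $\eta_H w = ze$, expand $e = \eta_H + (e-\eta_H)$, and then project onto the double coset.
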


More informally we may say `$w$ is constant on right cosets modulo $L^\circ$'.

\begin{proof}
Since $\Theta(e)\cap\C[HxH] = \RId_{\C[HxH]}(L^\circ) \hskip1pt\cap\hskip1pt \RId_{\C[HxH]}(L)$ we may
assume throughout that $w \in \RId_{\C[HxH]}(L^\circ)$. Since $L = L^\circ \oplus \C[G]\eta_H$ we have,
under this assumption,
\begin{align*}
w \in&~ \Theta(e)\cap\C[HxH] \\ &\iff \C[G]\eta_H w \subseteq (L^\circ \oplus \C[G]\eta_H) \cap \C[HxH] \\
&\iff \eta_H w \in (L^\circ \cap \C[HxH]) \oplus \langle h x \eta_H : h \in H \rangle \\
&\iff \eta_H w \in \eta_H (L^\circ \cap \C[HxH]) \oplus \eta_H \langle h x \eta_H : h \in H \rangle \\
&\iff \eta_H w = \eta_H v + c \eta_H x \eta_H \ \ \text{for some $v \in L^\circ \cap \C[HxH]$ and $c \in \C$,} 
\end{align*}
where the third double implication follows by multiplying on the left by the idempotent~$\eta_H$.
Now observe that by Lemma~\ref{lemma:averaging}(i) and (iii), the condition in the final line holds if and only if
there exist $v \in L^\circ \cap \C[HxH]$ and $c \in \C$ such that
\[ \frac{w(Hxh)}{|H|} = \frac{v(Hxh) }{|H|} + \frac{c}{|HxH|} \]
for all $h \in H$. Since $L^\circ = \bigoplus_{x \in H \backslash G / H} (L^\circ \cap \C[HxH])$,
it is equivalent to require $v \in L^\circ$; this proves the condition is necessary and
sufficient. Finally, by Lemma~\ref{lemma:averaging}(i), 
we have $(\eta_H v)(Hg) = v(Hg)$ for each $g$, so we may replace $v$ with $\eta_H v$. 
\end{proof}

\begin{example}[Strong lumping]\label{ex:ThetaRightCosetConditionStrong}
Let $e = 1$ and $L = \C[G]$. Thus
 $L^\circ = \C[G](1-\eta_H) = \Ann_{\C[G]}(\eta_H)$
is the space of all $w \in \C[G]$ whose sum is zero on each left coset $gH$. By Lemma~\ref{lemma:averaging}(ii),
$\eta_H \C[G] (1-\eta_H)$ is the space of all elements of $\C[G]$ constant on
each right coset $Hg$ having zero sum on each left coset $gH$. 

For instance, the left diagram overleaf shows the elements in a double coset $HxH$ for $H = \langle h \rangle \cong C_6$
with $xHx^{-1} \cap H = \langle h^3 \rangle$;
thus each $hxH \cap Hxh'$ has the form $\{g, h^3 g\}$ where $h^3 g = gh^3$.
On the right we show 
a general $w \in \eta_H \C[G] (1-\eta_H)$, where the condition ``$w(g) = a$ for all $g\in xH\cap Hx$'' is represented by placing an $a$ in the cell in column $xH$ and row $xH$ of the table, and similarly for all other cells. Note that since $w$ is constant on right cosets,
it is in particular constant on each $hxH \cap Hxh'$.

\begin{center}
    \includegraphics[page=21]{AllPictures.pdf}
\end{center}

Therefore given any $\eta_H w$, necessarily constant on right cosets,
we may add an element of $\eta_H L^\circ$, of the form shown above, to obtain an element of $\C[G]$ 
constant on $HgH$. This verifies the necessary 
and sufficient condition in Proposition~\ref{prop:ThetaDoubleCosetCondition},
and shows that the condition is equivalent to $w \in \Theta(1)$.
This is of course expected as $L = \C[G]$ and so the requirement
$Lw \subseteq L$ imposes no constraints.
\end{example}

For a further example of Proposition~\ref{prop:ThetaDoubleCosetCondition} see
Remark~\ref{remark:ThetaDoubleCosetCondition}.

\section{Abelian subgroups}
\label{sec:abelian}

In this section we consider the case where $H$ is abelian. In this case Theorem~\ref{thm:mainGL} can be made very explicit.
The irreducible representations of an abelian group $H$ are all $1$-dimensional and may be
identified with its irreducible characters, forming the group $\widehat{H}$.
The irreducible representation corresponding to the character $\beta$ is afforded by 
the idempotent element
\begin{equation}\label{eq:abelianIdempotent} e_\beta = \frac{1}{|H|} \sum_{h \in H} \beta(h^{-1}) h. \end{equation}
Note that the inverse is necessary so that we have 
\begin{equation} \label{eq:idempotentAffordsRepresentation}
h e_\beta  = \beta(h) e_\beta \end{equation} 
for each $h \in H$.
The following lemma records some basic properties we need.
Note that in (ii), the perpendicular space is taken with respect to the canonical $H$-invariant inner product on $\C[H]$,
defined by taking $H = G$ in~\eqref{eq:innerProduct}.

\begin{lemma}\label{lemma:abelianIdempotent}
Let $H$ be an abelian group, and let $\beta$, $\gamma \in \widehat{H}$. 
\begin{thmlist}
\item If $\beta, \gamma \in \widehat{H}$ then $e_\beta e_\gamma = e_\gamma e_\beta = 0$.
\item For each $\beta \in \widehat{H}$ we have $\langle e_\beta \rangle^\perp = \langle e_\gamma : \gamma \not=\beta
\rangle$.
\item We have \smash{$1 = \sum_{\beta \in \widehat{H}} e_\beta$}.
\item Every primitive idempotent of $H$ is of the form $e_\beta$ for some $\beta \in \widehat{H}$.
\item We have $x^{-1} e_\beta x = e_{\beta^x}$.
\end{thmlist}
\end{lemma}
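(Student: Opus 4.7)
The plan is to prove the five parts largely by direct computation, leveraging the two forms of character orthogonality for the finite abelian group $H$ (namely $\sum_{h \in H} \beta(h)\overline{\gamma(h)} = |H|\delta_{\beta,\gamma}$ and its dual $\sum_{\beta \in \widehat{H}} \beta(h) = |H|\delta_{h,1}$), together with the key identity $he_\beta = \beta(h)e_\beta$ displayed in~\eqref{eq:idempotentAffordsRepresentation}.

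For (i), I would expand
\[ e_\beta e_\gamma = \frac{1}{|H|^2}\sum_{h,k \in H} \beta(h^{-1})\gamma(k^{-1})hk, \]
substitute $g = hk$, and factor to obtain
\[ e_\beta e_\gamma = \frac{1}{|H|^2}\sum_{g \in H}\gamma(g^{-1})\Bigl(\sum_{h \in H}\beta(h^{-1})\gamma(h)\Bigr)g. \]
The inner sum equals $|H|\langle \gamma, \beta\rangle_H$ by row orthogonality, so the whole expression vanishes if $\beta \ne \gamma$ and simplifies to $e_\beta$ when $\beta = \gamma$ (this last computation also verifies that $e_\beta$ is an idempotent). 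Part (v) is an equally short exercise: using $\beta^x(x^{-1}hx) = \beta(h)$ by definition, direct conjugation of the sum defining $e_\beta$ gives $x^{-1}e_\beta x = \frac{1}{|H|}\sum_h \beta(h^{-1}) x^{-1}hx$, which matches the sum defining $e_{\beta^x} \in \C[x^{-1}Hx]$ after reindexing by $k = x^{-1}hx$.

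Part (iii) follows from column orthogonality: $\sum_\beta e_\beta = \frac{1}{|H|}\sum_h(\sum_\beta \beta(h^{-1}))h$ and the inner sum is $|H|$ if $h = 1$ and zero otherwise. For (ii), I would compute $\langle e_\beta, e_\gamma \rangle$ using the inner product~\eqref{eq:innerProduct} on $\C[H]$ and the identity $\overline{\beta(h^{-1})} = \beta(h)$ valid for linear characters, reducing the inner product once again to row orthogonality and obtaining $\langle e_\beta, e_\gamma\rangle = \frac{1}{|H|^2}\delta_{\beta,\gamma}$. In particular the $|H|$ idempotents $\{e_\beta : \beta \in \widehat{H}\}$ are mutually orthogonal and nonzero, hence linearly independent, so by dimension they form a basis of $\C[H]$; the perpendicular space to $\langle e_\beta\rangle$ is then spanned by the remaining $e_\gamma$'s.

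Finally for (iv), given any idempotent $e \in \C[H]$ I would expand $e = \sum_{\beta} c_\beta e_\beta$ using the basis from (ii) and apply (i) to compute $e^2 = \sum_\beta c_\beta^2 e_\beta$; comparing coefficients forces $c_\beta \in \{0,1\}$ for every $\beta$, so $e = \sum_{\beta \in S} e_\beta$ for some $S \subseteq \widehat{H}$. If $|S| \ge 2$, fix $\beta_0 \in S$ and write $e = e_{\beta_0} + (e - e_{\beta_0})$; both summands are idempotents, and by (i) their product vanishes, witnessing that $e$ is not primitive. Hence a primitive idempotent must have $|S| = 1$, i.e.~be one of the $e_\beta$. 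No step is a genuine obstacle; the only mild care needed is in (ii), where one must correctly apply the convention of~\eqref{eq:innerProduct} and the relation $\beta(h^{-1}) = \overline{\beta(h)}$ to stay faithful to the Hermitian inner product.
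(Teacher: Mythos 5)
Your proof is correct and follows essentially the same route as the paper, which simply cites character orthogonality for (i)--(ii), the Wedderburn decomposition for (iii)--(iv), and the same conjugation calculation (with the same convention $\beta^x(k)=\beta(xkx^{-1})$) for (v). Your only deviation is cosmetic: for (iii)--(iv) you replace the appeal to the Wedderburn decomposition by the dual (column) orthogonality relation and the explicit basis of mutually orthogonal idempotents $e_\beta$, which for an abelian group amounts to the same thing.
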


\begin{proof}
Parts (i) and (ii) follow from orthogonality of characters; parts (iii) and (iv) can be seen from the Wedderburn decomposition noting that every block is one-dimensional.
Part (v) is most simply proved by calculation:
\[ \begin{split} e_\beta x = \frac{1}{|H|}&{} \sum_{h \in H} \beta(h^{-1}) h x =
\frac{1}{|H|} \sum_{h \in H} \beta(h^{-1}) x (x^{-1}hx) \\ &=
\frac{1}{|H|} \sum_{k \in H} \beta(xk^{-1}x^{-1}) xk =
\frac{1}{|H|} \sum_{k \in H} \beta^k(x) xk =
x e_{\beta^x} \end{split} \]
where the penultimate equality holds since $\beta(xk^{-1}x^{-1}) = \beta^x(k^{-1})$.
\end{proof}

Given a subset $\IP \subseteq \widehat{H}$ of irreducible characters of $H$, let
\[ e_\IP = \sum_{\beta \in I} e_\beta. \]
By Lemma~\ref{lemma:abelianIdempotent}(iv), every idempotent in $\C[H]$ is of the form $e_\IP$, and so 
the left $\C[G]$-modules $K$ we must consider are precisely 
those $\C[G] e_\IP$ as $I$ ranges over $\widehat{H} \backslash \{\triv_H\}$.
Note that there are only finitely many such modules; in fact this feature characterises the case of abelian $H$.

\subsection{\texorpdfstring{Double coset decomposition of weak lumping algebras: 
the case of abelian $H$}{}}
Given a subset $\XY$ of $\C[HxH]$, let $\XY^\perp$ denote the perpendicular space to $\XY$ \emph{inside $\C[HxH]$},
with  respect to the canonical $G$-invariant
inner product on $\C[G]$ defined in~\eqref{eq:innerProduct}.

\begin{proposition}\label{prop:abelianThetaPerp}
Let $H$ be an abelian subgroup of $G$.
For $\IP \subseteq \widehat{H}$ we have
\[ 
\RId_{\C[HxH]}(\C[G]e_\IP)^\perp =
 \bigl\langle e_\beta  x  e_\gamma : \beta \in \IP, \gamma \in \widehat{H} \backslash \IP \bigr\rangle.
\]
\end{proposition}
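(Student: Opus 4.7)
The plan is to prove the identity by simultaneously diagonalising $\C[HxH]$ under the bi-action of $H\times H$ (by left and right multiplication) and then reading off the right idealiser and its orthogonal complement directly from the resulting isotypic decomposition. Since $H$ is abelian, Lemma~\ref{lemma:abelianIdempotent}(iii) gives $1 = \sum_{\beta \in \widehat{H}} e_\beta$, so every $w \in \C[HxH]$ decomposes as
\[ w = \sum_{\beta, \gamma \in \widehat{H}} e_\beta w e_\gamma. \]
The first observation is that each isotypic summand $e_\beta \C[HxH] e_\gamma$ is spanned by $e_\beta x e_\gamma$, hence is zero or one-dimensional. To see this, for any $hxh' \in HxH$, slide the idempotents using the abelianness of $H$ together with~\eqref{eq:idempotentAffordsRepresentation} to obtain
\[ e_\beta (hxh') e_\gamma = \beta(h)\gamma(h')\, e_\beta x e_\gamma . \]

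The second ingredient is that these isotypic components are pairwise orthogonal for the inner product in~\eqref{eq:innerProduct}. This is a standard consequence of the two-sided $G$-invariance of that inner product: if $\beta \ne \beta'$, pick $h \in H$ with $\overline{\beta(h)}\beta'(h) \ne 1$, and compare
\[ \langle h e_\beta u e_\gamma, h e_{\beta'} v e_{\gamma'}\rangle = \overline{\beta(h)}\beta'(h)\,\langle e_\beta u e_\gamma, e_{\beta'} v e_{\gamma'}\rangle \]
with $\langle h e_\beta u e_\gamma, h e_{\beta'} v e_{\gamma'}\rangle = \langle e_\beta u e_\gamma, e_{\beta'} v e_{\gamma'}\rangle$ coming from left-invariance; the case $\gamma \ne \gamma'$ is handled symmetrically using right-invariance.

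With these two ingredients in hand, the proof reduces to translating the definition of the right idealiser. By Lemma~\ref{lemma:circ}(i), setting $L = \C[G]e_\IP$,
\[ \RId_{\C[HxH]}(L) = \{w \in \C[HxH] : e_\IP w(1-e_\IP) = 0\}. \]
Writing $e_\IP = \sum_{\beta \in \IP} e_\beta$ and $1-e_\IP = \sum_{\gamma \notin \IP} e_\gamma$, the orthogonality of the primitive idempotents in Lemma~\ref{lemma:abelianIdempotent}(i) converts the condition $e_\IP w (1-e_\IP) = 0$ into the componentwise condition $e_\beta w e_\gamma = 0$ for every pair $(\beta, \gamma) \in \IP \times (\widehat{H}\setminus \IP)$. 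Thus $\RId_{\C[HxH]}(L)$ is precisely the direct sum of the isotypic summands indexed by pairs \emph{not} lying in $\IP \times (\widehat{H}\setminus \IP)$. Taking orthogonal complements inside $\C[HxH]$ and invoking the orthogonality from the previous paragraph gives
\[ \RId_{\C[HxH]}(L)^\perp = \bigoplus_{\beta \in \IP,\, \gamma \notin \IP} e_\beta \C[HxH] e_\gamma = \langle e_\beta x e_\gamma : \beta \in \IP,\, \gamma \notin \IP\rangle, \]
which is exactly the claim.

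No step seems genuinely hard: the only point requiring care is making sure the identification $e_\beta \C[HxH] e_\gamma = \langle e_\beta x e_\gamma \rangle$ does not tacitly assume that $x$ normalises $H$ (it does not — we only use that $e_\beta$ and $e_\gamma$ absorb elements of $H$ on the correct side), and that the $H\times H$-invariance of the inner product is deployed on the correct side in each of the two sub-cases of the orthogonality argument.
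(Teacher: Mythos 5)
Your proof is correct, but it is organised differently from the paper's. The paper first invokes Lemma~\ref{lemma:RId} (whose proof goes through the Wedderburn decomposition) together with Lemma~\ref{lemma:ThetaDoubleCosetDecomposition} to write $\RId_{\C[HxH]}(\C[G]e_\IP) = \C[HxH]e_\IP + (1-e_\IP)\C[HxH]$, then computes the perpendicular of each summand using Lemma~\ref{lemma:abelianIdempotent}(i)--(ii) and takes the intersection of the two perpendicular spaces. You instead bypass Lemma~\ref{lemma:RId} altogether: you decompose $\C[HxH]$ as the $(H\times H)$-bimodule direct sum $\bigoplus_{\beta,\gamma} e_\beta\C[HxH]e_\gamma$, observe via the sliding identity $e_\beta(hxh')e_\gamma = \beta(h)\gamma(h')\,e_\beta x e_\gamma$ that each summand is at most one-dimensional, prove that distinct summands are orthogonal using the bi-invariance of the inner product in~\eqref{eq:innerProduct}, and then read off both the idealizer and its perpendicular from the condition $e_\IP w(1-e_\IP)=0$ componentwise. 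What your route buys is a self-contained, more structural argument that also exhibits the orthogonal decomposition of $\C[HxH]$ with one-dimensional isotypic pieces (which makes the count of independent linear constraints on each double coset transparent, cf.\ the discussion after Corollary~\ref{cor:ThetaDoubleCosetConditionAbelian}); what the paper's route buys is brevity, since Lemmas~\ref{lemma:RId} and~\ref{lemma:abelianIdempotent} are already in place. Two small points of hygiene: your appeal to Lemma~\ref{lemma:circ}(i) is stated in the paper only for $e\in\Eb{H}$, whereas here $\IP$ is an arbitrary subset of $\widehat{H}$; the equivalence $\C[G]e\,w\subseteq\C[G]e \iff ew(1-e)=0$ needs only that $e$ is idempotent, so either argue this directly from $\C[G]e = \{x : x(1-e)=0\}$ or cite Lemma~\ref{lemma:RId} as the paper does. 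Also, when you convert $e_\IP w(1-e_\IP)=0$ into the vanishing of each $e_\beta w e_\gamma$ with $\beta\in\IP$, $\gamma\notin\IP$, you should say explicitly that multiplying on the left by $e_{\beta_0}$ and on the right by $e_{\gamma_0}$ isolates each component; this is immediate from Lemma~\ref{lemma:abelianIdempotent}(i) but is the step doing the work.
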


\begin{proof}
By Lemmas \ref{lemma:RId} and \ref{lemma:ThetaDoubleCosetDecomposition} 
we can write
\[ \RId_{\C[HxH]}(\C[G]e_\IP) = \C[HxH] e_\IP + (1-e_\IP)\C[HxH]. \]
The rest of this proof follows from parts (i)--(v) of Lemma \ref{lemma:abelianIdempotent}.
By part (i), the image of right
multiplication by $e_\IP$ is 
\[ \langle h x k e_\beta : \beta \in \IP, h, k \in H \rangle.\]
By~\eqref{eq:idempotentAffordsRepresentation} we may simplify this to
$\langle h x e_\beta : \beta \in \IP, h \in H \rangle$.
Now by part (ii) the perpendicular space of the image is $\langle h x e_\gamma : \gamma \not\in \IP, h \in H \rangle$.
Similarly, 
the image of left multiplication by $1-e_\IP$ is
$\langle e_\gamma x k : \gamma \in \widehat{H} \backslash \IP, k \in H \rangle$ and its perpendicular
space is $\langle e_\beta x k : \beta \in \IP, k \in H \rangle$. Taking the intersection
of the perpendicular spaces gives the result.
\end{proof}

We now obtain the analogue of Proposition~\ref{prop:ThetaDoubleCosetCondition}
for the case of abelian~$H$. 
Recall that $\Theta(e)$, as defined in~\eqref{eq:Theta}, is 
the algebra of weakly lumping weights for the idempotent $e \in \Eb{H}$ and
by~\eqref{eq:ThetaSplit} we have $\Theta(e) = \bigoplus_{x \in H\backslash G / H}
\Theta(e) \cap \C[HxH]$, and so, as we noted after this equation,
weak lumping of weights is decided double coset by double coset. 
As 
further motivation
for the hypothesis below, note that $e_\IP \in \Eb{H}$ if and only if
$\triv_H \in \IP$.

\begin{corollary}\label{cor:ThetaDoubleCosetConditionAbelian}
Let $H$ be an abelian subgroup of $G$.
Let $\IP \subseteq \widehat{H}$ contain $\triv_H$
and let $w \in \C[HxH]$. A necessary and sufficient
condition for $w \in \Theta(e_\IP)$  is that
\[ 
w \in\bigl( \bigl\langle e_\beta x e_\gamma : \beta \in \IP, \gamma \in \widehat{H}
\backslash \IP \bigr\rangle
+ \bigl\langle e_\beta x \eta_H : \beta \in \IP \backslash \{\triv_H\} 
\bigr\rangle \bigr)^\perp.
\]
\end{corollary}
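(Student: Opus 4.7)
The plan is to compute $\Theta(e_\IP) \cap \C[HxH]$ by dualising the double-coset decomposition given in \eqref{eq:ThetaSplit} and then applying Proposition~\ref{prop:abelianThetaPerp} separately to $L = \C[G]e_\IP$ and to $L^\circ$. First I would write
\[
\Theta(e_\IP) \cap \C[HxH] = \RId_{\C[HxH]}(L) \cap \RId_{\C[HxH]}(L^\circ),
\]
so that, taking perpendiculars inside the finite-dimensional inner product space $\C[HxH]$,
\[
\bigl(\Theta(e_\IP) \cap \C[HxH]\bigr)^\perp = \RId_{\C[HxH]}(L)^\perp + \RId_{\C[HxH]}(L^\circ)^\perp.
\]
It then suffices to identify the right-hand side with the spanning set appearing in the statement.

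The next step is to observe that in the abelian case $\eta_H = e_{\triv_H}$ and Lemma~\ref{lemma:abelianIdempotent}(i) gives $e_\IP \eta_H = \eta_H$ (using $\triv_H \in \IP$), so $e_\IP - \eta_H = e_{\IP \setminus \{\triv_H\}}$. Since $L$ is induced, Lemma~\ref{lemma: ker Lambda is Ann etaH} yields $L^\circ = L(1-\eta_H) = \C[G]e_{\IP \setminus \{\triv_H\}}$. I may therefore apply Proposition~\ref{prop:abelianThetaPerp} twice, once to $\IP$ and once to $\IP' = \IP \setminus \{\triv_H\}$, to obtain
\begin{align*}
\RId_{\C[HxH]}(L)^\perp &= \bigl\langle e_\beta x e_\gamma : \beta \in \IP,\ \gamma \in \widehat{H} \setminus \IP \bigr\rangle, \\
\RId_{\C[HxH]}(L^\circ)^\perp &= \bigl\langle e_\beta x e_\gamma : \beta \in \IP \setminus \{\triv_H\},\ \gamma \in (\widehat{H} \setminus \IP) \cup \{\triv_H\} \bigr\rangle.
\end{align*}

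Finally I would sum these two subspaces and split the second span according to whether $\gamma \in \widehat{H} \setminus \IP$ or $\gamma = \triv_H$. In the first case each generator $e_\beta x e_\gamma$ with $\beta \in \IP \setminus \{\triv_H\} \subseteq \IP$ and $\gamma \in \widehat{H} \setminus \IP$ already lies in $\RId_{\C[HxH]}(L)^\perp$, so contributes nothing new. In the second case, since $e_{\triv_H} = \eta_H$, the new generators are precisely $e_\beta x \eta_H$ for $\beta \in \IP \setminus \{\triv_H\}$. This identifies $(\Theta(e_\IP) \cap \C[HxH])^\perp$ with the span appearing in the corollary, and taking perpendiculars once more gives the stated necessary and sufficient condition.

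There is no deep obstacle in this argument: the work has already been done in Proposition~\ref{prop:abelianThetaPerp}, and the rest is essentially bookkeeping with the indexing sets. The only point that requires care is verifying that $L^\circ = \C[G]e_{\IP \setminus \{\triv_H\}}$ so that Proposition~\ref{prop:abelianThetaPerp} is applicable to $L^\circ$ as well as $L$, and then checking that the extra generators contributed by $L^\circ$ beyond those from $L$ are exactly the $e_\beta x \eta_H$ with $\beta \in \IP \setminus \{\triv_H\}$.
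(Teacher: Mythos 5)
Your proposal is correct and follows essentially the same route as the paper: both identify $\Theta(e_\IP)\cap\C[HxH]$ as $\RId_{\C[HxH]}(\C[G]e_\IP)\cap\RId_{\C[HxH]}(\C[G](e_\IP-\eta_H))$ via \eqref{eq: Theta as intersection of idealizers} and \eqref{eq:ThetaSplit}, apply Proposition~\ref{prop:abelianThetaPerp} to both ideals (using $e_\IP-\eta_H=e_{\IP\setminus\{\triv_H\}}$), and observe that the only generators contributed by $L^\circ$ beyond those from $L$ are the $e_\beta x\eta_H$ with $\beta\in\IP\setminus\{\triv_H\}$. Your explicit dualisation $(A\cap B)^\perp=A^\perp+B^\perp$ is just a slightly more spelled-out version of the paper's concluding sentence, and your verification that $L^\circ=\C[G]e_{\IP\setminus\{\triv_H\}}$ matches the paper's earlier computation following Definition~\ref{defn:GLideal}.
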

\begin{proof}
By~\eqref{eq: Theta as intersection of idealizers}
we have $\Theta(e) = \RId_{\C[G]}(\C[G]e_\IP) \cap \RId_{\C[G]}(e_\IP - \eta_H)$.
By Proposition~\ref{prop:abelianThetaPerp} we have
\begin{align*}
\RId_{\C[HxH]}(\C[G]e_\IP)^\perp &= \langle e_\beta x e_\gamma : \beta \in \IP,
\gamma \in \widehat{H} \backslash \IP \rangle \\
\RId_{\C[HxH]}(\C[G](e_\IP - \eta_H))^\perp &= \langle e_\beta x e_\gamma :
\beta \in I \backslash \{\triv_H \}, \gamma \in (\widehat{H} \backslash I)
\cup \{ \triv_H \}. \end{align*}
Therefore $w \in \Theta(e) \cap \C[HxH]$ if and only if 
it is in the first perpendicular space,
and also perpendicular to all $e_\beta x \eta_H$ for $\beta \in I \backslash \{\triv_H\}$,
as required.
\end{proof}

Applying the corollary to each double coset in turn
we obtain finitely many linear equations that specify
a necessary and sufficient condition
for a weight $w$ to lie in the algebra $\Theta(e)$.
In the extreme case when there is a coset $|HxH| = |H|^2$ of maximum size,
the elements $e_\beta x e_\gamma$ are linearly independent and there are 
$|\IP|(|H|-|\IP|)$ linearly independent equations from the coset $HxH$.
In general, since
each element of $HxH$ has $|H \cap xHx^{-1}|$
different expressions in the form $hxh'$ for $h, h' \in H$,
some work is needed to get an irredundant set of equations.

We are now ready to prove our final main result, Corollary~\ref{cor:abelian}, which we restate below for convenience.

\setcounter{section}{1}
\setcounter{theorem}{11} 
\begin{corollary}
Let $D$ be a set of double coset representatives for $H\backslash G / H$.
The left-invariant random walk on $G$ driven by an irreducible weight $w$ lumps weakly on the left cosets of $H$ if 
and only if 
there exists a subset $P \subseteq \widehat{H}$ containing  $\triv_H$ such that
for all $x \in D$ we have $w \in \bigcap_{x \in X} W_x^\perp$, where 
\[ W_x=  \bigl\langle e_\beta  x  e_\gamma : 
\: \beta \in P,\: \gamma \in (\widehat{H} \backslash P) \cup \{\triv_H\} ,\: 
(\beta, \gamma) \not= (\triv_H, \triv_H) 
\bigr\rangle. \]
\end{corollary}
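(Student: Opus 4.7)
The plan is to derive Corollary~\ref{cor:abelian} by specializing the main characterisation Theorem~\ref{thm:mainGL} (in the equivalent form provided by Corollary~\ref{cor: main testWeight}) to the abelian setting, and then translating the resulting algebraic conditions into the orthogonality system provided by Corollary~\ref{cor:ThetaDoubleCosetConditionAbelian} applied to each double coset in $D$.

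First I would enumerate the idempotents of $\C[H]$: since $H$ is abelian, Lemma~\ref{lemma:abelianIdempotent}(iii) and (iv) give that every idempotent $e \in \C[H]$ is of the form $e_P = \sum_{\beta \in P} e_\beta$ for a unique subset $P \subseteq \widehat{H}$. Taking $\beta = \triv_H$ in formula~\eqref{eq:abelianIdempotent} yields $e_{\triv_H} = \eta_H$, and combining this with the orthogonality relation $e_\beta e_\gamma = \delta_{\beta\gamma} e_\beta$ of Lemma~\ref{lemma:abelianIdempotent}(i) gives $\eta_H e_P = \eta_H$ if and only if $\triv_H \in P$. Hence $e_P \in \Eb{H}$ exactly when $\triv_H \in P$.

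Next I would invoke Corollary~\ref{cor: main testWeight}: the irreducible weight $w$ lumps weakly to $G/H$ if and only if $\MC(\eta_G, w)$ lumps weakly to $G/H$, which by Theorem~\ref{thm:mainGL} means there exists $e \in \Eb{H}$ such that $\eta_G \in \C[G] e$, $ew(1-e) = 0$ and $(e - \eta_H) w \eta_H = 0$. The first condition $\eta_G \in \C[G]e_P$ is automatic when $\triv_H \in P$, since $\eta_G \eta_H = \eta_G$ and $\eta_H e_P = \eta_H$. Thus weak lumping is equivalent to the existence of a subset $P \subseteq \widehat{H}$ containing $\triv_H$ with $w \in \Theta(e_P)$.

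Finally, I would break the condition $w \in \Theta(e_P)$ into conditions over the double cosets using the direct sum decomposition~\eqref{eq:ThetaSplit}, and then apply Corollary~\ref{cor:ThetaDoubleCosetConditionAbelian} on each double coset $HxH$ for $x \in D$ to get the explicit perpendicularity statement
\[
w_{HxH} \in \bigl( \langle e_\beta x e_\gamma : \beta \in P, \gamma \in \widehat{H} \setminus P \rangle + \langle e_\beta x \eta_H : \beta \in P \setminus \{\triv_H\} \rangle \bigr)^\perp.
\]
The last step is the purely cosmetic matching between this spanning set and $W_x$: substituting $\eta_H = e_{\triv_H}$ rewrites the second summand as $\langle e_\beta x e_{\triv_H} : \beta \in P \setminus \{\triv_H\} \rangle$, and combining the two summands gives exactly $\{e_\beta x e_\gamma : \beta \in P,\ \gamma \in (\widehat{H} \setminus P) \cup \{\triv_H\},\ (\beta,\gamma) \neq (\triv_H,\triv_H)\}$, i.e.~$W_x$. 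Since the $G$-invariant inner product on $\C[G]$ respects the double coset decomposition, the $w_{HxH}$-wise perpendicularity conditions combine into the single statement $w \in \bigcap_{x \in D} W_x^\perp$. There is no substantive obstacle here: all the real work has been done in the earlier sections, and the argument is essentially a bookkeeping exercise once one identifies $\eta_H$ with $e_{\triv_H}$ and tracks the constraint $\triv_H \in P$.
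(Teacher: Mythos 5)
Your proposal is correct and follows essentially the same route as the paper: the paper's proof is precisely to apply Corollary~\ref{cor:ThetaDoubleCosetConditionAbelian} to each double coset via the decomposition~\eqref{eq:ThetaSplit}, identifying $W_x^\perp$ with the perpendicular space appearing there. Your additional steps (classifying the idempotents $e_P$ with $\triv_H \in P$ as exactly $\Eb{H}$, reducing weak lumping of the irreducible weight $w$ to $w \in \Theta(e_P)$ via Theorem~\ref{thm:mainGL} and Corollary~\ref{cor: main testWeight}, and checking that $\eta_G \in \C[G]e_P$ is automatic) are exactly the groundwork the paper lays out in \S\ref{sec:abelian} just before the corollary, so nothing is missing and nothing is genuinely different.
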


\begin{proof}
This follows by applying Corollary~\ref{cor:ThetaDoubleCosetConditionAbelian}
to each double coset in turn using~\eqref{eq:ThetaSplit}. Note
that $W_x^\perp$ is the perpendicular space in this corollary.
\end{proof}

\setcounter{section}{13}
\setcounter{theorem}{3}

\subsubsection{Cosets of the normalizer}
In one important case this difficulty
does not arise.
Recall from Definition~\ref{defn:characterTwist} that if $x \in G$ and
$\beta \in \widehat{H}$ then $\beta^x$ denotes the character of $x^{-1}Hx$
defined by $\beta^x(k) = \beta(xkx^{-1})$.
Given $\IP \subseteq \widehat{H}$ we write $\IP^x$ for~$\{\beta^x : \beta \in \IP\}$.
As motivation, we remark that the condition $xH = Hx$ holds if and only
$xH = Hx = HxH$ and
if $xHx^{-1} = H$, and so if and only if $x$ is in the normalizer $N_G(H)$;
in this case $HxH = xH = Hx$.

\begin{corollary}\label{cor:abelianSingletonCoset}
Let $H$ be an abelian subgroup of $G$.
Let $\IP \subseteq \widehat{H}$ contain $\triv_H$
and let $w \in \C[HxH]$ be a weight. 
Suppose that $xH = Hx$.
Then a necessary and sufficient
condition for $w \in \Theta(e_\IP)$  is that
\[ w \in \bigl\langle x e_\delta : \delta \in \IP^x \cap  
(\widehat{H}\setminus \IP) \bigr\rangle^\perp. \]
\end{corollary}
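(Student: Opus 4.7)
My plan is to derive this corollary directly from Corollary~\ref{cor:ThetaDoubleCosetConditionAbelian} by simplifying both summands inside the perpendicular using the hypothesis $xH = Hx$, i.e.~$xHx^{-1} = H$. First I would observe that under this hypothesis, conjugation by $x$ induces a permutation of $\widehat{H}$, so $\IP^x = \{\beta^x : \beta \in \IP\}$ is a well-defined subset of $\widehat{H}$, and the identity $e_\beta x = x e_{\beta^x}$ from Lemma~\ref{lemma:abelianIdempotent}(v) makes sense.

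Next I would simplify the spanning set $\{e_\beta x e_\gamma : \beta \in \IP, \gamma \in \widehat{H}\setminus \IP\}$. Rewriting each term as $x e_{\beta^x} e_\gamma$ and applying the orthogonality relation $e_\delta e_\gamma = \delta_{\delta,\gamma} e_\gamma$ of Lemma~\ref{lemma:abelianIdempotent}(i), each such term vanishes unless $\beta^x = \gamma$, in which case it equals $x e_\gamma$. As $\beta$ ranges over $\IP$ the elements $\beta^x$ range over $\IP^x$, so the nonzero contributions are exactly
\[
\bigl\{ x e_\gamma : \gamma \in \IP^x \cap (\widehat{H}\setminus \IP) \bigr\},
\]
which is the spanning set appearing in the statement of the corollary (after renaming $\gamma$ to $\delta$).

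It remains to show the second summand $\langle e_\beta x \eta_H : \beta \in \IP \setminus \{\triv_H\} \rangle$ contributes nothing. The hypothesis $xHx^{-1} = H$ gives $x\eta_H = \eta_H x$, so $e_\beta x \eta_H = e_\beta \eta_H x$. Since $\eta_H = e_{\triv_H}$, Lemma~\ref{lemma:abelianIdempotent}(i) yields $e_\beta \eta_H = 0$ whenever $\beta \ne \triv_H$. Thus every generator of the second summand vanishes, and combining this with the previous paragraph we obtain
\[
w \in \Theta(e_\IP) \cap \C[HxH] \iff w \in \bigl\langle x e_\delta : \delta \in \IP^x \cap (\widehat{H}\setminus \IP) \bigr\rangle^\perp,
\]
as required.

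There is no substantial obstacle here; the corollary is really a bookkeeping exercise in exploiting $xHx^{-1} = H$. The only subtlety to watch is the direction of the character twist in Lemma~\ref{lemma:abelianIdempotent}(v) (that $e_\beta$ commutes past $x$ to become $e_{\beta^x}$ rather than $e_{\beta^{x^{-1}}}$), since writing $\IP^x$ instead of $\IP^{x^{-1}}$ in the final formula depends on getting this right.
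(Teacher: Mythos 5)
Your argument is correct and follows essentially the same route as the paper's proof: both start from Corollary~\ref{cor:ThetaDoubleCosetConditionAbelian} and use Lemma~\ref{lemma:abelianIdempotent}(v) together with orthogonality of the primitive idempotents to reduce the first spanning set to $\{x e_\delta : \delta \in \IP^x \cap (\widehat{H}\setminus\IP)\}$ and to kill the second summand. The only cosmetic difference is that you annihilate $e_\beta x \eta_H$ by commuting $\eta_H$ past $x$ (using $\beta \ne \triv_H$), whereas the paper commutes $e_\beta$ past $x$ (using $\beta^x \ne \triv_H$); both are the same orthogonality computation.
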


\begin{proof}
By Lemma~\ref{lemma:abelianIdempotent}(v)
we have $e_\beta x = x x^{-1} e_\beta x = xe_{\beta^x}$.
By this observation and Lemma~\ref{lemma:abelianIdempotent}(i)
we have 
\[ e_\beta x e_\gamma = \begin{cases}
e_{\beta^x} & \text{if $\beta^x = \gamma$} \\
0 & \text{otherwise.} \end{cases}\]
Now apply this to the sum of the two perpendicular spaces
given in Corollary~\ref{cor:ThetaDoubleCosetConditionAbelian},
noting that $e_{\beta^x} \eta_H = 0$ by Lemma~\ref{lemma:abelianIdempotent}(i),
since $\beta^x \not=\triv_H$.
\end{proof}

In particular, if $x \in H$, or more generally, if $Hx = xH$ \emph{and}
the conjugacy action $x$ permutes the irreducible
characters in $\IP$, then there is no constraint from the double coset $HxH$ on
the weights in $\Theta(e) \cap \C[HxH]$.

\subsection{An extended example: the six-sided die}
\label{subsec: abelian example}

Consider an ordinary six-sided die, that is rolled and then translated to its original position. 
For instance, imagine it is in an automatic dice roller which only allows for one stable position of the die.
The orientation-preserving symmetries of a cube are realised by the symmetric group $G = \Sym_4$,
 acting by permuting the four diagonals of the cube.
An observation of the top value of a die 
is invariant under the group of symmetries of the top face, 
which is the cyclic group 
$H = \langle(1,2,3,4)\rangle = \langle h\rangle \cong C_4$, where $h = (1,2,3,4)$.
Therefore repeated observations of the top face correspond to lumping of the
left-invariant random walk on $G$ to the left cosets $G/H$.

\subsubsection{Face action}
It is useful to understand the action on the faces. 
Recall that opposite faces sum to $7$. 
If the top face is \epsface{1}, we set the convention that the permutation $(1,2,3,4)$ acts on the faces as the permutation $(\epsface{2},\epsface{3},\epsface{5},\epsface{4})$ sending the face \epsface{2} to \epsface{3}, the face \epsface{3} to \epsface{5}, and so on.
We let the permutation $(1,2)$ act by $(\epsface{1},\epsface{2})(\epsface{3},\epsface{4})(\epsface{5},\epsface{6})$.
This permutation swaps two diagonals of the cube, and corresponds to the $180^\circ$ rotation about the axis through the centre of the edge between faces $\epsface{1}$ and $\epsface{2}$ and the centre of the edge between faces $\epsface{5}$ and $\epsface{6}$.

\subsubsection{Idempotents}
The subgroup $H$ is abelian. Its four primitive idempotents are
\[
    \def\arraycolsep{1pt}
    \begin{array}{rcrcrcrcrcr}
        e_\mathbbm{1} = \eta_H = \eta &=& \frac{1}{4}( & 1 &+ & h &+ & h^2&+ & h^3),\\[3pt]
        \xi &=& \frac{1}{4}( & 1 &+ & i h &- & h^2&- & ih^3),\\[3pt]
        e_{\sgn} = \varsigma &=& \frac{1}{4}( & 1 &- & h &+ &  h^2&- & h^3),\\[3pt]
        \bar\xi &=& \frac{1}{4}( & 1 &- & i h &-&  h^2&+ & h^3).\\
    \end{array}
\]
Hence $\Eb{H}$ has 8 elements, obtained by taking $\eta$ plus one of the possible combinations of the other idempotents. 
By~\eqref{eq:Theta}, $w$ is a weak lumping weight if and only if $w \in \bigcup_{e\in\Eb{H}}\Theta(e)$; by Corollary~\ref{cor:ThetaDoubleCosetConditionAbelian}
this is decided double coset by double coset by finitely many linear equations.
Among the~8 idempotents, only the $4$ which have either both
or neither of $\xi$ and $\bar\xi$ as a summand are real idempotents,
as studied in~\S\ref{sec:real}; we compute $\Theta(e)$ for each of these.

\subsubsection{Double coset decomposition}
There are three double cosets in $H\backslash G / H$, namely
$H$, $H(1,3)H$ and $H(1,2)H$,
where $H(1,3)H = H(1,3) = (1,3)H$ is of the type in Corollary~\ref{cor:abelianSingletonCoset}.
The double coset $H(1,2)H$ of size $16$ controls
which weights lump weakly to $G/H$. Its structure is shown
in Figure~\ref{fig:DoubleCosetsPopOMatic}.

\begin{figure}[h!]
\begin{center}
\includegraphics[page=22]{AllPictures.pdf}
\end{center}\smallskip
\begin{center}
\includegraphics[page=23]{AllPictures.pdf}
\end{center}
\caption{The top diagram shows the double coset $H(1,2)H$ when $G = \Sym_4$ and $H = \langle(1,2,3,4)\rangle$, and its action on the six faces of a die.
Rows are left cosets and columns are right cosets. 
For instance, the permutation $(1,2)$ 
swaps two diagonals of the cube, and corresponds to the $180^\circ$ rotation about the axis through the centre of the edge between faces $\epsface{1}$ and $\epsface{2}$ and the centre of the edge between faces $\epsface{5}$ and $\epsface{6}$, resulting in a permutation $(\epsface{1},\epsface{2})(\epsface{3},\epsface{4})(\epsface{5},\epsface{6})$ of the faces.
Shaded regions indicate the double cosets $TxT$ where $T = \langle(1,2)(3,4)\rangle$.
The division into right cosets of $T$ relevant to Example~\ref{ex:popWeakDual}
is shown in the lower diagram.
For instance $T(1,2)T
= T(1,2) \cup T(3,4)$ contains the four permutations
in the white region.
}
\label{fig:DoubleCosetsPopOMatic}
\end{figure}

 We now use Corollary~\ref{cor:ThetaDoubleCosetConditionAbelian}
to compute the space perpendicular to 
$\Theta(e)\cap\C[HxH]$ for every $e \in \Eb{H}$ and every double coset $HxH$.

\subsubsection{Strong lumping}
Fix the real idempotent $1 \in \Eb{H}$. The algebra $\Theta(1)$ is the 
strong lumping algebra (see~Proposition \ref{prop:strongJwIsCG}). 
That is, $w\in\Theta(1)$ if and only if the left-invariant random walk driven by $w$ lumps weakly for all initial distributions.
By Corollary~\ref{cor:ThetaDoubleCosetConditionAbelian}, 
this holds for $w \in \C[HxH]$ if and only if
\[
x \in \langle \xi x \eta, \varsigma x \eta, \bar{\xi} x \eta \rangle^\perp.
\]
(Note the first summand in this corollary vanishes.)
If $x = \Id_H$ or $x = (1,3)$ then since $Hx = xH$ in these cases,
by Corollary~\ref{cor:abelianSingletonCoset},
there is no constraint from this double coset.
Hence 
\[
\Theta(1)^\perp = 
\bigl\langle \xi (1,2)\eta, \varsigma (1,2)\eta, \bar\xi (1,2)\eta \bigr\rangle
\]
and there are just $3$ linear constraints.

\subsubsection{Exact lumping}
Fix the real idempotent $\eta \in \Eb{H}$. The algebra
$\Theta(\eta)$ is the exact lumping algebra $\Theta(\eta)$ seen in Proposition \ref{prop:exactLwIsCGeta}. (See Definition~\ref{defn:exactLumping} for the 
definition of exact lumping.)
By Corollary~\ref{cor:ThetaDoubleCosetConditionAbelian}, 
the weight $w \in \C[HxH]$  lumps exactly if and only if
\[
x \in \langle \eta x \xi, \eta x \varsigma, \eta x \bar{\xi} \rangle^\perp.
\]
(Note the second summand in this corollary vanishes.)
Again if $x = \Id_H$ or $x = (1,3)$ then since $Hx = xH$ in these cases,
by Corollary~\ref{cor:abelianSingletonCoset},
there is no constraint from this double coset.
Hence 
\[
\Theta(\eta_H)^\perp = 
\bigl\langle \eta (1,2) \xi, \eta (1,2) \varsigma, \eta (1,2) \bar{\xi} \rangle^\perp
\]
and again there are $3$ linear constraints. This should be expected
from Theorem~\ref{thm:mainTimeReversal}, which, informally stated,
says that the exact lumping algebra $\Theta(\eta_H)$ is dual to the strong
lumping algebra $\Theta(1)$.

\subsubsection{The first weak lumping algebra}\label{subsec:weakLumpingAlgebra}
Fix $\IP = \{ \eta, \xi, \bar\xi\}\subseteq \widehat{H}$ 
and the real idempotent $e_\IP = \eta+
\xi+\bar\xi \in \Eb{H}$.
Similar arguments to the strong and exact cases using 
Corollaries~\ref{cor:ThetaDoubleCosetConditionAbelian} 
and~\ref{cor:abelianSingletonCoset} show that
\begin{equation}\label{eq:popWeak}
\Theta(e_\IP)^\perp = \bigl\langle \eta (1,2)\varsigma, \xi (1,2) \varsigma,
 \bar\xi (1,2) \varsigma,   \xi (1,2) \eta, \bar\xi (1,2)\eta
 \bigr\rangle.
\end{equation}
In particular, there is no constraint from the double cosets $H$ and $H(1,3)H$ and 
the dimension of $\Theta(e_\IP)$ is $24 - 5 = 19$. It is
instructive to verify this using the dimension formula of Corollary~\ref{cor:dimTheta}. 
The four characters of $C_4$ are $\mathbbm{1}, \gamma, {\sgn}, \bar\gamma$, corresponding to the four idempotents $\eta, \xi, \varsigma, \bar\xi$, and 
\begin{align*}
    \mathbbm{1} \Ind_H^G &= \chi^{(4)} + \chi^{(2,2)} + \chi^{(2,1,1)}\\ 
    \bar\gamma \Ind_H^G = \gamma \Ind_H^G &= \chi^{(3,1)} + \chi^{(2,1,1)}\\ 
    \sgn \Ind_H^G &= \chi^{(3,1)} + \chi^{(2,2)} + \chi^{(1,1,1,1)},
\end{align*}
and thus the relevant coefficients are as shown in the table below.
\[
\begin{array}{r|ccccc}
    \bottomrule \\[-6pt]
    \psi  & \chi^{(4)} & \chi^{(3,1)} & \chi^{(2,2)} & \chi^{(2,1,1)} & \chi^{(1,1,1,1)}\\ \midrule
    a_\psi = \langle (\gamma + \bar\gamma)\ind_H^G, \psi\rangle & 0 & 2 & 0 & 2 & 0\\ 
    c_\psi = \langle \mathbbm{1}\ind_H^G, \psi\rangle & 1 & 0 & 1 & 1 & 0\\
    d_\psi = \psi(1) & 1 & 3 & 2 & 3 & 1\\ \bottomrule
\end{array}
\]
The dimension formula therefore gives
\begin{align*}
    \dim \Theta(e_\IP) &= \sum_{\psi\in\IrrC{G}} (
a_\psi^2 + a_\psi c_\psi + c_\psi^2 - a_\psi d_\psi - c_\psi d_\psi + d_\psi^2).\\
&= 1 + 7 + 3 + 7 + 1 = 19 
\end{align*}
as expected.
To make~\eqref{eq:popWeak} explicit,
we represent a weight
\[
w=  \sum_{i = 0}^{4} \sum_{j = 0}^{4} w\big(h^i(1,2)h^j\big) \cdot h^i(1,2)h^j
\]
supported on $H(1,2)H$
as a $4\times4$ matrix, in which $w\bigl( h^i(1,2)h^j \bigr)$
is in the row $h^i(1,2)H$ and column $H(1,2)h^j$, when $i$ and $j$
are ordered $0,2,1,3$ as in
Figure~\ref{fig:DoubleCosetsPopOMatic}. 
Using this notation, the algebra $\Theta(e_\IP)$ is then the space of all
 weights whose coefficients on $H(1,2)H$ 
are orthogonal to the five matrices
\newcommand{\pluso}{1}
\newcommand{\plusi}{i}
\[  
\scalebox{0.55}{$
\left(\begin{array}{CCCCs} \pluso  & \pluso& -1 &  -1 &\\
\pluso & \pluso &  -1 &-1 \\
\pluso &  \pluso & -1 &-1  \\
\pluso & \pluso & -1 & -1 
\end{array}\right), \ 
\left( \begin{array}{CCCCs} \pluso & \pluso &  -1 & -1 &\\
-1 & -1 & \pluso &  \pluso \\
\plusi & \plusi & -i & -i \\
-i &  -i &  \plusi & \plusi
\end{array} \right), \ 
\left(\begin{array}{CCCCs} \pluso  & \pluso& -1 & -1 &\\
-1  & -1  & \pluso & \pluso \\
-i  & -i & \plusi& \plusi \\
\plusi & \plusi & -i & -i
\end{array}\right), \ 
\left(\begin{array}{CCCCs} \pluso & \pluso & \pluso & \pluso &\\
-1 & -1 & -1 & -1 \\
\plusi & \plusi & \plusi & \plusi \\
-i & -i & -i & -i
\end{array}\right), \ 
\left(\begin{array}{CCCCs}  \pluso & \pluso & \pluso & \pluso &\\
-1 & -1 & -1 & -1 \\
-i & -i & -i & -i \\
\plusi & \plusi & \plusi & \plusi
\end{array}\right)$}
\]
or equivalently, by taking obvious linear combinations, to the five matrices
\[ 
\scalebox{0.55}{$
\left(\begin{array}{CCCCs} \pluso & \pluso & -1 & -1 &\\
\pluso & \pluso &  -1 &-1 \\
\pluso & \pluso & -1 & -1  \\
\pluso & \pluso &  -1 & -1 
\end{array}\right), \ 
\left( \begin{array}{CCCCs} 1 & 1 & 0 & 0&\\
-1 & -1 & 0 & 0 \\
0 & 0 & 0 & 0 \\
0 &0 & 0 & 0
\end{array} \right), \ 
\left(\begin{array}{CCCCs}  0 & 0 & 0 & 0 &\\
0 & 0  & 0 & 0 \\
0 & 0 & 1 & 1  \\
0 & 0 & -1 & -1
\end{array}\right), \ 
\left(\begin{array}{CCCCs} \pluso & \pluso & \pluso & \pluso &\\
-1 & -1 & -1 & -1 \\
0 & 0 & 0 & 0  \\
0 & 0 & 0 & 0
\end{array}\right), \ 
\left(\begin{array}{CCCCs}  0 & 0 & 0 & 0 &\\
0 & 0 & 0 & 0 \\
1 & 1 & 1 & 1 \\ 
-1 & -1 & -1 & -1 
\end{array}\right)$}.
\]

We use this description of $\Theta(e_\IP)$  to illustrate 
Theorem~\ref{thm:mainTransitionMatrices}.
The orbital matrices (see \S\ref{subsec:orbitalMatrices})
for the three double cosets $H$, $H(1,3)$ and $H(1,2)H$ are 
\[ \left( \begin{matrix} 1 & \cdot & \cdot & \cdot & \cdot & \cdot \\ 
 \cdot & 1 & \cdot & \cdot & \cdot & \cdot \\
 \cdot & \cdot & 1 & \cdot & \cdot & \cdot \\
 \cdot & \cdot & \cdot & 1 & \cdot & \cdot \\
 \cdot & \cdot & \cdot & \cdot & 1 & \cdot \\
 \cdot & \cdot & \cdot & \cdot & \cdot & 1 \end{matrix} \right),
 \quad
 \left( \begin{matrix}\cdot & 1 & \cdot & \cdot & \cdot & \cdot \\ 
 1 & \cdot & \cdot & \cdot & \cdot & \cdot \\
 \cdot & \cdot & \cdot & 1 & \cdot & \cdot \\
 \cdot & \cdot & 1     & \cdot & \cdot & \cdot \\
 \cdot & \cdot & \cdot & \cdot & \cdot & 1 \\
  \cdot & \cdot & \cdot & \cdot & 1 & \cdot 
 \end{matrix} \right),\quad
  \left( \begin{matrix}\cdot & \cdot & 1 & 1 & 1 & 1 \\ 
\cdot & \cdot & 1 & 1 & 1 & 1 \\ 
 1 & 1 & \cdot & \cdot & 1 & 1 \\
 1 & 1 & \cdot & \cdot & 1 & 1 \\
 1 & 1 & 1 & 1 & \cdot & \cdot \\
 1 & 1 & 1 & 1 & \cdot & \cdot \\
 \end{matrix} \right)
\]
where the left cosets of $H$ appear in the order
 $H$, $(1,3)H$, $(1,2)H$,
$h^2(1,2)H$, $h(1,2)H$, $h^3(1,2)H$, and for readability $\cdot$ denotes a $0$ entry.
For example, the weight  defined by the third matrix is 
simply $\eta (1,2) \eta$, giving equal weight to all elements in the double coset $H(1,2)H$,
and so corresponding to the all-ones $4 \times 4$ matrix in the notation above. 
Clearly it is orthogonal to all five matrices.
A similar argument for the other two cosets shows that, as expected, 
weights in the Hecke algebra
$\eta \C[\Sym_4] \eta$ satisfy~\eqref{eq:popWeak} and so lump stably, in the sense of Definition~\ref{defn:lumpsStably} 
for the ideal $\C[G]e_\IP$.
Of course such weights also lump strongly and exactly,
by Theorem~\ref{thm:mainTransitionMatrices}.

\subsubsection{A weak lumping weight that does not lump strongly or exactly, from
the first weak lumping algebra}
From the final $5$ matrices above, we see that
$w \in \Theta(e_\IP)$ if and only if $w$ satisfies the five constraints
\begin{align} w\bigl(H(1,2)\bigr) + w\bigl(H(1,2)h^2\bigr)  &=  w\bigl(H(1,2)h\bigr) + w\bigl(H(1,2)h^3\bigr) \nonumber \\
 w\bigl((1,2)H\bigr) &= w\bigl(h^2(1,2)H\bigr) \nonumber \\
 w\bigl(h(1,2)H\bigr) &= w\bigl(h^3(1,2)H\bigr) \nonumber \\ 
 w\bigl((1,2)\bigr) + w\bigl((1,2)h^2\bigr) &= w\bigl(h^2(1,2)\bigr) + w\bigl(h^2(1,2)h^2\bigr) \nonumber \\
 w\bigl(h(1,2)h\bigr) + w\bigl(h(1,2)h^3\bigr) &= w\bigl(h^3(1,2)h\bigr) + w\bigl(h^3(1,2)h^3\bigr) . \label{eq:popWeakEquations}
 \end{align}
It is notable that it is not obvious that these conditions even define a subalgebra of $\C[G]$. We now use this description to give, in
the same spirit as the example in \S 1.2.3, a weight
which lumps weakly to the left cosets $G/H$ but not strongly or exactly.

\begin{example}\label{ex:popWeak}
Let $w$ be the weight supported on the double coset $H(1,2)H$ as written
below in our usual convention so 
that the entry $h^i(1,2)h^j$ is in the position indicated by Figure~\ref{fig:DoubleCosetsPopOMatic},
using the order $0,2,1,3$.
\[
\frac{1}{12}
\scalebox{1}{$\left(\begin{matrix}
    2 & 1 & 1 & 2\\
    0 & 3 & 3 & 0\\ 
    0 & 0 & 0 & 0\\
    0 & 0 & 0 & 0 
\end{matrix}\right)$}.
\]
Explicitly,
\begin{align*} 
w &= \frac{1}{12}\bigl( 2 (1,2) + (1,2)h^2 + (1,2)h + 2(1,2)h^3 + 3 h^2(1,2)h^2 + 3h^2(1,2)h \bigr) \\
  &= \frac{1}{12}\bigl( 2 (1,2) + (1,4,2,3) + (1,3,4) + 2(2,4,3) + 3(3,4) + 3(1,4,2)\bigr).
\end{align*}
This matrix is orthogonal to either set of five matrices shown above, and so the weight $w$ lumps weakly.
By Corollary~\ref{cor:strongExact}, it does not lump strongly or exactly
because the matrix has neither
constant row sums, nor constant column sums.
\end{example}

\begin{remark}\label{remark:ThetaDoubleCosetCondition}
If $L = \C[G]e_\IP$ then $L^\circ = \C[G](e_\IP - \eta_H)$ and,
by Proposition~\ref{prop:abelianThetaPerp},
a necessary and sufficient condition for $w \in \RId(L^\circ)$
is that $w$ satisfies the final four equations in~\eqref{eq:popWeakEquations};
now Proposition~\ref{prop:ThetaDoubleCosetCondition} implies 
that a weight $w$ satisfying these equations is weakly lumping
if and only if there exists $v \in L^\circ$ such that
$w\bigl( H(1,2)h^j  \bigr) + v\bigl( H(1,2)h^j \bigr)$
is constant as $j$ varies. Since $v$ must be real and $e_\IP - \eta_H = \xi + \bar\xi = \mfrac{1}{2}(1 -h +h^2 - h^3)$ an
equivalent condition is that
\begin{equation}
 w\bigl( H(1,2)h^j  \bigr) + v\bigl( H(1,2)h^j \bigr) \label{eq:popWeakPlus}
\end{equation}
is constant as $j$ varies,
for some $v \in \langle h^i(1,2) (1 - h + h^2 - h^3) : 0 \le i \le 3 \rangle$.
Since any such~$v$ satisfies $v\bigl( H(1,2) \bigr) = v(H(1,2)h^2$ and $v\bigl( H(1,2)h \bigr)
= v\bigl( H(1,2)h^3 \bigr)$,
whenever the final four equations in~\eqref{eq:popWeakEquations} hold,
the first holds if and only if condition~\eqref{eq:popWeakPlus} holds.
This verifies the conclusion of Proposition~\ref{prop:ThetaDoubleCosetCondition}
for the first weak lumping algebra.
\end{remark}

\subsubsection{The second weak lumping algebra}
The final
 real idempotent $e_\IP\in\Eb{H}$ is
 defined by taking 
 $\IP = \{ \eta, \varsigma\}\subseteq \widehat{H}$. 
This corresponds to the time reversal of the previous example, so we just
give the result of Corollaries~\ref{cor:ThetaDoubleCosetConditionAbelian} 
and~\ref{cor:abelianSingletonCoset} that
\[
\Theta(e_\IP)^\perp = \bigl\langle \varsigma (1,2) \xi, \varsigma (1,2)\xi, \varsigma (1,2)\bar\xi, \eta (1,2) \xi, \eta (1,2)\bar\xi \bigr\rangle.
\]
We finish by giving an alternative description of $\Theta(e_\IP)^\perp$
  using Proposition~\ref{prop:interpolating}. 
 Let $T = \langle (1,3)(2,4)\rangle \cong C_2$. We have $T \le H \le G$, and $\eta_T = \eta + \varsigma = e_\IP$. Hence $\C[G]e_\IP$ is a weak lumping Gurvits--Ledoux ideal 
 (in the sense of Definition~\ref{defn:GLideal}) 
 for the weight $w$ if and only if

\smallskip
\begin{thmlistE}
    \item[(a$'$)] $w(Tg)$ is constant for 
    $Tg \subseteq TxT$ for all $TxT\in T\backslash G / T$, and
    \item[(b$'$)] $w(TgH)$ is constant for $TgH \subseteq HxH$ for all $HxH\in H\backslash G / H$.
\end{thmlistE}

\smallskip\noindent
Many choices of 
$g$ and $x$ render $Tg = TxT$ or $TgH = HxH$, and thus impose no constraint on $w$. 
We give below the remaining five equations which define $\Theta(e_\IP)$:

\smallskip
\begin{thmlistE}
    \item[(a$'$)]
        $w\bigl(T(1,2)\bigr) = w\bigl( T(3,4)\bigr)$,\\
    $w\bigl(T(1,3,4)\bigr) = w\bigl( T(1,2,3)\bigr)$,\\
    $w\bigl( T(1,2,4) \bigr) = w\bigl( T(1,4,3) \bigr)$, \\
    $w\bigl(T(1,4)\bigr) = w\bigl( T(2,3)\bigr)$;
    \item[(b$'$)] $w\bigl(T(1,2)H\bigr) = w\bigl( T(1,4)H\bigr)$.
\end{thmlistE}

\smallskip\noindent
These equations specify that the weights of each right coset
in the shaded regions of  Figure~\ref{fig:DoubleCosetsPopOMatic} are equal.
For instance  $T(1,2) = \{(1,2), (1,3,2,4)\}$
and $T(3,4) = \{(3,4), (1,4,3,2) \}$ together form the top-left region
in this figure.

\begin{example}\label{ex:popWeakDual}
In Example~\ref{ex:popWeak} we saw that the weight
\begin{align*} 
w  &= \frac{1}{12}\bigl( 2 (1,2) + (1,4,2,3) + (1,3,4) + 2(2,4,3) + 3(3,4) + 3(1,4,2)\bigr)
\\
\intertext{lumps weakly to $G/H$ with stable ideal $\C[G](\eta + \xi + \bar\xi)$.
Noting that $1 - (\eta + \xi + \bar\xi) +
\eta = \eta + \varsigma$, by Theorem~\ref{thm:mainTimeReversal}, the weight}
w^\star &= 
\frac{1}{12}\bigl( 2 (1,2) + (1,3,2,4) + (1,4,3) + 2(2,3,4) + 3(3,4) + 3(1,2,4)\bigr)
\end{align*}
lumps weakly to $G/H$ with the stable ideal $\C[G](\eta + \varsigma)$
relevant to the final idempotent $e_\IP$ in this subsection.
This may be checked directly using Figure~\ref{fig:DoubleCosetsPopOMatic},
noting that $w^\star$ is supported on the $8$ permutations in the left
half of the diagram, and the weights of the two right cosets of~$T$
in the top-left are equal, and similarly for the bottom-left.
We saw in Example~\ref{ex:popWeak} that~$w$ does not lump strongly or exactly 
and so, by the $\star$-duality in Theorem~\ref{thm:mainTimeReversal}, neither does~$w^\star$.
\end{example}

\section*{Acknowledgements}
Edward Crane and Mark Wildon gratefully acknowledge financial support from the 
Heilbronn Institute for Mathematical Research, Bristol, UK.
Álvaro Gutiérrez was funded by the University of Bristol Research Training Support Grant. The work of Erin Russell was supported by the Engineering and Physical Sciences Research Council [grant number EP/W52413X/1]. This work was carried out using Magma \cite{Magma} on the computational facilities of the Advanced Computing Research Centre, University of Bristol: \texttt{http://www.bristol.ac.uk/acrc/}.


\end{document}